\newtheorem{theorem}{Theorem}[section]
\newtheorem{lemma}[theorem]{Lemma}
\newtheorem{corollary}[theorem]{Corollary}
\theoremstyle{definition}
\newtheorem{definition}[theorem]{Definition}
\newtheorem{remark}[theorem]{Remark}
\numberwithin{equation}{section}
\newcommand{\U}{\mathrm{U}}
\newcommand{\Hom}{\mathrm{Hom}}
\newcommand{\SO}{\mathrm{SO}}
\newcommand{\rk}{\mathrm{rk}}
\DeclareMathOperator{\Aut}{Aut}
\theoremstyle{definition}
\newcommand{\C}{\mathbb C}
\newcommand{\Z}{\mathbb Z}
\newcommand{\Q}{\mathbb Q}
\newcommand{\R}{\mathbb R}
\newcommand{\wh}{\; | \;}
\newcommand{\eps}{\varepsilon}
\begin{document}
	\title{Characterization of locally standard, $\Z$-equivariantly formal manifolds in general position}
	\author{Nikolas Wardenski \footnote{University of Haifa, email: n.wardenski@web.de}}
	\maketitle
 \begin{abstract}
		We give a characterization of locally standard, $\Z$-equivariantly formal manifolds in general position. In particular, we show that for dimension $2n$ at least $10$, to every such manifold
		with labeled GKM graph $\Gamma$ there is an equivariantly formal torus manifold such that the restriction of the $T^n$-action to a certain $T^{n-1}$-action yields the
		same labeled graph $\Gamma$, thus showing that the (equivariant) cohomology with $\Z$-coefficients of those manifolds has the same description as that of equivariantly
		formal torus manifolds.
	\end{abstract}
	
\section{Introduction}
In \cite{MP03}, Masuda and Panov studied the (equivariant) cohomology of locally standard torus manifolds, especially that of equivariantly formal ones.
Further, it was shown that a torus manifold is equivariantly formal over $\Z$ if and only if it is locally standard and its orbit space is
face-acyclic, that is, the orbit space itself and the orbit space of all face submanifolds are acyclic.\\
Building on this, Ayzenberg and Masuda (\cite{AM23}) studied $R$-equivariantly formal $T$-actions in general position on compact manifolds (where $R=\Z$ if
all stabilizers are connected or $R=\Q$ in general) in a similar fashion. They showed that the orbit space of any such manifold has the $R$-homology of a sphere
and also found a partial converse to this statement, which deduces the equivariant formality over $R$ of a $T$-manifold in general position from
the assumption that its orbit space has the $R$-homology of a sphere and that its faces are acyclic. However, for this converse it is needed that all stabilizers are connected,
even for $R=\Q$.\\

Our main goal here is to extend these results for $R=\Z$ to locally standard $T$-actions in general position.
At first, we have a sufficient condition for equivariant formality.
\begin{theorem}
	Consider an action in general position of $T=T^{n-1}$ on the compact manifold $M$ of dimension $2n\geq 8$ such that
	\begin{itemize}
		\item for all closed subgroups $H\subset T$, every connected component of $M^H$ intersects $M^T$ non-trivially.
		\item the orbit space $M/T$ has the homology of a sphere (and is thus a homology sphere).
		\item $M_{n-2}^*$ is $n-3$-acyclic over $\Z$.
		\item every face-submanifold of codimension $4$ is equivariantly formal.
		\item for any isotropy submanifold $Q$ fixed by $\Z_p\subset T$, corresponding to an ineffective subgraph of covalence $1$,
		the map $H_*(Q^*)\to H_*(Q^*)$ given by multiplication with $p$ is an isomorphism in positive degrees.
	\end{itemize}
	Then the action is equivariantly formal over $\Z$.
\end{theorem}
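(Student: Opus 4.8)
The plan is to verify equivariant formality over $\Z$ in its standard form, namely surjectivity of the restriction $H^*_T(M;\Z)\to H^*(M;\Z)$ --- which for manifolds of this type is equivalent to $H^*_T(M;\Z)$ being free over $H^*(BT;\Z)$ and then forces $H^*(M;\Z)$ to be torsion-free --- and to check this by analysing the Borel construction stratum by stratum over the orbit space $M/T$. First I would filter $M$ by orbit dimension, with $M_k$ the union of orbits of dimension at most $k$, and pass to the associated spectral sequence in equivariant cohomology (an Atiyah--Bredon-type complex). Local standardness and the general-position hypothesis pin down the orbit space near each stratum --- a product of an open cone on a simplex with a disc --- so that, stratum by stratum, the Borel construction of $M$ is assembled from the Borel constructions of the face submanifolds of $M$ together with classifying spaces of the relevant subtori and finite groups. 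A Mayer--Vietoris argument over $M/T$ then writes the $E_1$-page in terms of the equivariant cohomology of those face submanifolds and the ordinary (co)homology of the face subspaces of the orbit space.

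The second step is an induction on $\dim M$, equivalently on the largest covalence occurring in $\Gamma$. Every positive-codimension face submanifold of $M$ again carries a locally standard action in general position of a subtorus on a compact manifold, and it inherits the hypotheses of the theorem from the combinatorics of $\Gamma$: a face subspace of the $\Z$-homology-sphere orbit space is $\Z$-acyclic because of the assumed $(n-3)$-acyclicity of $M_{n-2}^*$ together with the face poset, while the ineffectivity and covalence conditions are purely combinatorial. The codimension-$4$ faces are equivariantly formal by hypothesis and all faces of smaller dimension by the inductive hypothesis, so every ``interior'' contribution to the $E_1$-page sits in even total degree. After this reduction the only classes that can obstruct the conclusion are (i) the reduced (co)homology of $M/T$ and of its face subspaces, and (ii) local contributions concentrated along the isotropy submanifolds whose stabilizers have a nontrivial finite part.

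Contribution (i) vanishes at once: $M/T$ is a $\Z$-homology sphere and all of its face subspaces are $\Z$-acyclic. The genuinely new difficulty --- and the price of dropping the connected-stabilizer hypothesis used by Ayzenberg--Masuda --- is contribution (ii). An ineffective $\Z_p\subset T$ along an isotropy submanifold $Q$ of covalence $1$ contributes $p$-torsion, coming from $H^*(B\Z_p;\Z)$, to the equivariant cohomology near $Q$, and such torsion obstructs both the surjectivity onto $H^*(M;\Z)$ and the freeness over $H^*(BT;\Z)$. The last hypothesis is tailored to remove it: multiplication by $p$ being an isomorphism on $H_*(Q^*)$ in positive degrees makes the Gysin and transfer maps attached to the $\Z_p$-covering isomorphisms, so these $p$-torsion classes die before reaching $H^*_T(M;\Z)$. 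Combining (i) and (ii) yields the required surjectivity, hence equivariant formality over $\Z$. I expect the main obstacle to be precisely the bookkeeping in (ii): matching, uniformly over all covalence-$1$ strata, the $p$-torsion produced by each ineffective subgraph with a cancellation supplied by the multiplication-by-$p$ hypothesis; the low-dimensional base cases near $2n=8$, where the face submanifolds are small, will likewise have to be treated directly via the homology-sphere condition.
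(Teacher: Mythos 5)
Your proposal correctly identifies the central difficulty (the $p$-torsion coming from disconnected stabilizers on the covalence-one isotropy strata, and that the admissibility hypothesis is the ingredient that must neutralise it), but the route you describe diverges from the paper's proof in ways that create genuine gaps.

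First, the claim that every positive-codimension face submanifold ``again carries a locally standard action in general position of a subtorus'' is not correct: a face of rank $k$ has dimension $2k$ and, after dividing out the kernel of the $T^{n-1}$-action, is acted on effectively by a torus of rank $k$, so it is a \emph{torus manifold}, not a complexity-one space. This is why the hypothesis list contains ``every face submanifold of codimension $4$ is equivariantly formal'' and why the paper immediately invokes \cref{torusManifold} to deal with higher-codimension faces; there is no induction on $n$ within the class of general-position manifolds.

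Second, invoking an Atiyah--Bredon-type complex to \emph{establish} equivariant formality is circular as stated: the integral exactness theorem of Franz--Puppe (\cref{thm:ABFP}) takes equivariant formality as a hypothesis. The paper instead argues directly in ordinary homology. It decomposes $M = M_{n-2}'\cup N$ with $N = M\setminus(M_{n-2}'\setminus X)$ and $X = X_{n-2} = \partial M_{n-2}'$, and shows via Mayer--Vietoris that $H_{\mathrm{odd}}(M;\Z)=0$ by proving $H_{\mathrm{even}}(X)\to H_{\mathrm{even}}(M_{n-2})\oplus H_{\mathrm{even}}(N)$ is injective and $H_{\mathrm{odd}}(X)\to H_{\mathrm{odd}}(M_{n-2})\oplus H_{\mathrm{odd}}(N)$ is surjective. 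The hard part reduces to surjectivity of $H_j(X_1^{n-1})\to H_j(N)$, and this is where all the machinery of \cref{local} and \cref{EFgenPos} (especially \cref{propX}, \cref{conttorus} and \cref{basprop}) is needed; none of that control over the thickened skeleton $X$ appears in your outline.

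Third, your mechanism for how admissibility kills the torsion --- ``the Gysin and transfer maps attached to the $\Z_p$-covering are isomorphisms'' --- is a plausible heuristic but not the argument that closes the gap. In the paper, the $\Z_p$-covering $T^{n-1}\to T^{n-1}/\Z_p$ along a covalence-one stratum $Z$ induces multiplication by $p$ on one $H_1(S^1)$-factor of the fibre, so the map of Serre $E^2$-pages $E^2_{\geq 1,*}(E')\to E^2_{\geq 1,*}(Z')$ is $(H_*(E'^*)\to H_*(Z'^*)\xrightarrow{\cdot p} H_*(Z'^*))\otimes \mathrm{id}$; admissibility is exactly what makes this surjective with free abelian kernel $\tilde K$, which then feeds into \cref{kerdelinfty}, \cref{kerdel}, and a four-lemma-style diagram chase. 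Without exhibiting that explicit spectral-sequence computation and chase, the ``bookkeeping in (ii)'' you defer is the entire proof, and it is far from routine.
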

If an isotropy submanifold $Q$ as above does satisfy the assumption in the fifth point, we call
$Q$ (or $Q^*$) \textit{admissible}.\\

This sufficient condition is indeed necessary.
\begin{theorem}
	Consider an action in general position of $T=T^{n-1}$ on the compact manifold $M$ of dimension $6$ or $2n\geq 10$ such that
	\begin{itemize}
		\item The odd (co)homology of $M$ vanishes.
		\item Whenever $x$ is a point whose stabilizer is finite, there is a neighborhood of $x$ in which this is the only non-trivial stabilizer subgroup.
	\end{itemize}
	The following statements hold.
	\begin{enumerate}
		\item For all closed subgroups $H\subset T$ (not necessarily connected), every connected component of $M^H$ intersects $M^T$ non-trivially.
		\item Any connected component $Q^*$ of $Z^*$ is admissible. 
		\item $M^*_k$ is $k-1$-acyclic for $k\leq n-2$.
		\item $M^*$ is a homology sphere.
	\end{enumerate}
\end{theorem}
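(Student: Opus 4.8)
The plan is to adapt the strategy of Masuda--Panov \cite{MP03} and of Ayzenberg--Masuda \cite{AM23} --- who handle torus manifolds and, respectively, actions in general position over $\Q$ (or over $\Z$ when all stabilizers are connected) --- and to push it through integrally in the presence of the isolated finite stabilizers. The skeleton of the argument is a double induction: on $\dim M = 2n$ (a codimension-$2j$ face submanifold carries a locally standard $T^{n-j-1}$-action in general position on a manifold of dimension $2(n-j)$, hence is an object of the same kind but of strictly smaller dimension), and on the strata $M^*_0 \subset M^*_1 \subset \cdots$ of the face filtration of $M^*$. I would first record, as in \cite{MP03}, that a locally standard $T$-manifold in general position with $H^{\mathrm{odd}}(M;\Z) = 0$ is $\Z$-equivariantly formal: the structure of the orbit space forces $H^*(M;\Z)$ to be torsion free, so the Serre spectral sequence of $M \times_T ET \to BT$ collapses and $H^*_T(M;\Z)$ is a free $H^*(BT;\Z)$-module. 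Using the integral Chang--Skjelbred/Atiyah--Bredon machinery together with the local structure, I would show that this equivariant formality is inherited by $M^H$ for every subtorus $H \subset T$, by each connected component thereof, and by every face submanifold. In particular $M^T \neq \emptyset$, and Conclusion (1) for subtori is immediate: $(M^H)^{T/H} = M^H \cap M^T \neq \emptyset$, applied componentwise. A general closed subgroup reduces to its identity component together with a finite subgroup $\Z_p$; by the local-finiteness hypothesis $M^{\Z_p}$ coincides near each of its components with the finite-stabilizer locus $Z$, so the finite part of Conclusion (1) is settled together with Conclusion (2).

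For Conclusion (2), let $Q$ be a connected component of $Z$, with stabilizer $\Z_p$. Local finiteness forces the normal data of $Q$ in $M$ to be an ineffective $\Z_p$-representation of covalence $1$: the normal bundle $\nu$ is a complex line bundle on which $\Z_p$ acts by a primitive character, $T$ acts generically freely on a tubular neighbourhood $N$, and $\Z_p$ acts trivially on $Q$ itself, so that $Q^*$ is also the orbit space of the \emph{effective} $T/\Z_p$-action on $Q$. The key point is that the branching order $p$ enters explicitly: fibrewise the normal model is $D^2/\Z_p \cong D^2$, so after passing to orbit spaces $Q^*$ sits in $M^*$ as a codimension-$2$ submanifold whose normal line bundle carries a factor of $p$ in its Euler class, and equivalently the degree-$p$ isogeny $T \to T/\Z_p$ produces a factor of $p$ when one compares $H^*_T(Q;\Z)$ with $H^*_{T/\Z_p}(Q;\Z)$. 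Running the Mayer--Vietoris sequence of $M^* = N^* \cup (M^* \setminus Q^*)$ (equivalently, the Gysin sequence of the circle bundle $(N \setminus Q)/T \to Q^*$) and using that $H^*(M;\Z)$ is torsion free and concentrated in even degrees --- equivariant formality of $M$, available integrally from the outset --- I would conclude that multiplication by $p$ is an isomorphism on $H_*(Q^*;\Z)$ in positive degrees, i.e.\ that $Q$ is admissible; the dimension hypotheses supply the connectivity under which the remaining terms of the sequence vanish.

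For Conclusions (3) and (4) I would filter $M^*$ by its face skeleta $M^*_k$ and use the Mayer--Vietoris/face spectral sequence of \cite{MP03,AM23}: a component of $M^*_k$ is the orbit space of an equivariantly formal face submanifold of strictly smaller dimension, which by the inductive hypothesis (and by \cite{AM23} over $\Q$) is a $\Z$-homology sphere with highly acyclic subskeleta, so the skeleta of $M^*$ are built by gluing acyclic pieces along acyclic pieces. An induction on $k$ then yields that $M^*_k$ is $(k-1)$-acyclic for $k \le n-2$, which is Conclusion (3). Finally $M^*$ is a compact homology manifold of dimension $n+1$; combining the $(n-3)$-acyclicity of $M^*_{n-2}$ and the admissibility of the codimension-$2$ strata $Z^*$ (Conclusion (2)) with Poincar\'e--Lefschetz duality for $M^*$ relative to its lower-dimensional face locus promotes to ordinary Poincar\'e duality for $M^*$, and comparing with $H_*(M;\Z)$, which is torsion free and concentrated in even degrees, via the same spectral sequence pins down $H_*(M^*;\Z) \cong H_*(S^{n+1};\Z)$, which is Conclusion (4).

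The principal obstacle is to run everything integrally rather than rationally: over $\Q$ essentially all of this is already in \cite{AM23}, and the real work is to verify that equivariant formality genuinely localizes to the $\Z_p$-fixed sets and the face submanifolds without creating $p$-torsion --- which is precisely why the admissibility condition must be extracted and proved by hand --- and that the connecting maps and the differentials of the face spectral sequence carry no hidden torsion. This is where the hypotheses $2n = 6$ or $2n \ge 10$ are used (the excluded value $2n = 8$ is exactly where a required connectivity range falls short), and where local finiteness is indispensable, since it is what keeps the finite-stabilizer locus of covalence $1$ and disjoint from the remaining singular strata. A more routine, but still necessary, ingredient is the bookkeeping of the dimension induction: one must check that the face submanifolds that occur satisfy the particular conclusions needed of them, and handle the low-dimensional base cases directly.
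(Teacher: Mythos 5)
Your overall architecture --- Atiyah--Bredon--Franz--Puppe exactness, the face filtration, and a duality argument for $M^*$ --- matches the paper in spirit, and your reading of conclusion (3) is essentially right. But conclusions (1) (finite case), (2), and (4) are where the paper does its real work, and at each of those places your sketch either diverges into an unverified alternative or is too vague to count as a proof.

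For the finite-isotropy part of (1), you say the claim "is settled together with Conclusion (2)," but (2) is a statement about torsion in $H_*(Q^*;\Z)$, not about $Q\cap M^T\neq\emptyset$. The paper argues by contradiction: if a component $Q$ of $M^H$ missed $M^T$, it would lie in the interior of $N$, one estimates $\dim H^{odd}(Q;\Z_p)\le\dim H^{odd}(M;\Z_p)=0$ to get $H^{odd}(Q;\Q)=0$, and this is incompatible with $\chi(Q)=\chi(Q^T)=0$. That short contradiction is absent from your proposal, and the gap is genuine since nothing elsewhere in your outline supplies it.

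For (2), you propose an integral Gysin/Mayer--Vietoris argument exploiting a factor of $p$ in the Euler class of $(N\setminus Q)/T\to Q^*$, together with torsion-freeness of $H^*(M;\Z)$, to conclude that multiplication by $p$ is an isomorphism on $H_{\ge 1}(Q^*;\Z)$. The paper's route is different and noticeably more delicate: work with $\Z_p$-coefficients, deduce $\Z_p$-equivariant formality of $Q$ from $\dim H^{odd}(Q;\Z_p)\le 0$, run the Atiyah--Bredon sequence for $Q$ over $\Z_p$ to kill $\widetilde{H}^{<n-2}(Q^*;\Z_p)$ and $H^{n-2}(Q^*,\partial Q^*;\Z_p)$, and then apply Lefschetz duality \emph{twice} to kill $H^{n-2}(Q^*;\Z_p)$ as well, so that $Q^*$ is $\Z_p$-acyclic and hence $\mathrm{Tor}(H_{\ge1}(Q^*;\Z),\Z_p)=0$. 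Your Gysin argument is not filled in --- in particular you never explain how "a factor of $p$ in the Euler class" forces mult-by-$p$ to be an isomorphism on $H_*(Q^*;\Z)$ in positive degrees rather than merely injective or merely something about $H^2$ --- and it is not clear it can be pushed through without, in effect, rediscovering the duality step the paper uses.

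For (4), "Poincar\'e--Lefschetz duality for $M^*$ relative to its lower-dimensional face locus promotes to ordinary Poincar\'e duality for $M^*$" is not an argument, and it sidesteps exactly the place where the torsion could hide. The paper instead runs the Atiyah--Bredon sequence for $(M,M_{n-2})$ in negative degrees to get $H_T^k(N,X)=0$ for $k\le n-2$, then compares $H^*_T(N)$ with $H^*_T(X)$ via a Mayer--Vietoris diagram whose key input is the identification of $H^*_T(Z)$ as the cohomology of an $L_p^\infty$-bundle over $Z^*$; admissibility of $Q^*$ (conclusion (2)) is used precisely here to collapse the spectral sequence of that bundle, giving $H^*_T(Z)\cong H^*(Z^*)\oplus \widetilde{H}^*(L_p^\infty)$ and hence control of $\ker\bigl(H_T^*(Z)\to H_T^*(\partial Z)\bigr)$. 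None of this bundle analysis appears in your proposal, and I do not see how your duality phrasing replaces it: the problem is not whether $M^*$ satisfies some form of duality but whether $H^*(N^*;\Z)$ has unexpected torsion coming from the disconnected stabilizers, and that is exactly what the $L_p^\infty$-computation rules out.

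In short: (3) and the positive-dimensional part of (1) are fine, but the finite-$H$ case of (1) is missing an argument, and (2) and (4) replace the paper's ABFP-over-$\Z_p$-plus-Lefschetz-duality and $L_p^\infty$-bundle computations by sketches that are not worked out and whose correctness I cannot confirm.
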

The second assumption, together with the first one, actually implies that the action is locally standard.\\

 These results (see \cref{theosuffEF} for the sufficient and \cref{theonecEF} for the necessary condition)
 are proven in their respective sections \cref{suffEF} and \cref{necEF}.
 In the sections before that, we build some machinery to then prove these statements. For potential later use,
we work with $R$-coefficients (where $R=\Q$ or $R=\Z$) immediately, although the main results of \cref{EFgenPos} are formulated for $R=\Z$ only.\\

After that, \cref{secorient} serves as a preparation for the subsequent sections. There, we establish 'our' notion of 'orientability of a GKM graph'
(there are some others, e.g. \cite{BP15}[Definition 7.9.16] and \cite[Definition 2.19]{GKZ22}),
while constructing a smooth $T^k$-manifold with boundary $M_1'(\Gamma)$ out of any GKM graph $\Gamma$ with (signed or unsigned) $\Z^k$-labeling. We show that $M_1'(\Gamma)$ admits an equivariant deformation
retract onto its equivariant one-skeleton, and that any GKM manifold with graph $\Gamma$ is locally equivariantly homeomorphic to $M_1'(\Gamma)$.\\

Then we switch to manifolds in general position. In \cref{local} and $R=\Z$, we are working in the following setting: $M$ is a compact, connected $T$-manifold
of dimension $2n$ in general position, the action is locally standard, the faces are equivariantly formal and the orbit space of the equivariant $n-2$-skeleton, $M_{n-2}^*$, is $n-3$-acyclic.
In \cref{graph}, we show that GKM graphs with certain natural properties, which are satisfied in the above setting, come from torus graphs. That is, we show that the
$\Z^{n-1}$-labeling (signed or unsigned) of these graphs extends to a $\Z^n$-labeling (signed or unsigned).\\

Building on this, we then show in \cref{local} that the $T$-action in a neighborhood of $M_k$, $k\leq n-3$, extends naturally to a $T^n$-action,
and that, for $k\leq n-2$, there is an equivariant deformation retract $M_k'\to M_k$, where $M_k'$ is a smoothly embedded $T^n$-manifold (or $T$-manifold if $k=n-2$)
with boundary $X_k$. We close this section with a description of the homology of the topological manifold $X^*=X_{n-2}^*=X_{n-2}/T$.\\

In \cref{EFgenPos}, we study first the homological properties of $X=X_{n-2}$ and the map $H_*(X)\to H_*(M_{n-2})$. An important tool for that matter is the result, also shown in that section, that $M_{n-3}'$ admits a natural $T^n$-equivariant embedding into an equivariantly formal torus manifold $\tilde{M}$, which implies the following theorem.
\begin{theorem}
    The equivariant cohomology of a $\Z$-equivariantly formal manifold in general position is that of an equivariantly formal torus
manifold, after restricting its $T^n$-action to a certain subtorus of codimension $1$.
\end{theorem}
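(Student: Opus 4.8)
The plan is to build the asserted isomorphism as a short chain of identifications, starting from the equivariant one-skeleton, and using the two main outputs available by the time this theorem is stated: the $T^n$-equivariant embedding $\iota\colon M_{n-3}'\hookrightarrow\tilde M$ into an equivariantly formal torus manifold, and the extension/retraction package of \cref{local} (near $M_{n-3}$ the action of $T=T^{n-1}$ extends to a $T^n$-action, and $M_{n-3}'\to M_{n-3}$ is a $T^n$-equivariant deformation retract). The ``certain subtorus of codimension $1$'' is the image of $T=T^{n-1}$ in $T^n$ under this extension; restricting the $T^n$-action on $\tilde M$ along that inclusion is the precise meaning of the restriction in the statement.

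First I would recall that, because $M$ is $\Z$-equivariantly formal, locally standard and in general position, the graded $H^\ast(BT;\Z)$-algebra $H^\ast_T(M;\Z)$ is determined by the labeled GKM graph $\Gamma$ of the action — that is, by the pair $M_0=M^T\subset M_1$ of $0$- and $1$-dimensional orbit types together with their weights, subject to the congruence and face conditions controlled by the acyclicity/admissibility hypotheses. This graph description, in the spirit of Masuda--Panov and Ayzenberg--Masuda (\cite{MP03,AM23}) but over $\Z$, is exactly what the homological analysis of $X=X_{n-2}$ and of the map $H_\ast(X)\to H_\ast(M_{n-2})$ in \cref{EFgenPos} is meant to supply; over $\Z$ it is the delicate step, which is why the hypotheses ``$M^\ast$ a homology sphere'', ``$M_{n-2}^\ast$ is $(n-3)$-acyclic'', ``codimension-$4$ faces equivariantly formal'', and ``admissible isotropy orbit spaces'' enter. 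Since $2n\ge 10$, i.e.\ $n\ge 5$, we have $1\le n-3$, so $M_1\subset M_{n-3}$; using the retraction of \cref{local}, a copy of $M_1$ lives inside $M_{n-3}'$ carrying a $T^n$-action that restricts to the original $T$-action. Any point of $M_1$ has $T$-isotropy of dimension $\ge n-2$, hence $T^n$-isotropy of dimension $\ge n-2$ as well, so its $T^n$-orbit has dimension $\le 2\le n-3$; therefore $\iota$ maps $M_1$ onto the $T$-equivariant one-skeleton of $\tilde M$, and by the labeling extension of \cref{graph} and the orientation conventions of \cref{secorient} it does so compatibly with all weights and signs. In particular the labeled GKM graph of $\tilde M$ for the restricted $T$-action is precisely $\Gamma$.

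Then I would run the same reduction on the torus-manifold side. An equivariantly formal torus manifold has $H^{\mathrm{odd}}(\tilde M;\Z)=0$ (e.g.\ \cite{MP03}); hence for the restricted $T$-action the Serre spectral sequence of $\tilde M\to\tilde M_T\to BT$ collapses, so $\tilde M$ is $\Z$-equivariantly formal for $T$, and it is locally standard and in general position for $T$ because (by the previous paragraph) its $T$-weights along the one-skeleton are exactly those of $M$. Applying the graph description of the first step to $\tilde M$ in place of $M$ gives $H^\ast_T(\tilde M;\Z)\cong H^\ast(\Gamma)$ as $H^\ast(BT;\Z)$-algebras; combining this with the same description for $M$ yields $H^\ast_T(M;\Z)\cong H^\ast(\Gamma)\cong H^\ast_T(\tilde M;\Z)$, which is the assertion.

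The main obstacle is not the bookkeeping of this chain but the two genuinely $\Z$-theoretic points it rests on: establishing the labeled-graph description of $H^\ast_T$ over $\Z$ in the general-position situation with positive-dimensional $M^T$ (this is where the acyclicity and admissibility hypotheses and the homological study of $X^\ast$ are used, since the naive Chang--Skjelbred-type sequence need not be exact over $\Z$), and checking that $\iota$ matches the two labeled graphs with the correct signs — for which one needs precisely the orientability notion for GKM graphs of \cref{secorient} and the extension of the $\Z^{n-1}$-labeling to a $\Z^n$-labeling of \cref{graph}. The dimension bound $2n\ge 10$ is used only to ensure $M_1\subset M_{n-3}$; in the small cases still covered by \cref{theonecEF} one argues directly with $M_{n-2}'$ (respectively with $M_1=M_{n-3}$).
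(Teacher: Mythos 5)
Your proposal is correct and follows essentially the same route as the paper: take the embedding $\iota\colon M_{n-3}'\hookrightarrow\tilde M$ (i.e.\ \cref{conttorus}) as given, observe that $M_{n-3}'=\tilde M'_{n-3}$ forces the two labeled GKM graphs (for the $T=T^{n-1}$-action) to coincide, and then invoke the $\Z$-coefficient GKM/Chang--Skjelbred identification $H^*_T(\cdot)\cong H^*_T(\Gamma)$ on both sides — which is exactly the one-line conclusion the paper draws at the end of \cref{conttorus}. Two small inaccuracies worth noting: the $\Z$-GKM description is not ``supplied by'' the analysis of $X$ in \cref{EFgenPos} but is the standard Franz--Puppe refinement cited in Section 2 (applicable here because finite stabilizers are cyclic and sit in proper subtori by \cref{lem:isotropycomplone} and \cref{rem:ABFP}), and the bound $2n\ge 10$ is not used ``only to ensure $M_1\subset M_{n-3}$'' — it is what makes the construction of $\tilde M$ in \cref{conttorus} work (Kervaire's theorem on homology spheres bounding contractible manifolds, smoothability in dimension $\ge 5$), so the claim that small cases go through with $M_{n-2}'$ is unjustified.
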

Then finally \cref{theosuffEF} and \cref{theonecEF} are shown in \cref{suffEF} respectively \cref{necEF}.

\newpage

\section{Preliminaries}
\subsection{GKM actions and GKM graphs}
Let $M$ be a compact, oriented smooth manifold of dimension $2n$ on which a torus $T$ of dimension $m$ acts effectively. We use the standard notations $Tx$ and $T_x\subset T$ for the orbit of $T$
through $x\in M$ and the stabilizer of $x$ in $T$, respectively. For $H$ an arbitrary subgroup of $T$, we define $M^{(H)}$ to be all elements $x\in M$ with $T_x=H$ and
$M^H$ to be all elements $x\in M$ satisfying $H\subset T_x$. We denote by $M_k$ the equivariant $k$-skeleton of $M$, that is,
\[
M_k:=\{x\in M \wh \dim(Tx)\leq k \}=\{x\in M \wh \dim(T_x)\geq m-k \}.
\]
We get a filtration of $M$ by
\[
M^T=M_0 \subset M_1\subset \hdots \subset M_{m-1}=M
\]
and an induced filtration of the orbit space $M/T=M^*$, where for any $T$-invariant set $X\subset M$ we set $X^*$ to be the image of the projection $\pi\colon M\to M/T$.
From now on, we always assume that $M^T$ is finite and nonempty.
\begin{definition}
	The closure $F$ of a connected component of $M^*_i\setminus M^*_{i-1}$ in $M^*$ is called a \textit{face} if it intersects $M_0^*$ non-trivially.
	We define its rank $\rk(F)$ to be the number $i$ and call $\pi^{-1}(F)$ a \textit{face submanifold}.
\end{definition}
We note that the latter definition is justified since $\pi^{-1}(F)$ is indeed a submanifold of $M$. Also, for any face $F$, we set
\[
F_{-1}:=\{x\in F\colon \dim Tx < \text{rk}(F)\}.
\]
Now we turn to GKM actions.
For a graph $\Gamma$, we denote by $E(\Gamma)$ the set of all edges and by $V(\Gamma)$ the set of all vertices.
We can give each edge $e$ two possible orientations, each determining an initial vertex $i(e)$ and a terminal vertex $t(e)$, respectively.
On an oriented edge $e$, we denote by $\bar{e}$ the same unoriented edge with the other orientation. Thus $i(e)=t(\bar{e})$ and $t(e)=i(\bar{e})$.
We let $\widetilde{E}(\Gamma)$ be the set of all oriented edges, and let $E_v$ and $\widetilde{E}_v$ be the corresponding edges on any $v\in V(\Gamma)$.\\
An abstract unsigned GKM graph $(\Gamma,\alpha)$ consists of an $n$-valent graph $\Gamma$ (multiple edges may appear between two vertices)
and a \textit{labeling} $\alpha \colon E(\Gamma)\to \Z^{k}/ \pm 1$ such that for all $v\in V$ and any two $e_1,e_2$ at $v$, $\alpha(e_1)$ and $\alpha(e_2)$
are linearly independent, and such that there is a compatible connection $\nabla$, which we define now.
\begin{definition}
	A compatible connection $\nabla$ on $(\Gamma,\alpha)$ is a bijection $\nabla_e\colon \widetilde{E}_{i(e)}\to \widetilde{E}_{t(e)}$ (for each oriented edge $e$) such that
	\begin{enumerate}
		\item $\nabla_e e=\bar{e}$.
		\item $\nabla_{\bar{e}}=(\nabla_e)^{-1}$.
		\item for all $f \in \widetilde{E}_{i(e)}$ we have $\alpha(\nabla_e f)\pm \alpha(f)=c \alpha(e)$ for some $c\in \Z$.
	\end{enumerate}
\end{definition}
Analogously, we call $(\Gamma,\alpha)$ an abstract \textbf{signed} GKM graph when $\alpha$ takes values in $\Z^k$, $\alpha(e)=-\alpha(\bar{e})$, and there is a compatible connection
such that the third condition above is replaced with $\alpha(\nabla_e f)- \alpha(f)=c \alpha(e)$.\\
There is a rather obvious notion of isomorphism of GKM graphs.
\begin{definition}
	Two (signed) GKM graphs $(\Gamma,\alpha)$ and $(\Gamma',\alpha')$ are isomorphic if there exists an isomorphism $\Psi\colon \Gamma\to \Gamma'$ of unlabeled graphs,
	as well as an automorphism $\varphi\colon \mathfrak{t}^*\to \mathfrak{t}^*$ such that $\varphi(\alpha(e))=\alpha'(\Psi(e))$.
\end{definition}
There is a relationship between $T^k$-actions on $\C^n$ and the values of $\alpha$ on $E_v$. Namely, for
every edge $e\in E_v$, $\alpha(e)$ corresponds to a a homomorphism $\chi_e\colon T^k\to S^1$, which is well-defined up to complex conjugation on $S^1$.
In particular, the representation $\chi_v$ of $T^k$ on $\C\cong \R^2$ defined by $\chi_e$ is well defined up to (real) isomorphism, and thus we get a representation of $T^k$ on
$\C^n$ (up to real isomorphism) by
\[
t\cdot (z_1,\hdots,z_n)=(\chi_{e_1}(t) z_1,\hdots, \chi_{e_n}(t)z_n).
\]
We denote this representation by $\C^n(v)$.
\begin{definition}
    We call a GKM graph $\Gamma$ \textit{ineffective} if the representation $\C^n(v)$ is unfaithful for some vertex (and hence
    any, if the graph is connected) $v$.
\end{definition}
Note that the equivariant one skeleton $(\C^n(v))_1$ of $\C^n(v)$ is precisely the union of the single $\C$-summands,
because we assumed that any two labels are linearly independent. Of course, every $T^k$-representation on $\C^n$ such that $\C^n_1$ is the union of single $\C$-summands
is of the above form, so determines a 'labeling'. More generally, we have the following lemma.
\begin{lemma}
	Let $M$ be a possibly open manifold acted on by $T=T^k$ smoothly with the following properties:
	\begin{itemize}
		\item The set $M^T$ is finite and not empty.
		\item The equivariant one skeleton $M_1$ is given by a union of $T$-invariant 2-spheres.
	\end{itemize}
	Then the set $\Gamma=M_1/T$ has a natural graph structure (vertices correspond to fixed points, edges correspond to $T$-invariant 2-spheres), and there is a labeling
	$\alpha$ determined by the isotropy representation at each vertex. Moreover, the tuple $(\Gamma,\alpha)$ is a GKM graph.
\end{lemma}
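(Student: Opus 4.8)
The plan is to read off the three ingredients of an unsigned GKM graph --- the $n$-valent graph, the labeling with pairwise linearly independent weights at each vertex, and a compatible connection --- from the hypotheses, in that order, using the slice theorem for the first two and the classification of equivariant bundles over a rotation $2$-sphere for the third. First the local model: fix $v\in M^T$. By the slice theorem a $T$-invariant neighbourhood of $v$ is $T$-equivariantly diffeomorphic to the isotropy representation $T_vM$; since $M^T$ is finite, $T_vM$ has no non-zero $T$-fixed vector, so it is a sum of two-dimensional rotation modules $T_vM\cong\bigoplus_{i=1}^{n}\C_{w_i}$ with all $w_i\neq0$, and in particular $\dim M=2n$. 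The equivariant one-skeleton of $T_vM$ is the union of those coordinate lines $\C e_i$ along which orbits have dimension at most $1$; its being locally a union of $2$-spheres --- hence at most $2$-dimensional --- forces the $w_i$ to be pairwise linearly independent, since two proportional weights $w_i,w_j$ would make the whole plane $\C e_i\oplus\C e_j$ lie in $M_1$. Thus near $v$ the set $M_1$ is exactly $\bigcup_{i=1}^{n}\C e_i$, $n$ two-planes meeting only at $v$, and the germs at $v$ of the invariant $2$-spheres through $v$ are precisely these $n$ lines. I define $\alpha(e)\in\Z^k/{\pm1}$ to be the class of the weight $w_i$ on the line corresponding to $e$.

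Next the global graph. Away from $M_0=M^T$ the local description shows that $M_1\setminus M_0$ is a $2$-manifold and that each given invariant $2$-sphere $S$ meets it in a single connected component; hence distinct invariant $2$-spheres meet only in fixed points, and the given spheres are exactly the closures of the connected components of $M_1\setminus M_0$. Each $S$ carries a non-trivial $T$-action (otherwise $S\subset M^T$, contradicting finiteness), factoring through a circle quotient $T\to T/T_S\cong S^1$ which acts on $S\cong S^2$ by rotation with exactly two fixed poles, lying in $M^T$. Taking the vertices of $\Gamma$ to be the points of $M^T$ and the edges the spheres joined to their two poles makes $\Gamma$ an $n$-valent graph (no loops, since the poles are distinct; multiple edges allowed), and since the two poles of $S=S_e$ carry opposite $S^1$-weights, equal modulo $\pm1$, the function $\alpha$ is well defined on $E(\Gamma)$, with pairwise linearly independent labels at each vertex.

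Finally the connection. For an edge $e$ with poles $v=i(e)$ and $w=t(e)$, I would study the normal bundle $\nu\to S_e$, a $T$-equivariant real bundle of rank $2(n-1)$ over $S_e\cong S^2$; by the standard classification of $T$-equivariant bundles over a rotation $2$-sphere it splits $T$-equivariantly into $T$-equivariant complex line bundles $L_1\oplus\cdots\oplus L_{n-1}$. Each $L_j$ restricts over $v$, resp.\ over $w$, to one of the normal weight spaces, hence to an edge $f$ at $v$, resp.\ $f'$ at $w$; set $\nabla_e e:=\bar e$ and $\nabla_e f:=f'$. Pairwise linear independence of the labels at $v$ (resp.\ $w$) makes the weights of the $L_j$ pairwise distinct there, so $\nabla_e$ is a bijection $\widetilde{E}_{i(e)}\to\widetilde{E}_{t(e)}$; condition (1) is immediate, (2) holds because the splitting of $\nu$ does not depend on the orientation of $e$, and (3) follows from the fact that an equivariant line bundle of equivariant Euler number $c_j$ over a rotation $2$-sphere has weights at its two poles differing by $c_j$ times the rotation weight, i.e. $\alpha(\nabla_e f)=\alpha(f)-c_j\,\alpha(e)$ after a choice of signs, which modulo $\pm1$ reads $\alpha(\nabla_e f)\pm\alpha(f)=c\,\alpha(e)$. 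Hence $\nabla$ is a compatible connection and $(\Gamma,\alpha)$ is a GKM graph; for the signed variant one instead fixes a coherent lift of all weights to $\Z^k$, so that $\alpha(\bar e)=-\alpha(e)$ and (3) becomes $\alpha(\nabla_e f)-\alpha(f)=c\,\alpha(e)$.

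The hard part is this last step: proving the $T$-equivariant splitting of the normal bundle over $S^2$ into equivariant line bundles, pinning down the exact weight-shift formula, handling the sign bookkeeping needed to pass between honest weights in $\Z^k$ and the unsigned labels in $\Z^k/{\pm1}$, and checking that $\nabla_e$ is genuinely a bijection (the second place where pairwise linear independence of the labels is used). The graph-structure and labeling steps are then a routine combination of the slice theorem with the elementary classification of $S^1$-actions on $S^2$.
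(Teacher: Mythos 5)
The paper states this lemma without proof, treating it as a standard piece of GKM-theory folklore (it is the usual way a GKM graph is extracted from a GKM manifold, going back to Guillemin--Zara and Goresky--Kottwitz--MacPherson). Your argument is a correct reconstruction of that standard proof: the slice theorem gives the local model $T_vM\cong\bigoplus_i\C_{w_i}$ at each fixed point; the assumption that $M_1$ is $2$-dimensional forces pairwise linear independence of the $w_i$, yielding the $n$-valent graph and the labeling; and the connection is read off from a $T$-equivariant splitting of the normal bundle of each invariant $2$-sphere into complex line bundles.

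Two points are worth tightening up if this were to be written in full. First, to conclude that distinct invariant $2$-spheres meet only in fixed points and that $M_1\setminus M_0$ is a $2$-manifold, one should also invoke the slice theorem at a point $x$ with $1$-dimensional orbit: there $M_1$ is locally $T\times_{T_x}V^{T_x^0}$, a smooth manifold, so the hypothesis that $M_1$ is a union of $2$-spheres forces $\dim V^{T_x^0}=1$, and a single sphere germ passes through $x$; this is what really rules out two spheres crossing along an orbit. Second, the step you flag as the hard one --- the $T$-equivariant splitting of the rank-$2(n-1)$ normal bundle over the rotation $2$-sphere into equivariant complex line bundles and the resulting weight-shift formula $\alpha(\nabla_e f)\pm\alpha(f)=c\,\alpha(e)$ --- is indeed non-trivial. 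The usual route is: since the ineffective kernel $T_S$ acts fiberwise, first split into $T_S$-isotypic subbundles (each carries a complex structure because $T_S$ acts with nonzero weight, by $2$-independence at the poles), then split each isotypic piece further using that $S^1$-equivariant complex bundles over the rotation $S^2$ decompose into equivariant line bundles (via an equivariant clutching-function argument). You correctly use the $2$-independence at the poles to make the weights of the $L_j$ pairwise distinct and hence $\nabla_e$ a bijection, and the splitting is chosen once per unoriented edge so that $\nabla_{\bar e}=\nabla_e^{-1}$ is automatic. With these details supplied, the proof is complete and is the one a reader would expect to be behind the unproved lemma in the paper.
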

From now on, we will omit the labeling $\alpha$ and will only write about 'the GKM graph $\Gamma$'.
\subsection{\texorpdfstring{$j$}{dsds}-independence and the formality package}
From now on, the coefficient ring $R$ for all (co)homology is taken to be either $\Q$ or $\Z$. Again, we assume $T$ acts smoothly on $M=M^{2n}$
with $M^T$ finite and not empty.
\begin{definition}
	The action of $T$ is called $j$-independent, $j\geq 2$, if for any $x\in M^T$ any $j$ weights of the tangent representation of $T$ on $T_x M$
	are linearly independent over $\Q$.\\
	If $j=n-1$ and $T=(S^1)^j=T^j$, then we say that the action is in \textit{general position}.\\
	If $j=n$ (that is, the torus has maximal dimension), then we speak of a \textit{torus manifold}.
\end{definition}
Now we define what an action of \textit{GKM$_j$-type} is. This is closely related, but not identical to the action being $j$-independent.
\begin{definition}
	A $j$-independent action is said to be of \textit{GKM$_j$-type} or \textit{GKM$_j$} if the odd cohomology of $M$ vanishes. For $j=2$, we just omit
	the index and speak of a GKM action.
\end{definition}
\begin{remark}
	At first glance, it seems weird to ask for topological properties of the manifold acted on. However, there is a well-known result that,
	when the set of fixed points is isolated, this topological restriction is equivalent to the action being equivariantly formal
	(with respect to the coefficient ring $R$ chosen). That is, denoting by $ET\to BT$ the classifying bundle of $T$, the equivariant cohomology
	\[
	H_T^*(M):=H^*(M\times_T ET)
	\]
	is a free $H^*(BT)$-module, where the module structure comes from the homomorphism $H^*(BT)\to H^*(M\times_T ET)$
	induced by the projection $M\times_T ET\to BT$. More precisely, we have the isomorphism of $H^*(BT)$-modules
	\[
	H_T^*(M)\cong H^*(BT)\otimes H^*(M),
	\]
	since the Serre spectral sequence associated to $M\to M\times_T ET\to BT$ collapses at the second page due to degree reasons
	($H^*(BT)$ is the polynomial ring in $\dim(T)$ generators of even degree). In particular, the restriction map
	$H_T^*(M)\to H^*(M)$ is surjective and its kernel is $H^{\geq 1}(BT)\otimes H^*(M)$.
\end{remark}
There is a natural way to associate a ring to a GKM graph $\Gamma$. In order to do this, we set $T$ to be a $k$-dimensional torus and note that, abstractly,
the group of homomorphisms from $T$ to $S^1$ is isomorphic to $H^2(BT;\Z)$, because both are isomorphic to $\Z^k$. However, there is even a natural isomorphism,
coming from a fact that every homomorphism $T\to S^1$ gives a map $BT\to BS^1$ and thus, after fixing some generator of $H^2(BS^1)=\Z$, a unique element in $H^2(BT)$.
So, if $\Gamma$ is signed, we can uniquely identify any weight with an element in $H^2(BT;\Z)$, and when $\Gamma$ is not signed, this only works up to a sign in $H^2(BT;\Z)$.
\begin{definition}
	Let $R$ be either $\Q$ or $\Z$.
	The equivariant cohomology $H_T^*(\Gamma;R)$ of a GKM graph $\Gamma$ is defined by
	\[
	\left\{(\omega(v))_v\in \bigoplus_{v\in V(\Gamma)}  H^*(BT;R)\colon \omega(u)-\omega(w)\equiv 0\mod \alpha(e) \text{ for all edges $e$ between $u$ and $w$}\right\},
	\]
	where $\omega(u)-\omega(w)\equiv 0\mod \alpha(e)$ means that $\omega(u)-\omega(w)$ is contained in $H^*(BT;R)\cdot \alpha(e)$.\\
	The cohomology $H^*(\Gamma;R)$ of a GKM graph $\Gamma$ is defined by
	\[
	H^*(\Gamma;R)=H_T^*(\Gamma;R)/(H^{\geq 1}(BT;R)\cdot H_T^*(\Gamma;R)).
	\]
\end{definition}
As one can already guess from the definition, the equivariant cohomology respectively the cohomology of a GKM manifold is strongly linked
to the equivariant cohomology respectively the cohomology of the graph. The next theorem is \cite[Theorem 7.2]{GKM98},
and is obtained by using the equivariant Mayer Vietoris sequence as well as the Chang-Skjelbred lemma \cite[Lemma 2.3]{CS74}.
\begin{theorem}
	Let $M$ be a GKM $T$-manifold over $\Q$. There is an isomorphism of $H^*(BT;\Q)$-algebras $H_T^*(M;\Q)\to H_T^*(\Gamma;\Q)$,
	induced by the restriction map $H_T^*(M;\Q)\to H_T^*(M^T;\Q)$. This also induces an isomorphism
	$$H_T^*(M;\Q)/(H^{\geq 1}(BT;\Q)\cdot H_T^*(M;\Q))=H^*(M;\Q)\to H^*(\Gamma;\Q).$$
\end{theorem}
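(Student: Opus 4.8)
The plan is to deduce the theorem from the Chang--Skjelbred lemma together with the equivariant Mayer--Vietoris sequence, exactly as flagged: one shows that for a GKM manifold the restriction to the equivariant one-skeleton $M_1$ is an isomorphism on $H_T^*(\cdot;\Q)$, and that $H_T^*(M_1;\Q)$ coincides with $H_T^*(\Gamma;\Q)$ because $M_1$ is a union of invariant $2$-spheres meeting along the fixed points. I would begin by recording what "GKM over $\Q$" gives: $M^T$ is finite, the action is $2$-independent, and $H^{\mathrm{odd}}(M;\Q)=0$, which by the Remark means the action is equivariantly formal over $\Q$, so $H_T^*(M;\Q)$ is a free $H^*(BT;\Q)$-module. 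Freeness has two immediate consequences: by the localization theorem the restriction $\rho\colon H_T^*(M;\Q)\to H_T^*(M^T;\Q)$ has torsion kernel, hence is injective; and $\rho$ is a homomorphism of $H^*(BT;\Q)$-algebras, since restriction to an invariant subspace is a ring map. So it remains to identify $\im\rho\subseteq H_T^*(M^T;\Q)=\bigoplus_{v\in V(\Gamma)}H^*(BT;\Q)$ with $H_T^*(\Gamma;\Q)$, and to factor $\rho$ through the one-skeleton.

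The Chang--Skjelbred lemma, applicable because $H_T^*(M;\Q)$ is free, gives that $\im\bigl(H_T^*(M;\Q)\to H_T^*(M^T;\Q)\bigr)=\im\bigl(H_T^*(M_1;\Q)\to H_T^*(M^T;\Q)\bigr)$. Next I would analyse the structure of $M_1=\bigcup_{\codim K=1}M^K$. At a point $p\in M^T\cap M^K$ the tangent space $T_pM^K$ is the sum of the weight spaces of $T_pM$ on which the weights vanish on $\Lie(K)$; since $K$ has codimension one, $2$-independence forces at most one weight (up to sign) to be a multiple of the primitive covector cutting out $\Lie(K)$, so $\dim_\R T_pM^K\le 2$. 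Moreover $M^K$ is orientable (its normal bundle in the oriented manifold $M$ is a sum of oriented weight bundles) and is itself equivariantly formal with $(M^K)^T=M^T$; since an equivariantly formal manifold with empty fixed set is empty, every connected component of $M^K$ meets $M^T$ and is therefore either an isolated fixed point or a $T$-invariant $2$-sphere carrying two fixed points, and (again by $2$-independence) no two of these spheres share a vertex. Taking the union over all such $K$ exhibits $M_1$ as a union of $T$-invariant $2$-spheres glued along points of $M^T$, whose quotient graph together with the isotropy labelling is precisely the GKM graph $\Gamma$ associated to $M$.

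For the local model I would use that a $T$-invariant $2$-sphere $S_e$ joining $u$ and $w$ with weight $\alpha(e)$ is $\PP^1$ with $T$ acting through the character $\alpha(e)$, so that $H_T^*(S_e;\Q)\to H_T^*(\{u,w\};\Q)=H^*(BT;\Q)\oplus H^*(BT;\Q)$ is injective with image $\{(\omega_u,\omega_w):\omega_u-\omega_w\in H^*(BT;\Q)\cdot\alpha(e)\}$. Building $M_1$ up one sphere at a time and applying the equivariant Mayer--Vietoris sequence at each stage, the intersections appearing are always finite sets of fixed points; since these, the $2$-spheres, and (inductively) the partial unions all have vanishing odd equivariant cohomology, the Mayer--Vietoris connecting maps vanish and the sequences split into short exact sequences in even degrees. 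The upshot is an isomorphism of $H^*(BT;\Q)$-algebras from $H_T^*(M_1;\Q)$ onto the subring of $\bigoplus_v H^*(BT;\Q)$ cut out by the congruences $\omega(i(e))\equiv\omega(t(e))\bmod\alpha(e)$ over all edges $e$, i.e.\ onto $H_T^*(\Gamma;\Q)$; in particular $H_T^*(M_1;\Q)\to H_T^*(M^T;\Q)$ is injective. Combining with the previous paragraph, $\rho$ factors as $H_T^*(M;\Q)\xrightarrow{\sim}H_T^*(M_1;\Q)\xrightarrow{\sim}H_T^*(\Gamma;\Q)\hookrightarrow H_T^*(M^T;\Q)$, which is the asserted $H^*(BT;\Q)$-algebra isomorphism. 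Finally, for ordinary cohomology: equivariant formality gives $H^*(M;\Q)=H_T^*(M;\Q)/(H^{\ge1}(BT;\Q)\cdot H_T^*(M;\Q))$ (the Remark), $H^*(\Gamma;\Q)$ is by definition the analogous quotient of $H_T^*(\Gamma;\Q)$, and an $H^*(BT;\Q)$-linear isomorphism carries the one ideal onto the other, hence descends to $H^*(M;\Q)\xrightarrow{\sim}H^*(\Gamma;\Q)$.

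The main obstacle is not cohomological but structural: the careful verification that $2$-independence (with orientability and equivariant formality of the $M^K$) forces every component of each $M^K$ to be a point or a $2$-sphere, that these assemble into $M_1$ with quotient graph $\Gamma$, and that the iterated Mayer--Vietoris bookkeeping genuinely produces the congruence ring $H_T^*(\Gamma;\Q)$ — once this is in place, Chang--Skjelbred, the injectivity of $\rho$ over $\Q$, and the $\PP^1$ computation are routine.
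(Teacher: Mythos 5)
The paper does not supply its own proof of this result; it cites \cite{GKM98} and notes that the argument combines the Chang--Skjelbred lemma with the equivariant Mayer--Vietoris sequence. Your proposal reconstructs exactly this argument: localization gives injectivity of $\rho\colon H_T^*(M;\Q)\to H_T^*(M^T;\Q)$, Chang--Skjelbred identifies the image with that of $H_T^*(M_1;\Q)$, and a Mayer--Vietoris computation on $M_1$ identifies that image with $H_T^*(\Gamma;\Q)$. The structural analysis of $M_1$ (via $2$-independence, equivariant formality of $M^K$, and the $\PP^1$ model for an invariant $2$-sphere) is correct.

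There is, however, one genuinely false intermediate claim in the Mayer--Vietoris step. You assert that the partial unions $M_1^{(k)}$ all have vanishing odd equivariant cohomology, so that the connecting maps vanish and one obtains an isomorphism $H_T^*(M_1;\Q)\cong H_T^*(\Gamma;\Q)$ with $H_T^*(M_1;\Q)\hookrightarrow H_T^*(M^T;\Q)$. This fails as soon as $\Gamma$ has a cycle: when the sphere $S_e$ that closes a cycle is glued in, both of its fixed points already lie in $M_1^{(k)}$, and the piece of the Mayer--Vietoris sequence
\[
H_T^{2j}(S_e)\oplus H_T^{2j}(M_1^{(k)})\longrightarrow H_T^{2j}(\{u_e,w_e\})\longrightarrow H_T^{2j+1}(M_1^{(k+1)})\longrightarrow 0
\]
has nontrivial cokernel (already for $j=0$: the first map lands in the diagonal plus the congruence ideal, not all of $H^0(BT)^{\oplus 2}$). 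So $H_T^{\mathrm{odd}}(M_1;\Q)\neq 0$ in general, and $H_T^*(M_1;\Q)\to H_T^*(M^T;\Q)$ is not injective. Accordingly the factorization $H_T^*(M;\Q)\xrightarrow{\sim}H_T^*(M_1;\Q)\xrightarrow{\sim}H_T^*(\Gamma;\Q)$ is not correct as stated. Fortunately none of this is needed: what the same Mayer--Vietoris sequence does give (using only that $H_T^{\mathrm{odd}}$ of finite fixed-point sets vanishes, so $H_T^{2j}(M_1^{(k+1)})$ maps onto the kernel of the difference map) is the identification of the \emph{image} of $H_T^{\mathrm{even}}(M_1;\Q)$ in $H_T^*(M^T;\Q)$ with $H_T^*(\Gamma;\Q)$. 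Combined with injectivity of $\rho$ (localization, which you established separately) and Chang--Skjelbred, this still yields the asserted isomorphism $H_T^*(M;\Q)\xrightarrow{\sim}H_T^*(\Gamma;\Q)$, so the overall proof is salvageable by dropping the claims that $H_T^{\mathrm{odd}}(M_1)=0$ and that the restriction from $M_1$ is injective, and arguing only about images.
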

\begin{remark}
    This also holds for $\Z$-coefficients if every finite stabilizer is contained in a proper subtorus, for then the statement of the Chang-Skjelbred lemma also holds for $\Z$-coefficients
    (see \cite[Theorem 2.1]{FP07}).
\end{remark}
\begin{lemma}[{\cite[Lemma 2.2]{MP03}}]
	If the action is equivariantly formal over $\Q$ or $\Z$, then, for any subtorus $H\subset T$, any connected component of $M^H\subset M$ contains a fixed point and its odd $\Q$- or $\Z$-cohomology vanishes.
\end{lemma}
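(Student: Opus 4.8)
The statement is that if the $T$-action on $M$ is equivariantly formal (over $R = \Q$ or $\Z$), then for any subtorus $H \subset T$, every connected component $N$ of $M^H$ contains a $T$-fixed point and has vanishing odd cohomology.

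The plan is to induct on the codimension of $H$ in $T$, reducing to the case where $H$ has corank one, i.e. $T/H \cong S^1$. First I would recall the standard consequence of equivariant formality recorded in the Remark above: the restriction map $H_T^*(M;R) \to H^*(M;R)$ is surjective with kernel $H^{\geq 1}(BT;R)\otimes H^*(M;R)$, and $H_T^*(M;R)$ is a free $H^*(BT;R)$-module. The key input is localization: for $H$ a subtorus, the fixed set $M^H$ is itself a $T$-manifold (with $H$ acting trivially, so effectively a $T/H$-manifold), and by the localization theorem (after inverting the appropriate elements of $H^*(BT;R)$, which is legitimate over $\Q$, and over $\Z$ under the connected-stabilizer hypothesis invoked in the preceding Remark) the inclusion $M^H \hookrightarrow M$ induces an isomorphism on localized equivariant cohomology. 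Since localization preserves freeness over $H^*(BT;R)$, one deduces that $H_{T}^*(M^H;R)$ is a free $H^*(BT;R)$-module, hence in particular $H_{T/H}^*(M^H;R)$ is free over $H^*(B(T/H);R)$, so the $T/H$-action on $M^H$ is again equivariantly formal. In particular the odd cohomology of $M^H$, and by a connectedness/Mayer–Vietoris argument the odd cohomology of each connected component $N$, vanishes.

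For the fixed-point claim, I would argue that $N^{T/H} = N^{T} = N \cap M^T$ must be nonempty: if it were empty, then by equivariant formality of the $T/H$-action on $N$ the Euler characteristic would satisfy $\chi(N) = \chi(N^{T/H}) = 0$; but $N$ has vanishing odd cohomology by the previous paragraph, so $\chi(N) = \sum \dim H^{2i}(N;R) > 0$ since $H^0(N;R) \neq 0$, a contradiction. (Alternatively, one can invoke the fact that an equivariantly formal action of a nontrivial torus has nonempty fixed set directly, since the restriction $H_{T/H}^*(N) \to H_{T/H}^*(N^{T/H})$ is injective by freeness, forcing $N^{T/H} \neq \emptyset$.) Then $N \cap M^T \neq \emptyset$, which is the assertion.

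The main obstacle is the passage to $\Z$-coefficients: the localization theorem and the Chang–Skjelbred lemma are subtle integrally, and one genuinely needs the hypothesis (flagged in the Remark after Theorem via \cite[Theorem 2.1]{FP07}, and in the present paper's standing assumptions) that finite stabilizers lie in proper subtori, so that no $\Z$-torsion obstructs the localization argument. With $\Q$-coefficients everything is classical. Once the correct localization statement is in hand over $R$, the freeness is transported formally and the Euler-characteristic / injectivity argument for the fixed point is immediate; the only care needed is in handling a single connected component rather than all of $M^H$, which is a routine Mayer–Vietoris bookkeeping since the components of $M^H$ are open and closed in $M^H$ and each is $T$-invariant.
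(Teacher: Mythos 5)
Your overall strategy — Borel localization to transfer equivariant formality from $M$ to $M^H$, then an Euler-characteristic argument for the fixed-point claim — is the standard one, and is in the spirit of the cited \cite[Lemma 2.2]{MP03} (the paper itself just quotes this result without reproving it). However, there is a genuine gap in your central inference: you write that ``since localization preserves freeness over $H^*(BT;R)$, one deduces that $H_T^*(M^H;R)$ is a free $H^*(BT;R)$-module.'' Localization does preserve freeness, but only in one direction: a free module localizes to a free module over the localized ring. It is \emph{not} true that a module whose localization is free must itself be free over $H^*(BT;R)$, so knowing that $H_T^*(M^H)_{\mathrm{loc}}\cong H_T^*(M)_{\mathrm{loc}}$ is free does not give you freeness of $H_T^*(M^H)$. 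The correct argument is the total-Betti-number sandwich: over $\Q$ one always has
\[
\dim_\Q H^*(M^T;\Q)\;\leq\;\dim_\Q H^*(M^H;\Q)\;\leq\;\dim_\Q H^*(M;\Q),
\]
and equivariant formality of the $T$-action on $M$ forces equality at the two ends, hence equality throughout, which \emph{is} equivalent to equivariant formality of the $T$-action on $M^H$ (and then $H^{\mathrm{odd}}(M^H;\Q)=0$ follows because $M^H\supset M^T$ and a fresh spectral-sequence comparison shows the odd cohomology must vanish). Over $\Z$, as you correctly flag, this requires the refinements of Franz--Puppe.

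Two smaller points. First, the induction on the corank of $H$ that you announce at the start is never actually used; your argument treats general $H$ directly, so the opening sentence should either be removed or the argument reorganized to genuinely induct through circle quotients. Second, your fixed-point argument via Euler characteristics is fine (the equality $\chi(N)=\chi(N^T)$ holds for any compact $T$-space, not only equivariantly formal ones, so there is no circularity there), and your alternative argument via injectivity is also fine once one knows $H_T^*(N)$ is free, which follows since it is a direct summand of the free module $H_T^*(M^H)$ over the polynomial ring and hence free by Quillen--Suslin.
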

When we assume 'more' linear independence together with equivariant formality, we get even stronger results.
\begin{theorem}[{\cite[Proposition 3.11]{AMS22}}]\label{orbitSpace}
	If the action is of GKM$_j$-type and $R=\Q$, then
	\begin{enumerate}
		\item for any face $F$ we have $H^i(F,F_{-1})=0$ for $i< \rk(F)$. If, in addition, $\rk(F)<j$, then $H^*(F,F_{-1})=H^*(D^{\rk(F)},\partial D^{\rk(F)})$.
		\item $M^*$ is $j+1$-acyclic (that is, $H^i(M^*)=0$ for $1\leq i\leq j+1$).
		\item $M^*_r$ is $\min(r-1,j+1)$-acyclic.
	\end{enumerate}
	If one assumes that all stabilizers are connected, then this also holds for $R=\Z$.
\end{theorem}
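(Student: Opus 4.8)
The plan is to prove assertion (1) for all faces first, by induction on the rank, and then to deduce (2) and (3) from (1) together with the global consequences of equivariant formality of $M$ itself. Two structural facts drive everything. First, the local model of a $j$-independent action: near $x\in M^T$ the isotropy representation is $\C^n$ with weights $\alpha_1,\dots,\alpha_n$, any $j$ of which are linearly independent over $\Q$, so for $k<j$ the equivariant skeleton $M_k$ is locally exactly the union of the coordinate subspaces $\C^S$, $|S|\le k$. Consequently, for $r=\rk(F)<j$ a face submanifold $\pi^{-1}(F)$ is a connected component of $M^{T_F}$ (where $T_F$ is the subtorus acting ineffectively on it) and is, near each of its fixed points, the standard torus-manifold model for the quotient torus $T/T_F\cong T^r$ (over $\Z$, the hypothesis that stabilizers are connected ensures that the weights along $F$ span the lattice and $T_F$ is a genuine subtorus). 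Second, $\pi^{-1}(F)$, being a component of $M^{T_F}$, has vanishing odd $R$-cohomology by \cite[Lemma 2.2]{MP03}, and hence, its fixed set being finite, is equivariantly formal over $R$.

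For (1) in the range $r<j$: $\pi^{-1}(F)$ is then an equivariantly formal, locally standard torus manifold with isolated fixed points, so by the Masuda--Panov description \cite{MP03} its orbit space $F=\pi^{-1}(F)/T^r$ is a face-acyclic manifold with corners of dimension $r$ with $\partial F=F_{-1}$. Poincaré--Lefschetz duality then gives $H^i(F,F_{-1})\cong H_{r-i}(F)$, which is $R$ for $i=r$ and $0$ for $i<r$; that is, $H^*(F,F_{-1})\cong H^*(D^r,\partial D^r)$. For $r\ge j$ I would argue by induction on $r$, assuming (1) for all faces of smaller rank, by running the Atiyah--Bredon complex of the equivariantly formal manifold $\pi^{-1}(F)$ — exact in every degree (integrally, by the remark that finite stabilizers lie in proper subtori, cf.\ \cite{FP07}). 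Using the $T$-bundle structure of each skeleton stratum over the corresponding open face together with a Thom-isomorphism argument for the equivariant normal data (the normal weights are part of the isotropy representation, so $j$-independence keeps everything transverse in the relevant range, and connected stabilizers keep $H^*(BT_{F'};\Z)$ polynomial), one identifies the complex's terms, through degree $r$, with direct sums of $H^{*-i}(F',F'_{-1})\otimes H^*(BT_{F'})$ over faces $F'$ of rank $i<r$. Since $H_T^*(\pi^{-1}(F))$ has no odd cohomology and $H_T^*$ of the fixed set is generated in degree $0$, a degree count in this exact complex, fed with the inductive data on the lower-rank faces, forces $H^i(F,F_{-1})=0$ for $i<r$.

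With (1) in hand, excision gives $H^*(M_r^*,M_{r-1}^*)\cong\bigoplus_{\rk F=r}H^*(F,F_{-1})$, which vanishes below degree $r$ and, for $r<j$, equals $\bigoplus_{\rk F=r}H^*(D^r,\partial D^r)$. Running the long exact sequences of the pairs $(M_r^*,M_{r-1}^*)$ upward from $M_0^*=M^T$ already gives by a naive count that $M_r^*$ is $(r-2)$-acyclic; to gain the last degree — i.e.\ to see that $M_r^*$ is $\min(r-1,j+1)$-acyclic — one must feed in at each stage the fact that the attaching maps of the rank-$r$ faces are constrained by the global equivariant cohomology of $M$ (equivalently, by exactness of the Atiyah--Bredon complex of $M$ itself): this is what forces, for example, the rank-$2$ faces to kill $H^1$ of the GKM graph, a relation invisible to the ordinary homology of $M^*$. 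Taking $r$ at the top of the filtration then yields the $(j+1)$-acyclicity of $M^*$ in (2), while (3) is the inductive statement recorded along the way.

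The step I expect to be the real obstacle is the integral computation of the equivariant relative cohomology $H_T^*(M_k,M_{k-1})$ — equivalently, of the Atiyah--Bredon terms — in terms of the orbit-space faces: this requires the bundle structure of $M_k\setminus M_{k-1}$ over the open faces, compatible orientations of the equivariant normal bundles, the Thom isomorphism, and, over $\Z$, the connectedness of stabilizers throughout, without which torsion in $H^*(BT_F;\Z)$ would break both the Thom isomorphism and Chang--Skjelbred exactness — precisely why the final sentence of the statement restricts to connected stabilizers in the integral case. Equally delicate is the bookkeeping that yields the \emph{sharp} threshold $j+1$ (rather than $j$ or $j+2$) in (2) and (3), which requires using equivariant formality of $M$ globally and not merely its consequence \cite[Lemma 2.2]{MP03} for the individual face submanifolds.
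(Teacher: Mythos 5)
The paper does not prove this theorem; it imports it from \cite{AMS22}, so there is no in-house proof to compare against. The paper does, however, prove an integral analogue of item~(3) in its own Theorem~\ref{theonecEF} (for $j=n-1$), and that argument is a useful benchmark: it identifies the Atiyah--Bredon complex of $M$ in the lowest admissible degree with the augmented cellular cochain complex of $M_{n-2}^*$ and reads off acyclicity from exactness.

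Your treatment of item~(1) for $\rk F<j$ is sound in outline: $\pi^{-1}(F)$ is a component of $M^{T_F}$ for a subtorus $T_F$, hence has no odd cohomology by \cite[Lemma~2.2]{MP03}, is an equivariantly formal torus manifold, and its orbit space $F$ is a manifold with corners whose faces are acyclic; $\partial F=F_{-1}$ and Lefschetz duality (or a Mayer--Vietoris/nerve argument showing $F_{-1}$ is a homology $(\rk F -1)$-sphere, which avoids worrying about orientability and local standardness over $\Q$) gives $H^*(F,F_{-1})\cong H^*(D^{\rk F},\partial D^{\rk F})$.

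There are two genuine gaps beyond that.

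First, for $\rk F\ge j$ you propose to run the Atiyah--Bredon complex of $\pi^{-1}(F)$ and do ``a degree count,'' but neither step is justified. Once $\rk F\ge j$ there is no longer a clean reason why $\pi^{-1}(F)$ should be (a component of) $M^H$ for a subtorus $H$, so its equivariant formality is not a formal consequence of \cite[Lemma~2.2]{MP03}; and you never say what the degree count is. The content here is precisely what you flagged at the end as the ``real obstacle'' --- the identification $H_T^{*+k}(M_k,M_{k-1})\cong\bigoplus_{\rk F=k}H^{*}(F,F_{-1})\otimes H^*(BT_F)$ and its compatibility with the differentials --- and the proof has to be carried through, not announced.

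Second, and more seriously, your reduction of (2) and (3) to (1) contains an error. You claim that running the long exact sequences of the pairs $(M_r^*,M_{r-1}^*)$ ``already gives by a naive count that $M_r^*$ is $(r-2)$-acyclic,'' and that only the last degree requires global input. This is not so. From (1) alone, $H^i(M_r^*,M_{r-1}^*)=0$ for $i<r$ only yields that the restriction $H^i(M_r^*)\to H^i(M_{r-1}^*)$ is an isomorphism for $i<r-1$ and injective for $i=r-1$; it says nothing about vanishing. For instance $H^1(M_1^*)=H^1(\Gamma;R)$ has rank $b_1(\Gamma)$, and the naive count then only tells you $H^1(M_r^*)\hookrightarrow H^1(\Gamma)$ for all $r\ge 2$ --- it never forces $H^1(M_r^*)=0$. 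The acyclicity in \emph{every} degree, not just the top one, comes from the exactness of the global Atiyah--Bredon complex; the correct mechanism, as in Theorem~\ref{theonecEF} of the paper, is to interpret the truncation of the Atiyah--Bredon complex (after quotienting by $H^{\geq 1}(BT)$, i.e.\ taking the degree-zero piece of the associated graded) as the augmented cochain complex of the CW structure on $M_r^*$ determined by the faces, whose exactness then literally \emph{is} the acyclicity statement. Your proposal gestures at this (``feeding in the global equivariant cohomology'') but gets the bookkeeping wrong and does not supply the identification.
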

There is a kind of an inverse to the last theorem.
\begin{theorem}[{\cite[Theorem 4, Chapter 6]{AM23}}]
	Assume that the action of $T$ on $M$ is in general position and satisfies the following properties (here, rational or integer coefficients are taken):
	\begin{itemize}
		\item every face submanifold contains a fixed point;
		\item all stabilizers are connected;
		\item the orbit space is a homology $(n+1)$-sphere;
		\item each face of $Q_{n-2}$ is a homology disc;
		\item $Q_{n-2}$ is $(n-3)$-acyclic.
	\end{itemize}
	Then the action is equivariantly formal.
\end{theorem}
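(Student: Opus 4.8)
The plan is first to reduce to a topological statement. In general position the $n$ isotropy weights at any fixed point span $\ft^{*}\cong\Q^{n-1}$ (any $n-1$ of them already form a basis), so the fixed set is discrete, hence finite and nonempty. By the Remark above, for isolated fixed points equivariant formality over $R$ is equivalent to $H^{\mathrm{odd}}(M;R)=0$, so it suffices to prove the latter. Before starting I would record that, since all stabilizers are connected, the $\Z$-coefficient forms of the Chang--Skjelbred lemma and of localization are available (compare the Remark after the theorem of \cite{GKM98}); this is what makes the argument run over $\Z$ and not only over $\Q$.

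I would then argue by induction on $\dim M$, proving simultaneously a relative version for face submanifolds, whose orbit spaces are $R$-homology discs rather than homology spheres. The hypotheses are hereditary: for a face $F$ of rank $p\le n-2$ the common stabilizer $T_{F}$ acts trivially on $\pi^{-1}(F)$, the quotient torus acts on $\pi^{-1}(F)$ again in general position, its orbit space is $F$ (an $R$-homology disc of dimension $p<n$), every proper face of $F$ is again an $R$-homology disc, and the relevant skeleton of $F$ inherits the acyclicity of $Q_{n-2}$; so by induction every proper face submanifold, and every $\pi^{-1}(F_{-1})$, is equivariantly formal, hence has vanishing odd cohomology. The base case (small $\dim M$, where $T$ is trivial or one-dimensional) is immediate.

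The inductive step is the heart of the matter, and here I would follow the strategy of Masuda--Panov for torus manifolds, adapted to complexity one. The key geometric input is that general position forces the orbit space $Q$, and the orbit space of every face submanifold, to be an $R$-homology manifold whose stratification is ``torus-manifold-like'': this is a local statement about the model quotient $\C^{n}/T^{n-1}$ in general position, and it is exactly the place where connectedness of stabilizers is used over $\Z$. Granting this, one runs the spectral sequence of the filtration of $M$ by orbit dimension, $M_{0}\subset M_{1}\subset\cdots\subset M_{n-1}=M$, converging to $H^{*}(M;R)$; the contribution of the face submanifolds of rank $\le n-2$ is controlled, via Thom-type isomorphisms coming from the torus-manifold-like normal structure, by the (even, by induction) cohomology of those face submanifolds together with the combinatorics of the face poset of $Q_{n-2}$, and the hypotheses that every face of $Q_{n-2}$ is an $R$-homology disc and that $Q_{n-2}$ is $(n-3)$-acyclic are precisely the Cohen--Macaulay-type conditions making this part of the spectral sequence degenerate in the even-degree pattern familiar from the torus-manifold case, which yields $H^{\mathrm{odd}}(M_{n-2};R)=0$. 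The genuinely new feature is the top stratum, of orbits of full dimension $n-1$: there the free part $M^{\circ}=M\setminus M_{n-2}$ is a principal $T$-bundle over the honest open $(n+1)$-manifold $Q^{\circ}=Q\setminus Q_{n-2}$, one uses Poincar\'e--Lefschetz duality on the closed oriented manifold $M$ to identify $H^{*}(M,M_{n-2};R)$ with the homology of $M^{\circ}$, and the hypothesis that $Q$ is an $R$-homology $(n+1)$-sphere together with the $(n-3)$-acyclicity of $Q_{n-2}$ gives, via Alexander duality in $Q$, that $H_{*}(Q^{\circ};R)$ is essentially acyclic; the torus-bundle spectral sequence and the long exact sequence of $(M,M_{n-2})$ then combine with $H^{\mathrm{odd}}(M_{n-2};R)=0$ to force $H^{\mathrm{odd}}(M;R)=0$, completing the induction.

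The main obstacle I expect is making the local model precise and matching it with the combinatorics: establishing that $Q$ and the orbit spaces of all face submanifolds are $R$-homology manifolds with a torus-manifold-like corner structure (a careful analysis of $\C^{n}/T^{n-1}$ in general position, and the source of the connected-stabilizer hypothesis over $\Z$), and then verifying that the acyclicity hypotheses genuinely kill every odd-degree class in the orbit-type spectral sequence --- in particular, in the last step, keeping track of how the low-degree homology of $Q^{\circ}$ and the differentials of the torus-bundle spectral sequence interact with the restriction map $H^{*}(M;R)\to H^{*}(M_{n-2};R)$, so that no odd class survives. The remaining parts are adaptations of existing torus-manifold machinery.
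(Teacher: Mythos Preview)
This theorem is not proved in the paper; it is quoted from \cite{AM23} as a preliminary result, so there is no ``paper's own proof'' to compare against directly. The paper's contribution is the generalization \cref{theosuffEF}, which drops the connected-stabilizer hypothesis, and the method there is rather different from yours.

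Your sketch contains one genuine error. You write that for a proper face $F$ of rank $p\le n-2$ ``the quotient torus acts on $\pi^{-1}(F)$ again in general position.'' This is false: the face submanifold $\pi^{-1}(F)$ has dimension $2p$ and the effective quotient torus has dimension $p$, so $\pi^{-1}(F)$ is a \emph{torus manifold}, not a complexity-one manifold in general position. You therefore cannot feed it back into the same inductive hypothesis. The fix is simply to invoke the Masuda--Panov result (\cref{torusManifold}) for the faces rather than the complexity-one statement; under your hypotheses the faces are locally standard with face-acyclic orbit space, so this goes through and gives $H^{\mathrm{odd}}(\pi^{-1}(F))=0$ as you want. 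So the error is correctable, but the induction is not structured as you describe.

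Modulo that, your overall strategy---reduce to $H^{\mathrm{odd}}(M)=0$, control $M_{n-2}$ via equivariant formality of face submanifolds, then handle the top stratum using the free $T$-bundle over $Q\setminus Q_{n-2}$ together with Alexander duality in the homology sphere $Q$---is essentially the route of \cite{AM23}. The paper's own proof of the generalization proceeds differently: it constructs an explicit smooth equivariant neighborhood $M_{n-2}'$ of $M_{n-2}$ with boundary $X=X_{n-2}$ (\cref{defretr}), determines $H_*(X^*)$ via iterated equivariant surgery (\cref{homologyX*}), and then runs Mayer--Vietoris on $M=M_{n-2}'\cup N$, establishing the needed injectivity and surjectivity of $H_*(X)\to H_*(M_{n-2})\oplus H_*(N)$ by direct computation with the Serre spectral sequence rather than by the orbit-type filtration. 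This more hands-on approach is what allows the paper to track the finite-isotropy submanifolds $Z\subset N$ when stabilizers are disconnected; in the connected-stabilizer case you are addressing, your outline is lighter and closer to the original \cite{AM23} argument.
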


The assumption on the orbit space being a homology-sphere is not as restrictive as it seems. We call an action in general position
\textbf{appropriate}, if, for any closed subgroup $H\subset T$, the closure of $M^{(H)}$ contains a point $x'$ whose
stabilizer has a larger dimension than $H$.
\begin{lemma}\cite[Theorem 2.10]{A18}\label{lem:approp}
	The orbit space of an appropriate $T$-manifold in general position is a topological manifold.
\end{lemma}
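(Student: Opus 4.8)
The plan is to show that the orbit space $M^*$ of an appropriate $T$-manifold $M$ in general position is a topological manifold by a local analysis: around each point $x^* \in M^*$ one produces a neighborhood homeomorphic to Euclidean space, using a slice representation at a point $x$ above $x^*$ together with the appropriateness hypothesis to pin down exactly which slice representations can occur. First I would use the slice theorem: a $T$-invariant neighborhood of the orbit $Tx$ is equivariantly diffeomorphic to $T\times_{T_x} \nu_x$, where $\nu_x$ is the normal slice representation of the stabilizer $T_x$, and hence the orbit space near $x^*$ is homeomorphic to $\nu_x / T_x$. So the entire question reduces to showing that for each stabilizer $H = T_x$ that actually occurs, the quotient $\nu_x / H$ is homeomorphic to a Euclidean space of the right dimension.

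The key step is to constrain the slice representation using general position and appropriateness. General position ($(n-1)$-independence for $T = T^{n-1}$) forces the weights of $\nu_x$ appearing in any slice to be in very general position, which severely limits how $H$ can act: the connected part $H_0$ can act with at most a small number of independent weights, and a component where $H_0$ acts freely would give, locally, an orbit space that is a manifold with boundary rather than without boundary — this is exactly where the appropriateness hypothesis enters. Appropriateness says that the closure of $M^{(H)}$ always contains a point with strictly larger-dimensional stabilizer; I would translate this into the statement that $\nu_x$ cannot contain a trivial $H_0$-summand of too large a dimension, equivalently that the stratum $M^{(H)}$ is never ``maximal'' in a way that would make $M^*$ look like a half-space near $x^*$. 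Combining these, the possible slice representations are pinned down to a short list, and for each one $\nu_x/H$ must be checked to be a Euclidean ball — essentially the classification says $\nu_x/H_0$ is $\R^{d}$ for appropriate $d$ (the non-free torus quotients in low ``complexity'' are cones on spheres, hence balls), and then quotienting by the finite group $H/H_0$, which acts on this ball, one invokes that a finite group quotient of $\R^d$ which is a homology manifold and simply connected at infinity is again $\R^d$ (or one checks directly that the finite-group action is conjugate to a linear one on the relevant low-dimensional pieces).

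I would then assemble these local homeomorphisms: since each point of $M^*$ has a neighborhood homeomorphic to $\R^{n+1}$ (the dimension of $M^*$, as $\dim M = 2n$ and generic orbits are $(n-1)$-dimensional, so $\dim M^* = 2n-(n-1) = n+1$), and $M^*$ is second countable and Hausdorff (inherited from $M$), it is a topological manifold. The main obstacle I expect is the classification of admissible slice representations and, within it, showing that the relevant quotients $\nu_x/H$ are genuinely Euclidean balls rather than merely homology balls — the finite-stabilizer case is the delicate one, since a finite group acting on $\R^d$ need not have Euclidean quotient in general, so one really needs the dimension to be small enough (which is where $2n \geq$ something, i.e.\ the low covalence coming from general position, is used) or an extra argument that the finite part acts linearly. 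This is, unsurprisingly, precisely the content of \cite[Theorem 2.10]{A18}, so in the paper I would cite it; the sketch above is how one would reconstruct the proof.
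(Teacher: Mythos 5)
The paper does not prove this lemma; it is stated as a citation to \cite[Theorem 2.10]{A18} and used as a black box. So there is no in-paper proof to compare your attempt against, and your own closing remark---that you would in practice cite \cite{A18}---matches the paper's treatment exactly.

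That said, your sketch does capture the right strategy for reconstructing Ayzenberg's argument: reduce via the slice theorem to showing $\nu_x/T_x$ is a Euclidean cell, then use general position and appropriateness to constrain the slice representations. Two points in the sketch are loose enough that they would not survive being written out. First, the formulation of where appropriateness enters is off: it is not that ``a component where $H_0$ acts freely'' produces a boundary. Rather (compare the paper's \cref{rem:isotropygenpos}), the boundary phenomenon occurs when the weights of the normal slice fail to be in general position, so that the kernel of the action on a face rotates an extra coordinate; appropriateness rules out these degenerate slice types. Second, the appeal to ``a finite group quotient of $\R^d$ which is a homology manifold and simply connected at infinity is again $\R^d$'' is not a theorem in the generality you invoke it (it fails or is unavailable in low dimensions, and finite quotients of $\R^d$ need not even be homology manifolds). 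In \cite{A18} the admissible slice quotients are computed explicitly --- in complexity one and general position the quotient sphere $S^{2n-1}/T^{n-1}$ is identified directly with $S^n$ --- rather than being recognized abstractly by a recognition principle. These are filling-in issues rather than a wrong approach, and since the paper itself defers entirely to \cite{A18}, your proposal is appropriate for its purpose.
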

\begin{remark}\label{rem:isotropygenpos}
	From the last lemma, one can deduce that, for any generic point $x$ in a $2k$-dimensional face submanifold $F$ of a manifold in general position, the isotropy	representation of the connected component $T_x^0$ containing the identity element of $T_x$ on the normal bundle of $F$ is in general position.
	For if not, then the orbit space would not be a topological manifold. The reason for that, in turn, is that the orbit space of the $T_x^0$-action on $S^{2n-2k-1}\subset \C^{n-k}$ is not a manifold, since when the first $n-k-1$ weights, for example, are not linearly independent, then
    the kernel of the $T_x^0$-action on $\C^{n-k-1}\times \{0\}$, which is a circle,
    rotates in the last factor of $\C$, and so the orbit space near a generic point in $\C^{n-k-1}$ has boundary.
\end{remark}
We will treat actions in general position that are in some way \textit{locally standard}, that is, every slice looks just like a slice from some linear $T$-action
on $\C^n$ (these are clearly appropriate).
The following lemma is 'standard' for a locally standard complexity one space in general position.
\begin{lemma}\label{lem:isotropycomplone}
	Whenever an action of $T=T^{n-1}$ on a $2n$-dimensional manifold is locally standard
 and in general position, a non-trivial stabilizer
	subgroup $T_x$ of any point $x$ in $M$ is either connected or a product of a subtorus with one
    non-trivial cyclic group.
\end{lemma}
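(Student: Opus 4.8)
The plan is to reduce the statement to a short computation with the character lattice. First I would unwind the local standardness hypothesis: near $x$ there is a $T$-invariant neighbourhood equivariantly diffeomorphic to $T\times_{T_x}W$, where $W$ is an invariant neighbourhood of the origin in a slice of some linear $T$-action on $\C^n=\bigoplus_{i=1}^n\C_{\lambda_i}$, the weights $\lambda_i$ lying in the character lattice $\Lambda:=\Hom(T,S^1)\cong\Z^{n-1}$. Because $T$ acts effectively on $M$, this linear action is forced to be faithful (its ineffective kernel lies in $T_x$ and would act trivially on the whole local model), so $\lambda_1,\dots,\lambda_n$ generate $\Lambda$. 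Writing $S\subseteq\{1,\dots,n\}$ for the set of coordinates that do not vanish at the base point of the slice, one has $T_x=\bigcap_{i\in S}\ker\lambda_i$. If $|S|=n$ this is trivial by faithfulness, so I may assume $T_x\neq\{1\}$ and $|S|\le n-1$; then, since the action is in general position, the vectors $\{\lambda_i:i\in S\}$ are $\Q$-linearly independent.

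The core of the argument is then the lattice computation. Set $L:=\langle\lambda_i:i\in S\rangle\subseteq\Lambda$, a sublattice of rank $|S|$. Pontryagin duality applied to $0\to L\to\Lambda\to\Lambda/L\to0$ identifies $T_x=\bigcap_{i\in S}\ker\lambda_i$ with $\Hom(\Lambda/L,S^1)$. Now $\Lambda/L$ has free rank $\rho:=(n-1)-|S|=\dim T_x$, and since the $\lambda_i$ generate $\Lambda$ while $\lambda_i\in L$ for $i\in S$, the classes of the $\lambda_j$ with $j\notin S$ generate $\Lambda/L$; that is, $\Lambda/L$ is generated by $n-|S|=\rho+1$ elements, one more than its free rank. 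A finitely generated abelian group of free rank $\rho$ generated by $\rho+1$ elements has cyclic torsion, since otherwise $(\Lambda/L)\otimes\Z/p$ would have dimension at least $\rho+2$ over $\Z/p$ for some prime $p$. Hence $\Lambda/L\cong\Z^\rho\oplus C$ with $C$ finite cyclic, and dualizing gives $T_x\cong(S^1)^\rho\times C$. More precisely $T_x^0$ is the subtorus dual to the free part, the extension $1\to T_x^0\to T_x\to C\to1$ splits because $T_x^0$ is divisible, and $T_x/T_x^0\cong C$ is cyclic; so $T_x$ is connected if $C$ is trivial and is the product of a subtorus with a single non-trivial cyclic group otherwise.

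I expect the main obstacle to be the reduction in the first paragraph rather than the computation: making precise that the weight system of the local model appearing at an \emph{arbitrary} point of $M$ — not just at a fixed point — inherits the general-position condition used above. For this I would lean on the fact that the orbit space is a topological manifold (Lemma~\ref{lem:approp}), arguing as in Remark~\ref{rem:isotropygenpos} that a degenerate local model would force orbit-space singularities; the faithfulness of the local model, by contrast, is immediate from effectiveness of the $T$-action on $M$. Once this setup is in place, the remaining steps are the elementary facts about finitely generated abelian groups recorded above.
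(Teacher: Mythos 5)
Your argument is correct and, after unwinding the Pontryagin duality, is essentially the same as the paper's own proof. The paper shows that $T_p$ is a codimension-one closed subgroup of the maximal torus $T^{n-k}\subset\U(n-k)$ (where $k=|S|$), hence an extension of a finite cyclic group by a subtorus, which splits since tori are divisible; your observation that $\Lambda/L$ has free rank $\rho=n-1-|S|$ but is generated by $\rho+1$ elements is exactly the dual statement, and the lattice computation is a clean alternative way to package the final step. Both routes pivot on the same claim, namely that $\{\lambda_i:i\in S\}$ is $\Q$-linearly independent (equivalently $\dim(T\cdot p)=|S|$), and both get it from the orbit space being a topological manifold via \cref{lem:approp}.

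You correctly flag that claim as the only nontrivial step, but the paper closes it a bit more directly than an appeal to \cref{rem:isotropygenpos}: since the local model is faithful, $T_p$ acts effectively on the $\C^{n-|S|}$-factor, so $\dim T_p\le n-|S|$, while $\dim T_p = n-1-\dim(T\cdot p)\ge n-1-|S|$; if equality $\dim T_p=n-|S|$ held, $T_p$ would be a maximal torus of $\U(n-|S|)$, and the local orbit space near $p$ would be $\R^{\text{something}}\times\R_{\ge 0}^{\,n-|S|}$, which has boundary, contradicting \cref{lem:approp}. This forces $\dim T_p=n-1-|S|$, i.e.\ the independence you need. With that detail supplied, your lattice argument finishes the proof; the paper's version of the last step (codimension-one subgroup of a torus) is slightly more economical in that it sidesteps any explicit appeal to the structure theory of finitely generated abelian groups.
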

\begin{proof}
	For $x\in M$ there is a representation of $T$ on $\C^n$
	together with a point $p\in \C^n$ such that $T_p=T_x$. Assume that precisely the last $n-k$ coordinates of $p$
	are $0$, so that $\dim(T\cdot p)\leq k$. On the other hand, $T_p$ acts effectively on
 $\{0\}\times \C^{n-k}\subset \C^k\times \C^{n-k}$,
	so $\dim(T_p)\leq n-k$. Using $\dim(T_p)+\dim(T\cdot p)=n-1$ we deduce $\dim(T\cdot p)\geq k-1$, that is,
	there are only the cases $\dim(T\cdot p)= k-1$ and $\dim(T\cdot p) =k$.\\
	In the former casewe have $\dim(T_p)=n-k$, so $T_p\cong T^{n-k}$ (because $T_p$ acts 
 effectively on $\C^{n-k}$). But his can not happen, since
 the orbit space of that action on $\{0\}\times \C^{n-k}$ is not a manifold, a contradiction
 to our assumption that the action of $T$ on $M$ is in general position.\\
 
 So necessarily $\dim(T\cdot p) =k$, and thus $T_p$ is a codimension one subgroup of
	$T^{n-k}$, and thus isomorphic to a product of a torus and a single cyclic group.
\end{proof}
All the next lemmata, up until the end of the subsection, are highly technical in nature, and only important
for \cref{fol} in the setting $R=\Q$.
\begin{lemma}\label{lem:normalform}
    Let $\rho$ be an unfaithful $T^n$-representation on $\C^n$, 
    where the kernel is cyclic of order $m$
    and contained in the last $S^1$-factor. Then, up to automorphism of $T^n$, the representation
    is given by $(t_1,\hdots,t_n)\cdot (z_1,\hdots,z_n)=(t_1 z_1,\hdots, t_{n-1}z_{n-1},t_n^m z_n)$.
\end{lemma}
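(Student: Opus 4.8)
The plan is to turn the statement into linear algebra over $\Z$ and then isolate the one place where the geometry enters. Since $T^n$ is a compact abelian Lie group, $\rho$ decomposes as a sum $\rho=\chi_1\oplus\dots\oplus\chi_n$ of one-dimensional complex representations, and under the canonical identification $\Hom(T^n,S^1)\cong\Z^n$ each $\chi_i$ is recorded by its weight $a_i\in\Z^n$. Let $A$ be the integer $n\times n$ matrix with rows $a_1,\dots,a_n$; then $\rho$ is the homomorphism $T^n=\R^n/\Z^n\to(S^1)^n$, $x\mapsto Ax\bmod\Z^n$, acting on $\C^n$ one coordinate at a time, and $\ker\rho=A^{-1}\Z^n/\Z^n$ is finite of order $|\det A|$. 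Note that $\rho$ has no trivial summand, since $n-1$ characters of $T^n$ never have finite common kernel; that replacing $\rho$ by $\rho\circ\varphi$ for the automorphism of $T^n$ given by $B\in\GL_n(\Z)$ replaces $A$ by $AB$; and that relabelling the lines $\C z_i$ (an isomorphism of representations) permutes the rows of $A$. So the assertion is exactly that $A$ can be brought, by right multiplication by $\GL_n(\Z)$ and a permutation of rows, to $D:=\diag(1,\dots,1,m)$.

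Next I would use the hypotheses on the kernel. Every element of $\ker\rho=A^{-1}\Z^n/\Z^n$ has order dividing $m$ and, by assumption, lies in the last coordinate circle $\{1\}^{n-1}\times S^1$, hence lies in $\{1\}^{n-1}\times\mu_m=(\Z^{n-1}\times\tfrac1m\Z)/\Z^n=D^{-1}\Z^n/\Z^n$; therefore $A^{-1}\Z^n\subseteq D^{-1}\Z^n$. As both lattices contain $\Z^n$ with index $m$ — the order of $\ker\rho$, equal to $|\det D|$ — we get $A^{-1}\Z^n=D^{-1}\Z^n$. In particular $m$ divides the last column of $A$, so $C:=AD^{-1}$ has integer entries, and $C\Z^n=A(A^{-1}\Z^n)=\Z^n$, whence $C\in\GL_n(\Z)$ and $A=CD$. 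At this point $\rho$ has weight matrix $CD$, and the remaining task is to absorb $C$ into an automorphism of $T^n$: after first relabelling the $z_i$ one wants $B\in\GL_n(\Z)$ with $CDB=D$, i.e. $B=D^{-1}C^{-1}D$, and this matrix has integer entries precisely when $m$ divides the first $n-1$ entries of the last row of $C^{-1}$.

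The step I expect to be the genuine obstacle is exactly this divisibility: it is not a consequence of $\ker\rho$ being a cyclic group inside the last circle alone, since for an arbitrary $C\in\GL_n(\Z)$ the last row of $C^{-1}$ is only guaranteed to be primitive, and then $D^{-1}C^{-1}D$ need not be integral. One therefore has to invoke the setting in which the lemma is used, where $\rho$ is the $T^n$-extension of a locally standard $T^{n-1}$-action in general position; I would use this to rigidify the weights $a_i$ — concretely, that the restriction of $\rho$ to the original codimension-one torus is a standard slice representation — which forces $C$ into the required shape modulo $m$. Granting that, $B:=D^{-1}C^{-1}D$ lies in $\GL_n(\Z)$ and the corresponding automorphism of $T^n$ carries $\rho$ to $(t_1,\dots,t_n)\cdot(z_1,\dots,z_n)=(t_1z_1,\dots,t_{n-1}z_{n-1},t_n^m z_n)$, which is the claim.
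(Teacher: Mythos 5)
Your lattice reduction is exactly right, and the divisibility you flag is precisely where both your argument and the paper's proof get stuck. The paper writes $\rho = \bar\rho\circ\pi$ with $\pi$ the $m$-fold cover of the last circle (matrix $D$), standardizes the faithful quotient representation $\bar\rho$ by an automorphism $\psi$ of $T^n$ whose matrix is $C^{-1}$, and then asserts that ``this automorphism lifts to $T^n$ under the covering.'' In matrix terms the lift would have to be $D^{-1}C^{-1}D$, which is integral exactly when $m$ divides the first $n-1$ entries of the last row of $C^{-1}$ --- the condition you isolate. The paper offers no justification for this, and none follows from the stated hypotheses.

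In fact the lemma as written is false. Take $n=m=2$ and $\rho(t_1,t_2)(z_1,z_2)=(t_1z_1,\,t_1t_2^{2}z_2)$, with weight matrix $A=\begin{pmatrix}1&0\\1&2\end{pmatrix}$. Its kernel is $\{(1,\pm 1)\}$, cyclic of order $2$ in the last circle, so the hypotheses hold. But for any $\Phi\in\GL_2(\Z)$ the rows of $A\Phi$ are $(\Phi_{11},\Phi_{12})$ and $(\Phi_{11}+2\Phi_{21},\Phi_{12}+2\Phi_{22})$; matching $\diag(1,2)$ forces $1+2\Phi_{21}=0$, and matching its row swap forces $2\Phi_{21}=1$, neither of which has an integer solution. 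So your concern reflects a genuine defect in the lemma rather than a gap in your own reasoning: without an additional hypothesis on the weight matrix (or a weakened conclusion), the statement cannot be proved, and the paper's one-line lifting claim does not close the hole.
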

\begin{proof}
    By dividing out the kernel, we see that the representation $\rho$ is given by
    the concatenation of the covering $(t_1,\hdots,t_n)\mapsto (t_1,\hdots,t_{n-1},t_n^m)$
    and a standard, faithful representation of $T^n$ on $\C^n$. The latter is, up
    to automorphism, given by
    $(t_1,\hdots,t_n)\cdot (z_1,\hdots,z_n)=(t_1 z_1,\hdots,t_n z_n)$.
    Now this automorphism lifts to $T^n$ under the covering, and we are done.
\end{proof}
\begin{lemma}\label{lem:isotropycomplone1}
	Let $T=T^{n-1}$ act on $\C^n$ linearly and in general position. Let $T_p=\Z_m\times T^{n-k-1}$ ($m$ could also be $1$) be an $n-k-1$-dimensional stabilizer subgroup of
	a point $p$ in $\C^{k}\times \{0\}$. Then, up to automorphism of $T$, the $T$-action on $\C^k\times \{0\}$ is given by
	$(t_1,\hdots,t_{n-1})\cdot(z_1,\hdots,z_k)=(t_1z_1,\hdots,t_{k-1}z_{k-1},t_k^m z_k)$.
\end{lemma}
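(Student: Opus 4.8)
We have $T = T^{n-1}$ acting linearly on $\C^n$, in general position, and $T_p = \Z_m \times T^{n-k-1}$ is the stabilizer of a point $p \in \C^k \times \{0\}$. We want to understand the $T$-action on the subrepresentation $\C^k \times \{0\}$.

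The approach: First, since $T_p$ fixes the last $n-k$ coordinates but acts effectively on $\{0\} \times \C^{n-k}$ (no — actually it fixes $p$ which has nonzero first $k$ coordinates; it acts on the normal directions), I would set up coordinates so that $p$ has all its first $k$ coordinates nonzero. The key point is that $T/T_p$ acts effectively on the orbit $T\cdot p \subset \C^k\times\{0\}$ of dimension $k-1$ (using general position as in the proof of \ref{lem:isotropycomplone}, where the case $\dim(T\cdot p)=k$ forces $T_p$ to have the right codimension — wait, I should recheck: here $\dim T_p = n-k-1$, so $\dim(T\cdot p) = (n-1)-(n-k-1) = k$). So $T\cdot p$ is a $k$-dimensional subtorus orbit inside $\C^k\times\{0\}$, i.e. $T$ acts on $\C^k$ with the orbit through $p$ having full dimension $k$. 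This means the restricted action $T \to (S^1)^k$ (the diagonal torus of $\GL_k$) given by the $k$ weights $\alpha_1,\dots,\alpha_k$ on $\C^k\times\{0\}$ has image of dimension $k$, hence is surjective with finite kernel.

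Next, I would identify that finite kernel. The kernel of $T \to (S^1)^k$ acting on $\C^k\times\{0\}$ consists of elements acting trivially on the first $k$ coordinates; since these elements still have to act on the remaining $n-k$ coordinates, and by general position the weights are in sufficiently general position, this kernel is exactly... well, it should be contained in $T_p$. In fact, since $p$ has nonzero first $k$ coordinates, the kernel of the action on $\C^k\times\{0\}$ is contained in $T_p = \Z_m\times T^{n-k-1}$, and since it must act trivially on the $T\cdot p$-directions it is actually the largest subgroup of $T_p$ acting trivially on $\C^k$. By general position of the weights (any $n-1$ of the $n$ weights of the ambient $\C^n$-action are independent), the torus part $T^{n-k-1}$ of $T_p$ acts nontrivially on each of the first $k$ coordinate lines, so the kernel is a finite cyclic group; it must then be $\Z_m$ (the only finite cyclic factor available), or trivial if $m=1$. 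Thus $T \to (S^1)^k$ is a surjection with kernel the cyclic group generated by the same element generating $\Z_m \subset T_p$, i.e. $T/\Z_m \cong (S^1)^k$.

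Finally, I would apply (an analogue of) Lemma \ref{lem:normalform}. We have a surjection $T^{n-1} \to (S^1)^k$ with cyclic kernel of order $m$, and the action on $\C^k$ factors through this as a faithful $(S^1)^k$-representation on $\C^k$; the faithful one is, up to automorphism of $(S^1)^k$, the standard coordinatewise action $(s_1,\dots,s_k)\cdot(z_1,\dots,z_k)=(s_1z_1,\dots,s_kz_k)$. Pulling back, I must realize the covering $T^{n-1}\to (S^1)^k$ with cyclic kernel of order $m$ in normal form: choosing coordinates on $T^{n-1}$ so that the kernel sits inside the $k$-th $S^1$-factor (it can be taken to lie in a single circle factor since it is cyclic, and by a change of basis we can arrange it is the $k$-th one, matching where $p$ is "critical"), the covering becomes $(t_1,\dots,t_{n-1})\mapsto(t_1,\dots,t_{k-1},t_k^m)$ composed with projecting away the last $n-1-k$ coordinates. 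Composing, the $T$-action on $\C^k\times\{0\}$ is $(t_1,\dots,t_{n-1})\cdot(z_1,\dots,z_k)=(t_1z_1,\dots,t_{k-1}z_{k-1},t_k^m z_k)$, as claimed. The automorphism of $(S^1)^k$ used to normalize the faithful representation lifts through the covering (as in the proof of \ref{lem:normalform}), so everything is compatible and the argument closes.

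The main obstacle I expect is the bookkeeping in the second step: carefully checking, using only the $(n-1)$-wise (general position) independence of the $n$ ambient weights, that the kernel of $T\to(S^1)^k$ is exactly the cyclic group $\Z_m$ appearing in $T_p$ and not something larger or differently placed — i.e. that the finite part of $T_p$ "comes from" the failure of independence among precisely the weights $\alpha_1,\dots,\alpha_k$ together with one more. One must also be a little careful that the cyclic kernel can be put inside a single coordinate circle of $T^{n-1}$ and that this circle can be chosen as the $k$-th one; this is a standard fact about finite cyclic subgroups of tori (any cyclic subgroup of $(S^1)^{n-1}$ lies in a subcircle, which extends to a coordinate system), but it needs to be invoked cleanly.
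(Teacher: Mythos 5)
Your proposal contains a genuine error in the identification of the kernel of the restricted action $T\to (S^1)^k$ on $\C^k\times\{0\}$. You claim this kernel is finite (cyclic of order $m$), arguing that ``the torus part $T^{n-k-1}$ of $T_p$ acts nontrivially on each of the first $k$ coordinate lines.'' This is backwards. Since $\dim T_p = n-k-1$ gives $\dim T\cdot p = k$, the point $p$ must (as you yourself set up) have all of its first $k$ coordinates nonzero; but then any element of $T_p$, by virtue of fixing $p$, must fix each of those $k$ coordinate lines pointwise. Thus \emph{all} of $T_p$, including its torus part $T^{n-k-1}$, acts trivially on $\C^k\times\{0\}$, and the kernel of $T\to(S^1)^k$ is the full group $T_p=\Z_m\times T^{n-k-1}$, of dimension $n-k-1$ (which is positive whenever $k<n-1$). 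Your dimension count already shows this: image $k$, source $n-1$, so kernel dimension $n-1-k$. The ``general position'' hypothesis controls the independence of the ambient tangent weights; it does not force $T_p$ to act nontrivially on the tangent directions to its own fixed locus. You appear to be conflating the action of $T_p$ on the normal space $\C^{n-k}$ (where it \emph{is} effective, as in \cref{lem:isotropycomplone}) with its action on $\C^k$ (where it is trivial).

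Because the kernel is $T_p$ and not $\Z_m$, your step ``we have a surjection $T^{n-1}\to(S^1)^k$ with cyclic kernel of order $m$'' is false, and the normal form you then invoke does not arise from the situation you describe. The fix, which essentially reproduces the paper's proof, is to note that $T_p\cong\Z_m\times T^{n-k-1}$ sits inside a subtorus $T'\cong T^{n-k}$ of $T$; after an automorphism of $T$ one may take $T'$ to be the last $n-k$ coordinate circles, with $\Z_m$ inside the $k$-th circle and the free part filling the last $n-k-1$. Then the quotient $T\to T/T_p\cong(S^1)^k$ is $(t_1,\dots,t_{n-1})\mapsto(t_1,\dots,t_{k-1},t_k^m)$, and composing with the standard faithful $(S^1)^k$-action on $\C^k$ (\cref{lem:normalform}) gives the stated formula. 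In short, your endgame is correct, but your middle step misidentifies the kernel, and the formula you want only emerges once $T_p$, not merely $\Z_m$, is placed in normal form relative to the coordinate circles of $T$.
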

\begin{proof}
	Fix the $T_p$-representation on $\C^{n-k}$. A local model around $p$ is $(-1,1)^k \times T\times_{T_p} \C^{n-k}$. Now $T_p$ is contained in a subtorus $T'$
	of dimension $n-k$, and this subtorus consists of the last $n-k$ factors of $S^1$ in $T$, up to automorphism. In the same vein, we may assume that
	the embedding $T_p\hookrightarrow T'=T^{n-k}\hookrightarrow T^{n-1}$ is of the form
	\[
	(e^{2k\pi i/m},t_1,\hdots,t_{n-k-1})\mapsto (1,\hdots,1, e^{2k\pi i/m}, t_1,\hdots,t_{n-k-1}).
	\]
	Now $T^k\times \{0\}$ acts on $\C^k\times \{0\}$ and the kernel of this action is a $\Z_m$ contained in the last $S^1$-factor, implying the assertion by means of
 \cref{lem:normalform}
\end{proof}
Starting from there, we immediately obtain the following.
\begin{lemma}
	In the setting of \cref{lem:isotropycomplone1} (and the canonical form of the $T$-action on $\C^k\times \{0\}$), we find an automorphism of $T$ that fixes the first $k-1$
 coordinates and sends $\{e\}^{k-1}\times T^{n-k}$ to itself,
 such that the resulting $T$-action on $\{0\}\times \C^{n-k}$ is given by
 \[
(t_1,\hdots,t_{n-1})\cdot (z_{k+1},\hdots, z_n)=
(t_k^lz_{k+1}, t_{k+1}z_{k+2},\hdots, t_{n-1} z_n),
 \]
 where $l$ is an integer coprime to $m$.
\end{lemma}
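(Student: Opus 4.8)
The plan is to carry everything over to the weight lattice $\Z^{n-1}=\Hom(T,S^1)$ and to produce the desired automorphism as a short composite of explicit lattice moves; the subtlety is to do this over $\Z$ and not merely over $\Q$, and this is exactly where the two general-position hypotheses are used. By \cref{lem:isotropycomplone1} the weights of the $T$-representation on $\C^n$ are $\chi_i=e_i$ for $i<k$, $\chi_k=me_k$, and $\chi_{k+1},\dots,\chi_n$ are the weights of the normal representation on $\{0\}\times\C^{n-k}$. Write $\Z^{n-1}=V\oplus W$ with $V=\langle e_1,\dots,e_{k-1}\rangle$ and $W=\langle e_k,\dots,e_{n-1}\rangle$, and let $\bar\chi$ be the $W$-component of a weight $\chi$. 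An automorphism of $T$ that fixes the first $k-1$ coordinates and preserves $\{e\}^{k-1}\times T^{n-k}$ is precisely one whose matrix on $(\R/\Z)^{n-1}$ is $\bigl(\begin{smallmatrix}I&0\\ B&C\end{smallmatrix}\bigr)$ with $C\in\GL_{n-k}(\Z)$; dually it acts on weights by $\bigl(\begin{smallmatrix}I&B^{\mathsf T}\\ 0&C^{\mathsf T}\end{smallmatrix}\bigr)$, replacing the reduced normal weights $\bar\chi_{k+j}$ by $C^{\mathsf T}\bar\chi_{k+j}$ and shearing their $V$-components by a $B$-dependent amount. So I want to choose $C$ (after a permutation of $z_{k+1},\dots,z_n$) making the $\bar\chi_{k+j}$ into $le_k,e_{k+1},\dots,e_{n-1}$, and then choose $B$ killing the remaining $V$-components.

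First I would study the full-rank sublattice $N=\langle\bar\chi_{k+1},\dots,\bar\chi_n\rangle\subset W$ (full rank because, dropping $\chi_k$, general position of the ambient action makes the $\bar\chi_{k+j}$ $\Q$-independent), and set $l=[W:N]$. By \cref{rem:isotropygenpos} the isotropy representation of $T_p^0$ on the normal bundle $\C^{n-k}$ is in general position, and since $T_p\subset T$ acts there effectively so does $T_p^0$; hence the images of the $\bar\chi_{k+j}$ in $W/\langle e_k\rangle$ generate that lattice, i.e.\ $N+\langle e_k\rangle=W$, which makes $W/N$ a quotient of $\langle e_k\rangle/(\langle e_k\rangle\cap N)$ and so cyclic of order $l$. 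Effectiveness of the full $T$-action gives $\langle me_k\rangle+N=W$, so the class of $me_k$ generates $W/N\cong\Z/l$, which forces $\gcd(l,m)=1$; this identifies the integer $l$ of the statement and settles the coprimality claim.

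Next comes the normalization. Because $W/N$ is cyclic of order $l$, the relative Smith normal form of $N$ in $W$ is $\langle le_k,e_{k+1},\dots,e_{n-1}\rangle$, so some $C_0\in\GL_{n-k}(\Z)$ carries $N$ onto it. To pin down the images of the individual generators I would exploit the extra genericity of general position --- that any $n-k$ of the $n-k+1$ vectors $me_k,\bar\chi_{k+1},\dots,\bar\chi_n$ form a $\Q$-basis of $W\otimes\Q$ --- which, after reindexing $z_{k+1},\dots,z_n$, pins $\bar\chi_{k+1}$ down to be $l$ times a primitive vector with $(\tfrac1l\bar\chi_{k+1},\bar\chi_{k+2},\dots,\bar\chi_n)$ a $\Z$-basis of $W$; a $C_0$ realizing this carries the ordered tuple $(\bar\chi_{k+j})$ exactly to $(le_k,e_{k+1},\dots,e_{n-1})$. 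Applying the automorphism with $B=0$ and this $C_0$, the reduced normal weights are now exactly $le_k,e_{k+1},\dots,e_{n-1}$; it remains to solve for an integral $B$ with $B^{\mathsf T}$ absorbing the $V$-components, which works because the same genericity forces the divisibilities (e.g.\ $l$ divides the $V$-component of $\chi_{k+1}$) needed for that shear to be integral. The composite automorphism has the prescribed block form, $\{0\}\times\C^{n-k}$ now carries the stated action, and on $\C^k\times\{0\}$ only the $z_k$-slot changes (there $\chi_k=me_k$ is replaced by $m$ times a primitive vector), consistently with the statement, which only asks the automorphism to fix $e_1,\dots,e_{k-1}$.

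The hard part will be exactly these $\Z$-level steps in the normalization: the $\Q$-linear version --- that the $\bar\chi_{k+j}$ can be put in the prescribed shape over $\Q$ --- is immediate from general position, but upgrading it to an honest automorphism of the torus requires knowing that $N$ has cyclic quotient of the right order and that the $V$-components satisfy the divisibility needed for the clearing shear. Both are $\Z$-integrality facts, forced respectively by general position of the ambient action (any $n-1$ of the $n$ weights independent) together with \cref{rem:isotropygenpos}, and by general position applied to the weights $\chi_i$ with $i<k$; marshalling these determinant/index computations correctly is the technical core.
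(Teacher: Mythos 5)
Your proof takes a genuinely different route from the paper's. The paper's argument is a one-liner: restrict to the representation of the last $n-k$ coordinate circles on the last $n-k$ complex coordinates, observe that its kernel is cyclic and sits in the $k$-th coordinate circle, and invoke \cref{lem:normalform}. You instead work entirely on the weight lattice: you correctly identify the allowed automorphisms as the block lower-triangular matrices $\bigl(\begin{smallmatrix}I&0\\B&C\end{smallmatrix}\bigr)$, you derive that $W/N$ is cyclic of order $l$ from effectiveness of $T_p$ on the normal slice (a correct combination of \cref{rem:isotropygenpos} with the elementary fact that a subgroup of a group acting effectively again acts effectively), and you correctly obtain $\gcd(l,m)=1$ from effectiveness of the $T^{n-k}$-action. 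You also explicitly notice something the paper's one-line proof silently skips: the weights $\chi_{k+1},\dots,\chi_n$ may have nontrivial $V$-components, which must be cleared by the $B$-block.

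However, there is a genuine gap at exactly the step you yourself flag as "the hard part". The claim that, after reindexing, $\bar\chi_{k+1}$ equals $l$ times a primitive vector and $(\tfrac1l\bar\chi_{k+1},\bar\chi_{k+2},\dots,\bar\chi_n)$ is a $\Z$-basis of $W$ does not follow from the genericity you invoke: general position gives $\Q$-rank conditions (nonvanishing determinants), not $\Z$-divisibility of individual weight vectors, and cyclicity of $W/N$ does not force any single generator of $N$ to be divisible by $l$. Concretely, with $n=4$, $k=2$, $\chi_1=(1,0,0)$, $\chi_2=(0,m,0)$, $\chi_3=(0,2,1)$, $\chi_4=(1,1,2)$ and $\gcd(m,3)=1$, all the stated hypotheses hold, $W/N\cong\Z/3$ is cyclic and coprime to $m$, yet $\bar\chi_3=(2,1)$ and $\bar\chi_4=(1,2)$ are each primitive, so neither is divisible by $l=3$, and a direct check shows no $C\in\GL_2(\Z)$ carries $\{\bar\chi_3,\bar\chi_4\}$ to $\{3e_k,e_{k+1}\}$ in either order. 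The same caveat applies to your assertion that $l$ divides the $V$-component of $\chi_{k+1}$. So the $\Z$-level normalization you defer as the "technical core" is not a bookkeeping step that general position forces; it is where the statement is delicate, and as written your proposal does not establish it. (I note that in the same example the kernel of the restricted $T^{n-k}$-action is generated by $(e^{2\pi i/3},e^{2\pi i/3})$, which is not contained in the $k$-th coordinate circle either, so the hypothesis under which the paper invokes \cref{lem:normalform} is also not met; the integrality issue is intrinsic to the statement rather than to your particular approach.)
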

\begin{proof}
    Simply consider the representation of
    the last $n-k$ coordinates of $T$ on the last $n-k$
    coordinates of $\C^n$, and note that the kernel of this action is cyclic and contained in the $k$-th coordinate of $T$. Now apply \cref{lem:normalform}, again.
\end{proof}
\begin{remark}
    Note that, in the setting of the last lemma, we have no control over the action of $T$ on the $k$-th coordinate
    of $\C^n$ anymore, except that the $k$-th circle still acts with weight $m$. In that sense, we have achieved that
    the action is standard in all coordinates, except
    the $k$-th one.
\end{remark}
At last, we obtain
\begin{lemma}\label{lem:extcomplone}
	In the setting of the last lemma (and the canonical form of the $T$-action on $\C^n$), let $S^1$ act on $\C^n$ such that the $T\times S^1$-action is effective.
	Then, up to automorphism of $T\times S^1$ fixing $T\times \{e\}$, the $\{e\}\times S^1$-action on $\C^n$ is given by
	$$t\cdot (z_1,\hdots,z_n)=(z_1,\hdots,z_{k-1},t^{m'} z_k,t^{l'}z_{k+1},z_{k+2},\hdots,z_n)$$
	for natural numbers $m'$ and $l'$ such that $ml'-m'l=\pm 1$.\\
 In particular, if we denote by $\mathcal{X}$ the fundamental vector field of that $S^1$-action, then there are rational constants $C_1$ and $C_2$ such that $\mathcal{X}=C_1\mathcal{X}_1+C_2 \mathcal{X}_2$, where $\mathcal{X}_1$ is the fundamental vector field of the $k$-th circle in $T$,
 and $\mathcal{X}_2$ is the fundamental vector field of a certain $S^1$-representation on the $k+1$-th coordinate
 of $\C^n$.
\end{lemma}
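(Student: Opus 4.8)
The plan is to encode the whole $T\times S^1$-representation on $\C^n$ in its integral weight matrix and to read off the statement from the unimodularity of that matrix forced by effectiveness. Since $T\times S^1$ is an $n$-torus acting linearly and effectively on $\C^n$, after a harmless change of the coordinates $z_k,z_{k+1}$ inside their common $T$-isotypic plane -- which is only needed in the degenerate case where the two $T$-weights on $z_k,z_{k+1}$ coincide, i.e. $m=l=1$ -- one may assume the whole action is diagonal, so $\{e\}\times S^1$ acts by $s\cdot(z_1,\dots,z_n)=(s^{c_1}z_1,\dots,s^{c_n}z_n)$ for integers $c_1,\dots,c_n$. Identifying characters of $T\times S^1$ with $\Z^{n-1}\oplus\Z$ (the $T$-part in $\Z^{n-1}$, the $S^1$-part in $\Z$) and using the canonical form of the $T$-action from the previous lemma, the weight of $z_j$ is $(e_j,c_j)$ for $j\le k-1$, $(me_k,c_k)$ for $j=k$, $(le_k,c_{k+1})$ for $j=k+1$, and $(e_{j-1},c_j)$ for $j\ge k+2$, where $e_1,\dots,e_{n-1}$ is the standard basis of $\Z^{n-1}$.

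I would first compute the determinant of this $n\times n$ matrix. Using the columns indexed by the circles $1,\dots,k-1,k+1,\dots,n-1$ of $T$, integer column operations clear all entries of the $S^1$-column except those in the rows of $z_k$ and $z_{k+1}$; a Laplace expansion along the rows of $z_1,\dots,z_{k-1},z_{k+2},\dots,z_n$ then reduces the determinant, up to sign, to the $2\times 2$ minor $mc_{k+1}-lc_k$ on the rows of $z_k,z_{k+1}$ and the columns of the $k$-th circle of $T$ and of $S^1$. The kernel of the action being of order equal to the absolute value of this determinant, effectiveness of $T\times S^1$ is exactly the condition $mc_{k+1}-lc_k=\pm1$ (in particular $\gcd(m,l)=1$, as it should be).

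Next I would produce the normalizing automorphism. An automorphism of $T\times S^1$ fixing $T\times\{e\}$ pointwise has the shape $(t,s)\mapsto(t\,\phi(s),s^{\varepsilon})$ with $\varepsilon\in\{\pm1\}$ and $\phi\colon S^1\to T$ a cocharacter $v=(v_1,\dots,v_{n-1})\in\Z^{n-1}$; it pulls a character $(b,c)$ back to $(b,\langle b,v\rangle+\varepsilon c)$, hence fixes every $T$-weight and sends the $S^1$-weight $c_j$ to $v_j+\varepsilon c_j$ for $j\le k-1$, to $v_{j-1}+\varepsilon c_j$ for $j\ge k+2$, to $mv_k+\varepsilon c_k$ for $j=k$, and to $lv_k+\varepsilon c_{k+1}$ for $j=k+1$. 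Choosing $v_j=-\varepsilon c_j$ ($j\le k-1$) and $v_{j-1}=-\varepsilon c_j$ ($j\ge k+2$) kills all $S^1$-weights except those of $z_k,z_{k+1}$, which become $m'=mv_k+\varepsilon c_k$ and $l'=lv_k+\varepsilon c_{k+1}$; then $ml'-m'l=\varepsilon(mc_{k+1}-lc_k)=\pm1$ no matter how $v_k$ is chosen. The leftover freedom in $v_k$ and $\varepsilon$, together with complex conjugations of the coordinates $z_k,z_{k+1}$ (each negating the corresponding pair of weights and leaving all the above identities intact), then lets me make $m',l'$ nonnegative, which gives the stated normal form.

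Finally, for the "in particular" clause: in this normal form $\mathcal{X}$ rotates $z_k$ with weight $m'$ and $z_{k+1}$ with weight $l'$ and vanishes on the other coordinates, while $\mathcal{X}_1$ rotates $z_k$ with weight $m$ and $z_{k+1}$ with weight $l$ and also vanishes elsewhere; taking $\mathcal{X}_2$ to rotate only $z_{k+1}$ with weight $1$ and solving $C_1m=m'$, $C_1l+C_2=l'$ forces $C_1=m'/m$ and $C_2=(ml'-m'l)/m=\pm 1/m$, and then $\mathcal{X}=C_1\mathcal{X}_1+C_2\mathcal{X}_2$ holds coordinate by coordinate, hence on all of $\C^n$, with $C_1,C_2\in\Q$. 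The argument is essentially a determinant computation; I expect the only mildly annoying points to be the reduction to a diagonal $S^1$-action in the degenerate case $m=l=1$ and the sign bookkeeping needed to present $m'$ and $l'$ as natural numbers -- there is no deeper obstacle.
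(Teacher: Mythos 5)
Your argument is correct in substance and follows the same strategy as the paper's proof: parametrize the automorphisms of $T\times S^1$ fixing $T\times\{e\}$ by a cocharacter $v\colon S^1\to T$ (plus a sign $\varepsilon$ on the $S^1$-factor), use $v$ to kill the $S^1$-weights on all coordinates except $z_k,z_{k+1}$, and derive $ml'-m'l=\pm 1$ from effectiveness. The paper leaves the determinant/kernel-order computation implicit; you make it explicit, which is a genuine improvement in readability, but it is the same route.

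One factual error to fix: you record the $T$-weight on $z_k$ as $me_k$, but the remark immediately preceding the lemma explicitly warns that after the normalizations of the two previous lemmata the $T$-action on the $k$-th coordinate is no longer controlled, except that the $k$-th circle acts with weight $m$. The correct weight is $me_k+b$ for some $b$ supported in the other $n-2$ circle directions. This does not break your proof: the extra entries are cleared by the very column operations you perform, and the quantity entering the determinant and the relation $ml'-m'l=\pm 1$ is simply a shifted version of your $c_k$ (so the normalizing cocharacter and the final conclusion are unaffected). But the weight matrix should be written correctly before the reduction, and your criterion for when the extra diagonalization of the $S^1$-action is needed changes slightly (it requires $b=0$ in addition to $m=l=1$). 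A second, smaller sign issue: conjugating $z_k$ is not really available as a freedom, since it would flip $m$ to $-m$ in the fixed canonical form of the $T$-action (where $m$ is the order of the cyclic stabilizer and hence positive); conjugating $z_{k+1}$ (which only flips the unconstrained integer $l$ and $c_{k+1}$) together with the free choice of $v_k$ and $\varepsilon$ suffices to make $m',l'$ nonnegative.
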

\begin{proof}
    An automorphism $\phi$ as mentioned in the statement is uniquely determined by what it does on $\{e\}\times S^1$.
    If the action of the latter on the first
    coordinate, for example, has weight $j$, then we want
    $\phi(e,t)$ to have first coordinate $t^{-j}$ for $t\in S^1$. Repeat this for all coordinates in $T$, except the $k$-th one. The resulting action is then of the desired form; the restriction of $m'$ and $l'$ come from the assumption that the $T\times S^1$-action is effective.
\end{proof}
\subsection{Torus manifolds and equivariant formality}
When we deal with a high amount of abelian symmetry, that is, with torus manifolds, there are particularly nice links between
the homology of its orbit space and their equivariant formality over $\Z$ due to Masuda and Panov, \cite{MP03}.
\begin{theorem}\label{torusManifold}
	The following are equivalent for a torus manifold $M$:
	\begin{itemize}
		\item The action is equivariantly formal.
		\item The action is locally standard and each face $F$ of $M^*$ as well as $M^*$ itself is acyclic.
	\end{itemize}
\end{theorem}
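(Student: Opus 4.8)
The plan is to prove the two implications separately, both of which are essentially due to Masuda and Panov \cite{MP03}, so the job here is to recall the structure of the argument. First I would treat the direction "equivariantly formal $\Rightarrow$ locally standard with all faces and $M^*$ acyclic". Local standardness follows because, by the formality package (see the remark recalling that equivariant formality with isolated fixed points is equivalent to $H_T^*(M)$ being a free $H^*(BT)$-module, together with \cite[Lemma 2.2]{MP03} quoted above), every connected component of every $M^H$ contains a fixed point and has vanishing odd cohomology; for a torus manifold this forces the slice representations at fixed points to be the standard $T^n$-representation on $\C^n$, hence the action is locally standard. Acyclicity of $M^*$ and of each face $F$ then comes from \cref{orbitSpace} (with $j = n$, and the integral statement available because a torus manifold that is equivariantly formal over $\Z$ has all stabilizers connected — this is exactly local standardness): part (2) gives that $M^*$ is $(n+1)$-acyclic, which for an $(n+1)$-dimensional orbit space means acyclic, and part (1) applied to a face $F$ of rank $\rk(F) < n$ gives $H^*(F, F_{-1}) = H^*(D^{\rk F}, \partial D^{\rk F})$; an inductive argument over the face poset, peeling off $F_{-1}$ and using the long exact sequence of the pair together with the acyclicity of the lower-dimensional faces, then yields that each $F$ itself is acyclic.

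For the converse direction, "locally standard with all faces and $M^*$ acyclic $\Rightarrow$ equivariantly formal", I would use the standard approach via the quotient construction. Since the action is locally standard, $M^*$ is a manifold with corners whose face poset records the stabilizer data, and $M$ is recovered (up to equivariant homeomorphism) from $M^*$ together with the characteristic function by the construction $M \cong M^* \times T / \!\sim$, where one collapses the torus according to the stabilizer on each face. One then computes $H_T^*(M)$ via the corresponding model $H_T^*(M^*\times T/\!\sim)$: acyclicity of $M^*$ and of all its faces makes the relevant Mayer–Vietoris / spectral sequence arguments degenerate, showing that $H_T^*(M)$ is the face ring of $M^*$, which is a free $H^*(BT)$-module, hence the action is equivariantly formal. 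Alternatively, and perhaps more cleanly, one can verify the Chang–Skjelbred criterion directly: equivariant formality over $\Z$ for an action with the Chang–Skjelbred property is equivalent to $H_T^*(M) \to H_T^*(M_1) \to H_T^*(M_0)$ being exact in the appropriate sense, and the acyclicity hypotheses on faces let one build up $M$ skeleton by skeleton, at each stage attaching pieces whose equivariant cohomology is understood, so that no odd cohomology is created.

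I expect the main obstacle to be the converse direction, specifically the bookkeeping needed to pass from "each face is acyclic" to the freeness of $H_T^*(M)$: one must set up the right filtration of $M^*$ by skeleta $M^*_k$ and control the attaching maps, and in the integral setting one has to be careful that local standardness (equivalently, connectedness of all stabilizers) is what makes the Chang–Skjelbred lemma valid over $\Z$ — this is the point flagged in the remark after the rational GKM theorem above. The forward direction is comparatively routine once \cref{orbitSpace} is in hand, the only slightly delicate point being the inductive deduction that each face is genuinely acyclic (not merely that $(F, F_{-1})$ looks like a disc relative to its boundary), which requires knowing acyclicity of the lower strata first and hence an induction on $\rk(F)$.
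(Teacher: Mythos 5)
The paper does not prove this theorem: it is a verbatim quotation of the Masuda--Panov result, as the sentence immediately preceding the statement makes explicit (``\ldots due to Masuda and Panov, \cite{MP03}''). So there is no proof in the paper for your plan to be compared against, and your plan should be judged as a reconstruction of the MP03 argument, with the paper's imported machinery (in particular \cref{orbitSpace}) available as a shortcut.

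As a reconstruction, the overall shape is right, but two steps are asserted where the real work lies. First, the inference ``every $M^H$ contains a fixed point and has vanishing odd cohomology $\Rightarrow$ the slice representation at each fixed point is the standard $T^n$-representation'' is not immediate; the weights at a fixed point of an effective torus manifold are automatically a $\Q$-basis of $\mft^*$, and what must be shown is that they form a $\Z$-basis, i.e.\ that no nontrivial \emph{finite} stabilizers occur near the fixed point. That step is a genuine theorem in \cite{MP03} and does not follow just from the connectedness/acyclicity statements you quote, which are about \emph{subtori} $H$. Second, the inductive ``peeling off $F_{-1}$'' argument for face acyclicity needs more input than acyclicity of the lower faces: from $H^*(F,F_{-1})\cong H^*(D^{\rk F},\partial D^{\rk F})$ and the long exact sequence of the pair, you can only conclude $F$ is acyclic if you also know that $F_{-1}=\partial F$ is a homology sphere of dimension $\rk F-1$ and that the top cohomology of $F$ vanishes. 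The former requires the combinatorial structure of the face poset of $\partial F$ (coming from local standardness), not merely the acyclicity of each individual lower face; the latter uses that $F$ is a compact (manifold-with-corners) piece with nonempty boundary. Both are fillable, but your plan as written does not flag them as steps, and a reader following it literally would get stuck at exactly these points. The converse direction is sketched at a level (face ring description of $H_T^*$, or Chang--Skjelbred over $\Z$ for locally standard actions) that matches the standard treatment and is fine as a plan.
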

We now formulate the (specialization of a) key theorem
for this, which comes from \cite[Theorem 2.1]{FP07}. To formulate this, we recall the notion $T^p$ for the minimal subgroup in $T$ containing all
elements of order $p$, as well as the notion $M_{p,i}$ for the set of all $T^p$-orbits consisting of at most $p^i$ points.
\begin{theorem}\label{thm:ABFP}
	Let $M$ be a closed $(S^1)^n=T$-manifold such that the $T$-action is equivariantly formal over a coefficient ring $R\in \{\Z_p,\Z,\Q\}$.
	Assume that $M_{p,i}=M_i$ for $R=\Z_p$ and $M_{p,i-1}\subset M_i$ for all $p\neq -1,0,1$ for $R=\Z$. Then the Atiyah-Bredon sequence
	\[
	0 \to H_T^*(M)\to H_T^*(X_0)\to H_T^{*+1}(X_1,X_0)\to \hdots \to H_T^{*+n}(X_n,X_{n-1}) \to 0
	\]
	is exact, where the first map is the natural restriction, and the other maps are the boundary maps coming from
	the long exact cohomology sequence of the triple $(X_{i+1},X_i,X_{i-1})$.
\end{theorem}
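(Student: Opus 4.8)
The plan is to prove exactness by induction up the skeletal filtration $X_0\subset X_1\subset\cdots\subset X_n=M$, using the long exact sequences of the triples $(X_{i+1},X_i,X_{i-1})$ — through which the Atiyah--Bredon differentials are defined in the first place — and showing that under our hypotheses each such triple sequence breaks into short exact pieces which then splice together. Two global inputs feed in. First, equivariant formality makes $H_T^*(M;R)$ a free module over $\Lambda:=H^*(BT;R)$, hence $R$-torsion-free and $\Lambda$-torsion-free; by the (integral) localization theorem this forces the restriction $H_T^*(M)\to H_T^*(X_0)$ to be injective, and with the Chang--Skjelbred lemma — valid over $R\in\{\Z_p,\Z,\Q\}$ here because every $R$-invisible finite stabilizer is confined to a proper skeleton — it also gives exactness of the sequence at its first two spots. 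Second, one needs a level-by-level freeness statement for the relative terms $H_T^*(X_i,X_{i-1};R)$, and it is exactly here that the hypotheses on $M_{p,i}$ enter.

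The core step is the analysis of a single skeletal pair $(X_i,X_{i-1})$. Decompose $X_i\setminus X_{i-1}$ into its connected components $N_\lambda$; on each $N_\lambda$ the orbits have dimension exactly $i$, so the identity component $K_\lambda:=(T_{N_\lambda})^0$ of the generic stabilizer is an $(n-i)$-subtorus and $T/K_\lambda\cong T^i$ acts with finite stabilizers. Excision together with, on each stratum, a Thom isomorphism for the equivariant normal bundle $\nu_\lambda$ of $\overline{N_\lambda}$ gives an isomorphism of $\Lambda$-modules $H_T^{*}(X_i,X_{i-1};R)\cong\bigoplus_\lambda\widetilde H_T^{*}(\Th(\nu_\lambda);R)$, where each summand is a module over $H^*(B(T/K_\lambda);R)$. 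Over $\Q$ every finite stabilizer is invisible, each summand is free over the relevant polynomial ring after inverting the Euler class, and the argument is immediate; over $\Z$ or $\Z_p$ the finite isotropy along $N_\lambda$ can produce $p$-torsion, and the hypothesis $M_{p,i}=M_i$ (resp. $M_{p,i-1}\subset M_i$ for all relevant $p$) is precisely what guarantees that any such torsion already lives on $X_{i-1}$. Hence $H_T^{*}(X_i,X_{i-1};R)$ is $R$-torsion-free, free over the relevant polynomial subring, with the Thom classes acting as non-zero-divisors.

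Granting this, the long exact sequence of each triple $(X_{i+1},X_i,X_{i-1})$ splits: the image of $H_T^{*}(X_i,X_{i-1})\to H_T^{*}(X_i)$ is exactly the kernel of $H_T^{*}(X_i)\to H_T^{*}(X_{i-1})$, and the connecting map $H_T^{*}(X_i)\to H_T^{*+1}(X_{i+1},X_i)$ is injective on that image, because the obstruction to splitting is $R$-torsion while the modules in sight are torsion-free. Splicing these short exact sequences up the skeleta yields exactness of the Atiyah--Bredon complex $\mathrm{AB}^\bullet(M)$ at every spot $i\ge 1$. At the bottom, exactness together with injectivity of the restriction identifies $H^0(\mathrm{AB}^\bullet(M))$ with $\im\bigl(H_T^*(M)\to H_T^*(X_0)\bigr)$; that nothing is lost here one checks by a $\Lambda$-rank comparison, using that $H_T^*(M;R)$ is free of rank $\dim_R H^*(M;R)$ — equivalently, by running the cohomological spectral sequence of the skeletal filtration (whose $E_1$-page, with its differential, is $\mathrm{AB}^\bullet(M)$) and observing that it degenerates onto the zeroth column.

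I expect the second step above — the level-by-level freeness of $H_T^*(X_i,X_{i-1};R)$ — to be the main obstacle. Rationally the whole theorem is little more than the localization theorem and the Chang--Skjelbred lemma, but over $\Z$ and $\Z_p$ the relative cohomology of a skeletal pair genuinely can carry $p$-torsion from finite isotropy along the stratum, and it is exactly the hypotheses relating $M_{p,\bullet}$ to $M_\bullet$ that push this torsion down one skeleton so that it cannot obstruct the splicing. Making this precise — in effect the integral Chang--Skjelbred lemma of \cite{FP07} and its iteration over all levels — is the technical heart, and the rest of the argument is diagram chasing.
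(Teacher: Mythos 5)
The paper does not prove this theorem: it explicitly introduces it with ``which comes from \cite[Theorem 2.1]{FP07}'' and states it as a specialization of the Franz--Puppe result, so there is no in-paper proof to compare against. Your sketch is therefore an attempt at reconstructing Franz--Puppe's argument, and while it assembles the right raw materials (integral localization, integral Chang--Skjelbred, excision plus a Thom isomorphism to reduce $H_T^*(X_i,X_{i-1};R)$ to the strata $N_\lambda$, and the role of the hypotheses $M_{p,i}=M_i$ or $M_{p,i-1}\subset M_i$ in controlling $p$-torsion), the step you yourself flag as the ``main obstacle'' is not the only gap.

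The splicing argument is not valid as stated. You assert that, granting torsion-freeness of the relative groups, ``the long exact sequence of each triple $(X_{i+1},X_i,X_{i-1})$ splits'' and the Atiyah--Bredon complex is exact at the interior spots because the ``obstruction to splitting is $R$-torsion.'' But exactness of the Atiyah--Bredon complex at position $i$ is not the statement that some long exact sequence breaks into short pieces, and it is certainly not a formal consequence of torsion-freeness of the relative terms: over $R=\Q$ every module in sight is automatically $\Q$-torsion-free, yet the Atiyah--Bredon complex of a rational $T$-manifold need not be exact in the absence of equivariant formality (exactness there is equivalent to a depth/Cohen--Macaulay condition on $H_T^*(M;\Q)$, strictly stronger than torsion-freeness). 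The actual engine in \cite{FP07} is the spectral sequence of the orbit-type filtration, whose $E_1$-page with differential is the Atiyah--Bredon complex, combined with a syzygy-type argument: freeness of $H_T^*(M)$ over $H^*(BT)$, together with the degree bound $H_T^j(X_i,X_{i-1})=0$ for $j<i$ and the localization results you mention, forces the spectral sequence to collapse onto the $0$-column, which is precisely exactness of the Atiyah--Bredon complex. Your outline would need to be rebuilt around that mechanism rather than around an appeal to splitting of triples; as written it proves too much.
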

\begin{remark}\label{rem:ABFP}
	In the setting of \cref{lem:isotropycomplone}, we conclude that the condition $M_{p,i-1}\subset M_i$ always holds for those manifolds, since
	there is at most one cyclic subgroup in $T_x$ for all $x\in M$, so that $T_x$ is contained in
	a subtorus of dimension $\dim(T_x)+1$ (see \cite[Corollary 2.2]{FP07} ff.), or connected.\\
	Likewise, the conditions $M_{p,i}=M_i$ respectively $M_{p,i-1}\subset M_i$ are clearly fulfilled whenever a $T^n$-action on a $2n$-dimensional
	manifold is locally standard.
\end{remark}
Using \cref{thm:ABFP} as well as the same ideas as in \cite{AM23}, it is straightforward to show the following:
\begin{theorem}\label{torusManifoldgen}
	Let $R$ be either $\Q$ or $\Z_p$, $p$ a prime. Then any face $F$ of $M^*$ as well as $M^*$ itself is acyclic over $R$ for an equivariantly formal (and locally standard, if $R\neq \Q$) torus manifold $M$.
\end{theorem}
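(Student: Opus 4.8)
The plan is to treat the two coefficient rings slightly differently. Over $R=\Q$ I would deduce everything from \cref{orbitSpace} together with an induction over face submanifolds (one could equally well run the Atiyah--Bredon argument below, since \cref{thm:ABFP} carries no extra hypothesis for $R=\Q$); over $R=\Z_p$, where \cref{orbitSpace} is unavailable, I would feed equivariant formality into the Atiyah--Bredon sequence of $M$ as in \cite{AM23}, the point being that \cref{thm:ABFP} still applies --- its hypothesis $M_{p,i}=M_i$ is guaranteed by local standardness, see \cref{rem:ABFP}.

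Over $\Q$: an equivariantly formal torus manifold $N$ of dimension $2d$ is automatically $d$-independent with $N^T$ finite and nonempty, and has vanishing odd cohomology, hence is of $\mathrm{GKM}_d$-type over $\Q$; by \cref{orbitSpace} its orbit space $N^*$ is then $(d+1)$-acyclic, and since $\dim N^*=d$ this forces $\widetilde{H}^*(N^*;\Q)=0$. Now any face $F$ of $M^*$ of rank $r$ is the orbit space of its face submanifold $M_F:=\pi^{-1}(F)$, which is a connected $2r$-manifold acted on effectively by a rank-$r$ quotient torus of $T$ with finitely many (and at least one) fixed points --- so $M_F$ is again a torus manifold --- and which is a connected component of $M^{T_F^{0}}$, where $T_F^{0}$ is the identity component of the generic isotropy group along $F$. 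Hence, by \cite[Lemma 2.2]{MP03}, $M_F$ has vanishing odd $\Q$-cohomology and is equivariantly formal. Applying the first observation to $N=M$ and to $N=M_F$ for every face $F$ then proves the statement over $\Q$; the induction bottoms out at $\dim N\in\{0,2\}$, where $N^*$ is a point or a closed interval.

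Over $\Z_p$: \cref{thm:ABFP} makes the Atiyah--Bredon sequence
\[
0 \to H_T^*(M;\Z_p)\to H_T^*(M_0;\Z_p)\to H_T^{*+1}(M_1,M_0;\Z_p)\to \cdots \to H_T^{*+n}(M_n,M_{n-1};\Z_p) \to 0
\]
exact, with $H_T^*(M;\Z_p)\cong H^*(BT;\Z_p)\otimes H^*(M;\Z_p)$ by equivariant formality and $H_T^*(M_0;\Z_p)$ free over $H^*(BT;\Z_p)$ on $\lvert M^T\rvert$ degree-zero generators. Since $M$ is locally standard, $M^*$ is a compact $n$-manifold with corners whose faces are precisely the $F$, and for each $i$ excision together with the orbit-type decomposition of $M_i\setminus M_{i-1}$ and Poincar\'e--Lefschetz duality in $M^*$ combine to give, up to an overall degree shift,
\[
H_T^{*}(M_i,M_{i-1};\Z_p)\;\cong\;\bigoplus_{\rk(F)=i} H^*(BT_F;\Z_p)\otimes_{\Z_p} H^*(F,F_{-1};\Z_p).
\]
Substituting these into the exact complex and inducting on $i$ --- exactly as in \cite[Chapter 6]{AM23}, but with $M^*$ of dimension $n$ rather than $n+1$ --- one shows that each pair $(F,F_{-1})$ is a homology-disc pair over $\Z_p$ and that $M^*$ is acyclic over $\Z_p$ (its top cohomology vanishing because $M^*$ has non-empty boundary). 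Each face is then upgraded to an acyclic space as over $\Q$: $M_F$ is a lower-dimensional locally standard torus manifold, equivariantly formal over $\Z_p$ because it is a connected component of $M^{T_F}$ --- the $\Z_p$-analogue of \cite[Lemma 2.2]{MP03} following from the same Atiyah--Bredon exactness applied to $M_F$ --- and $F=M_F^*$.

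I expect the main obstacle to be this $\Z_p$-coefficient bookkeeping: checking that every homological ingredient of \cite[Chapter 6]{AM23} survives over $\Z_p$ once $M_{p,i}=M_i$ secures \cref{thm:ABFP} --- in particular the splitting of $H^*(BT_F;\Z_p)$ off the relative terms, the duality identification above, the term-by-term control of the Atiyah--Bredon differentials, and the auxiliary fact that every connected component of $M^H$ meets $M^T$ and has vanishing odd $\Z_p$-cohomology. The dimension count is precisely what replaces the ``homology sphere'' conclusion of \cite{AM23} by an ``acyclic'' one here.
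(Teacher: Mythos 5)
Your treatment over $\Q$ is correct and takes a slightly different route from the paper: you apply \cref{orbitSpace} directly to $M$ and to each face submanifold $M_F$ (each of which is an equivariantly formal torus manifold by \cite[Lemma 2.2]{MP03}), using the dimension count $\dim M_F^*=\rk(F)\le \rk(F)+1$ to upgrade the $(\rk(F)+1)$-acyclicity to full acyclicity. The paper instead cites \cite[Proposition 2.4]{AM23} and bumps the torus dimension from $n-1$ to $n$; both are sound and both routes are acknowledged in your opening paragraph.

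Over $\Z_p$, however, there is a genuine gap, and it sits exactly at the point you flag as the ``main obstacle'': you need every connected component of $M^H$ to be equivariantly formal over $\Z_p$, so that the face submanifolds $M_F$ inherit the ABFP package. Your proposed justification --- ``the $\Z_p$-analogue of \cite[Lemma 2.2]{MP03} following from the same Atiyah--Bredon exactness applied to $M_F$'' --- is circular: to invoke \cref{thm:ABFP} for $M_F$ over $\Z_p$ you must already know that $M_F$ is equivariantly formal over $\Z_p$, which is what you are trying to deduce. Note that \cite[Lemma 2.2]{MP03} as quoted in the paper is stated only for $\Q$ and $\Z$; there is no free $\Z_p$-version to pull off the shelf, and the localization argument behind it is precisely what breaks down over $\Z_p$ without further input.

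The missing ingredient is the paper's Smith-theory step, and this is where local standardness is really used (not merely for the manifold-with-corners bookkeeping). Because the action is locally standard, any connected component $C$ of $M^H$ for a one-dimensional subtorus $H$ is already a connected component of $M^G$ for some subgroup $G\subset H$ of order $p$ (locally $M^G=M^H$ in the standard chart $\C^n$). One then applies the Smith inequality
\[
\dim_{\Z_p} H^{odd}(M^G;\Z_p)\ \le\ \dim_{\Z_p} H^{odd}(M;\Z_p)=0,
\]
which is available precisely because $G$ is a finite $p$-group, to conclude $H^{odd}(C;\Z_p)=0$ and hence that $C$ is equivariantly formal over $\Z_p$; one then iterates over subtori of higher dimension. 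Without replacing the subtorus $H$ by an order-$p$ subgroup, no such inequality is available over $\Z_p$, and your induction has nowhere to start.
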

\begin{proof}
	For $R=\Q$, this is really just the proof of \cite[Proposition 2.4]{AM23} carried out for the torus having
 dimension $n$ as opposed to $n-1$. For $R=\Z_p$,
    we can do the same thing (using the version of \cref{thm:ABFP} for $R=\Z_p$); we only need to check in addition that $M^H$ for a subtorus $H\subset T^n$ of dimension $1$
	(and thus any subtorus by induction) is equivariantly formal.\\
	Since we assumed that the action is locally standard in this case, to any connected component $C$ of $M^H$ there is a subgroup $G$ of order $p$ such that $C$ is a connected component of $M^G$.
	Therefore, it suffices to show that $H^{odd}(M^G;\Z_p)=0$. But it is standard that
	\[
	\dim H^{odd}(M^G;\Z_p)\leq \dim H^{odd}(M;\Z_p)=0.
	\]
\end{proof}

\newpage

\section{Orientability of GKM graphs and smooth neighborhoods of the equivariant one skeleton}\label{secorient}
Suppose an abstract GKM graph $\Gamma$ is given. If this was the GKM graph of a GKM manifold, then a small equivariant neighborhood
would have to be orientable as an open manifold. Here, we will construct to 
every GKM graph $\Gamma$ a smooth (not necessarily orientable) manifold with boundary $M_1'$ that models this small neighborhood, together
with an equivariant deformation retract $M_1'\to M_1$, where we mean by $M_1$ the actual equivariant one skeleton corresponding to $\Gamma$,
and then investigate some conditions under which we can assure that $M_1'$ becomes orientable.\\
This is partly done in \cite{GKZ22}, and also in \cite{GZ01} for an 'open thickening'.
\begin{remark}\label{X_1^*}
	We will associate a $T^k$-manifold $M_1'(\Gamma)$ (this can be seen as an 'equivariant tubular neighborhood' of the equivariant one-skeleton) with boundary $X_1(\Gamma)$ to a GKM graph $\Gamma$.
	The construction will depend on certain choices, for example the connection on $\Gamma$. We will deal with this later.\\
	For a fixed element $p$ in $V(\Gamma)$, we denote by $\C^n_p$ a representation
	of $T^k$ on $\C^n$ according to the labels of the edges emerging at $p$ and by $S(p),D(p)\subset \C^n_p$ the unit sphere respectively the unit disc
	(this corresponds to choosing signs for the labels). Let $T'$ be some maximal tree of the graph. Whenever two vertices $p_1$ and $p_2$ are connected by an edge in $T'$, then we consider
	the equivariant connected sum of $S(p_1)$ and $S(p_2)$ along their shared invariant subcircle, which means that we take out a neighborhood of this $S^1$ in both
	$S(p_1)$ and $S(p_2)$ and glue
	the spaces along the boundaries $S^1\times S^{2n-3}\subset S^{2n-1}\subset \C\times \C^{n-1}$ with a $T^k$-equivariant diffeomorphism that restricts to a linear isomorphism $h_{(p_1,p_2)}$ on $\{e\}\times S^{2n-3}$
	which sends $S^1\subset S^{2n-3}$ corresponding to an edge $e$ at $p_1$ to $S^1\subset S^{2n-3}$ corresponding to the edge $\nabla_{(p_1,p_2)} e$
	(this is well-defined due to the compatibility condition between connection and labeling of the graph). This will be the boundary of the space
	\[
	M'=((D(p_1)\setminus S^1) \amalg (D(p_2)\setminus S^1))/\sim,
	\]
	where we identify those two in an open neighborhood of the $S^1$'s we take out (this open neighborhood minus $S^1$ is equivariantly diffeomorphic to $S^1\times D^{2n-2}\times (0,1]$,
	so we can identify those neighborhoods in the same way as for the $S(p_i)$ before). Also, there is a natural map $r_1$ from $M'$ to the $T$-invariant sphere
	inside it, which is an equivariant deformation retract. This comes from the natural deformation retracts from the $D(p_i)$ to $0$. Indeed, we can deform these
	on $D(p_i)\setminus S^1$ only in a neighborhood of $S^1$ as indicated in \cref{fig:defretr} and now it is clear that this extends to the desired map $r_1$ on $M'$ canonically.
	Note further that the homotopy from the identity to $r_1$ restricted to $\partial M'$ defines a collar of $\partial M'$, and that the image of $\partial M'$ under the whole
	homotopy intersects itself only at the very beginning.
	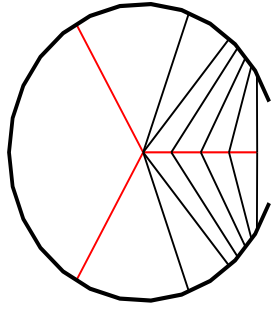
\begin{figure}[ht] 
		\centering
		\resizebox{4cm}{4cm}{
			\begin{tikzpicture}
				\draw[red] (0,0) -- (120:1cm);
				\draw[red] (0,0) -- (240:1cm);
				\draw[red] (0,0) -- (0:0.85cm);
				\draw (0,0) -- (50:1cm);
				\draw (0,0) -- (70:1cm);
				\draw (0,0) -- (310:1cm);
				\draw (0,0) -- (290:1cm);
				\draw (0.21,0) -- (45:1cm);
				\draw (0.43,0) -- (40:1cm);
				\draw (0.64,0) -- (36:1cm);
				\draw (0.85,0) -- (32:1cm);
				\draw (0.21,0) -- (-45:1cm);
				\draw (0.43,0) -- (-40:1cm);
				\draw (0.64,0) -- (-36:1cm);
				\draw (0.85,0) -- (-32:1cm);
				\draw [thick,domain=20:340] plot ({cos(\x)}, {sin(\x)});
			\end{tikzpicture}
		}
		\caption{The deformation for $n=3$. The red lines represent the one-skeleton, and the black lines the map.  \label{fig:defretr}}
	\end{figure} 
	
	Doing this for all points in $T'$ we obtain a simply-connected $T^k$-manifold $M_1'(T')$ with boundary $X_1(T')$ and the map $r_1(T')$ to its equivariant one-skeleton.
	Now we take an edge $e\in \Gamma \setminus T'$ with vertices $v_1$ and $v_2$,
	and perform the equivariant connected sum, again. Doing this for all edges in $\Gamma \setminus T$ gives us a (not necessary orientable!) $T^k$-manifold
	$M_1'$ (with the map $r_1$ to its one-skeleton, and with boundary $X_1$) whose fundamental group is isomorphic to that of $\Gamma$. We also have $H_2(X_1)\cong H_2(M_1)$ realized by $r_1$, which can be seen inductively, using
	the iterative construction of $M_1$ respectively $X_1$ and the Mayer Vietoris sequence. Indeed, when we denote by $X_1(k)$ the manifold constructed corresponding to a subtree $T_k\subset T'$ with $k$ edges
	and we assume that $H_2(X_1(k)) \overset{\cong}{\to} H_2(M_1(k))=\Z^k$ is an isomorphism, then we have the diagram
	\[
	\begin{tikzcd}
		\hdots \to H_2(X_1(k)\setminus S^1)\oplus H_2(S^{2n-1}\setminus S^1) \arrow{r} \arrow{d} &  H_2(X_1(k+1)) \arrow{r} \arrow{d} & H_1(S^1\times S^{2n-3}) \arrow{d}{\cong}\arrow{r} &\hdots \\
		\hdots \to H_2(M_1\setminus S^1)\oplus H_2(D^{2n}\setminus S^1) \arrow{r} &  H_2(M_1(k+1)) \arrow{r} & H_1(S^1)\arrow{r} & \hdots
	\end{tikzcd}
	\]
	The assertion follows because $H_2(X_1(k)\setminus S^1)=H_2(X_1(k))$, $H_2(S^{2n-1}\setminus S^1)=H_2(D^{2n}\setminus S^1)=0$ and
	\[
	H_1(S^1\times S^{2n-3})\to H_1(X_1(k)\setminus S^1)\oplus H_1(S^{2n-1}\setminus S^1)
	\]
	is the $0$-map. Similarly, we can argue for each step after $X_1(T')$ is already constructed (that is, when we start gluing $X_1(T')$ to itself).\\
	Note that the statement about the second homology groups does not depend on the choices of the $T^k$-representation on $\C^n$ made.
\end{remark}
We made some choices in the construction. In order to argue that they are not restrictive, we need an elementary lemma.
\begin{lemma}\label{lem:commuting}
	Let $A\in \text{O}(2n)$ act linearly on $\R^{2n}=(\R^2)^n=\C^n$ such that it commutes with an $S^1$-representation on $\C^n$
	that only fixes $0$. Then $A$ is contained in the standard $\U(n)\subset \SO(2n)$.
\end{lemma}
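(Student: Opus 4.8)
The plan is to reduce the claim to a statement about the infinitesimal generator of the circle action and then to manufacture a metric‑compatible complex structure out of that generator. Write $\rho\colon S^1\to\SO(2n)$ for the given representation and let $X\in\mathfrak{so}(2n)$ be its generator, so $\rho(e^{i\theta})=\exp(\theta X)$ and $X$ is skew‑symmetric. Since $S^1$ is connected, $A$ commutes with $\rho(t)$ for every $t$ if and only if $AX=XA$. The hypothesis that $\rho$ fixes only the origin says exactly that $\ker X=0$, so $X$ is invertible; equivalently, after an orthogonal change of basis $X=\diag(a_1J_0,\dots,a_nJ_0)$ with all weights $a_j\neq 0$, where $J_0$ denotes rotation by $\pi/2$ on an $\R^2$‑block.

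\textbf{Producing the complex structure.}
Next I would build an orthogonal complex structure from $X$. As $X$ is skew and invertible, $-X^2=X^{\top}X$ is symmetric and positive definite, so its positive square root $D:=(-X^2)^{1/2}$ is well defined; being a limit of polynomials in $X^2$ it commutes with $X$, hence also with $D^{-1}$ and with $A$. Put $J:=D^{-1}X$. A direct computation gives $J^{\top}=-XD^{-1}=-D^{-1}X=-J$ and $J^2=D^{-2}X^2=(-X^2)^{-1}X^2=-\Id$, so $J$ is an orthogonal, metric‑compatible complex structure, and $AJ=D^{-1}AX=D^{-1}XA=JA$. Therefore $A$ is $\C$‑linear for $J$ and, being orthogonal, preserves the associated Hermitian form $\langle\cdot,\cdot\rangle-i\langle J\cdot,\cdot\rangle$; that is, $A$ lies in the unitary group of $(\R^{2n},J)$.

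\textbf{Identification with the standard $\U(n)$.}
It remains to identify $(\R^{2n},J)$ with the standard $\C^n$. On the weight‑$a_j$ block one reads off $-X^2=a_j^2\Id_2$, hence $D=|a_j|\,\Id_2$ and $J=\mathrm{sgn}(a_j)\,J_0$; thus $J$ agrees with the standard complex structure $J_{\mathrm{std}}=\diag(J_0,\dots,J_0)$ on the positive‑weight blocks and differs from it only by a complex conjugation on each negative‑weight block. Normalising the signs (which is exactly the freedom available when choosing the representative $\C^n_p$ of the unsigned labels) so that all weights are positive forces $J=J_{\mathrm{std}}$, and then $A$ commuting with $J$ puts $A$ in the standard copy of $\U(n)\subset\SO(2n)$. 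I expect this last identification to be the only delicate point: everything through ``$A$ is unitary for $J$'' is routine linear algebra, but one has to keep track of the signs of the weights so that the conclusion is membership in \emph{the} standard $\U(n)$ rather than merely in some compatible unitary subgroup — which is precisely where the sign choices built into the construction of $\C^n_p$ are used.
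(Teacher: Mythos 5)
Your proof is correct and takes a genuinely different route from the paper's.  The paper averages the standard scalar product over the circle to obtain one invariant under both $A$ and $\rho$, decomposes $\R^{2n}$ orthogonally into $S^1$‑isotypic pieces indexed by unsigned weight, observes $A$ preserves each piece, and then invokes the fact that the centralizer of the diagonal circle in $\mathrm{O}(2k)$ is $\U(k)$.  You instead pass to the Lie algebra, take the skew generator $X$, and use functional calculus to build the compatible complex structure $J=(-X^2)^{-1/2}X$ in one stroke; commutation of $A$ with $J$ is then immediate.  Your route is cleaner and avoids the weight‑space decomposition entirely; it also makes the sign issue fully explicit in the last step — the observation that $J$ and $J_{\mathrm{std}}$ differ by a conjugation on each negative‑weight block, and that the ``standard $\U(n)$'' conclusion really does depend on normalising the weight signs, is a point the paper's proof glosses over when it slides from ``$A$ acts as an element of $\U(k)$'' for the $J$‑induced complex structure to the stated conclusion about the fixed copy of $\U(n)$.

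One small discrepancy worth flagging: you write $X\in\mathfrak{so}(2n)$ at the outset, i.e.\ you tacitly assume $\rho$ is orthogonal with respect to the given inner product.  The paper does not assume this; the averaging step at the start of its proof is there precisely to manufacture a scalar product preserved by both $A$ and $\rho$, after which one may take $X$ skew.  In all of the paper's actual uses of the lemma the circle is a subcircle of a torus already acting orthogonally on $\C^n_p$, so your extra hypothesis is harmless there, but if you want to prove the lemma as literally stated you should either add the averaging step or restrict the hypothesis to orthogonal $\rho$.  Everything else — skew‑symmetry and invertibility of $X$, positive‑definiteness of $-X^2$, commutation of $D=(-X^2)^{1/2}$ with both $X$ and $A$, the identities $J^\top=-J$, $J^2=-\Id$, $J\in\mathrm{O}(2n)$, $AJ=JA$, and the blockwise comparison of $J$ with $J_{\mathrm{std}}$ — is verified correctly.
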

\begin{proof}
	Since $A$ and the $S^1$-representation commute, we may average a scalar product invariant under $A$ and thus get a  scalar product which is invariant under both
	$A$ and the $S^1$-representation. Thus, $\R^{2n}$ decomposes orthogonally into subrepresentations of $S^1$, indexed by their unsigned weight, none of which is $0$
	by assumption. Then $A$ leaves those subrepresentations whose weight is not $\pm 1$ invariant because it commutes with the representation,
	and thus also the orthogonal complement, the subrepresentation with weight $\pm 1$.\\
	On such a subrepresentation of complex dimension $k$, after dividing out the kernel, we simply have the diagonal $S^1$-action on $\C^k$.
	This commutes with $A$ restricted to this $\C^k$, and so $A$ acts as an element of $\U(k)$. This shows the claim.
\end{proof}
Now we can slowly go through the choices made in \cref{X_1^*}.
\begin{remark}
	In the construction, we specified how the linear isomorphism $h=h_{(p_1,p_2)}$ has to look like, but actually any choice
	of linear isomorphism $g$ with respect to which the gluing map $S^1\times S^{2n-3}\to S^1\times S^{2n-3}$ becomes $T^k$-equivariant
	will be homotopic to linear isomorphisms with the same property (which makes the resulting manifolds equivariantly diffeomorphic).
	To see this, we view $g$ as a map $\C^{n-1}\to \C^{n-1}$ and denote by $T'$ the kernel of $\alpha=\alpha((p_1,p_2))$.
	This is of the form $T\times Z$, where $T$ is a $k-1$-dimensional torus and $Z$ a cyclic subgroup (non-trivial if and only if $\alpha$
	is not primitive). Now $T'$ acts on both copies of $\C^{n-1}$, and similar to the proof of \cref{lem:commuting}, we have a scalar product
	invariant under $T'$ and $h^{-1}\circ g$ and an orthogonal decomposition
	of any such $\C^{n-1}$ into subspaces on which $T'$, after dividing out the kernel, acts as the diagonal $S^1$-action. Hence, both $g$ and $h$ restricted to
	any such $m$-dimensional subspace $S$ will be equivariantly homotopic to eachother,
	because $h^{-1}\circ g$ centralizes a subcircle $S^1\subset \U(m)$ and is thus contained in $\U(m)$ by \cref{lem:commuting}, and
	homotopic to the identity through elements in $\U(m)$ centralizing this $S^1$.
\end{remark}
\begin{definition}\label{orientdef}
	We call the GKM graph $\Gamma$ \textit{orientable} if there exists a choice of representations $\C^n_p$, $p\in V(\Gamma)$, and a connection $\nabla$ on $\Gamma$ such that $X_1(\Gamma)$ as in \cref{X_1^*}
	is orientable.
\end{definition}
\begin{remark}
	This does not actually depend on the choice of connection. Indeed, a different choice of connection on an edge $(p_1,p_2)$, while fixing all other choices, only gives rise to a different map
	$h'_{(p_1,p_2)}$, which, by the last remark, is equivariantly homotopic to $h_{(p_1,p_2)}$ (the map from the 'old' connection) through linear isomorphisms.
\end{remark}
\begin{remark}\label{equneigh}
	Note that this was a construction based on an abstract GKM graph. However, given a GKM manifold $M$ whose graph $\Gamma$ is orientable,
	we will show that, with the choices of signs coming from the orientability, $M_1'(\Gamma)$ can be embdedded equivariantly (non-smoothly, up to now)
	into $M$. Indeed, $M_0'$ can clearly be embedded equivariantly (with those choices of signs), and we may modify this embedding in such a way that, whenever $(p_1,p_2)$ is an edge,
	$D(p_1)$ and $D(p_2)$ touch precisely in neighborhoods $\mathcal{N}_1(S^1)\subset D(p_1)$ respectively $\mathcal{N}_2(S^1)\subset D(p_2)$ of the shared subcircle
	in their boundaries and the induced map
	$$S^1\times D^{2n-2}=\mathcal{N}_1(S^1)\to \mathcal{N}_2(S^1)=S^1\times D^{2n-2}$$
	is an equivariant fiberwise linear isomorphism. The image of these $D(p_i)$ under this modification is not necessarily a smooth manifold, but a topological manifold
	with boundary equivariantly homeomorphic to $M_1'(\Gamma)$ (the smoothness will be done later in \cref{equneigh}).
\end{remark}
\begin{remark}\label{singfol}
	There is a natural singular foliation $\mathcal{F}_1$, whose leaves are tori of different dimension, on $M_1'$. This is given on $M_0'$ by the orbits
	of the natural $T^n$-action on $D^{2n}$, and this is preserved under the gluing maps used in \cref{X_1^*} (although the $T^n$-action might not be).
	We will denote by $Y_1 \subset X_1$ the leaves of maximal dimension $n$. This has the natural structure of a (not necessarily principal) $T^n$-bundle over a space $B_1$ which is homotopy
	equivalent to $\Gamma$, because, via the construction in \cref{X_1^*}, $B_1$ is obtained by gluing disks (namely the orbit spaces of the free $T^n$-orbits of the natural action on $S^{2n-1}$)
	onto each other along smaller disks. 
\end{remark}
\begin{remark}
	There is also the definition of an oriented graph as in \cite{BP15}[Definition 7.9.16]. It is unknown to the author how these definitions are connected.
\end{remark}
\begin{definition}
	The GKM graph $\Gamma$ is called $j$-independent, $j\geq 2$, if for any vertex any $j$ labels are linearly independent over $\Q$.\\
	If $j=k=n-1$, then we say that the graph is in \textit{general position}.\\
	If $j=k=n$, then we speak of a \textit{torus graph}.
\end{definition}
Note that the following holds for any $j$-independent GKM graph $\Gamma$: at any vertex, any $k<j$
edges determine a unique ($k$-independent) GKM subgraph (see \cite[Proposition 5.7]{AC22},
for example).
\begin{definition}\label{def:connectionpath}
	A closed simple (that is, edges occur at most once) edge path
	$$e_1=(p_1,p_2),e_2=(p_2,p_3),\hdots,e_{n-1}=(p_{n-1},p_{n}), e_n=(p_n,p_1), e_{n+1}=e_1,\hdots$$
	is called a \textit{connection path} of the GKM graph $(\Gamma,\nabla)$ if $\nabla_{e_{i-1}} e_i=e_{i+1}$ for $i\in \{1,\hdots,n\}$.
\end{definition}
\begin{remark}\label{orientable}
	There are two basic properties of orientability of graphs that we want to mention now.
	\begin{enumerate}
		\item A subgraph $\Gamma'$ of an orientable torus graph $\Gamma$ is orientable. Indeed, the manifold $X_1(\Gamma')$ is a connected component of
		$X_1(\Gamma)^H$ for a closed subgroup $H\subset T^n$, and the normal bundle of $X_1(\Gamma)^H$ is a sum of line bundles, hence orientable.
		\item If the fundamental group of a 3-independent graph is generated by connection paths, then it is orientable. The reason is that the equivariant two-skeleton $(X_1)_2$ of $X_1$
		generates $\pi_1(X_1)$, that every connected component of $(X_1)_2$ is equivariantly diffeomorphic to $S^1\times T^2$ with an action of $T^k$ on $T^2$ induced
		by an epimorphism and that the normal bundle of such an $S^1\times T^2$ is orientable. The last statement is true for $4$-independent graphs, because
		then the normal bundle splits into line bundles. But this is not necessarily true anymore if the graph is 3-independent. In this case, however,
		there is a linear $T^{k-2}$-action on the fiber $\R^{2n-4}$ of the normal bundle that only fixes $0$ and commutes with the element
		$A\in \text{O}(2n-4)$ that defines the normal bundle of $S^1=(S^1\times T^2)/T^2$ in $X_1/T^2$. This implies that $A$ is in $\U(n-2)$ by \cref{lem:commuting}.
	\end{enumerate}
\end{remark}

\newpage

\section{Some statements about graphs}\label{graph}
The main result of this small section is \cref{ext}, which states that certain 4-independent GKM graphs are actually restrictions
of a torus graph. The main assumption is that the fundamental group of $\Gamma$ is generated by connection paths (see \cref{def:connectionpath}).
This assumption
is quite natural in view of \cref{orbitSpace}: if $\Gamma$ was realizable as an equivariantly formal (even only over $\Q$) GKM manifold $M$, then
it would follow that $b_1(M_2^*)=0$, which is equivalent to the statement that $H_1(\Gamma;\Q)$ is generated by connection paths,
because $M_2^*$ is obtained from $M_1^*\cong \Gamma$ by attaching disks only along connection paths. We will see in \cref{rem:graphbetti}
that this, in turn, is equivalent to the same statement on fundamental groups.\\
Let us begin with a more fundamental concept, for which we do not need the 4-independency yet.
\begin{lemma}\label{tree}
	Let $\Gamma$ be an $n$-valent GKM graph with the following properties:
	\begin{enumerate}
		\item Any connection path is a two-valent GKM subgraph.
		\item The group $\pi_1(\Gamma)$ is generated by connection paths.
	\end{enumerate}
	Then there is a maximal contractible tree $T\subset \Gamma$ together with an ordered tuple of edges $e_1,\hdots,e_k=E(\Gamma)\setminus E(T)$
	such that attaching $e_1$ to $T$ closes a connection path, attaching $e_2$ on $T\cup e_1$ closes a connection path, $\hdots$, attaching $e_k$
	on $\Gamma\setminus e_k$ closes a connection path.
\end{lemma}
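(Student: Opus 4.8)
The plan is to build the tree $T$ and the ordered edge list by reverse induction, peeling off one connection-path-closing edge at a time. First I would fix any maximal contractible tree $T_0\subset\Gamma$, so that $E(\Gamma)\setminus E(T_0)=\{f_1,\dots,f_k\}$ has exactly $k=b_1(\Gamma)$ elements, and each $f_i$ determines a generator $[\gamma_i]$ of $\pi_1(\Gamma)\cong F_k$ (the homotopy class of the loop through $f_i$ and the unique $T_0$-path closing it). By hypothesis (2), $\pi_1(\Gamma)$ is also generated by the classes of connection paths $c_1,c_2,\dots$; since $\pi_1(\Gamma)$ is a free group of rank $k$, some $k$ of these connection-path classes already generate it. The key algebraic input is that in a free group of rank $k$, any generating set of size $k$ is a free basis, and more importantly that a single connection path $c$, being by hypothesis (1) a two-valent GKM subgraph (hence an embedded circle in $\Gamma$, i.e.\ a simple closed edge path), uses at least one edge $f_i\notin E(T_0)$; collapsing $T_0$ sends $c$ to a word in the $f_i$'s, and I want to choose $c$ so that this word involves some $f_i$ exactly once.

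The induction step is then as follows. Suppose $\Gamma'\subset\Gamma$ is a subgraph of the form $T\cup\{e_1,\dots,e_{k-m}\}$ (with $T$ a maximal contractible tree of $\Gamma$, so also of $\Gamma'$) such that $b_1(\Gamma')=k-m$ and each $e_j$ closes a connection path of $\Gamma$ upon attachment to $T\cup\{e_1,\dots,e_{j-1}\}$ — at the start $\Gamma=\Gamma'$, $m=0$; I want to decrease $m$ to $0$ in the sense of producing the whole list, so really I induct downward on $m$ from $k$, building the list from the front. Concretely: I claim that among all edges $e\in E(\Gamma')\setminus E(T)$, there is one, call it $e_{k-m+1}$... — cleaner to phrase it as: \emph{there exists an edge $e^\ast\in E(\Gamma')\setminus E(T)$ such that $e^\ast$ lies on a connection path $c$ of $\Gamma$ all of whose other edges lie in $\Gamma'\setminus e^\ast$, equivalently $c$ closes upon adjoining $e^\ast$ to the graph $\Gamma'\setminus e^\ast$.} Granting the claim, set $\Gamma'':=\Gamma'\setminus e^\ast$; then $b_1(\Gamma'')=b_1(\Gamma')-1$, $T$ is still a maximal contractible tree of $\Gamma''$, and I recurse on $\Gamma''$. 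Running this down to $b_1=0$, i.e.\ down to $\Gamma'=T$, produces edges which I then list \emph{in reverse order of deletion} as $e_1,\dots,e_k$; by construction attaching $e_j$ to $T\cup\{e_1,\dots,e_{j-1}\}$ closes a connection path, which is exactly the assertion.

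Everything therefore reduces to the Claim, and that is where the real work — and the main obstacle — lies. The point is to see that $\pi_1(\Gamma')$ (free of rank $b_1(\Gamma')$) is still generated by connection paths of $\Gamma$ \emph{that lie entirely inside $\Gamma'$}: this needs the observation that a connection path is determined by any one of its edges together with the direction, so a connection path meeting $\Gamma'$ in an edge-deleted-from is forced to leave $\Gamma'$ only through that deletion — more carefully, after I delete $e^\ast$ at each stage, the remaining connection paths through the surviving non-tree edges are unaffected because a connection path, once it enters a vertex along a surviving edge, is propagated by $\nabla$ along surviving edges (the deletion of $e^\ast$ removed a whole connection-path worth of information, not a partial one). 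Given that $\pi_1(\Gamma')$ is generated by connection paths contained in $\Gamma'$, pick such a generating set; collapsing $T$, each such path maps to a nonempty reduced word in the free basis $\{f_i : f_i\in E(\Gamma')\setminus E(T)\}$, and since these words generate the whole free group, the abelianized words span $\Z^{b_1(\Gamma')}$, so by a Nakayama/Smith-normal-form argument some connection path $c$ has a word in which some $f_i$ occurs with total exponent $\pm1$; I then take $e^\ast:=f_i$ and check that $c$ is a connection path witnessing the Claim for $e^\ast$ — here I use that $c$ is a \emph{simple} closed edge path (hypothesis (1): it is a two-valent subgraph, so embedded), hence traverses $e^\ast$ exactly once, hence $c\setminus e^\ast$ is an edge path in $\Gamma'\setminus e^\ast$, i.e.\ $c$ closes upon re-adjoining $e^\ast$. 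The delicate points I expect to fight with are: (a) ensuring the exponent-$\pm1$ edge can be chosen \emph{inside the current $\Gamma'$} at every stage (this is where I lean on the fact that connection paths restrict well to $\Gamma'$), and (b) confirming that deleting $e^\ast$ genuinely drops $b_1$ by one and keeps $T$ maximal contractible — routine, since $e^\ast\notin E(T)$ and $\Gamma'$ is connected with $T$ spanning it.
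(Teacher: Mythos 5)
Your overall strategy — delete non-tree edges one at a time, with an inductive invariant that the remaining graph's fundamental group is generated by connection paths living entirely inside it — is a genuinely different route from the paper's, which works with iterated \emph{quotients} $\Gamma \to \Gamma/\gamma_1 \to (\Gamma/\gamma_1)/\gamma_2' \to \cdots$ rather than deletions. The paper then chooses each $e_j$ from the lift of a descended connection path, arranging only that $e_j \notin \gamma_1 \cup \cdots \cup \gamma_{j-1}$; this is weaker than containment of $\gamma_j$ in $\Gamma' \setminus \{e_{j+1},\dots,e_k\}$ and is what actually gets used.

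The real problem is your key Claim, which you rightly flag as the crux but do not actually establish: that $\pi_1(\Gamma')$ (for each intermediate deletion subgraph $\Gamma'$) is generated by connection paths of $\Gamma$ \emph{contained entirely in} $\Gamma'$. Your justification — that ``a connection path is determined by any one of its edges together with the direction'' so ``the deletion of $e^\ast$ removed a whole connection-path worth of information, not a partial one'' — does not hold up. First, a connection path is determined by two consecutive edges, not by one oriented edge: from $\nabla_{e_{i-1}}e_i = e_{i+1}$ you propagate forward and backward only once you know an adjacent pair. A single oriented edge sits on many connection paths (one for each transverse direction at its endpoint). Second, and more decisively, deleting the single edge $e^\ast$ does not delete a connection path: it merely breaks every connection path that passes through $e^\ast$, and these are as many as there are transverse directions, all of which still have all their \emph{other} edges present in $\Gamma'$. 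So the set of connection paths of $\Gamma$ fully contained in $\Gamma'$ is a proper, potentially much smaller set, and there is no a priori reason it should generate $\pi_1(\Gamma')$. Concretely, if the generating connection paths of $\pi_1(\Gamma)$ all happen to pass through $e^\ast$, then none survive into $\Gamma' = \Gamma\setminus e^\ast$, and your invariant fails at the very first step. This is exactly the pitfall the paper avoids by collapsing $\gamma_1$ instead of deleting $e_1$: the quotient map is automatically surjective on $\pi_1$, so the descended connection paths generate $\pi_1(\Gamma/\gamma_1)$ for free; the only bookkeeping needed is identifying which descensions are nonconstant (done by ordering connection paths by length) and lifting back an edge $e_2 \notin \gamma_1$.

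A smaller remark: once your invariant is granted, the Smith-normal-form digression is unnecessary. Since a connection path is simple (hypothesis (1)), its image under collapsing $T$ is a reduced word in which each $f_i$ appears at most once, so the abelianized vector lies in $\{-1,0,1\}^{b_1(\Gamma')}$ and is nonzero (the path cannot live inside the tree); any nonzero entry is already $\pm 1$, so any connection path in $\Gamma'$ works. The hard part is, unavoidably, proving such a connection path exists at each stage — and for that you need an argument closer in spirit to the paper's collapse construction.
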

\begin{proof}
	We denote by $\mathcal{G}_1$ the set of all connection paths of $\Gamma$, ordered increasingly by the number of their edges.
	Take any of the first connection paths $\gamma_1$ of $\mathcal{G}_1$ and remove an edge. This will be $e_1$. Now set $\Gamma_2:=\Gamma/\gamma_1$,
	and let $\mathcal{G}_2$ be the set of all non-trivial (that is, non-constant paths) descensions of elements of $\mathcal{G}_1$, ordered by number of edges.
	$\Gamma_2$ is homotopy equivalent to $\Gamma\setminus e_1$, and we claim that the elements of $\mathcal{G}_2$ generate the fundamental group of $\Gamma_2$.
	To see this, we need to check that no connection path $\gamma$ in $\Gamma_1$ except $\gamma_1$ descends to a point in $\Gamma_2$. This is true as long
	as not all edges of $\gamma$ collapse. If $\gamma$ has more edges than $\gamma_1$, this is clear. If $\gamma$ has the same amount of edges,
	then these would have been precisely those of $\gamma_1$, so $\gamma=\gamma_1$.\\	
	Now take the first element $\gamma_2'\in \mathcal{G}_2$ and remove an edge $e_2'$ of $\gamma_2'$. There is a corresponding $\gamma_2\subset \Gamma$
	(which might possibly intersect $e_1$) and a corresponding edge $e_2$ in $\gamma_2$,
	which is not contained in $\gamma_1$. Therefore, putting $e_1$ inside $\Gamma\setminus (e_1\cup e_2)$ still closes a connection path, and so does
	putting $e_2$ inside $\Gamma\setminus e_2$. Now set $\Gamma_3:=\Gamma_2/\gamma_2'$, which is homotopy equivalent to $\Gamma\setminus (e_1\cup e_2)$,
	define $\mathcal{G}_3$ as usual, and so on. Once again, $\mathcal{G}_3$ generates the fundamental group of $\Gamma_3$ by the same argument as before.\\
	We may repeat these arguments until some $\Gamma_{k+1}$ is contractible (which eventually has to happen before $\mathcal{G}_k$ becomes empty,
	because $\pi_1(\Gamma)$ is generated by connection paths). The ordered tuple $(e_1,\hdots, e_k)$ then has the desired properties.
\end{proof}
\begin{remark}\label{rem:graphbetti}
	Under the assumption that each connection path is a two-valent GKM subgraph, the assumption that $\pi_1(\Gamma)$ is generated by connection paths
	is implied by (and thus equivalent to) the assumption that $H_1(\Gamma;\Q)$ is generated by connection paths. Indeed, by the same process as in the proof of \cref{tree},
	$b_1(\Gamma_{k+1})$ eventually has to become $0$, which, since $\Gamma_{k+1}$ is a graph, implies that $\Gamma_{k+1}$ is contractible.\\
 So, going backwards from $\Gamma_{k+1}$, we see that $\gamma_1$ generates the fundamental group of $\Gamma_{k+1}\cup e_1$,
 $\gamma_1$ and $\gamma_2$ generate the fundamental group of $\Gamma_{k+1}\cup e_1\cup e_2$, and so on, since
 $\gamma_2$ does never cross $e_1$ by choice of $\gamma_2$.
\end{remark}
Consider a two-valent GKM subgraph $\gamma$ of $\Gamma$ and fix a vertex $v\in V(\gamma)$. Denote by $F(v)$ the set $E(\Gamma)_v\setminus E(\gamma)_v$.
When taking an edge in $F(v)$ and pushing this one time around $\gamma$ with the connection, we get a bijection $\mu_{\gamma}\colon F(v)\to F(v)$,
the monodromy map of $\gamma$. Note that this is trivial for all $\gamma$ if the graph is at least $4$-independent, and that in this case,
any two edges at one vertex determine a unique two-valent GKM subgraph. We want to prove the following theorem.
\begin{theorem}\label{ext}
Let $\Gamma$ be an $n$-valent $3$-independent GKM graph with the property that any three edges belong to a unique 3-valent GKM-subgraph
and that
the group $\pi_1(\Gamma)$ is generated by two-valent GKM subgraphs. Then the $\Z^k$-labeling (respectively $\Z^k/\pm 1$)
extends to an effective $\Z^n$-labeling (respectively $\Z^n/\pm 1$).
\end{theorem}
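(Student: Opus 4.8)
The plan is to build the extended $\Z^n$-labeling vertex by vertex, using the tree structure from \cref{tree} to control the ambiguity. First I would fix a maximal contractible tree $T\subset\Gamma$ together with the ordered edges $e_1,\dots,e_k=E(\Gamma)\setminus E(T)$ supplied by \cref{tree} (whose hypotheses are met here, since $3$-independence guarantees every connection path is a two-valent GKM subgraph and $\pi_1(\Gamma)$ is assumed generated by two-valent GKM subgraphs). Over the tree $T$ there is no obstruction: choose a root vertex $v_0$, declare the $n$ labels at $v_0$ to be $\tilde\alpha(e)=(\alpha(e),w_e)\in\Z^{k}\times\Z=\Z^n$ where the extra coordinates $w_e$ are chosen so that the full set of $n$ vectors is a basis of $\Z^n$ (possible because the $\alpha(e)$ are already $k$ linearly independent primitive-ish vectors spanning a rank-$k$ summand after the $3$-independence is used — more precisely one uses that any $k$ of them over $\Q$ are independent, picks the $w_e$ generically, and then adjusts by an automorphism of $\Z^n$ fixing the first $k$ coordinates to make it unimodular). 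Then propagate along the edges of $T$ using the connection: condition (3) in the definition of a compatible connection, $\alpha(\nabla_e f)\pm\alpha(f)=c\,\alpha(e)$, forces the only consistent choice of $\tilde\alpha$ at the far endpoint of each tree edge once we require $\tilde\alpha(\nabla_e f)\pm\tilde\alpha(f)=c\,\tilde\alpha(e)$ in $\Z^n$; the $n$-th coordinates are then determined by the same integer constants $c$. Effectivity of the resulting $\Z^n$-representation at $v_0$ (and hence at every vertex, the graph being connected) is automatic because the $n$ labels form a $\Z$-basis.

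The content is in closing up the non-tree edges $e_1,\dots,e_k$, and this is where I expect the main obstacle to lie. When we adjoin $e_i$ to $T\cup e_1\cup\dots\cup e_{i-1}$ it closes a connection path $\gamma_i$; going around $\gamma_i$ and returning to the starting vertex, the already-defined first-$k$-coordinate labels $\alpha$ must agree (this is what it means for $\gamma_i$ to be a genuine two-valent GKM subgraph of $\Gamma$), but a priori the $n$-th coordinates we have propagated might have a nonzero holonomy. So the key step is: \textbf{the holonomy of the $n$-th coordinate around any connection path vanishes}. To see this, restrict attention to the two-valent GKM subgraph $\gamma_i$ itself and to the third coordinate direction transverse to it. Going once around $\gamma_i$, the connection gives a product of elementary transvections on $\Z^n$ (each of the form $v\mapsto v\pm c\,\alpha(e_j)$ applied in the plane spanned by an edge $e_j$ of $\gamma_i$ and one transverse edge $f$), and the composite is the identity on the rank-$k$ sublattice of $\alpha$-values by the GKM-subgraph hypothesis. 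One then argues that the same product of transvections, which already acts trivially on the first $k$ coordinates, must act trivially on the whole of $\Z^n$: the transvections all lie in the subgroup of $\GL_n(\Z)$ generated by $\{v\mapsto v+\lambda\,\tilde\alpha(e):\lambda\in\Z,\ e\in E(\gamma_i)\}$ composed with permutations of the transverse edges, and on this subgroup the first-$k$-coordinate projection is injective. (Concretely: if the transverse edge bijection around $\gamma_i$ is the identity — which holds since the graph is $3$-independent, so the monodromy $\mu_{\gamma_i}$ is trivial, exactly the remark preceding the theorem — then the holonomy is a product of transvections along the single direction $\tilde\alpha(e_j)$'s, and its first $k$ coordinates being trivial forces the coefficients to telescope to zero, hence the $n$-th coordinate holonomy is zero too.)

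Given the vanishing of the $n$-th-coordinate holonomy, each $e_i$ can be consistently assigned its $\Z^n$-labels (extending the already-fixed $\Z^k$-labels) so that condition (3) holds across $e_i$; since by \cref{tree} every non-tree edge is added in a way that only ever closes a connection path, and the connection on all other edges is unchanged, compatibility of the extended connection with the extended labeling is checked edge by edge and reduces in each case to condition (3) for $\Gamma$ together with the holonomy computation just done. This produces a global signed (resp.\ unsigned) $\Z^n$-labeling restricting to the given one; $3$-independence of $\Gamma$ plus the construction makes the new $n$-th weight linearly independent from any $k-1$ of the old ones at each vertex (choosing the generic $w_e$ at $v_0$ above with this in mind), but note the theorem only claims extension of the labeling, not $j$-independence of the extension, so it suffices that the extension be effective, which we already have. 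The unsigned case follows from the signed case by passing to $\pm1$-quotients throughout; the only point to check is that the sign choices made in propagating along $T$ and closing the $e_i$ can be made coherently, which again is exactly what the connection compatibility condition (3), read with the ambiguous sign, provides.
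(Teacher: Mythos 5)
Your proposal follows the paper's overall architecture (fix the tree from \cref{tree}, seed a $\Z^n$-labeling at a root, propagate via the connection, then close up the non-tree edges one connection path at a time), but the key step — showing the propagated labeling is consistent across each closing edge — is justified by a different mechanism, and this is where the argument breaks down.

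The decisive error is in your parenthetical justification of trivial monodromy. You write that the monodromy $\mu_{\gamma_i}$ is trivial ``since the graph is $3$-independent, \ldots exactly the remark preceding the theorem.'' That remark says the opposite: it asserts triviality of the monodromy for graphs that are \emph{at least $4$-independent}, precisely because for merely $3$-independent graphs it can fail. For $3$-independent graphs, triviality of $\mu_\gamma$ does in fact hold \emph{under the theorem's additional hypothesis} that any three edges determine a unique $3$-valent GKM subgraph — push a transverse edge $f$ around $\gamma$ inside the unique $3$-valent subgraph spanned by $\gamma$ and $f$; since at each vertex that subgraph has exactly one transverse edge, the monodromy must fix $f$ — but you never invoke that hypothesis anywhere in your proof, and it is the essential structural input. (It is also exactly what the paper's proof hinges on, in its third bullet.)

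Even granting trivial monodromy, the ``telescoping'' step is not an argument. The relation $\sum_j c_j\alpha(e_j)=0$ in $\Z^k$ certainly does not force the $c_j$ individually to vanish, since the labels along a $2$-valent subgraph span only a rank-$2$ sublattice; and the desired conclusion $\sum_j c_j\tilde\alpha(e_j)=0$ requires knowing that the \emph{extended} labels $\tilde\alpha(e_j)$ of $\gamma$ (including the one on the not-yet-labeled closing edge) all lie in a rank-$2$ sublattice projecting isomorphically onto $L=\langle\alpha(e_j)\rangle_\Q$. Establishing that — in particular extending across the edge you are trying to close — is the whole content, and is handled in the paper not by a transvection calculus but by first showing that the labeling extends uniquely and consistently over each $3$-valent GKM subgraph $\Gamma'$ containing $\gamma$ (using linearity of the GKM condition together with the uniqueness hypothesis), and then patching the extensions over the various $\Gamma'\supset\gamma$, which intersect pairwise precisely in $\gamma$. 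You would need to supply that missing piece, or an honest replacement for the rank argument, for the holonomy route to go through.
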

\begin{proof}
	We do this in the signed case, the unsigned case is analogous. We begin by noting three things.
	\begin{itemize}
		\item For every maximal contractible tree $T\subset \Gamma$, there is an extension by extending at
		one vertex and then pushing this over whole $T$ via the connection.
		\item By the same reasoning, whenever there is an extension, it is uniquely determined by the extension at one vertex.
		\item At any vertex, any three edges $e_1$, $e_2$ and $e_3$ define a unique 3-valent, 3-independent subgraph $\Gamma'$ by assumption.
		We can extend the labeling $\alpha(e_i)$ on this vertex to a $\Z^n$-labeling $(\alpha(e_i),0)\in \Z^k\times \Z^{n-k}$ and this clearly gives a well-defined
		extension on $\Gamma'$. It follows that any three labels $\alpha'(e_i)\in \Z^n$ give a well-defined $\Z^n$-labeling on $\Gamma'$,
		since the GKM condition is linear and there is an isomorphism $\Q^n\to \Q^n$ sending $(\alpha(e_i),0)$ to $\alpha'(e_i)$.\\
		In particular, for any connection path $\gamma$, there is an extension of the labeling on all edges meeting
		$\gamma$. Indeed, we may extend the $\Z^k$-labeling at some vertex to an effective $\Z^n$-labeling. Now for some vertex $v$ of $\gamma$, we take some edge $e$ at $v$ transverse
  to $\gamma$, and this gives an extension on all edges that meet $\gamma$ \textbf{and} are contained
  in the unique 3-valent GKM subgraph $\Gamma'$ spanned by $\gamma$ and $e$. Now take some other
  transverse edge $e'\neq e$ at $v$ and repeat the argument, then take a transverse edge $e''\neq e,e'$ at $v$ and repeat the argument, and so forth. This will give a well-defined labeling near
  $\gamma$, since the intersection of two of these 3-valent subgraphs near $\gamma$ is precisely $\gamma$.
	\end{itemize}
Now choose a maximal contractible tree $T$ as in \cref{tree} (which is possible, because any two adjacent edges belong to a unique
two-valent GKM subgraph by our assumption), choose an extension at some vertex of $T$ and consider
the induced extension on whole $T$. Normally, when attaching $e_1$, this extension of $T$ might not be compatible.
But here, we close a connection path $\gamma_1$ when attaching $e_1$, so by the third bullet above, the GKM-condition
at each edge of $\gamma_1$ is indeed fulfilled, so in particular at $e_1$. When attaching $e_2$, we close a connection path, again, so the GKM condition is fulfilled,
and so on. This shows that the GKM condition holds everywhere on $\Gamma$, which we wanted to prove.
\end{proof}
In particular, every realizable GKM graph which is at least $4$-independent comes from an $n$-independent GKM graph, since for
GKM$_4$-manifolds $M$ we have $b_1(M_2^*)=\pi_1(M_2^*)=0$, so $M_1^*=\Gamma$ has its fundamental group generated by two-valent subgraphs.

\newpage

\section{Technical properties of manifolds in general position}\label{local}
In this section, we establish some results regarding the homological behaviour of a (possibly non-compact) $T$-manifold $M$ near the equivariant $k$-skeleton $M_k$, for $k\leq n-2$.
We assume that the action of $T$ on $M$ is in general position, that any $j\leq n-2$ edges emerging from a vertex belong to a $T$-invariant, equivariantly formal
(coefficients $R$ are taken to be $\Z$ or $\Q$) face submanifold of dimension $2j$, and that $M_k^*$ is $k-1$-acyclic, $k\leq n-2$. In particular
$b_1(M_2^*)=0$ and thus $\pi_1(\Gamma)$ is generated by connection paths by \cref{rem:graphbetti}.\\
While $M$ is automatically orientable then for $R=\Z$ (by the universal coefficient theorem we have $H^1(M;\Z_2)=\Hom_{\Z}(H_1(M;\Z),\Z_2)=0$),
we need to additionally assume $M$ to be orientable for $R=\Q$.\\

We set $F_{k+1}$ to be $M_{k+1}$ minus an open equivariant neighborhood of $M_k$ in $M_{k+1}$. Every connected component of this belongs
to a face submanifold $Z$ of dimension $2k+2$, hence to a unique GKM subgraph of valence $k+1$, and can be considered to be the complement of a small 
equivariant neighborhood of $Z_k$ in $Z$.\\
There are two settings now: $R=\Z$ and $R=\Q$. The reason why these are different is that for $R=\Z$,
the action of $T$ on every such $Z\setminus Z_k$ is free after dividing out the kernel (by \cref{torusManifold}).
This also implies that $F_{k+1}$ is equivariantly diffeomorphic to the product of a homology disk and $T^{k+1}$, where $T$ acts on $T^{k+1}$ via some surjectice homomorphism.
Likewise, any connected component of the boundary of $F_{k+1}$ is the product of a homology sphere and $T^{k+1}$.\\
For $R=\Q$, the equivariant formality does not imply that the action on $Z$ is locally standard, of course. There might be many disconnected stabilizers occuring,
which needs to be respected in the local description of $M$ around its equivariant $k$-skeleton, $k\leq n-2$. If, in the following, a statement is written without
specifying the coefficients, then it holds for both $\Q$ and $\Z$.
\begin{remark}\label{rem:boundaryidentification}
	In both cases, we may consider the boundary of $F_{k+1}$ both as a subset of $F_{k+1}$ and of the boundary of a small $T$-invariant neighborhood $M_k'$ of $M_k$ in $M$
	intersected with $M_{k+1}$. We write $\partial F_{k+1}$ for the boundary considered in $F_{k+1}$ and $\mathcal{M}_k$ for the boundary
	considered in $M_k'$. Though they are named differently, these are the same sets, and there is the 'natural identification' $\partial F_{k+1}\cong \mathcal{M}_k$ given by $p\mapsto p$.\\
	Although it seems to be redundant, it will be convenient to use these two different notions for the same set. We will come back to this later.
\end{remark}
\begin{lemma}\label{equlinebundles1}
	Let $n\geq 4$. For $k\leq n-3$, the normal bundle of any $2k$-dimensional face submanifold $Z$ splits equivariantly into line bundles (in particular, $Z$ is orientable also for $R=\Q$).\\
	Specifically for $R=\Z$, the normal bundle restricted to $Z\setminus Z_{k-1}$ is equivariantly trivial (that is, a product space endoweed with a product action),
	and for $Z$ of dimension $2n-4$, the normal bundle of $Z$ restricted to the boundary of an equivariant neighborhood of $Z_{n-3}\subset Z$ is equivariantly trivial.
\end{lemma}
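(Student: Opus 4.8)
The plan is to decompose the statement into its three assertions and treat each one via the local structure of a manifold in general position near a face submanifold of low enough equivariant skeleton depth. For the first assertion, I would argue at a generic point $x$ of $Z$: by \cref{rem:isotropygenpos}, the isotropy representation of $T_x^0$ on the normal space $\R^{2(n-k)}$ is in general position, and since $\dim Z = 2k \le 2(n-3)$ the torus $T_x^0$ has dimension at least $2$ acting on the $(n-k)$-dimensional complex normal space with every pair of weights linearly independent; in fact it has enough dimension that the normal representation splits into \emph{distinct} weight lines. The key point is that because $Z$ is a face submanifold the whole torus $T$ (not just $T_x^0$) acts on the normal bundle, and the weights at $x$ are pairwise non-proportional, so no two normal line summands can be interchanged by the connection (the monodromy/holonomy of the equivariant normal bundle must preserve the weight decomposition). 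Hence the normal bundle is a Whitney sum of equivariant complex line bundles; orientability of $Z$ for $R = \Q$ then follows because an equivariant sum of complex line bundles is orientable and $M$ was assumed orientable (or, for $R=\Z$, automatically orientable, as noted at the start of \cref{local}).

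For the second assertion, specialize to $R = \Z$. Here the action is locally standard, so by \cref{lem:isotropycomplone} every stabilizer is a subtorus times at most one cyclic factor. Over $Z \setminus Z_{k-1}$ the stabilizers have constant connected part (the subtorus $H$ fixing $Z$ pointwise, of dimension $n-1-k$), and the orbit space $(Z\setminus Z_{k-1})^*$ is a manifold with corners away from the deleted skeleton; since the face is equivariantly formal its faces are acyclic by \cref{torusManifold}, applied to $Z$ viewed as an equivariantly formal torus $T^{k+?}$-manifold after passing to the effective quotient. The point is that on $Z\setminus Z_{k-1}$ all remaining stabilizers (beyond $H$) are trivial after dividing out $H$, so each of the $n-k$ normal line bundles is classified by a map to $BS^1$ factoring through the contractible (acyclic, in the relevant range) orbit space of a locally standard torus manifold, forcing each line bundle — and hence their sum — to be equivariantly trivial; equivariant triviality follows from ordinary triviality of the line bundle together with the fact that the $T$-action on the fiber is by a fixed character, so a non-equivariant trivialization can be averaged to an equivariant one.

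For the third assertion, take $Z$ of dimension $2n-4$ (so $k = n-2$, the top case excluded from the first two assertions' clean hypotheses) and restrict the normal bundle — which is now a sum of two complex line bundles — to $X := \partial(\text{nbhd of } Z_{n-3} \text{ in } Z)$. The strategy is that $Z \setminus Z_{n-3}$, after dividing out the circle stabilizer of $Z$, is a free or almost-free quotient, and by the hypotheses of this section ($M_{n-2}^*$ is $(n-3)$-acyclic) the orbit space of $X$ is acyclic in the degrees that detect the two line-bundle classes in $H^2$; combined with \cref{torusManifold} applied to the codimension-$4$ face $Z$ (equivariantly formal, hence its faces are acyclic), each line bundle restricted to $X$ is trivial, and equivariant triviality again follows by averaging. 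I expect the main obstacle to be the second and third assertions' equivariant (as opposed to ordinary) triviality claims: one must carefully track that the relevant classifying space is the Borel construction and that the obstruction to an equivariant trivialization of a line bundle with a fixed isotropy character lives in the ordinary cohomology of the base (the orbit space), which is then killed by the acyclicity hypotheses of \cref{local}; the bookkeeping of which skeleton piece of $Z$ one has deleted, and verifying that $Z$ genuinely satisfies the hypotheses of \cref{torusManifold} after passing to the effective quotient torus, is where the argument needs the most care.
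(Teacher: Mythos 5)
Your treatment of the first assertion takes a genuinely different route from the paper's. You argue pointwise, using that the generic normal isotropy weights are pairwise non-proportional (from \cref{rem:isotropygenpos}) so that the weight-line decomposition cannot be permuted by holonomy. The paper instead observes that $Z$ lies in exactly $n-k$ face submanifolds of dimension $2(k+1)$ (which exist because of the standing hypothesis in \cref{local} that any $j\le n-2$ edges at a vertex span an equivariantly formal face), and the normal bundle of $Z$ inside each is an equivariant line bundle whose Whitney sum gives the full normal bundle. Both arguments lead to the splitting, but the paper's is more economical, and it automatically provides the line bundles one works with in the second assertion; your version would still need a small argument showing the generic weight-line decomposition extends continuously over $Z_{k-1}$, where the stabilizer jumps.

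The genuine gap is in your claim that ``equivariant triviality follows from ordinary triviality of the line bundle together with the fact that the $T$-action on the fiber is by a fixed character, so a non-equivariant trivialization can be averaged to an equivariant one.'' This averaging argument fails. Take the toy example $T = S^1$ acting freely on $Z = S^1$ and $L = Z\times\C$ with $g\cdot(z,v) = (gz, g^m v)$: the non-equivariant section $s(z)=(z,1)$ averages to $\tilde s(z) = (z, \int_{S^1} g^{-m}\,dg) = (z,0)$ for $m\neq 0$. More seriously, an equivariant line bundle over a $T$-space can be non-equivariantly trivial yet equivariantly non-trivial, so the logic ``ordinary triviality $\Rightarrow$ equivariant triviality'' is false even before one tries to produce a trivialization by averaging. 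The paper circumvents this by invoking \cite[Theorem A]{LMS83}, which classifies $T$-equivariant principal $S^1$-bundles by the equivariant first Chern class $c_1^T\in H^2_T(Z\setminus Z_{k-1};\Z)$; one then shows $c_1^T$ vanishes using that the $T$-action on $Z\setminus Z_{k-1}$ is free modulo the kernel, that $(Z\setminus Z_{k-1})^*$ is a homology disk (so the Borel construction has the cohomology of the orbit Borel construction $T^k\times_T ET$), and that the $S^1$-bundle restricted to a single $T$-orbit is equivariantly trivial. Your closing paragraph correctly suspects that the obstruction lives in $H^2_T$ and is killed by acyclicity of the orbit space, but your earlier averaging claim contradicts this and would not survive scrutiny. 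Finally, note that the paper does not prove the third assertion inside this lemma at all: it defers it to \cref{equlinebundles2}, where the argument uses the $T^n$-extension of the action coming from \cref{fol} (which itself relies on the first two assertions here), so there is a logical ordering your proposal does not respect.
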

\begin{proof}
	$Z$ is contained in $n-k$ face submanifolds of dimension $2(k+1)$, so the normal bundle of $Z$ in all of these is an equivariant line bundle.
	This implies the first claim.\\
	For $R=\Z$, the bundle (which is a sum of equivariant line bundles) is equivariantly trivial over $Z\setminus Z_{k-1}$ if
	every line bundle is equivariantly trivial over $Z\setminus Z_{k-1}$, and the latter is true if and only if the corresponding $S^1$-bundle
	$S^1\to E\to Z\setminus Z_{k-1}$ is equivariantly trivial over $Z\setminus Z_{k-1}$. By \cite[Theorem A]{LMS83}, this is true if and only if the equivariant first Chern class $c_1$
	of this bundle vanishes. Since the $T$-action on $Z\setminus Z_{k-1}$ is free after dividing out the kernel and the orbit space is a homology disk,
	$c_1$ is determined by its restriction to $T^k\times_T BT$. But the $S^1$-bundle restricted to a $T$-orbit in $Z\setminus Z_{k-1}$
	is equivariantly trivial, because the total space itself is a $T$-orbit, so we are done.\\
	The third claim will be treated soon in \cref{equlinebundles2}.
\end{proof}
In the next lemma, we establish the existence of something like an 'equivariant tubular neighborhood' of the equivariant $k$-skeleton in $M$.
\begin{lemma}\label{defretr}
	Let $n\geq 3$. For all $k\leq n-2$, there is a closed equivariant deformation retract $r_k\colon M_k'\to M_k$ in $M$ whose boundary $X_k$ is a smooth manifold.
	Moreover, $r_{k+1}$ restricted to $M_k'$ minus a neighborhood of $\mathcal{M}_k=(M_{k+1}\cap X_k)$ may be assumed to be equivariantly homotopic
	to $r_k$ and, when restricted to a suitable embedding of
	the normal bundle of $F_{k+1}$ in $M_{k+1}'\setminus (M_k'\setminus X_k)$, it may be assumed to collaps the fibers.\\
	Further, the flow $F_{[0,1]}(X_k)$ of $X_k$ under the whole homotopy intersects $X_k$ only at $t=0$,
	and $F$ restricted to $[0,\eps]\times X_k$ may be assumed to be smooth with $\partial_{t=0} F_t$ being a vector field transverse to $X_k$.\\
	If the $T$-action comes from a $T^n$-action, then we may take everything equivariant with respect to this $T^n$-action.
\end{lemma}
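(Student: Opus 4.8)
The plan is to build $M_k'$ and the retraction $r_k$ by induction on $k$, gluing together local models coming from the slice theorem, exactly in the spirit of the construction of $M_1'(\Gamma)$ in \cref{X_1^*}. For the base case $k=0$ (or $k=1$ if one prefers to start from the one-skeleton already handled in \cref{secorient}), one takes a disjoint union of equivariant disk neighborhoods $D(v)$ of the fixed points; each carries the obvious radial deformation retract onto its center, and the flow of the boundary sphere under this homotopy is embedded except at $t=0$, with $\partial_{t=0}F_t$ the outward radial field, which is transverse to $X_0$. For the inductive step, assume $r_k\colon M_k'\to M_k$ with smooth boundary $X_k$ has been constructed. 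The set $F_{k+1}$ (= $M_{k+1}$ minus an open equivariant neighborhood of $M_k$) decomposes into pieces, each sitting inside a $2(k+1)$-dimensional face submanifold $Z$ and equivariantly diffeomorphic to the complement of an equivariant neighborhood of $Z_k$ in $Z$; by \cref{equlinebundles1} (for $k\le n-3$) the normal bundle of each such piece in $M_{k+1}'\setminus(M_k'\setminus X_k)$ splits into equivariant line bundles, so one can embed this normal bundle and use the fiberwise radial retraction to collapse it onto $F_{k+1}$. Gluing: over $\mathcal{M}_k=M_{k+1}\cap X_k$ one identifies the boundary $\partial F_{k+1}$ of this tube with the already-constructed collar of $X_k$ from the homotopy in stage $k$ — this is where \cref{rem:boundaryidentification}'s identification $\partial F_{k+1}\cong\mathcal{M}_k$ enters — and one arranges the new homotopy to agree with $r_k$ (up to equivariant homotopy) away from a neighborhood of $\mathcal{M}_k$. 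The result is $M_{k+1}'$ with smooth boundary $X_{k+1}$, and $r_{k+1}$ restricts to (something homotopic to) $r_k$ on $M_k'$ minus a neighborhood of $\mathcal{M}_k$ and collapses fibers on the normal bundle of $F_{k+1}$, as required.

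The flow statement is handled alongside the construction. On each local model the homotopy is by construction a smooth radial contraction, so for small $t$ the flow $F_t$ of $X_k$ (resp.\ $X_{k+1}$) is an embedding — the trajectories are radial rays pointing strictly inward — and $\partial_{t=0}F_t$ is the inward radial vector field, which is transverse to the boundary; since the gluings are equivariant diffeomorphisms of collar neighborhoods, these properties are preserved when the local pieces are patched together, after reparametrizing the time variable so the speeds match along the overlaps. That $F_{[0,1]}(X_k)$ meets $X_k$ only at $t=0$ follows because the image of the homotopy stays in $M_k'$ and the boundary is crossed only initially — inductively this is inherited from the corresponding statement for $r_k$ together with the same property for the radial contraction of the new tubes. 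Finally, when the $T$-action is the restriction of a $T^n$-action, every step above can be carried out $T^n$-equivariantly: the slice/normal-bundle decompositions of \cref{equlinebundles1} are $T^n$-equivariant, the radial contractions are canonical, and the gluing diffeomorphisms can be chosen $T^n$-equivariant by the same argument (via \cref{lem:commuting}) used in \cref{secorient} to show the gluing maps for $M_1'(\Gamma)$ are unique up to equivariant homotopy.

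The main obstacle I expect is the \emph{compatibility of the retractions across the gluing locus} — making $r_{k+1}|_{M_k'}$ equivariantly homotopic to $r_k$ while simultaneously having it collapse the normal fibers of $F_{k+1}$. Near $\mathcal{M}_k$ these two prescriptions must be interpolated: the collar of $X_k$ produced by stage $k$ is, by the inductive hypothesis, the restriction of the flow $F_{[0,\eps]}$, and one must identify it with the boundary region of the normal-bundle tube of $F_{k+1}$ and choose the new homotopy to agree with the old one outside and with the fiber-collapse inside. This is a partition-of-unity / collar-straightening argument that is routine in spirit but fiddly to state precisely in the equivariant smooth category; the orientability remarks ensure no global obstruction to the line-bundle splittings appears for $k\le n-3$, and for the top case $k=n-2$ one uses the $R=\Z$ triviality statements in \cref{equlinebundles1} (the piece about $Z$ of dimension $2n-4$, proved in \cref{equlinebundles2}) in place of a full splitting.
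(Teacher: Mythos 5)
Your proposal follows the paper's own proof essentially step for step: induction on $k$, building $M_{k+1}'$ by attaching the equivariant disc-bundle neighborhood $F_{k+1}\tilde{\times} D^{2(n-k-1)}$ to $M_k'$ along $\mathcal{M}_k$ via the fiberwise-linear identification $f_k$ coming from the normal-bundle splitting in \cref{equlinebundles1}, with the interpolation between $r_k$ and the fiber-collapse near $\mathcal{M}_k$ --- which you correctly single out as the delicate point --- carried out in the paper by an explicit choice of bump functions $\rho_1,\rho_2$ on the region $U'$ shown in Figure 2. One small correction: the paper does not invoke the $R=\Z$ triviality of \cref{equlinebundles2} in this step; for $k=n-3$ it only uses that a neighborhood of $\mathcal{M}_{n-3}$ in $X_{n-3}$ is an equivariant $D^4$-bundle (not necessarily split into line bundles), which is all the isotopy-to-fiberwise-linear argument and the subsequent gluing require.
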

\begin{proof}
	This exists for $k=0$ as already mentioned in \cref{X_1^*}.\\
	Assume $r_k$ and an equivariant homotopy $F_t$ from the identity to $r_{k}$ exists for some $k\leq n-3$. In order to construct this for $k+1$, we need to
	construct $r_{k+1}$ on $M_{k}'$, first.\\
	By \cref{equlinebundles1}, a small, closed equivariant neighborhood of $\mathcal{M}_k$ in $X_k$ is equivariantly diffeomorphic to $\mathcal{M}_k\tilde{\times} D^{2(n-k-1)}$
	(and we fix this identification together with a fiber metric $g$ from now on), where we just mean that this is an equivariant bundle over $\mathcal{M}_k$ (which splits equivariantly into line bundles for $k\leq n-4$).
	The complement $N_k$ of the interior of $M_k'$ in $M$ is a manifold with boundary $X_k$, and its equivariant $k+1$-skeleton is
	precisely $F_{k+1}$. Again, a neighborhood of $F_{k+1}$ in $N_k$ is equivariantly diffeomorphic to $F_{k+1}\tilde{\times} D^{2(n-k-1)}$. 
 By possibly shrinking the neighborhood, we may assume that under the identification in \cref{rem:boundaryidentification}
	$\partial F_{k+1}\tilde{\times} D^{2(n-k-1)}$ is even mapped into $\mathcal{M}_k\tilde{\times} D^{2(n-k-1)}$. It is now standard differential topology
	that this emebdding can be isotoped rel $\partial F_{k+1}\tilde{\times} \{0\}$  equivariantly into a fiberwise linear embedding and maps $\partial F_{k+1}\tilde{\times} D^{2(n-k-1)}$
	into the set $g(v,v)\leq \beta$ for some $\beta>0$. Then we can rescale
	those fibers to make this embedding an isomorphism
    $$f_k\colon \partial F_{k+1}\tilde{\times} D^{2(n-k-1)} \to \mathcal{M}_k\tilde{\times} D^{2(n-k-1)},$$
    which, in a local section, is of the form
	\[
	f_k(p,v):=(p,h_k(p)(v)),
	\]
	where $h_k(p)\in \SO(2(n-k-1))$ is an invariant map with the additional property that $h_k(p)$ has to commute with the action of $T_p$, the stabilizer
	of $p$, on the $D^{2(n-k-1)}$-fiber over $p$.\\	
	
	By the inductivity assumption, a neighborhood $V$ of $\mathcal{M}_k$ in $M_k'$ is equivariantly diffeomorphic to $\mathcal{M}_k\tilde{\times} D^{2(n-k-1)}\times [0,\eps]$ via
	the flow of $\mathcal{M}_k\tilde{\times} D^{2(n-k-1)}$ under $F_t$. We may assume, by choosing all neighborhoods small enough,
	that $F_t(X_k)$ hits $V$ only for $p\in \mathcal{M}_k\tilde{\times} D^{2(n-k-1)}\subset X_k$ and $t\leq \eps$.\\
	Assuming that the disk has radius $1$,
	we consider the subset $U$ in there consisting of all points $(p,v,t)$ that satisfy $1/2\leq ||v||+\eps^{-1} t\leq 1$ (the red lines in \cref{fig:defretr1}).	
		\begin{figure}[ht] 
		\centering
			\begin{tikzpicture}
			\path[draw] (0,0)--(3,0)--(3,3)--(0,3)--cycle;
			\foreach \x in {0,...,20}{
				\draw[red] (1.5*\x*0.05,0)--(3,3-1.5*\x*0.05);}
			\foreach \x in {0,...,20}{
			\draw (1.5*\x*0.05,0)--(1.5*\x*0.05,3);}
			\foreach \x in {0,...,20}{
			\draw[green] (1.5+1.5*\x*0.05,0)--(3,1.5-1.5*\x*0.05);}
			\end{tikzpicture}
		\caption{\label{fig:defretr1}}
	\end{figure} 
	This is equivariantly diffeomorphic to $\mathcal{M}_k\tilde{\times} D^{2(n-k-1)}\times [1/2,1]$ (the black lines in \cref{fig:defretr1}).
	In fact, there is a straighforward isotopy $j_t$ rel $\mathcal{M}_k\times \{0\}\times [1/2,1]$ from the embedding
	$\mathcal{M}_k\tilde{\times} D^{2(n-k-1)}\times [1/2,1]\hookrightarrow \mathcal{M}_k\tilde{\times} D^{2(n-k-1)}\times [0,1]$ into $U$.\\
	Likewise, $U'$ consisting of all points with $0<||v||+\eps^{-1} t\leq 1$ (the union of the red lines and the green lines) is equivariantly diffeomorphic to $\mathcal{M}_k\tilde{\times} D^{2(n-k-1)}\times (0,1]$,
	and we fix this identification from now on.\\
	Choose a bump function $\rho^1\colon [0,1]\to [0,1]$ identically $0$ on $[0,1/2+\delta]$ and identically $1$ on $[1/2+ 2\delta,1]$ for some small $\delta>0$.
	Moreover, choose a bump function $\rho^2 \colon [0,1]\to [0,1]$ identically $1$ near $1$ and identically $0$ on $[0,3/4]$.
	Define
	\[
	\rho_1\colon (0,1]\to [0,1], \; \rho_1(x):=\rho^1(x)/x, \quad \rho_1\colon (0,1]\to [0,1], \; \rho_2(x):=\rho^2(x)/x
	\]
	For $x\in \mathcal{M}_k$ we define $r_{k+1}(x):=x$, for $x=(p,v,t)\in U'$ we define
	\[
	r_{k+1}(x):=F_{\rho_1(||v||+\eps^{-1}t)}(p,\rho_2(||v||+\eps^{-1}t) v, t+\eps(1-\rho_2(||v||+\eps^{-1}t))||v||),
	\]
	and for $x$ not in $U'$ we set $r_{k+1}(x):=r_k(x)$ (that is, we interpolate between $r_k$ way outside $\mathcal{M}_k$ and the collapse of the $D^{2(n-k-1)}$-fibers
	corresponding to the green lines in \cref{fig:defretr1}).\\
	The image of $r_{k+1}$ is in $M_{k+1}$. There is an equivariant homotopy $G_s$ from the identity to $r_{k+1}$ which is given by
	$F_s$ outside $U'$ and inside $U'$ by
	\[
	G_s(y)=F_{s\rho_1(||v||+\eps^{-1}t)}(p,(1-s(1-\rho_2(||v||+\eps^{-1}t))) v, t+\eps s(1-\rho_2(||v||+\eps^{-1}t))||v||)
	\]
 (that is, we interpolate between the homotopy $F_s$ way outside $\mathcal{M}_k$ and the continouus shrinking of the $D^{2(n-k-1)}$-fibers	corresponding to the green lines in \cref{fig:defretr1}).
	Moreover, by the choice of $V$ above, this homotopy can be restricted to
	$M^k$ (by which we mean $M_{k}'$ minus the green region in \cref{fig:defretr1}), and so $r_{k+1}$ may be assumed to be equivariantly homotopic to the
	identity on $M^k$ as well. It is clear that $\partial_{s=0} G_s$ is transverse to $X_k\cap M^k$.\\
	
	We now define the space $M_{k+1}'$ to be $F_{k+1}\tilde{\times} D^{2(n-k-1)}$ attached to $M^k$
    along $U\cong \mathcal{M}_k\times D^{2(n-k-1)}\times [1/2,1]$ using $f_{k}$. It is straightforward to see that this embeds into $M$ in the desired way.
	The map $r_{k+1}$ and the homotopy extend to $F_{k+1}\tilde{\times} D^{2(n-k-1)}$ and thus $M_{k+1}'$ naturally, and the homotopy defines a collar of $X_{k+1}$ in $M_{k+1}'$.\\

	Now, $r_{k+1}$ restricted to $M_{k+1}$ is not the identity, but it is the identity on $M_k$ and equivariantly homotopic to the identity
	on $M_{k+1}$ with support in a neighborhood of $M_k$. Since $M_{k+1}$ is an equivariant subcomplex of the equivariant CW-complex $M_{k+1}'$,
	this homotopy lifts and we may take this new map as our $r_{k+1}$, which then fulfills all properties we wanted it to have.
\end{proof}
\begin{corollary}\label{cor:defretrnaturality}
	The deformation retract $r_k\colon M_k'\to M_k$ is natural with respect to subspaces in the sense that for a face submanifold $Z\subset M$,
	we have that the restriction $r_k\colon M_k'\cap Z\to M_k\cap Z$ is well-defined and has all the properties that the map $r_k$ has.
\end{corollary}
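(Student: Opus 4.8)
The plan is to revisit the inductive construction in the proof of \cref{defretr} and to observe that every choice made there---the equivariant tubular neighbourhoods, the identifications of normal bundles with $\mathcal{M}_k\tilde{\times}D^{2(n-k-1)}$ and $F_{k+1}\tilde{\times}D^{2(n-k-1)}$, the fibre metric $g$, the straightening isotopies, the gluing maps $f_k$ and bump functions $\rho^1,\rho^2$---can be made \emph{adapted} to the finite family of face submanifolds of $M$, in the sense that for every face submanifold $Z$ the resulting $M_k'$, $r_k$ and homotopy $F_t$ restrict on $Z$ to exactly the objects produced by running the same construction on the $T$-manifold $Z$ (which itself satisfies the hypotheses of this section, e.g.\ by \cref{orbitSpace} applied to the equivariantly formal torus manifold $Z$). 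Once this is arranged, well-definedness and the inheritance of all the properties of $r_k$ are immediate. Since $Z\subset M_k$ whenever $\rk(Z)\le k$---in which case $r_k$ restricts to the identity and the claim is trivial---only the case $\rk(Z)\ge k+1$ needs attention, and there $M_k\cap Z$ is the equivariant $k$-skeleton of $Z$.

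For the base case $k=0$, $M_0'$ is a disjoint union of equivariant discs $D(p)\subset\C^n_p$ around the fixed points, with $r_0$ the radial retraction (the $k=0$ input of \cref{defretr}, cf.\ \cref{X_1^*}). Near a fixed point $p\in Z$ the submanifold $Z$ is the complex coordinate subspace $\C^{\rk(Z)}_p\subset\C^n_p$ spanned by the edges of $Z$ at $p$, so one may choose the discs so that $M_0'\cap Z=\coprod_p\bigl(D(p)\cap\C^{\rk(Z)}_p\bigr)$ is the $0$-skeleton neighbourhood of $Z$, and the radial retraction restricts to the radial retraction. For the inductive step, assume the statement holds for $r_k$ with $k\le n-3$, together with an adapted product identification of a neighbourhood $V$ of $\mathcal{M}_k$ in $M_k'$. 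The key point is \cref{equlinebundles1}: the normal bundle of $F_{k+1}$ splits equivariantly into line bundles indexed by the edges transverse to $F_{k+1}$, and the sub-sum over the edges tangent to $Z$ is the normal bundle of $F_{k+1}\cap Z$ inside $Z$; fixing an invariant metric that orthogonalises all these splittings, one may choose the tubular-neighbourhood embedding of $F_{k+1}\tilde{\times}D^{2(n-k-1)}$, the identification near $\mathcal{M}_k$, and the straightening isotopy so that each carries the corresponding sub-disc-bundle for $Z$ into $Z$; then $F_{k+1}\cap Z$ is the degree-$(k+1)$ piece of $Z$ and $f_k$ restricts to the analogous gluing map for $Z$. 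Because $\rho^1,\rho^2$ depend only on $\|v\|+\eps^{-1}t$ and the formulas defining $r_{k+1}$ and $G_s$ move the fibre coordinate $v$ only by rescaling within its own fibre, both $r_{k+1}$ and the homotopy $G_s$ restrict to the $Z$-versions; hence $M_{k+1}'=F_{k+1}\tilde{\times}D^{2(n-k-1)}\cup_{f_k}M^k$ meets $Z$ in the corresponding $M_{k+1}'$ of $Z$, and $r_{k+1}$, its collar and $G_s$ restrict accordingly. The final step---lifting, across the equivariant CW-pair $(M_{k+1}',M_{k+1})$, the homotopy correcting $r_{k+1}|_{M_{k+1}}$ to the identity on $M_k$---can equally be done compatibly, since $(M_{k+1}\cap Z,M_k\cap Z)$ is a CW-subpair; and the flow, transversality and (when present) higher-torus equivariance statements pass to $Z$ for free.

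The main obstacle is not any single step but the requirement of \emph{simultaneous} compatibility with the whole poset of face submanifolds: at each inductive stage one must produce equivariant tubular neighbourhoods and straightening isotopies adapted to all faces at once. This is the familiar ``compatible tubular neighbourhoods'' difficulty, and it goes through here because any two face submanifolds meet cleanly and all the relevant normal bundles split into line bundles indexed by edges (\cref{equlinebundles1}), so that a single invariant metric orthogonalising every such splitting lets one build all the neighbourhoods and isotopies from its exponential map. I expect the bookkeeping of this poset, rather than a genuinely new idea, to be the bulk of a careful write-up.
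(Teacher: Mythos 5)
Your argument is not wrong in spirit, but it misses the point of the corollary and therefore ends up re-proving \cref{defretr} from scratch under extra bookkeeping constraints. The paper's proof is a one-liner, and your approach is orders of magnitude more work than necessary.

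The key observation you have overlooked is that a face submanifold $Z$ is (a connected component of) the fixed-point set $M^H$ of some subtorus $H\subset T$. The map $r_k$ and the homotopy $F_t$ from the identity to $r_k$ constructed in \cref{defretr} are $T$-equivariant, hence carry $M^H$ into $M^H$. Since $F_0=\mathrm{id}$, continuity of $t\mapsto F_t(x)$ forces each connected component of $M^H$ to be preserved; in particular $F_t$ and $r_k$ restrict to maps of $M_k'\cap Z$ into $Z$, landing in $M_k\cap Z$ at $t=1$. All the subsidiary structure (the collar, the transversality of $\partial_{t=0}F_t$, the $T^n$-equivariance when available) restricts likewise, because a $T$-invariant vector field along $M^H$ is automatically tangent to $M^H$ and $X_k\cap Z=(X_k)^H\cap Z$ is automatically a smooth submanifold, being a component of the fixed set of a subtorus acting on the smooth manifold $X_k$. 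That is the entirety of the paper's proof: equivariance alone gives naturality with respect to \emph{every} face submanifold, for the $r_k$ already built in \cref{defretr}, without any compatible-tubular-neighbourhood machinery.

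Your plan---redoing the induction while keeping all choices simultaneously adapted to the full poset of faces---would, if carried out in detail, also prove the corollary (indeed a slightly stronger statement: that $r_k|_Z$ coincides with the $r_k$ one would build intrinsically on $Z$). The clean intersections and the line-bundle splittings of \cref{equlinebundles1} do make the compatible-tubular-neighbourhood argument go through. But you have paid a steep price for a conclusion that is automatic: you are forced to rebuild $M_k'$, $r_k$, and the homotopies from the ground up rather than using the already-constructed ones, and you have deferred the nontrivial poset bookkeeping to ``a careful write-up.'' Before embarking on such a reconstruction, it is worth asking whether the objects you already have preserve the relevant subspaces for structural reasons; here they do, because face submanifolds are fixed-point components and everything in sight is equivariant.
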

\begin{proof}
	This is an immediate consequence of the equivariance of the map $r_k\colon M_k'\to M_k$ as well as the equivariance of its homotopy to the identity.
\end{proof}
\begin{remark}\label{rem:identifications}
	We have fixed some identification of a small neighborhood of $\mathcal{M}_k$ in $X_k$ with $\mathcal{M}_k \tilde{\times} D^{2(n-k-1)}$,
	and relative to this we have the isomorphism $f_k$ as in the last proof. Whenever we have a $T^n$-action on $M_k'$,
	we choose this identification $T^n$-equivariant, though.\\
	We will show soon that the $T$-action on $M_k$ always comes from a $T^n$-action for $k\leq n-3$, and then we
	fix this identification to be $T^n$-equivariant (and refer to $f_k$ as the isomorphism relative to this fixed identification).
\end{remark}
\begin{remark}\label{X_1*}
	In our special case of the graph being in general position and the action being locally standard (\cref{lem:approp}),
	the orbit space $X_1^*$ is homeomorphic to $\#_k S^1\times S^{n-1}$ (where $k$ is the rank of $\pi_1(\Gamma)$) and we can choose $k$
	embedded circles in the free stratum as a basis of $\pi_1(X_1^*)$. Indeed, the orbit space of $X_1(T')$ (see \cref{X_1^*} for the notation), is homeomorphic to $S^n$,
	and for every edge $e\in \Gamma \setminus T'$, we may choose a path connecting the two $T$-fixed points in $X_1(T')$ corresponding to $e$
	whose interior is in the free stratum. The connected sum of $X_1(T')$ along these points now gives $X_1$ together with $k$ subcircles
	in the orbit space of the free stratum forming a basis of $\pi_1(X_1)=\pi_1(X_1^*)$. That the $k$-fold connected sum of $S^n$ with itself
	gives $\#_k S^1\times S^{n-1}$ is standard, because a connected sum of any manifold $M=M^n$ with itself is the connected sum of
	$M$ with $S^1\times S^{n-1}$ (write $M$ as $M \# S^n$ and perform the sum on two points of $S^n$).
\end{remark}
\begin{remark}\label{rem:orbitspacemanifold}
	For $R=\Z$, it is clear from the construction in \cref{defretr} that if the $T^{n-1}$-action on $M_k'$ is locally standard (which is true for $k=0$), then the $T^{n-1}$-action
	on $M_{k+1}'$ is locally standard, so that all orbit spaces $X_k^*$ are topological manifolds for $R=\Z$ by \cref{lem:approp}.\\
	For $R=\Q$, these are orbifolds. Indeed, $X_1^*$ is a manifold, and for a connected component $C$ of $F_{k+1}$ and its principal isotropy subgroup $T'$,
	$C\tilde{\times} D^{2(n-k-1)}/T'=C\tilde{\times} (D^{2(n-k-1)}/T')=C\tilde{\times} D^{n-k+1}$ is a manifold, and this makes
	$C\tilde{\times} D^{2(n-k-1)}/T=(C\tilde{\times} D^{n-k+1})/(T/T')$ into an orbifold (the action of $T/T'$ is almost free on the latter space). Since $X_{k+1}^*$ is obtained from $(X_k\setminus \mathcal{M}_k)^*$
	by smoothly (in the 'total space') attaching $F_{k+1}\tilde{\times} D^{2(n-k-1)}$, $X_{k+1}^*$ is an orbifold if $X_k^*$ is.
\end{remark}
\begin{lemma}\label{fol}
	Let $n\geq 5$. For $k\leq n-3$, the $T=T^{n-1}$-action on $M_k'$ extends to an effective $T^n$-action such that
	every $T^n$-orbit of dimension at most $n-2$ is a $T$-orbit.\\
    If every subgraph of covalence one is ineffective, the statement also holds for $k=n-2$.\\
    If $n=4$ and any three edges emerging at a vertex belong to an ineffective subgraph, then the statement holds as well.
\end{lemma}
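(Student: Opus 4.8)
The plan is to build the $T^n$-action inductively on $M_k'$, extending the $T = T^{n-1}$-action, with the key structural input being that $\Gamma$ is in general position and hence $3$-independent (even $4$-independent when $k \le n-3$, by general position on the relevant subgraphs), so that \cref{ext} applies: the $\Z^{n-1}$-labeling of the GKM graph extends to an effective $\Z^n$-labeling. First I would treat $k=0$: near each fixed point $M_0'$ is a disjoint union of disks $D(v) \subset \C^n(v)$, and the extended $\Z^n$-labeling at $v$ specifies an effective linear $T^n$-action on $\C^n(v)$ restricting to the given $T$-action; this defines the $T^n$-action on $M_0'$. For the inductive step, suppose the effective $T^n$-action is defined on $M_k'$ with the property that orbits of dimension $\le n-2$ are $T$-orbits. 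Using the explicit construction of $M_{k+1}'$ from \cref{defretr} — namely $M_{k+1}' = M^k \cup_{f_k} (F_{k+1} \tilde\times D^{2(n-k-1)})$ glued along $U \cong \mathcal M_k \times D^{2(n-k-1)} \times [1/2,1]$ — I would extend the action over the piece $F_{k+1}\tilde\times D^{2(n-k-1)}$. On a component of $F_{k+1}$, belonging to a face submanifold $Z$ of dimension $2(k+1)$, the relevant local models are governed by \cref{lem:isotropycomplone1} and \cref{lem:extcomplone}: there I have a canonical normal form for the $T$-action in all but one coordinate, and \cref{lem:extcomplone} produces the extra circle $S^1$ whose fundamental vector field is $\mathcal X = C_1 \mathcal X_1 + C_2 \mathcal X_2$, so the $T^n = T \times S^1$-action is pinned down up to automorphism. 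I would use these local models to define $T^n$-equivariant charts, then patch them using the equivariant connection on the GKM subgraph to ensure consistency along $F_{k+1}$ (this is where $3$-independence, or the ineffectivity hypotheses in the $n=4$ and $k=n-2$ cases, enters to guarantee the monodromy is trivial and the local extensions agree on overlaps).

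The matching along $U$ is where the bundle triviality from \cref{equlinebundles1} and the identifications fixed in \cref{rem:identifications} do the work: I would choose the identification of a neighborhood of $\mathcal M_k$ with $\mathcal M_k \tilde\times D^{2(n-k-1)}$ to be $T^n$-equivariant (possible since $T^n$ already acts on $M_k'$ and the normal bundle splits equivariantly into line bundles for $k \le n-4$, the bundle being equivariantly trivial over $Z\setminus Z_{k-1}$ for $R=\Z$), and likewise choose the gluing isomorphism $f_k(p,v) = (p, h_k(p)(v))$ with $h_k(p)$ commuting with the relevant circle action; by \cref{lem:commuting} such an $h_k(p)$ automatically lands in the centralizer that makes it $T^n$-equivariant once it is $T$-equivariant, because the ineffective circle in $T^n$ (the one acting with a nontrivial kernel, pinned down by the labeling extension) acts on the $D^{2(n-k-1)}$-fiber in a way that forces $h_k(p)$ into $\U$ of the relevant blocks. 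So the glued action is well-defined on $M_{k+1}'$. Finally I would verify effectiveness (the extended labeling is effective by \cref{ext}) and that new $T^n$-orbits of dimension $\le n-2$ remain $T$-orbits: a $T^n$-orbit of dimension $\le n-2$ has stabilizer of dimension $\ge 2$, hence meets the kernel-circle in the $\mathcal X_1,\mathcal X_2$ decomposition nontrivially, and by \cref{lem:extcomplone} the extra circle acts only in two coordinates with the image already inside $T$ on such orbits; this is the place to argue carefully.

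For the boundary cases: when $k = n-2$, the face submanifolds $Z$ of dimension $2(n-1)$ are complexity-one spaces, and the hypothesis that every subgraph of covalence one is ineffective means precisely that the normal circle to $Z$ acts with a nontrivial finite kernel — equivalently the extra weight is non-primitive — which is what allows the local model of \cref{lem:normalform}/\cref{lem:isotropycomplone1} to be applied (without it, effectiveness of $T^n$ near these orbits could fail). When $n = 4$ and $k = n-3 = 1$: the subgraphs in question are $3$-valent inside a $3$-independent graph, but $3$-independence alone does not force the monodromy $\mu_\gamma$ to be trivial; the hypothesis that any three edges at a vertex belong to an ineffective subgraph substitutes for $4$-independence and kills the obstruction in \cref{orientable}(2), so the normal bundle of the equivariant two-skeleton still behaves well enough ($A \in \U(n-2)$ via \cref{lem:commuting}) for the patching to go through.

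The hard part will be the consistency of the local $T^n$-extensions along a whole face submanifold $F_{k+1}$: each local model from \cref{lem:extcomplone} pins the extra circle only up to an automorphism fixing $T \times \{e\}$, and I must check that the choices forced by the equivariant connection (and by the already-constructed $T^n$-action on $M_k' \supset \mathcal M_k$) are globally compatible — i.e. that the cocycle of transition automorphisms is trivial. This reduces to the statement that the extended $\Z^n$-labeling from \cref{ext} is itself consistent around connection paths, together with the triviality of the relevant monodromy, which is exactly where $j$-independence ($j=4$ for $k\le n-3$, and the ineffectivity hypotheses otherwise) is indispensable.
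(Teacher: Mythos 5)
Your high-level plan --- extend the labeling to $\Z^n$ via \cref{ext}, handle $k=0$ by the linear models $\C^n(v)$, then induct over the attachment pieces $F_{k+1}\tilde\times D^{2(n-k-1)}$ --- matches the paper's strategy, and you correctly single out \cref{lem:extcomplone} and \cref{rem:identifications} as the tools in play. However, there is a genuine gap in the step where you pin down the gluing maps $h_k(p)$, and your globalization argument diverges from (and is weaker than) the paper's.

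The gap: you argue via \cref{lem:commuting} that $h_k(p)$ lands in the unitary group of the relevant blocks, ``because the ineffective circle in $T^n$ acts on the fiber in a way that forces $h_k(p)$ into $\U$.'' But $\U(n-k-1)$ is not good enough --- \cref{lem:commuting} only exploits a single circle with no fixed vectors, which generically has repeated weights and therefore a centralizer much larger than a maximal torus. What the proof actually needs is that $h_k(p)$ lies in the \emph{maximal torus} of $\SO(2(n-k-1))$, so that the transition data is diagonal and an extra circle action on the fiber is automatically preserved by the gluings. The paper obtains this from the \emph{generic stabilizer} $T'\subset T$ of $F_{k+1}$: by \cref{rem:isotropygenpos}, $T'$ acts in general position on the fiber, and because $\dim(T')\ge 2$ when $k\le n-4$ this action is GKM, so the centralizer of $T'$ in $\SO(2(n-k-1))$ is exactly the maximal torus; $T_p$ and hence $h_k(p)$ commute with $T'$ and are therefore forced into that torus. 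This is precisely where the hypothesis for $k=n-3$ (every covalence-one subgraph ineffective) enters: then $\dim(T')=1$ but the $S^1$-representation on $D^4$ has both weights $\neq 0,\pm 1$, so the splitting into lines is still canonical and the centralizer is again the maximal torus. Your $n=4$, $k=1$ discussion has the same issue --- you invoke $A\in\U(n-2)$, but the conclusion needed is that $A$ is diagonal.

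On globalization: you propose to patch the local extensions using the connection on the GKM graph and argue that the ``cocycle of transition automorphisms is trivial.'' The paper's argument is more direct and avoids any monodromy analysis: for generic points $p$ of $\mathcal{M}_k$, neither the stabilizer $T_p$ nor its representation on the normal slice $\C^{n-k-1}$ vary (they are the same for every generic $p$ in a connected component of $F_{k+1}$), and the bundle transition functions preserve any torus action; hence the formula $\mathcal X = C_1\mathcal X_1 + C_2\mathcal X_2$ from \cref{lem:extcomplone} defines a single vector field on all of $F_{k+1}\tilde\times D^{2(n-k-1)}$ that agrees with the fundamental field of the existing extra circle on $\mathcal{M}_k\tilde\times D^{2(n-k-1)}$. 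Its flow is periodic on the dense set of generic points and so periodic everywhere, giving the extended circle action. Your connection-based consistency check is neither what's required nor evidently sufficient; the essential fact is the constancy of the generic isotropy data, not triviality of a monodromy cocycle.
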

\begin{proof}
	We may extend the labeling of $\Gamma$ to a $\Z^n$-labeling by \cref{ext}, and so we have the extension of the $T^{n-1}$-action on $M_0'$.\\
	So assume we have shown the statement for some $k\leq n-4$. Consider a generic stabilizer $T'$ of the $T$-action on a connected component of $F_{k+1}$.
	By \cref{rem:isotropygenpos}, the action of its connected component on the fiber $D^{2(n-k-1)}$ of the normal bundle of $F_{k+1}$ in $M_{k+1}'$
	is in general position, in particular GKM (we have $\dim(T')\geq 2$ due to $k\leq n-4$). Therefore, the centralizer of $T'$ in $\SO(2(n-k-1))$ is in the maximal torus.
	This is also true for $k=n-3$ if every subgraph of covalence one is ineffective, since then the $S^1$-representation on $D^4$
	splits canonically in $\C_{\alpha_1}\oplus \C_{\alpha_2}$, where $\alpha_i\neq 0,\pm 1$.\\
	Since for any point $p$ in the latter connected component of $F_{k+1}$ the stabilizer $T_p$ of $p$ contains that $T'$ and acts on $D^{2(n-k-1)}$,
	in particular commuting with the $T'$-representation, $T_p$ is contained in the maximal torus of $\SO(2(n-k-1))$. This shows that
	any equivariant bundle automorphism in a local section is of the form $(p,v)\mapsto h_k(p)(v)$, where $h_k(p)$ is in the maximal torus
	of $\SO(2(n-k-1))$, and in particular, that the $T$-action
	can be extended (though not necessarily effectively) on $F_{k+1}\tilde{\times} D^{2(n-k-1)}$ by letting an additional $S^1$-factor act on the fiber, arbitrarily.\\
	
	Conversely, we look at the way the $T^n$-action on $\mathcal{M}_k\tilde{\times} D^{2(n-k-1)}$ in $X_k$ is obtained from the $T$-action there.
	Around every point $p$ with generic stabilizer, a $T$-neighborhood is isomorphic to $(0,1)^k\times T\times_{T_p} \C^{n-k-1}$,
	and neither the subgroup $T_p\subset T$ nor its representation on $\C^{n-k-1}$ depend on the generic $p$ chosen.
 Moreover, the transition functions between these trivializations preserve any torus action, which implies that
	\cref{lem:extcomplone} can be applied to this setting;
	so consider the vector field $\mathcal{X}:=C_1 \mathcal{X}_1+C_2\mathcal{X}_2$ as in \cref{lem:extcomplone} on whole $F_{k+1}\tilde{\times} D^{2(n-k-1)}$, which agrees with the fundamental vector field of the $\{e\}\times S^1$-action
    on generic points of $\mathcal{M}_k\tilde{\times} D^{2(n-k-1)}$, hence everywhere. This shows that we can extend this
    fundamental vector field to $F_{k+1}\tilde{\times} D^{2(n-k-1)}$. Again, on generic points of the latter, this extension
    integrates to a periodic flow, hence it does so everywhere.\\
    
	We have thus found an extension of the $T$-action as desired.
\end{proof}
\begin{remark}\label{rem:orbitspacemanifold2}
	In the situation of \cref{fol}, denote by $Y_k$ all points through which the $T^n$-orbit has dimension $n$. By the same reasoning as in
	\cref{rem:orbitspacemanifold}, $(X_k\setminus Y_k)^*$ (orbit space taken with respect to the $T$-action, not the $T^n$-action)
	is a manifold for $R=\Z$ and an orbifold for $R=\Q$, relying on the fact that this statement is true for $k=0$.
\end{remark}
\begin{remark}
	We assumed in the proofs of \cref{defretr} and \cref{fol} that $M$ is smooth. However, this assumption is not needed necessarily. It is only needed that
	a neighborhood of each $\mathcal{M}_k$ in $X_k$ is equivariantly homeomorphic to a sum of equivariant line bundles over
	$\mathcal{M}_k$ and that the attaching maps $f_k$ are of the above mentioned form. This might be important
	when one tries to construct these spaces $M_k'$ from an abstract graph, without a manifold $M$ present (although this won't happen, here).
\end{remark}
\begin{remark}\label{equlinebundles2}
	Since we know now that the $T$-action on $M_{n-3}'$ extends to a $T^n$-action, the normal bundle of a $2n-4$-dimensional face submaniold $Z$ restricted to $Z_{n-3}$
	splits equivariantly into line bundles regardless of the assumption on subgraphs of covalence one as for example in \cref{fol}. In particular, the normal bundle of
	$\mathcal{M}_{n-3}$ splits equivariantly into line bundles. If $R=\Z$, then this bundle is equivariantly trivial, since the $T$-equivariant line bundle
	extends to a $T$-equivariant $\C^2$-bundle
	(this is indeed a $\C^2$-bundle and not only an $\R^4$-bundle, because the structure group centralizes a subtorus of the maximal torus in $\SO(4)$)
	over $Z\setminus Z_{n-3}$, and hence its first Chern class vanishes. That it is equivariantly trivial follows as in \cref{equlinebundles1}.
\end{remark}
We assume now that we are in the situation of \cref{fol}. We denote by $\mathcal{F}^j_k$ the set of all points in $T^n$-orbits (see \cref{fol})
in $X_k$ whose dimension is $j$, by $X_k^j$ the union of all $\mathcal{F}_k^j$, where $\geq j$, (of course, it is understood that $j\geq k+1$, because
the dimension of an orbit in $X_k$ is at least $k+1$) and set
$Y_k:=\mathcal{F}^n_k$. From now on, for a $T$-invariant set $O\subset M_k'$ we still mean by $O^*$ the orbit space with respect to the $T$-action on $M_k'$ unless stated otherwise.\\

Now that we have the $T^n$-action, we will, as explained in \cref{rem:identifications}, take the isomorphisms $f_k$ with respect
to $T^n$-equivariant identifications of a neighborhood of $\mathcal{M}_k$ with $\mathcal{M}_k\tilde{\times} D^{2(n-k-1)}$.
\begin{remark}\label{chain}
	It should be noted that there are inclusions
    \begin{align*}
	X_1^j\subset X_2^j \subset \hdots \subset X_{j-1}^j
	\end{align*}
	by construction of the $X_k^j$, since $X_{k+1}$ is obtained from $X_k$ by removing a neighborhood of $\mathcal{M}_k$ and gluing in
	$F_{k+1}\tilde{\times} S^{2n-2k-3}=F_{k+1}\tilde{\times} \partial D^{2(n-k-1)}$	using $f_k$ as in the proof of \cref{defretr}. We may call this process 'equivariant surgery'
	due to the similarities to the process of surgery.\\
	
	Take a generic point in a connected component $C$ of $F_k$, $k\leq n-3$, and consider its stabilizer in $T$. This is a subtorus $T'\subset T$ of dimension $n-k-1$, acting
	on $D^{2(n-k)}$ linearly. Since $T'$ is contained in every (abelian) stabilizer $T_p$ of $F_k$,
	and every such stabilizer acts on $T^{n-k}\subset D^{2(n-k)}$, $T_p/T'$ acts on $T^{n-k}/T'=S^1$. It follows that the $T_p/T$-representation
	on $S^1$ factors through that of a cyclic group $G$.
	Since $D^{2(n-k)}/T'$ is homeomorphic to a disk of dimension $n-k+1$, so is $D^{2(n-k)}/T_p$. Indeed,
	we can think of the boundary of $D^{2(n-k)}/T'$, a sphere of dimension $n-k$, as $D^{n-k-1}\times S^1$ with the $S^1$-fibers collapsed
	in the boundary, and under this identification, the action of $G$ on said sphere is simply given by rotation in the fiber.\\
	While $(C\tilde{\times} D^{2(n-k)})/T'=C\tilde{\times} (D^{2(n-k)}/T')$ is
	a $D^{n-k+1}$-bundle over $C$, it is important to stress that this does not necessarily make
	$(C\tilde{\times} D^{2(n-k)})/T$ a $D^{n-k+1}$-bundle over $C^*$ as it is if $R=\Z$, or more generally, if the stabilizer on $C$ is constant.\\
	
	When we define $S':=S_{j-k}^{2n-2k-3}$ to be $S^{2n-2k-3}$ with all torus orbit (with respect to the maximal linear torus action on $S^{2n-2k-3}$ )
	of dimension less than $j-k$ removed and set $S:=(S_{j-k}^{2n-2k-3})^*$ (with respect to any linear $T^{n-k-2}$-action whose orbit space is a manifold;
	the homeomorphism type of the resulting orbit space does not depend on the choice of such action), $X_{k+1}^j$ is obtained from $X_k^j$
	(for $j\leq k+2$ of course) by gluing in $F_{k+1}\tilde{\times} S_{j-1}^{2n-2k-3}$. For $R=\Z$, we get a diagram coming out of the Mayer-Vietoris sequences
	\[
	\begin{tikzcd}
		\hdots \overset{g_*}{\to} H_*(F_{k+1}^*\times S) \oplus H_*((X_{k}^{j})^*) \arrow{r} \arrow{d} & H_*((X_{k+1}^{j})^*) \arrow{r} \arrow{d} & H_{*-1}(\mathcal{M}_{k}^*\times S) \arrow{d} \overset{g_{*-1}}{\to} \hdots\\
		\hdots \to H_*(F_{k+1}^*) \oplus H_*(M_{k}^*) \arrow{r} & H_*(M_{k+1}^*) \arrow{r} & H_{*-1}(\mathcal{M}_{k}^*) \to \hdots
	\end{tikzcd}
	\]
	We get a similar diagram for the total spaces. Later, we will explain how the maps $g_*$ look like (for orbit spaces and 'total spaces').\\
	The cases especially important to us are $j=k+2$ and $j=n$.
	The first case corresponds to the case of 'surgery' on $X_k^*$ itself
	(because $X_k^{k+2}=X_k\setminus \mathcal{M}_k$ and $X_{k+1}^{k+2}=X_{k+1}$), and the second one corresponds to the inclusions $Y_1\subset Y_2\subset \hdots$,
	and $Y_{k+1}$ is obtained by gluing $F_{k+1}\tilde{\times} (\mathring{D}^{n-k-2}\times T^{n-k-1})$ onto $Y_k$ via
	\[
	\partial F_{k+1}\tilde{\times} (\mathring{D}^{n-k-2}\times T^{n-k-1})\overset{f_k}{\rightarrow} \mathcal{M}_k\tilde{\times} (\mathring{D}^{n-k-2}\times T^{n-k-1})= Y_k\cap (\mathcal{M}_k\tilde{\times} S^{2n-2k-3}).
	\]
\end{remark}
Although the following statements hold similarly for $R=\Q$, the way to show them is way more involved than for $R=\Z$.
Since, in addition, we do not really need these statements for $R=\Q$ later, we will assume $R=\Z$ from now on until the end.
That is, we assume in particular that the normal bundle of $\mathcal{M}_k$ in $X_k$ is equivariantly trivial for $k\leq n-3$ (see \cref{equlinebundles1}
respectively \cref{equlinebundles2}).\\
Now let us look at the map $g_*=g^1_*\oplus g^2_*$ in the above diagram. As explained in \cref{rem:identifications}, we have fixed a $T^n$-equivariant
embedding of the normal bundle of $\mathcal{M}_k$ into $X_k$ and defined $f_k$ relative to this fixed embedding. So (on level of orbit spaces, but also for the total spaces)
$g^2_*$ is just induced by the embedding $\mathcal{M}_k^*\times S\to (X_k^j)^*$. However, $g^1_*$ is induced by the inclusion $\partial F_{k+1}^*\times S\to F_{k+1}^*\times S$
precomposed with $f_k^{-1}\colon \mathcal{M}_k^*\times S\to \partial F_{k+1}^*\times S$. Remember that $\mathcal{M}_k^*$ and $\partial F_{k+1}^*$
are the same sets (\cref{rem:boundaryidentification}). Thus, the formulation of the following lemma is justified.
\begin{lemma}\label{lem:homologymap}
	Assume $(k,j)\neq (1,n)$.
	On level of homology of the orbit spaces, $f_k$ is the identity.\\
	The same statement also holds for the 'total spaces' if, in addition, $(n,k)\neq (6,3)$.
\end{lemma}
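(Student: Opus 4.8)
The map $f_k$ is the composite of the identification $\mathcal M_k^*\times S=\partial F_{k+1}^*\times S$ (which on sets is the identity, by \cref{rem:boundaryidentification}) with the self-map of $\partial F_{k+1}^*\times S$ induced by the bundle automorphism $f_k(p,v)=(p,h_k(p)(v))$ in local sections.  Since in the $R=\Z$ setting the normal bundle of $\mathcal M_k$ is equivariantly trivial (\cref{equlinebundles1} for $k\le n-4$, \cref{equlinebundles2} for $k=n-3$), we may take a single global trivialization and view $f_k$ as a genuine (not merely local) map $\partial F_{k+1}\tilde\times D^{2(n-k-1)}\to\mathcal M_k\tilde\times D^{2(n-k-1)}$ of the form $(p,v)\mapsto(p,h_k(p)(v))$ with $h_k\colon\partial F_{k+1}\to\SO(2(n-k-1))$ continuous; restricting to the sphere bundles and passing to the relevant strata we get the map on $\mathcal M_k^*\times S$ (orbit spaces) and on $\mathcal M_k\tilde\times S^{2n-2k-3}$ (total spaces) whose effect on homology we must compute.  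The key point is that $f_k$ preserves the product structure in the first factor and is fiberwise, so by the Künneth theorem (everything is free over $\Z$ here, the faces being equivariantly formal) it suffices to understand what $f_k$ does to the $S$-factor, i.e. the class of $h_k$ as a map into $\SO(2(n-k-1))$ up to homotopy, and whether its action on $H_*$ of the fiber of $S^{2n-2k-3}\to S$ (resp. $S^{2n-2k-3}$ itself) is trivial.

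**Orbit-space case.**  First I would handle the orbit spaces.  Here one uses that $h_k(p)$ must commute with the linear $T_p$-action on the fiber (the property recorded in the proof of \cref{defretr}).  Because the action is in general position and $k\le n-3$, the generic stabilizer $T'$ on the component of $F_{k+1}$ has $\dim T'\ge 2$, so by \cref{rem:isotropygenpos} it acts on $D^{2(n-k-1)}$ in general position; its centralizer in $\SO(2(n-k-1))$ therefore lies in the maximal torus (exactly the argument of \cref{fol}).  Thus $h_k$ is homotopic to a map into the maximal torus $T^{n-k-1}\subset\SO(2(n-k-1))$, and the induced self-map of the orbit space $S$ of the sphere is fiberwise rotation in the collapsed torus coordinates.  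On $S$ such a rotation is isotopic to the identity (the torus acts by a path to the identity), hence $f_k$ acts as the identity on $H_*(\mathcal M_k^*\times S)$.  For the excluded case $(k,j)=(1,n)$ the argument breaks because then $\dim T'=n-2$ may fail the general-position hypothesis needed for the centralizer statement, which is why that case is omitted.

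**Total-space case.**  For the total spaces the extra hypothesis $(n,k)\ne(6,3)$ enters.  The issue is that the fiber is now all of $S^{2n-2k-3}$ (resp.\ its stratum $S_{j-k}^{2n-2k-3}$) rather than its orbit space, and the torus $h_k$ lands in acts on this sphere with nontrivial homology action only through $\pi_0$-type obstructions coming from how many $\C$-summands it rotates; in the stable range ($2(n-k-1)$ large enough relative to the degrees appearing, i.e. $n-k\ge 4$, equivalently $k\le n-4$, plus the borderline $k=n-3$ with $n\ge 7$) the relevant homotopy group $\pi_{*}(\SO(2(n-k-1)))$ contributing to an action on $H_*(S^{2n-2k-3})$ vanishes, so $h_k$ is null-homotopic into the subgroup fixing the needed homology and $f_k$ is the identity on homology.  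The sole failure is $n=6$, $k=n-3=3$: then $2(n-k-1)=4$ and $\SO(4)$ is too small — $\pi_3(\SO(4))\ne 0$ and an element can act nontrivially on $H_*$ of a sphere that shows up in $Y_3$ — so that case genuinely must be excluded.  I would therefore organize the proof as: (i) reduce to $h_k$ via the global equivariant trivialization; (ii) Künneth-split off the $F_{k+1}^*$ (resp.\ $F_{k+1}$) factor; (iii) for orbit spaces, use the centralizer-in-the-maximal-torus fact plus the torus-rotation-is-isotopic-to-identity argument, handling $(k,j)\ne(1,n)$; (iv) for total spaces, run a dimension count on $\pi_*(\SO(2(n-k-1)))$ to see that outside $(n,k)=(6,3)$ the map $h_k$ acts trivially on the fiber's homology.

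**Main obstacle.**  The delicate part is step (iv): controlling exactly which homotopy classes of $h_k\colon\partial F_{k+1}\to\SO(2(n-k-1))$ can occur and verifying that, outside the single exceptional $(n,k)=(6,3)$, none of them acts nontrivially on $H_*$ of the relevant sphere (or its stratum $S_{j-k}^{2n-2k-3}$).  This requires matching the low-dimensional homotopy of $\partial F_{k+1}$ — which by \cref{defretr} and \cref{X_1*} is built by equivariant surgery and has homotopy groups we can bound — against $\pi_*(\SO(2(n-k-1)))$ in the appropriate stable range, and being careful at the single unstable boundary case. Everything else is a routine Künneth/commutativity bookkeeping exercise.
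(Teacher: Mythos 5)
Your overall plan — reduce to the bundle automorphism $h_k$, use the global equivariant trivialization of the normal bundle, Künneth, then argue $h_k$ acts trivially — matches the paper's approach, but several of the key steps are wrong in a way that leaves genuine gaps.

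First, your explanation of why $(k,j)=(1,n)$ is excluded is incorrect. The centralizer-in-maximal-torus argument does \emph{not} break for $k=1$: the generic stabilizer has dimension $n-k-2 = n-3 \geq 2$ whenever $n\geq 5$, so that part is fine. What actually fails for $k=1$ is that $h_1\colon \mathcal M_1^*\to T^{n-2}$ need not be nullhomotopic — $\mathcal M_1^*$ retracts onto the graph $\Gamma$, which in general has nontrivial fundamental group. The paper's proof handles this by a separate argument: for $j\neq n$ the $T^n$-orbit in $S^{2n-2k-3}_{j-k}$ and in its orbit space is nullhomologous, so the (possibly nontrivial) map $h_1$ still acts trivially on homology. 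Your proposal does not address the case $k=1$, $j<n$ at all, even though it is covered by the statement. Relatedly, for $k\geq 2$ (and $k\leq n-4$) the paper concludes $h_k$ is nullhomotopic as a map into the torus (using $(k-1)$-acyclicity of $\mathcal M_k^*$); your orbit-space argument, that each rotation $h_k(p)$ is individually isotopic to the identity hence $f_k$ acts as the identity, is not a valid deduction — a loop of rotations can act nontrivially on $H_1$ of the target even though each rotation is isotopic to the identity. You need the parametrized family, i.e.\ the map $h_k$, to be nullhomotopic.

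Second, the framing of the total-space case in terms of $\pi_*(\SO(2(n-k-1)))$ and a "stable range" is not the paper's argument and doesn't quite work as stated. For $k=n-3$ the relevant thing is that $h_{n-3}$ descends to a $T$-invariant map $\mathcal M_{n-3}^*\to\U(2)$ and the effect on homology is governed by the composite $\mathcal M_{n-3}^*\to\U(2)\to S^3$ (resp.\ $\to S^2$ for orbit spaces). The source $\mathcal M_{n-3}^*$ is a closed homology $(n-3)$-sphere (\cref{homologyFkn-1}), so $H_3(\mathcal M_{n-3}^*)$ vanishes whenever $n\neq 6$, which makes the composite trivial on homology; the orbit-space version is always trivial because the Hopf map $S^3\to S^2$ kills $H_2$. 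You do correctly flag $\SO(4)$ (equivalently $\U(2)$ with $n=6$) as the problematic case, but the obstruction is the degree of a map from the homology $3$-sphere $\mathcal M_3^*$ to $S^3$, not an unstable $\pi_3(\SO(4))$ contribution to an action on fiber homology.
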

\begin{proof}
	Since $f_k$ is given by
	\[
	f_k(p,v)=(p,h_k(p)(v))
	\]
	and $h_k$ takes values in a torus if $k\leq n-4$, the assertion follows for orbit spaces and total spaces because $h$ is nullhomotopic for $k\geq 2$
	and a $T^n$-orbit in $S_{j-k}^{2n-2k-3}$ and $(S_{j-k}^{2n-2k-3})^*$ is nullhomologous for $j\neq n$.\\
	
	For $k=n-3$, we use that $h_k$ is invariant under the $T$-action on $\mathcal{M}_k$, so that the effect of $f_k$ on homology is governed by
	the effect on homology of the induced maps
	$$\mathcal{M}_k^*\overset{h_k^*}{\to} \U(2)\to S^3 \text{  and  } \mathcal{M}_k^*\overset{h_k^*}{\to} \U(2)\to S^3\to S^2$$
	for total space and orbit space, respectively.\\
	This is clearly always trivial for the orbit spaces. For the total spaces it is trivial for $n\neq 6$ by degree reasons.
	This shows the lemma.
\end{proof}
\begin{remark}\label{rem:homologyhtilde}
	In the case of $(n,k)=(6,3)$ it could certainly happen that $f_k$ is not the identity because the induced map $\mathcal{M}_k^*\overset{h_k^*}{\to} \U(2)\to S^3$
	does not have to be trivial. The non-trivial effect of $f_k$ on homology,
	\[
	(H_3(\mathcal{M}_3^*)\oplus (H_0(\mathcal{M}_3^*)\otimes H_3(S^3)))\otimes H_{*}(T^4)\to (H_3(\mathcal{M}_3^*)\oplus (H_0(\mathcal{M}_3^*)\otimes H_3(S^3)))\otimes H_{*}(T^4),
	\]
	can then be described by $(x,y)\otimes z\mapsto (x,\tilde{h}(x)+y)\otimes z$, where $\tilde{h}\colon H_*(\mathcal{M}_3)\to (H_0(\mathcal{M}_3)\otimes H_3(S^3))$
	is induced by $h_3^*$.
\end{remark}
\begin{lemma}\label{section}
	In the situation of \cref{fol} and $k\leq n-3$, the restriction of $r_k$ to $B_k:=Y_k/T^n$ induces an isomorphism in homology between $B_k$ and $M_k^*$.\\
	This also holds for $k=n-2$ when the $T^n$-action is defined on $X_{n-2}$.
\end{lemma}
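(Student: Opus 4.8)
The plan is to induct on $k$, using the equivariant surgery description of \cref{chain} in the case $j=n$ together with the five lemma. Throughout, write $\bar r_k\colon B_k\to M_k^*$ for the map induced by $r_k$ on $T^n$-orbit spaces; this is well defined because the composite $Y_k\xrightarrow{r_k}M_k\to M_k^*$ is $T^n$-invariant — for $k\le n-3$ since every $T^n$-orbit meeting $M_k$ has dimension at most $k+1\le n-2$ and hence is a $T$-orbit by \cref{fol}, and for $k=n-2$ since, by the construction in the proof of \cref{fol} (where the added circle acts on $F_{n-2}$ through a subcircle of $T$, its normal component $\mathcal X_2$ vanishing on the zero section), $r_{n-2}(Y_{n-2})$ meets only $T^n$-orbits that are $T$-orbits. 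For the base case $k=0$, the space $M_0'$ is a disjoint union of standard discs $D^{2n}$, one per fixed point, so $Y_0$ is the free $T^n$-stratum of $\partial D^{2n}=S^{2n-1}$ and $B_0$ is a disjoint union of open simplices $\mathring\Delta^{n-1}$, while $M_0^*$ is a disjoint union of points; $\bar r_0$ collapses each simplex to the corresponding point and is therefore a homology isomorphism.

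For the inductive step, suppose $\bar r_k$ is a homology isomorphism. By \cref{chain} with $j=n$, the space $Y_{k+1}$ is obtained from $Y_k$ by removing a neighbourhood of $\mathcal M_k\tilde{\times}(\mathring D^{n-k-2}\times T^{n-k-1})$ and gluing in $F_{k+1}\tilde{\times}(\mathring D^{n-k-2}\times T^{n-k-1})$ along $f_k$. Since $R=\Z$, the action is locally standard and, by \cref{equlinebundles1} and \cref{equlinebundles2}, the normal bundles of $F_{k+1}$ and of $\mathcal M_k$ are equivariantly trivial; hence on $T^n$-orbit spaces these two pieces become $F_{k+1}^*\times\mathring D^{n-k-2}$ and $\mathcal M_k^*\times\mathring D^{n-k-2}$, and $B_{k+1}$ is $B_k$ with $F_{k+1}^*\times\mathring D^{n-k-2}$ glued on along $\mathcal M_k^*\times\mathring D^{n-k-2}$. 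On the other hand $M_{k+1}^*=M_k^*\cup F_{k+1}^*$ glued along $\mathcal M_k^*=\partial F_{k+1}^*$. These decompositions yield a ladder of Mayer--Vietoris sequences whose vertical maps are induced by $r_k$ and $r_{k+1}$: on $B_k$ the map is $\bar r_k$, an isomorphism by hypothesis; on $F_{k+1}^*\times\mathring D^{n-k-2}$ it is the projection to $F_{k+1}^*$, since by \cref{defretr} the retraction $r_{k+1}$ collapses the normal fibres of $F_{k+1}$; and on $\mathcal M_k^*\times\mathring D^{n-k-2}$ it is likewise the projection to $\mathcal M_k^*$ — both homotopy equivalences. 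The only square whose commutativity is not immediate from naturality of Mayer--Vietoris is the one comparing the $f_k$-twisted gluing on top with the honest inclusion $\partial F_{k+1}^*\hookrightarrow F_{k+1}^*$ on the bottom. But $f_k(p,v)=(p,h_k(p)v)$ with $h_k(p)$ in the maximal torus of $\SO(2(n-k-1))$: indeed $h_k(p)$ commutes with the general-position isotropy representation of $T_p$ on the normal fibre, hence lies in the maximal torus by \cref{lem:commuting} and the argument in the proof of \cref{fol} (for $k=n-3$ one also uses the covalence-one ineffectivity to split the normal $D^4$). Thus $h_k(p)$ preserves the amplitude coordinates, so on $T^n$-orbit spaces $f_k$ induces the ordinary inclusion $\mathcal M_k^*\times\mathring D^{n-k-2}\hookrightarrow F_{k+1}^*\times\mathring D^{n-k-2}$ and the square commutes on the nose — this is precisely the orbit-space content of \cref{lem:homologymap}, for which there is no exceptional case. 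The five lemma now gives that $\bar r_{k+1}$ is a homology isomorphism, completing the induction; the final case $k=n-2$ is the same step with $k+1=n-2$, carried out using the $T^n$-action on $X_{n-2}$ provided by \cref{fol} under the covalence-one hypothesis.

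The main work here is bookkeeping rather than anything conceptual: one must check carefully that the equivariant surgery of \cref{chain} really descends, on $T^n$-orbit spaces, to the product decompositions above — this is exactly where local standardness and the triviality of the normal bundles of $F_{k+1}$ and $\mathcal M_k$ (from \cref{equlinebundles1} and \cref{equlinebundles2}) enter — and that the vertical maps of the Mayer--Vietoris ladder are the claimed projections, which uses the precise properties of $r_{k+1}$ and of its homotopy to the identity recorded in \cref{defretr} (that it is the identity on $M^k$, that it collapses the normal fibres of $F_{k+1}$, and that near $\mathcal M_k$ it is compatible with the $T^n$-equivariant identification fixed in \cref{rem:identifications}).
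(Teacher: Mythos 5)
Your proof is correct and takes essentially the same approach as the paper: induction on $k$, the Mayer--Vietoris ladder coming from the equivariant-surgery description of \cref{chain} for $j=n$, and the five lemma, with the same base case $k=0$. The paper is slightly more concise—it observes that $B_{k+1}\setminus B_k\cong F_{k+1}^*\times\mathring{D}^{n-k-2}$ is acyclic, so the Mayer--Vietoris ladder degenerates and only the comparison on $B_k$ (equal to the one induced by $r_k$, since $r_k$ and $r_{k+1}$ are equivariantly homotopic on $Y_k$) matters—but the core argument is the same.

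Two small remarks on the extra care you take. First, the paragraph justifying commutativity of the $f_k$-twisted square is not needed: the map $\bar r_{k+1}$ carries the cover of $B_{k+1}$ into the cover of $M_{k+1}^*$ piece by piece (it maps $A\simeq B_k$ into $(M_{k+1}\cap M_k')^*$, the glued-in piece into $F_{k+1}^*$, and the overlap into $\mathcal M_k^*$), so the ladder is commutative by naturality of Mayer--Vietoris, with no twist to untangle; the $f_k$ only enters in the identification of the overlap with a product, not in the comparison of the two rows. Second, your parenthetical ``for which there is no exceptional case'' about the orbit-space part of \cref{lem:homologymap} is a misquotation—that lemma does exclude $(k,j)=(1,n)$ even on orbit spaces—but this does not affect the validity of your proof, precisely because the commutativity was already automatic and you did not actually need \cref{lem:homologymap} here.
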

\begin{proof}
	This is true for $k=0$, because then $Y_0$ is the disjoint union of $\mathring{D}^{n-1}\times T^n$, one for each vertex of $\Gamma$, and $M_0$ is the vertex set
	of $\Gamma$. So assume it holds for some $k$.
	Since $B_{k+1}\setminus B_k\cong F_k^*\times \mathring{D}^{n-k-2}$ is acyclic, we may consider the diagram for $*\geq 1$ (the horizontal sequences are
	Mayer-Vietoris sequences)
	\[
	\begin{tikzcd}
		\hdots \arrow{r} & H_*(B_k) \arrow{r} \arrow{d} & H_*(B_{k+1}) \arrow{r} \arrow{d} & H_{*-1}((\mathcal{M}_k\tilde{\times} S')/T^n) \arrow{d} \arrow{r} & \hdots\\
		\hdots \arrow{r} & H_*((M_{k+1}\cap M_k')^*) \arrow{r} & H_*(M_{k+1}^*) \arrow{r} & H_{*-1}(\mathcal{M}_k^*) \arrow{r} & \hdots
	\end{tikzcd}
	\]
	where the vertical maps are induced by $r_{k+1}^*$. Now, the left vertical map is the same as the one induced by $r_k$
	(since $r_k$ and $r_{k+1}$ are equivariantly homotopic on $Y_k$) and the assertion follows from the inductivity assumption and the five lemma.
\end{proof}
\begin{remark}
	It is certainly true that $B_k$ is homotopy equivalent to $M_k^*$ via $r_k$ for $R=\Z$, but we do not need this.
\end{remark}
\begin{lemma}\label{lem:homologyfkj}
	A connected component $C$ of $(\mathcal{F}_k^j)^*$, where $k\leq n-3$ and $k+1\leq j\leq n-2$, is $k-1$-acyclic.
\end{lemma}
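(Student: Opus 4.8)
The plan is to argue by induction on $k$, using the ``equivariant surgery'' description of \cref{chain}. For $k=0$ the assertion is vacuous and for $k=1$ it only says a connected component is connected; so the content lies in the inductive step, where, for a fixed $j$ with $k+2\le j\le n-2$, I would assume the statement for $(\mathcal{F}_k^j)^*$ (same $j$) and deduce it for $(\mathcal{F}_{k+1}^j)^*$. First I would make the surgery precise on the stratum $\mathcal{F}^j$. Since $j\le n-2$, \cref{fol} shows every $T^n$-orbit in $\mathcal{F}^j$ is already a $T$-orbit, so $(\mathcal{F}_k^j)^*=\mathcal{F}_k^j/T^n$. Intersecting the passage $X_k\rightsquigarrow X_{k+1}$ with the $j$-orbit stratum writes $(\mathcal{F}_{k+1}^j)^*$ as $A\cup B$, where $A$ is obtained from $(\mathcal{F}_k^j)^*$ by deleting an open collar of the form $\mathcal{M}_k\tilde\times W\times(0,1)$ (so $A\simeq(\mathcal{F}_k^j)^*$), $B\cong(F_{k+1}\tilde\times W)/T$, and $A\cap B\cong(\mathcal{M}_k\tilde\times W)/T$. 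Here $W$ is the stratum of orbits of dimension $j-k-1$ in the normal sphere $S^{2n-2k-3}$, on which the isotropy torus of $F_{k+1}$ — of dimension $n-k-2$ — acts in general position; because $j-k-1<n-k-2$ this is not the top orbit dimension, and general position forces each connected component of $W/T^{n-k-2}$ to be an open simplex, hence contractible.

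Next I would collect the homological inputs. By \cref{torusManifold} (applied to the equivariantly formal face submanifold of dimension $2(k+1)$ containing $F_{k+1}$) the base $F_{k+1}^*$ is a homology disk, and a connected component of $\mathcal{M}_k^*=\partial F_{k+1}^*$ is a homology $k$-sphere. By \cref{lem:homologymap} the gluing map $f_k$ induces the identity on the homology of the orbit spaces — we are safely away from the exceptional case since $j\le n-2<n$. Hence, on orbit spaces, the surgery amounts to deleting from $(\mathcal{F}_k^j)^*$ a collar neighborhood of a disjoint union of homology $k$-spheres $\mathcal{M}_k^*\times\{pt_\alpha\}$ and gluing in, along each, an acyclic piece $F_{k+1}^*\times P_\alpha$ (with $P_\alpha$ contractible), i.e.\ capping off homology $k$-spheres by homology disks.

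Feeding this into the Mayer--Vietoris sequence of $A\cup B$, restricted to a connected component $C$ of $(\mathcal{F}_{k+1}^j)^*$, one gets for $2\le q\le k-1$ that $\tilde H_q(C)$ is squeezed between vanishing $\tilde H_q$'s of the sphere-factors and $\tilde H_q$ of $A\simeq(\mathcal{F}_k^j)^*$, which vanishes by induction; for $q=1$ the bottom of the sequence identifies $\tilde H_1(C)$ with the kernel of $\tilde H_0(C\cap A\cap B)\to\tilde H_0(C\cap A)\oplus\tilde H_0(C\cap B)$, and this kernel is zero because each glued-in piece meets $A$ along a single connected locus, so the nerve of the decomposition is a forest. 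The only degree that is not immediate is $q=k$: there Mayer--Vietoris identifies $\tilde H_k(C)$ with the cokernel of $\bigoplus_\alpha\tilde H_k(\mathcal{M}_k^*)\to\tilde H_k\big((\mathcal{F}_k^j)^*\big)$. So the induction has to be strengthened: besides $(k-1)$-acyclicity, I would carry along the statement that $H_k$ of each component of $(\mathcal{F}_k^j)^*$ is generated by the classes of the homology $k$-spheres $\mathcal{M}_k^*\times\{pt\}$ along which the surgery $X_k\rightsquigarrow X_{k+1}$ is performed. Granting this, the map above is surjective, so $\tilde H_k(C)=0$; and a parallel computation one degree higher re-establishes the strengthened hypothesis at level $k+1$, the new degree-$(k+1)$ homology being produced exactly by the disks $F_{k+1}^*\times\{pt\}$ capping the spheres $\mathcal{M}_k^*\times\{pt\}$, hence generated by $\mathcal{M}_{k+1}^*\times\{pt\}$. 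The base case of the strengthened statement at $k=1$ is where the hypothesis $\pi_1(\Gamma)=\langle\text{connection paths}\rangle$ enters: the loops $\mathcal{M}_1^*\times\{pt\}$, being the boundary circles of the $4$-dimensional face submanifolds realizing the connection paths, generate $H_1$ of each component of $(\mathcal{F}_1^j)^*$.

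The main obstacle is precisely this top-degree bookkeeping: at every stage one must match the cut locus $\mathcal{M}_k^*\tilde\times W$ with a generating set of the a priori uncontrolled group $H_k\big((\mathcal{F}_k^j)^*\big)$ and check that the glued homology disks both annihilate the old top class and generate the new one — this is where the homology-disk/homology-sphere structure of $(F_{k+1}^*,\mathcal{M}_k^*)$ and the identity-on-homology property of $f_k$ are used essentially. A secondary technicality is verifying that passing to a single connected component of $(\mathcal{F}_{k+1}^j)^*$ is compatible with the Mayer--Vietoris splitting (the pieces $C\cap A$, $C\cap B$, $C\cap A\cap B$ may be disconnected, but every component of each of them still falls under the inductive hypothesis, and the nerve remains a forest).
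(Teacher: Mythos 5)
Your route is genuinely different from the paper's, and it has a gap at exactly the point you flag as delicate. The paper does not run a fresh Mayer--Vietoris induction on $(\mathcal{F}_k^j)^*$ at all. It observes that a component $C$ of $(\mathcal{F}_k^j)^*$ is $(X_k^j\cap Z)^*$ for the $2j$-dimensional face submanifold $Z$ that $C$ retracts onto, and that this is precisely the ``$B_k$'' of $Z$ (the orbit space of the full-orbit stratum of $\partial Z_k'$ for the extended torus). By the naturality of $r_k$ with respect to face submanifolds (\cref{cor:defretrnaturality}), \cref{section} restricts to $Z$ and gives $H_*(C)\cong H_*((M_k\cap Z)^*)=H_*(Z_k^*)$, which is the orbit space of the equivariant $k$-skeleton of an equivariantly formal torus manifold and hence $(k-1)$-acyclic by the Atiyah--Bredon--Franz--Puppe argument. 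In other words, the heavy lifting is already done in \cref{section}, and the key device there is not the surgery bookkeeping you attempt but a five-lemma comparison of $B_k$ against the known model $M_k^*$.

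Your argument, by contrast, tries to prove the acyclicity intrinsically from the surgery description in \cref{chain}. This forces you to strengthen the inductive hypothesis to ``$H_k$ of each component of $(\mathcal{F}_k^j)^*$ is generated by the classes $\mathcal{M}_k^*\times\{pt\}$,'' and the inductive preservation of this extra statement is exactly where the proposal stops being a proof. You assert that the new degree-$(k+1)$ classes are ``produced exactly by the disks $F_{k+1}^*\times\{pt\}$ capping the spheres $\mathcal{M}_k^*\times\{pt\}$, hence generated by $\mathcal{M}_{k+1}^*\times\{pt\}$,'' but the ``hence'' does not follow: Mayer--Vietoris produces $H_{k+1}$ of the glued space as an extension of $\ker\bigl(\bigoplus_\alpha H_k(\mathcal{M}_k^*)\to H_k((\mathcal{F}_k^j)^*)\bigr)$ by a quotient of $H_{k+1}((\mathcal{F}_k^j)^*)$, and neither piece is visibly spanned by the $(k+1)$-spheres $\mathcal{M}_{k+1}^*$, which live one level up (as boundaries of $F_{k+2}^*$) and whose position inside $(\mathcal{F}_{k+1}^j)^*$ you never relate to this kernel. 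Making that identification precise would in effect require comparing against $M_{k+1}^*$ anyway --- i.e.\ reproving \cref{section}. The base case you give for the strengthened hypothesis at $k=1$ is fine (it is the ``connection paths generate $\pi_1$'' hypothesis), and the geometric setup (orbit-space surgery along $\mathcal{M}_k^*\times W$, $W$ with contractible components, $f_k$ trivial on homology by \cref{lem:homologymap}, $F_{k+1}^*$ a homology disk, nerve a forest) is all correct; only the top-degree bookkeeping is missing, and that is the whole content.

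Given this, the shortest repair is simply to replace the surgery induction by the paper's reduction: use \cref{cor:defretrnaturality} to restrict \cref{section} to the face submanifold $Z$ carrying $C$.
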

\begin{proof}
	Note that $C$ is given by $X_k^j\cap Z$ for a certain face submanifold $Z\subset M$ (namely that to which $C$ is mapped under $r_k$) of dimension $2j$. By \cref{cor:defretrnaturality} combined with
	\cref{section}, $C^*=(X_k^j\cap Z)^*$ has the same homology as $(M_k\cap Z)^*$, that is, the same homology as the orbit space
	of the equivariant $k$-skeleton of an equivariantly formal torus manifold. This shows the claim.
\end{proof}
\begin{remark}\label{equlinebundles3}
	For $n\geq 6$ it is immediate that $Y_k\cong B_k\times T^n$ for $k\leq n-3$, since $H^2(B_k)=0$ for $k\neq 2$
	and $B_2\subset B_3$.\\	
	For $n=5$ this holds nonetheless. The reason is that $H_2(X_2^*)$ is generated by $H_2(\mathcal{M}_2^*)$, and then
	the bundle $T^n\to Y_2\to B_2$ is trivial. Indeed, the normal bundle of $\mathcal{M}_2$ is a sum of line bundles
	as seen in \cref{equlinebundles2}, and this extends to a $\C^2$-bundle over $F_3$ whose first Chern class is $0$.\\
	It follows that the $T^n$-bundle restricted to the intersection of $B_2$ with a small neighborhood of $\mathcal{M}_2^*$ in $X_2^*$ is trivial.
	Since $H_2(B_2)$ is generated by the intersection of $B_2$ with a small neighborhood of $\mathcal{M}_2^*$ in $X_2^*$,
	the claim follows.
\end{remark}
\begin{lemma}
	The normal bundle of $\mathcal{F}_k^j$, where $k\leq n-3$, $k+1\leq j$, in $X_k^j$ is equivariantly trivial. The same holds for $T$-orbit spaces.
\end{lemma}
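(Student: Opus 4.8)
The plan is to imitate the proofs of \cref{equlinebundles1} and \cref{equlinebundles3}. First I would dispose of $j=n$: here $\mathcal{F}_k^n=Y_k=X_k^n$ is open in $X_k$, so the normal bundle has rank $0$ and there is nothing to prove. So fix $k+1\le j\le n-1$ and a connected component $C$ of $\mathcal{F}_k^j$. By \cref{cor:defretrnaturality} the retraction $r_k$ carries $C$ into a $2j$-dimensional face submanifold $Z$, and in fact $C=X_k^j\cap Z$; transversality then identifies the (rank $2(n-j)$) normal bundle $\nu$ of $C$ in $X_k^j$ with the restriction to $C$ of the normal bundle of $Z$ in $M$. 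The latter splits $T^n$-equivariantly into complex line bundles by the general-position argument of \cref{equlinebundles1}/\cref{equlinebundles2} (for $j=n-1$ it is a single line bundle), and dividing by $T$ gives the analogous splitting of $\nu/T$ over $C^*$. So it is enough to show each line summand $L_i$ is equivariantly trivial, equivalently, by \cite[Theorem A]{LMS83}, that $c_1^{T^n}(L_i)\in H^2_{T^n}(C)$ vanishes, and likewise that $c_1(L_i/T)\in H^2(C^*)$ vanishes.

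Next I would kill these Chern classes exactly as in \cref{equlinebundles1}. The $T$-action on $C$ is free after dividing out a finite kernel, and $L_i$ restricts equivariantly trivially to every $T^n$-orbit in $C$ (the isotropy character of that orbit extends to a character of $T^n$); so $c_1^{T^n}(L_i)$, and similarly $c_1(L_i/T)$, is detected by its restriction to a single orbit once $H^2$ of the relevant base is known to vanish. For $k\ge 3$ and $j\le n-2$ this is immediate from \cref{lem:homologyfkj}, which gives that $C^*$ is $(k-1)$-acyclic and in particular $2$-acyclic. For the remaining low values of $k$ I would argue as in \cref{equlinebundles3}: $H^2$ of the base is generated by the part lying over a small neighborhood of $\mathcal{M}_k\cap Z$, and over that neighborhood $L_i$ already extends to a $T^n$-equivariant $\C^2$-bundle coming from the ambient face structure (as in \cref{equlinebundles2}), so its Chern class restricts to $0$ on those generators. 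The statement for $T$-orbit spaces then follows by running the same argument with $H^2_{T^n}$ replaced throughout by ordinary $H^2$ of the orbit space.

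I expect the main obstacle to be the bookkeeping in the first step: verifying the identification $C=X_k^j\cap Z$, checking that the normal bundle really splits equivariantly in the whole range of $j$ at hand, and, for $j=n-1$, supplying the missing near-acyclicity of $C^*$ — here \cref{lem:homologyfkj} does not apply, so one must instead push $L_i$ into the open stratum $Y_k\cap Z$, where the $T^n$-orbit structure is essentially a product (cf.\ \cref{equlinebundles3}), and use that to conclude $c_1^{T^n}(L_i)=0$. The low-$k$ cases, where locating the generators of $H^2$ near $\mathcal{M}_k$ and identifying the $\C^2$-extension of $L_i$ over that neighborhood must be done by hand, are the other delicate point.
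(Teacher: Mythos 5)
Your identification of the normal bundle $\nu$ of $C=X_k^j\cap Z$ with the restriction of the normal bundle of $Z$ in $M$ is correct and is the right starting point, but you stop just short of the easy route for $j\le n-2$: since all orbits in $C$ have dimension $j$, we have $C\subset Z\setminus Z_{j-1}$ (and for $j=n-2$, $C\subset \mathcal{M}_{n-3}$), and the equivariant triviality of the normal bundle of $Z$ over these sets was already established in \cref{equlinebundles1} and \cref{equlinebundles2}. The paper's proof therefore disposes of the whole range $j\le n-2$ in one line by observing that $\nu$ is a restriction (along the chain of inclusions of \cref{chain}) of the normal bundle of $\mathcal{M}_{j-1}$ in $X_{j-1}$, which it already knows to be trivial, with no new Chern class computation. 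Your plan to re-derive this summand-by-summand via $c_1^{T^n}=0$, using \cref{lem:homologyfkj} for $k\ge3$ and a $\C^2$-extension argument for small $k$, re-invents that machinery; the small-$k$ branch in particular is unsubstantiated (you would need to locate the generators of $H^2(C^*)$ and actually produce the $T^n$-equivariant $\C^2$-extension over the relevant neighbourhood), and your phrase "the $T$-action on $C$ is free after dividing out a finite kernel" is wrong for $j\le n-2$, where the generic isotropy has positive dimension $n-1-j$.

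The genuine gap is $j=n-1$, which you flag but do not close. Here $\nu$ is a single line bundle, there is no acyclicity of $C^*$ in degree $2$ to invoke (indeed $H^2(C^*)\neq 0$ is possible when $k=2$ by \cref{homologyFkn-1}), and the instruction to "push $L_i$ into the open stratum $Y_k\cap Z$" cannot be executed as written: $Z$ has dimension $2(n-1)$ and so carries no free $T^n$-orbits, hence $Y_k\cap Z=\emptyset$. What the paper actually does is observe that the unit $S^1$-bundle of $\nu$ is, as a space, a principal $T^n$-bundle over $C^*$ (the $T^n$-action on $X_k^{n-1}$ is free on that total space), that the inclusion into $Y_k$ is a morphism of principal $T^n$-bundles over $C^*\hookrightarrow B_k$, and that $T^n\to Y_k\to B_k$ is trivial by \cref{equlinebundles3}; the $S^1$-subbundle is then trivial. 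Your mention of "the $T^n$-orbit structure is essentially a product" gestures at this, but the step from that observation to $c_1^{T^n}(L_i)=0$ is precisely the content of the proof and is absent from the proposal.
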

\begin{proof}
	For $j\leq n-2$, said normal bundle is a restriction of the normal bundle of $\mathcal{M}_{j-1}$ in $X_{j-1}$, which is equivariantly trivial
	(the same holds for $T$-orbit spaces). We need to check $j=n-1$.\\
	The boundary of the normal bundle of $\mathcal{F}_k^{n-1}$ in $X_k^{n-1}$ is a principal $S^1$-bundle. Its total space carries the natural structure
	of a principal $T^n$-bundle (remember that we have a $T^n$-action on $X_k^{n-1}$), extending the $S^1$-action on the fiber. The embedding
	of this $T^n$-bundle into $Y_k$ is a morphism of principal $T^n$-fiber bundles.
	Since $T^n\to Y_k\to B_k$ is a trivial bundle by \cref{equlinebundles3}, we get the claim.  
\end{proof}
We defined $X_k^j$ by removing certain closed subsets out of $X_k$. However, the way
we obtained $X_{k+1}$ from $X_k$ was by removing a small open neighborhood of $\mathcal{M}_k$,
to obtain a manifold with boundary we now call $X_k'$, and gluing in something into $X_k'$ along
its boundary. This makes it so that $\mathcal{F}_k^{k+2}\subset X_k'$ can be considered as a manifold
with boundary. We will make use of this in the next theorem.
\begin{theorem}\label{homologyX*}
	Assume that the homology of $M_k^*$ is nontrivial only in degrees $0$ and $k$ for $k\leq n-2$.
	Then the map $H_*(X_k^*)\to H_*(M_k^*)$ is surjective and its kernel is $H_n(X_k^*)$ plus the image of
	$H_0(\mathcal{M}_{k-1}) \otimes H_*(S^{n-k}) \to H_*((X_{k-1}')^*)\to H_*(X_k^*)$.\\
	If the $T^n$-action exists on whole $M$, then the same is true for $k=n-1$.
\end{theorem}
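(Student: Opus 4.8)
The plan is to induct on $k$, at each step comparing, via the retractions $r_k$, the two Mayer--Vietoris ladders produced by the equivariant surgery of \cref{chain}. The top row computes $H_*(X_k^*)$ out of $H_*((X_{k-1}')^*)$ and $H_*(F_k^*\times S)$, where $S$ denotes the $T$-orbit space of the normal sphere of $\mathcal{M}_{k-1}$ in $X_{k-1}$, which by the general-position hypothesis is a homology $S^{n-k}$; the bottom row computes $H_*(M_k^*)$ out of $H_*(M_{k-1}^*)$ and $H_*(F_k^*)$; the vertical maps are induced by $r_k$. The base case is $k=0$: there $M_0^*$ is a finite set and $X_0^*$ is a disjoint union of homology spheres $S^n$ (one unit sphere per vertex, divided by the general-position isotropy representation), so $H_*(X_0^*)\to H_*(M_0^*)$ is an isomorphism in degree $0$ and has kernel exactly $H_n(X_0^*)$, which matches the claim since $\mathcal{M}_{-1}=\emptyset$.

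For the inductive step I would run a five-lemma/diagram chase on the ladder, fed by three inputs. First, \cref{lem:homologymap} shows that the gluing isomorphism $f_{k-1}$ acts as the identity on the homology of all orbit spaces involved; since only orbit spaces occur here, the exceptional pair $(n,k)=(6,3)$ of that lemma is irrelevant (and the excluded case $(k-1,k+1)=(1,n)$, i.e. $k=2$, $n=3$, is handled separately), so the top row is genuinely the Mayer--Vietoris sequence of a fibrewise-trivial surgery. Second, \cref{lem:homologyfkj} --- via \cref{cor:defretrnaturality}, \cref{section} and the equivariant formality of the faces --- forces the relevant components of $(\mathcal{F}^j_{k-1})^*$, in particular those of $\mathcal{M}_{k-1}^*$ and of $F_k^*$, to be acyclic below degree $k-1$, so that together with the hypothesis that $H_*(M_k^*)$ lives only in degrees $0$ and $k$ all the error terms in the ladder are confined to degrees that cannot interfere. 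Third, the Künneth splitting $H_*(\mathcal{M}_{k-1}^*\times S)=H_*(\mathcal{M}_{k-1}^*)\oplus\bigl(H_{*-(n-k)}(\mathcal{M}_{k-1}^*)\otimes H_{n-k}(S)\bigr)$ shows that the connecting map $H_*(X_k^*)\to H_{*-1}(\mathcal{M}_{k-1}^*\times S)$ of the top row detects exactly the fibre classes: the preimages of the $H_0(\mathcal{M}_{k-1})\otimes H_{n-k}(S)$-summand are the new linking $(n-k)$-spheres in $H_*(X_k^*)$, which die under $H_*(X_k^*)\to H_*(M_k^*)$ and are precisely the image of $H_0(\mathcal{M}_{k-1})\otimes H_*(S^{n-k})\to H_*((X_{k-1}')^*)\to H_*(X_k^*)$, while the top class $H_n(X_k^*)$ is forced into the kernel for dimension reasons ($\dim M_k^*<n$). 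Surjectivity of $H_*(X_k^*)\to H_*(M_k^*)$ drops out of the same chase, using that $H_*((X_{k-1}')^*)\to H_*(M_{k-1}^*)$ is surjective, which follows from the inductive hypothesis together with the fact that $(X_{k-1}')^*$ is $X_{k-1}^*$ with a tubular neighbourhood of the low-dimensional $\mathcal{M}_{k-1}^*$ removed.

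The delicate point, and the step I expect to be the main obstacle, is to show that the kernel at level $k$ is regenerated afresh from $\mathcal{M}_{k-1}$ rather than accumulating the kernels of all earlier levels: one must check that both the inductive top class $H_n(X_{k-1}^*)$ and the previous linking $(n-k+1)$-spheres coming from $\mathcal{M}_{k-2}$ cease to lie in the kernel at level $k$. The first disappears because it is not carried by the bounded submanifold $(X_{k-1}')^*$; the second must become a boundary once the new handle along $\mathcal{M}_{k-1}$ has been attached, and verifying this --- geometrically, or by a careful degree-by-degree comparison of the two ladders using the acyclicity of $F_k^*$ and $\mathcal{M}_{k-1}^*$ and the concentration hypothesis on $H_*(M_k^*)$ --- is the technical heart of the argument; the smallest cases ($n\le 5$, $k=2$) also require separate attention because of possible collisions of degrees. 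Finally, the case $k=n-1$ follows by the identical ladder argument once a global $T^n$-action is assumed: then $Y_{n-1}$ and $B_{n-1}$ exist and \cref{section} applies through level $n-2$, so that even though $M_{n-1}=M$ is closed the construction of \cref{defretr} still yields a meaningful $X_{n-1}$ with $X_{n-2}^*$ obtained from it by the final surgery that glues in $F_{n-1}^*\tilde{\times}S^1$, and the proof runs verbatim with $S^{n-k}$ replaced by the circle $S^1$.
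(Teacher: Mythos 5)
Your setup --- an induction on $k$ comparing the two Mayer--Vietoris ladders of \cref{chain} via $r_k$, fed by \cref{lem:homologymap}, \cref{lem:homologyfkj}, \cref{section}, Poincar\'e duality on $X_k^*$, and the K\"unneth splitting of $H_*(\mathcal{M}_{k-1}^*\times S)$ --- is exactly what the paper does, and your surjectivity observation, base case, and identification of the kernel for $k<\lfloor n/2\rfloor$ are all on target. But the proposal stops precisely where the paper's real work starts, and it misplaces the obstruction. You treat the degree collision as a small-$n$ phenomenon confined to $n\le 5$, $k=2$, and you describe the heart of the matter (old linking spheres dying after the next surgery) as verifiable by a local degree-by-degree chase of the two ladders at step $k$. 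Neither is right: the collision occurs at $k=\lfloor n/2\rfloor$ for \emph{every} $n\ge 4$, because there $H_k$ and $H_{n-k}$ of $X_{k+1}^*$ coincide (even $n$) or are adjacent (odd $n$), so Poincar\'e duality no longer forces either to vanish; and the offending group --- in the paper's notation, $H_{k-2}((\mathcal{F}_{k+1}^{n-1})^*)$ entering through the connecting map --- cannot be killed from the ladder at level $k+1$ alone, since \cref{lem:homologyfkj} gives only $(k-1)$-acyclicity of $(\mathcal{F}_k^j)^*$ and says nothing for $j=n-1$.

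What the paper does to break this deadlock is a genuinely global, non-inductive step that is absent from your outline: it first pushes the unproblematic part of the induction past the middle dimension up to $X_{n-2}^*$, thereby pinning down the homology of $(X_{n-3}')^*$; it then exploits that $(\mathcal{F}_{n-3}^{n-1})^*\subset(X_{n-3}')^*$ is a compact $(n-2)$-manifold whose boundary is a union of homology spheres, applies Lefschetz duality there to confine $H_*((\mathcal{F}_{n-3}^{n-1})^*)$ to the ends of its degree range, and only \emph{a posteriori} concludes $H_{k-2}((\mathcal{F}_{k+1}^{n-1})^*)=0$ for $k=\lfloor n/2\rfloor$. The argument also splits on the parity of $n$ (Betti numbers versus torsion, the latter ruled out by a further Poincar\'e-duality argument on $X_{k+1}^*$), and requires bespoke treatment of $n=5$ and $n=6$ (the latter via \cref{rem:graphbetti} and generation of $\pi_1$ by connection paths), none of which appears in your sketch. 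So while your skeleton agrees with the paper's, the step you flag as the technical heart is a real gap, and the tools you list for it --- acyclicity of $F_k^*$ and $\mathcal{M}_{k-1}^*$ plus the concentration of $H_*(M_k^*)$ --- are demonstrably insufficient without the forward-then-backward Lefschetz argument.
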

\begin{proof}
	The surjectivity of $H_*(X_k^*)\to H_*(M_k^*)$ for $k\leq n-3$ follows from \cref{section}, because the $S^1$-bundle is trivial due to \cref{equlinebundles3}.
	For $k=n-2$ it is true as well because of the following diagram for $k=n-3$ ($*$ is supposed to be $\geq 1$)
	\[
	\begin{tikzcd}
		\hdots \to [H_0(\mathcal{M}_k)\otimes H_*(S^{n-k-1})] \oplus H_*((X_k^{k+2})^*) \arrow{r} \arrow{d} & H_*(X_{k+1}^*) \arrow{r} \arrow{d} & H_{*-1}(\mathcal{M}_k^*\times S^{n-k-1}) \arrow{d} \overset{g}{\to} \hdots \\
		\hdots \to 0 \oplus H_*(M_k^*) \arrow{r} & H_*(M_{k+1}^*) \arrow{r} & H_{*-1}(\mathcal{M}_k^*) \overset{h}{\to} \hdots
	\end{tikzcd}
	\]
	We know that $H_{*}(\mathcal{M}_k^*\times \{pt.\})\to H_{*}((X_k^{k+2})^*)$ factorizes over $H_{*}(Y_k^*)$ and that $H_{*}(\mathcal{M}_k^*\times \{pt.\})$
	is sent to $0$ in $H_0(\mathcal{M}_k)\otimes H_*(S^{n-k-1})$ by \cref{lem:homologymap}. This implies that the kernel of $h$ is the image of the kernel of $g$.\\
	
	The other statements are already shown for $k=1$ (see \cref{X_1*}), so we assume that they hold for some $k\leq n-3$.
    Since the isotropy action of $T$ on a sphere in the fiber of the normal bundle of $\mathcal{M}_k$
	is in general position by \cref{rem:isotropygenpos}, the orbit space of this sphere is a sphere of dimension $n-k-1$, and the process of 'equivariant surgery' (\cref{chain}) on $X_k$ corresponds to
	the process of surgery on $X_k^*$. Moreover, all orbit spaces $X_k^*$ are topological manifolds by \cref{rem:orbitspacemanifold}, which enables us
	to use Poincaré duality throughout, that is, $b_i(X_k^*)=b_{n-i}(X_k^*)$ and $\text{Tor}(H_i(X_k^*))=\text{Tor}(H_{n-i-1}(X_k^*))$ (using the universal coefficient theorem for cohomology in the last statement, of course).\\
	
	Assuming at first that $k< \lfloor \frac{n}{2} \rfloor$, $H_k((X_k')^*)\to H_k(X_{k}^*)$ is an isomorphism, and so $H_k((X_k')^*)\to H_k(M_k^*)$
	is. Hence, using the above diagram, $H_k(X_{k+1}^*)=H_k(M_{k+1}^*)=0$, which implies $H_{n-k}(X_{k+1}^*)=0$ using Poincaré duality, and
	$H_{k+1}(X_{k+1}^*)\to H_{k+1}(M_{k+1}^*)$ is injective, thus an isomorphism. All in all, $H_*(X_{k+1}^*)$ is concentrated in the desired degrees
	and the kernel of $H_*(X_{k+1}^*)\to H_{*}(M_{k+1}^*)$ is generated by $H_n(X_{k+1}^*)$ plus the image of
	\[
	H_0(\mathcal{M}_{k-1}) \otimes H_*(S^{n-k-1}) \to H_*((X_{k}')^*)\to H_*(X_{k+1}^*)
	\]
	as claimed.\\
	If $k=\lfloor \frac{n}{2} \rfloor$, we need to argue differently. Assume that $n\geq 6$ is even (there is nothing to do for $n=4$). We still want to show that $H_k(X_{k+1}^*)=0$.
	We have $H_k(Y_{k+1}^*)$ because of \cref{section}. Now we consider the chain of inclusions
	\[
	Y_{k+1}^*=(X_{k+1}^n)^*\to (X_{k+1}^{n-1})^*\to (X_{k+1}^{n-2})^*\to \hdots \to (X_{k+1}^{k+2})^*=X_{k+1}^*
	\]
	Using that $(\mathcal{F}_{k}^j)^*$ is $k-1$-acyclic for $j\leq n-2$ (see \cref{lem:homologyfkj}), the Mayer Vietoris sequence (where $m-1\geq k+2$)
	\[
	\hdots \to H_*((\mathcal{F}_{k+1}^{m-1})^*)\oplus H_*((X_{k+1}^{m})^*)\to H_*((X_{k+1}^{m-1})^*)\to H_{*-1}((\mathcal{F}_{k+1}^{m-1})^*\times S^{n-m+1})\to \hdots
	\]
	shows that the cokernel of $H_k((X_{k+1}^{m})^*)\to H_k((X_{k+1}^{m-1})^*)$ is trivial except for $m=n$, where it could happen that $H_{k-2}((\mathcal{F}_{k+1}^{n-1})^*)\neq 0$.	
	So, in our current situation, we have that the kernel of $H_*(X_{k+1}^*)\to H_*(M_{k+1}^*)$ is given by
    $$H_{k-1}(X_{k+1}^*)\oplus H_{k}(X_{k+1}^*)\oplus H_{n}(X_{k+1}^*),$$
	so this is an isomorphism in degree $k+1$.\\
	
	Now assume $n\geq 8$. If we start at $k=n/2$ and go up with $k$ just as before, doing the surgery on $X_k^*$ in every step, we see that
	$H_{j}(X_{n-2}^*)\to H_j(M_{n-2}^*)$ is an isomorphism for $k+1\leq j\leq n-1$.
	This, in turn, implies that $H_{\leq k-2}(X_{n-2}^*)=0$ unless in degrees $0$ and $2$, and that
	$H_j(X_{n-3}^*)\to H_j(M_{n-3}^*)$ is an isomorphism as well. The MVs
	\[
	\hdots \to H_*((\mathcal{F}_{n-3}^{n-1})^*)\oplus H_*(Y_{n-3}^*)\to H_*((X_{n-3}')^*)\to H_{*-1}((\mathcal{F}_{n-3}^{n-1})^*\times S^1)\to \hdots
	\]
	gives us (using also that $H_{n-3}(Y_{n-3}^*)\to H_{n-3}((X_{n-3}')^*)\to H_{n-3}(M_{n-3}^*)$ is an isomorphism)
	$$H_{k-1}((\mathcal{F}_{n-3}^{n-1})^*)=\hdots =H_{n-5}((\mathcal{F}_{n-3}^{n-1})^*)=H_{n-4}((\mathcal{F}_{n-3}^{n-1})^*)=0.$$
	That is, we have $H_*((\mathcal{F}_{n-3}^{n-1})^*)=0$ for $k-1=\frac{n-2}{2}\leq *\leq n-4$. But $(\mathcal{F}_{n-3}^{n-1})^* \subset (X_{n-3}')^*$ is a compact manifold
	of dimension $n-2$ whose boundary is a union of homology spheres, so $H_*((\mathcal{F}_{n-3}^{n-1})^*)=0$ for all $2\leq *\leq n-4$ by Lefschetz duality.
	A posteriori, we find that
	$$H_{k-2}((\mathcal{F}_{k+1}^{n-1})^*)=H_{k-2}((\mathcal{F}_{n-2}^{n-1})^*)=0$$, thus $H_k(X_{k+1}^*)=0$, and we are done.\\
	We then also have $H_1((\mathcal{F}_{n-3}^{n-1})^*)=0$, for if not, then $H_3((X_{n-3}')^*)\neq 0$ would inject into $H_3(X_{n-2}^*)=0$.
	That is, the homology of $(\mathcal{F}_{n-3}^{n-1})^*$ is generated by its boundary.\\
	
	Now, for the case $n=6$, if $H_1((\mathcal{F}_3^{n-1})^*)\neq 0$, then $b_1((\mathcal{F}_3^{n-1})^*)\neq 0$
	by \cref{rem:graphbetti}, and then $H_3((\mathcal{F}_3^{n-1})^*)$ is not generated by its boundary
	(using Lefschetz duality),
	so $H_5((X_3^5)^*)$ is not generated by its boundary $\mathcal{M}_3^*\times S^2$ (because the homology of $\mathcal{M}_3^*\times S^2$
	in $H_5((X_3^5)^*)$ is mapped to $H_3((\partial \mathcal{F}_3^{5})^*)\otimes H_1(S^1)$ under the boundary map of the MVs above). This contradicts $H_5(X_4^*)=0$.\\
	
	Let us now turn to odd $n\geq 7$.
	Here, we want to show that $H_{k+1}(X_{k+1}^*)\to H_{k+1}(M_{k+1}^*)$ is an isomorphism. Certainly $H_{k+1}(Y_{k+1}^*)\to H_{k+1}(M_{k+1}^*)$
	is, so by the same reasons as for even $n$, we need to argue why $H_{k-1}((\mathcal{F}_{k+1}^{n-1})^*)=0$.\\
	Since $(\mathcal{F}_{n-3}^{n-1})^*$
	is obtained from $(\mathcal{F}_{k+1}^{n-1})^*$ by attaching homological cells of degree at least $k+2$, the inclusion induces an isomorphism in
	homology of degree $\leq k$, and then we have with Lefschetz duality
	$$b_{k-1}((\mathcal{F}_{k+1}^{n-1})^*)=b_{k-1}((\mathcal{F}_{n-3}^{n-1})^*)\overset{\text{LD}}{=}b_{k}((\mathcal{F}_{n-3}^{n-1})^*)=b_{k}((\mathcal{F}_{k+1}^{n-1})^*)$$
	and
	$$b_{k}((\mathcal{F}_{k+1}^{n-1})^*)=b_{k+2}((X_{k+1}^{n-1})^*)=b_{k+2}((X_{k+1}^{n-2})^*)=\hdots = b_{k+2}(X_{k+1}^*)=0,$$
	where the last equations come from the obvious Mayer Vietoris sequence. Therefore, we could only have torsion. But this would imply
	that $H_{k+1}((X_{k+1}^{n-1})^*)$ has torsion (the short exact sequence
	\[
	0 \to H_{k+1}(Y_{k+1}^*)\to H_{k+1}((X_{k+1}^{n-1})^*)\to H_{k-1}((\mathcal{F}_{k+1}^{n-1})^*)\otimes H_1(S^1)\to 0
	\]
	splits indeed, because $H_{k+1}((X_{k+1}^{n-1})^*)\to H_{k+1}(M_{k+1}^*)\overset{\cong}{\to} H_{k+1}(Y_{k+1}^*)$ is a split)
	which injects into $H_{k+1}(X_{k+1}^*)$. But this is impossible due to Poincaré duality and $H_{k-1}(X_{k+1}^*)=0$.\\
	When we now go up, we see that the kernel of $H_*((X_{n-3}^{n-1})^*)\to H_*(M_{n-3}^*)$ is non-trivial at most in degrees $2$ and $k$
	with the same arguments used as in the case $k<\lfloor \frac{n}{2}\rfloor$.	In $k$, it could only be a torsion group $T$ by PD. By the same MVs,
	we would have $H_{k-2}((\mathcal{F}_{n-3}^{n-1})^*)=H_k((\mathcal{F}_{n-3}^{n-1})^*)=T$, but then there would be non-trivial kernel of
	$H_*((X_{n-3}^{n-1})^*)\to H_*(M_{n-3}^*)$ in degree $k+2$ with the same arguments as before.\\
	
	For $n=5$, we only need to show that $H_3((X_2^4)^*)=0$, which is equivalent to $H_1((\mathcal{F}_2^4)^*)=0$, which is equivalent to $b_1((\mathcal{F}_2^4)^*)=0$
	by \cref{rem:graphbetti}. This can be argued as for $n=6$.
\end{proof}
As a side product, we have proven
\begin{lemma}\label{homologyFkn-1}
	Let $n\geq 5$ and $F_k$ be a connected component of $\mathcal{F}_k^{n-1}$ (for all $k\leq n-2$ when $F_k$ belongs to an isotropy submanifold, and $k\leq n-3$
	otherwise). The homology of $F_k^*$ is concentrated in degree $k$ and, if it is defined, $F_{n-2}^*$ has the homology of a sphere.
\end{lemma}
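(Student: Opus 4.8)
The plan is to harvest both assertions from the proof of \cref{homologyX*}, where --- running through the cases $n=5,6,7$ and $n\ge 8$ --- the orbit space $(\mathcal F_k^{n-1})^*$ is analysed in detail. There it is shown that each connected component $F_k^*$, $k\le n-3$ (resp.\ $k\le n-2$ in the isotropy case), is a compact $(n-2)$-manifold --- closed when $k=n-2$, and with boundary a union of homology spheres otherwise --- and that its homology vanishes in all intermediate degrees; the Poincaré/Lefschetz duality arguments there also exclude stray torsion. So for $k\le n-3$ the proof simply collects those vanishing statements and combines them with Lefschetz duality and with $\partial F_k^*$ being a union of homology spheres, which together force $H_i(F_k^*)=0$ for every $i\notin\{0,k\}$ (the top degree $n-2$ being killed by the nonempty boundary).

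The conceptual reason behind this is the reduction already used in \cref{lem:homologyfkj}: by \cref{cor:defretrnaturality} together with the argument of \cref{section}, now carried out inside the face submanifold $Z\subset M$ to which $F_k$ belongs (an equivariantly formal face submanifold by the standing hypotheses of \cref{local}), $H_*(F_k^*)$ is identified with $H_*((M_k\cap Z)^*)=H_*(Z_k^*)$, the homology of the orbit space of the equivariant $k$-skeleton of $Z$; the acyclicity statements for such skeleta (\cref{orbitSpace}, and its integral counterpart in the locally standard case via \cref{torusManifold} and \cref{thm:ABFP}) then give concentration in degree $k$. In the case $k=n-2$ the same reduction, applied inside the isotropy submanifold $Q$ (a $2(n-1)$-dimensional, locally standard, equivariantly formal torus manifold for its effective $T^{n-1}/\Z_p\cong T^{n-1}$-action, with the $T^n$-action on $X_{n-2}$ supplied by \cref{fol}), identifies $H_*(F_{n-2}^*)$ with $H_*(Q_{n-2}^*)=H_*(\partial Q^*)$; since $Q^*$ is acyclic by \cref{torusManifold}, its boundary $\partial Q^*$ is a homology $(n-2)$-sphere, and hence so is $F_{n-2}^*$.

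The main difficulty is essentially expository rather than mathematical: one has to phrase ``the argument of \cref{section} carried out inside $Z$'' in a way that also covers the face submanifolds that are not literally in the scope of \cref{fol}, and to be careful about the integral $(k-1)$-acyclicity of $Z_k^*$ when the stabilizers inside $Z$ are not all connected (this is exactly the point handled in \cref{lem:homologyfkj} via \cref{torusManifold}). Since almost all of the quantitative work --- the degreewise vanishing of $H_*((\mathcal F_k^{n-1})^*)$, the identification of its boundary, and the exclusion of torsion --- was already done in the proof of \cref{homologyX*}, the remaining task is just to assemble these pieces, which is why the statement is recorded here only as a side product.
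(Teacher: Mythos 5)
Your first paragraph correctly identifies the paper's argument: \cref{homologyFkn-1} is recorded immediately after \cref{homologyX*} as a side product, because the degreewise vanishing of $H_*((\mathcal F_k^{n-1})^*)$, the identification of its boundary with homology spheres, and the exclusion of torsion are all extracted, case by case in $n$, from the Mayer--Vietoris and Lefschetz-duality bookkeeping inside that proof. That part matches.

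The ``conceptual reason'' in your second paragraph is, however, not available under the standing hypotheses of \cref{local}, and would be a genuine gap if relied upon. The reduction of \cref{lem:homologyfkj} presupposes an equivariantly formal face submanifold $Z$ of dimension $2j$, which the hypotheses supply only for $j\le n-2$. For $\mathcal F_k^{n-1}$ the relevant codimension-two stratum is either an isotropy submanifold $Q$ --- which is \emph{not} a face submanifold and whose equivariant formality (hence acyclicity of $Q^*$ via \cref{torusManifold}) is nowhere assumed in \cref{local}; the admissibility condition of \cref{theosuffEF} is not in force here, and even when it is, it is strictly weaker than acyclicity of $Q^*$, cf.\ \cref{surjrem} --- or else is not contained in any $2(n-1)$-dimensional face at all, since there is no face of rank $n-1$ with preimage of that dimension. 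The point of \cref{homologyFkn-1} is precisely that the concentration of $H_*(F_k^*)$ in degree $k$, and the sphere statement for $F_{n-2}^*$, are forced by the global structure of $X_{n-2}^*$ without assumptions on $Q$; that is what the duality arguments in \cref{homologyX*} deliver and what your alternative would not. So the ``side-product'' route sketched in your first paragraph is the actual content of the proof, not merely a matter of exposition.
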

For $n=4$ and $R=\Z$, the situation is not so clear. It is not even clear if isotropy submanifolds of $X_2$ are automatically orientable. To motivate this,
consider the spaces $M_1'$ and $X_1=\partial M_1'$ associated to $\Gamma$ as in \cref{X_1^*}. These are automatically orientable by \cref{orientable},
because $\pi_1(X_1)$ is generated by $\mathcal{M}_1$. We need to think about the singular foliation $\mathcal{F}_1$ on $X_1$
as in \cref{singfol}, particularly in a neighborhood of a connected component $C$ of $\mathcal{M}_1$, depending on whether the connection path $\gamma_C$ associated to $C$ has trivial
monodromy or not.\\
If it does, then $\mathcal{F}_1^3$, the space of leaves of $\mathcal{F}_1$ dimension $3$, intersected with $C\times S^3$, the boundary of a neighborhood of $C$ in $X_1$, is simply given
by the two components $C\times (S^1\times \{0\})$ and $C\times (\{0\}\times S^1)$ (as it is the case when the labeling of $\Gamma$ extends to a $\Z^4$-labeling), whereas
if the monodromy of $\gamma_C$ is not trivial, then $\mathcal{F}_1^3\cap (C\times S^3)$ will only have one connected component. The reason is that, when thinking of
$C\times S^3$ as $[0,1]\times T^2\times S^3$ glued together along its ends, $\mathcal{F}_1^3$ on there is given by the union of $[0,1]\times T^2\times (S^1\times \{0\})$
and $[0,1]\times T^2\times (\{0\}\times S^1)$ in both cases, but the gluing map in case of a trivial monodromy is the identity, and the map for
non-trivial mondromy on $S^3$ is given by switching the entries. By the same reasoning, the space $Y_1^*=(\mathcal{F}_1^4)^*$ might not be the total space of a trivial
$S^1$-bundle over $B_1$ anymore (compare with \cref{section}), but only the total space of an unorientable $S^1$-bundle over $B_1$ (which implies that $B_1$ itself is necessarily
unorientable, then).\\

Albeit these difficulties in a neighborhood of $M_1$ constructed out of a graph $\Gamma$ occur, we will show the following.
\begin{lemma}\label{homologyFkn-1dim8}
	If $M_2'$ can be embedded in a $\Z$-equivariantly formal $T^3$-manifold $M$ (which is then necessarily orientable),
	then any isotropy submanifold $Z$ of $X_2$ is indeed orientable and its orbit space is a two-sphere. If, moreover, every connected isotropy submanifold of $M$ contains a fixed point,
	then $T\to Z\to Z^*$ is a trivial bundle.
\end{lemma}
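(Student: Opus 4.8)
The plan is to realise $Z$ as the boundary of a collar neighbourhood inside the ambient isotropy submanifold $W\subseteq M$ and to read off everything from the orbit space of $W$, which is tightly controlled because $M$ is $\Z$-equivariantly formal. Write $Z = X_2\cap W$, where $W$ is the six-dimensional face submanifold of $M$, corresponding to an ineffective covalence-one subgraph of its GKM graph, to which $Z$ belongs, and fixed by the finite cyclic group $\Z_p$; then $\bar W:=W/\Z_p$ is a locally standard torus manifold for the torus $\bar T:=T/\Z_p\cong T^3$. By \cref{cor:defretrnaturality}, $M_2'\cap W$ is a compact codimension-zero submanifold-with-boundary of $W$ which equivariantly deformation retracts onto $M_2\cap W = W_{-1}$ and whose boundary is $Z$. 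The normal bundle of $W$ in $M$ carries a canonical complex structure, the one for which $\Z_p$ acts by fibrewise rotations, hence is orientable; since $M$ is orientable (being $\Z$-equivariantly formal), $W$ is orientable, so $M_2'\cap W$ is orientable and therefore so is its boundary $Z$. This gives the first assertion.

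For the orbit space I pass to $T$-quotients. Since the action on $M_2'$ is locally standard, every orbit space occurring is a topological manifold (\cref{rem:orbitspacemanifold}); in particular $W^* = \bar W^*$ is a compact $3$-manifold with boundary $\partial W^* = (W_{-1})^*$, and $(M_2'\cap W)^*$ is a neighbourhood of $\partial W^*$ in $W^*$ deformation retracting onto it, hence a collar, so that $Z^* = \partial\big((M_2'\cap W)^*\big)\cong\partial W^*$. Now $M$ is $(n-1)$-independent with vanishing odd rational cohomology, i.e.\ GKM$_{n-1}$ over $\Q$, so \cref{orbitSpace}(1) applied to the rank-$3$ face $W^*$ gives $H^i(W^*,\partial W^*;\Q)=0$ for $i\le 2$; Lefschetz duality for the compact $3$-manifold $(W^*,\partial W^*)$ then gives $H_j(W^*;\Q)=0$ for $j\ge 1$, so $W^*$ is $\Q$-acyclic, and the long exact sequence of the pair forces $H_1(\partial W^*;\Q)=0$. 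Moreover the interior of $W^*$ is the base of a principal $\bar T$-bundle with orientable total space $W\setminus W_{-1}$, hence is orientable, so $W^*$ and $\partial W^*$ are orientable. An orientable closed surface with vanishing rational first homology and Euler characteristic $2$ — here $\chi(\partial W^*)=2\chi(W^*)=2$ — is $S^2$; hence $Z^*\cong S^2$.

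Now assume in addition that every connected isotropy submanifold of $M$ contains a fixed point, so that $W$ is a genuine face with $W_{-1}\neq\emptyset$ and all of the above applies. On $Z$ the $T$-stabiliser is everywhere exactly $\Z_p$, since points of $Z$ lie outside $W_{-1}$, so $\bar T$ acts freely and $Z\to Z^*$ is a principal $\bar T = T^3$-bundle over $Z^*\cong S^2$, classified by its first Chern class in $H^2(S^2;\Z^3)\cong\Z^3$. Being a $T$-invariant subset of $W\setminus W_{-1}$, this bundle is the pullback of the principal $\bar T$-bundle $W\setminus W_{-1}\to\operatorname{int} W^*$ along the inclusion $Z^*\hookrightarrow\operatorname{int} W^*$, which is homotopic — push $Z^*$ through the collar — to the boundary inclusion $\partial W^*\hookrightarrow W^*$. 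As $W^*$ is $\Q$-acyclic, $H^2(W^*;\Z^3)$ is a torsion group, so its image in the torsion-free group $H^2(S^2;\Z^3)$ vanishes; hence the first Chern class of $Z\to Z^*$ is zero and $Z\cong S^2\times T^3$ equivariantly.

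The step I expect to be most delicate is the orbit-space identification in the second paragraph: one must verify that $(M_2'\cap W)^*$ genuinely is a collar of $\partial W^*$ — that is, that the equivariant-neighbourhood construction of \cref{defretr} descends to a well-behaved collar on the manifold-with-corners $W^*$ — and that \cref{orbitSpace}, established for manifolds in general position, legitimately applies to $W$ with its induced $\bar T$-action. This, together with the fact that $W$ is a face submanifold at all (rather than an isotropy submanifold avoiding $M^T$, which could in principle occur in dimension $8$), is exactly where the hypothesis that $M_2'$ embeds in an honest $\Z$-equivariantly formal $T^3$-manifold — rather than in an abstract graph neighbourhood, where the $n=4$ monodromy pathologies discussed above may arise — is used.
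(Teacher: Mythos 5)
Your overall strategy — identify $Z^*$ with the boundary of the three-dimensional orbit space of the codimension-two submanifold $C$ containing $Z$, show that this orbit space is a rational homology disk, and then kill the Chern class of the bundle over $Z^*$ by observing it factors through a torsion group — matches the paper's structure, but there are two genuine issues, one of them a gap.

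\textbf{Orientability for $p=2$.} Your argument is that the normal bundle of $W$ in $M$ carries "a canonical complex structure, the one for which $\Z_p$ acts by fibrewise rotations." This gives an orientation only when one can pick out a preferred rotation direction from the $\Z_p$-action, which is possible once a generator $h\in\Z_p$ is fixed and the rotation angle is not $\pi$ — that is, only when $p>2$. For $p=2$ the generator acts as $-\mathrm{Id}$ on the fiber, which commutes with \emph{every} complex structure, so there is no preferred orientation and your argument breaks. The paper treats $p=2$ separately: it uses equivariant formality over $\Z$ to bound $\dim H^{\mathrm{odd}}(C;\Z_2)\le\dim H^{\mathrm{odd}}(M;\Z_2)=0$, concluding $w_1(C)=0\in H^1(C;\Z_2)$. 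Without this or something equivalent, the first conclusion of the lemma is not established for involutions.

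\textbf{Identification of $C$ with a face, and which lemma to invoke.} You set up the proof from the start with $Z=X_2\cap W$ for $W$ a face submanifold, and you flag at the end that "$W$ is a face submanifold at all" is "exactly where the hypothesis is used" — but you never actually justify this, and the first two claims of the lemma are stated without the additional hypothesis about fixed points. The paper avoids this by working directly with the connected component $C$ of $M^{\Z_p}$ and never requiring it to be a face; the fact that $C$ actually does contain a $\bar T$-fixed point (so that one may cite \cref{torusManifoldgen} as the paper does, or \cref{orbitSpace} as you do) follows, once $H^{\mathrm{odd}}(C;\Z_p)=0$ is established, from $\chi(C)>0$. Your substitution of \cref{orbitSpace}(1) applied to the rank-three face of $M^*$ for the paper's application of \cref{torusManifoldgen} to $C^*$ is a legitimate alternative route to $W^*$ being rationally acyclic, but it inherits this unjustified identification as a prerequisite. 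The Chern-class argument in your last paragraph agrees with the paper's.

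Bottom line: same architecture, a valid alternative for the acyclicity of the orbit space, but a real gap at $p=2$ for orientability, plus an elided step in showing the isotropy component is a face.
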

\begin{proof}
Let $C$ be the connected component of isotropy submanifolds of $M$ that contains $Z$.
After dividing out the kernel of the $T$-action on $C$, we may view the $T$-action to be effective. Let $C'\subset C$ be a connected component of
those isotropy submanifolds whose principal stabilizer $H$ is disconnected. A neighborhood in $C$ of a $T$-orbit through a point in $C'$ is equivariantly diffeomorphic
to $T\times_H \R^3$, where $H$ acts as a subgroup of $\SO(3)$ on $\R^3$. Since $H$ is abelian, it is necessarily cyclic by the classification of finite subgroups of $\SO(3)$,
and thus the orbit space of $C'$ has dimension one. Thus, the only stabilizer subgroup occuring in $C'$ is $H$, and $C'/T$ is $S^1$. Moreover, since the orbit space of $T\times_H \R^3$
is the orbit space of $\R^3/H$ and thus homeomorphic to a sphere, $C^*$ is a topological manifold of dimension $3$ with boundary $\partial C^*\cong Z^*$.\\
Let us show that $C$ is orientable. If $H$, generated by some element $h$, is not of order two, the sphere bundle belonging to the normal bundle of $C$ in $M$ can be oriented by saying that
a positive orientation of a fiber is given by the direction of a geodesic from some point $p$ to $h\cdot p$ not passing through $h^2\cdot p$. If $H$ is of order two, then,
since $M$ is equivariantly formal over $\Z$, we have
\[
\dim H^{odd}(C;\Z_2)\leq \dim H^{odd}(M;\Z_2)=0,
\]
so in particular $w_1(C)\in H^1(C;\Z_2)$ is the $0$-class, which makes $C$ orientable again.\\
In order to show that $Z^*$ is a two-sphere, we only need to show that $Z^*$ is a rational homology sphere, or equivalently, that $C^*$ is a rational homology disk.
By the above inequality with coefficients in $\Z_p\subset H$, $p$ a prime, we see that $C$ has vanishing odd rational cohomology, so that $C^*$ is necessarily rationally
face-acyclic by \cref{torusManifoldgen}.\\
As a consequence, if every connected isotropy submanifold of $M$ contains a fixed point, the first Chern class of the principal bundle $T\to Z\to Z^*$ is the image of
the first Chern class of the principal bundle $T\to C\to C^*$ under $H^2(C^*)\to H^2(Z^*)$. But $H^2(C^*)$ is torsion, so the bundle $T\to Z\to Z^*$ is trivial as claimed.
\end{proof}

\newpage

\section{Equivariant formality and orbit spaces of manifolds in general position}\label{EFgenPos}
From now on and until the end, we take coefficients to be $\Z$ unless stated otherwise.
In this section, we want to link the equivariant formality of a manifold in general position (with certain isotropy assumptions)
to the homology of its orbit space. The next lemma more or less states that, although $M$ does not have to be an equivariantly formal torus manifold,
$X:=X_{n-2}$ has the 'homological properties' as if $M$ was an equivariantly formal torus manifold.
The exclusion of $n=6$ in the last point seems a bit peculiar at first, but this will turn out to be fine later, when we look at the greater context.
\begin{lemma}\label{propX}
	Let $X=X_{n-2}=\partial M_{n-2}'$ be as in \cref{local} respectively \cref{defretr}. If $n=4$, assume in addition that $M$ is compact, equivariantly formal
	over $\Z$ and that every component of an isotropy submanifold contains a fixed point. Denote by $\partial Z$ (this could be empty) the isotropy submanifolds of $X$
	(by \cref{homologyFkn-1} and \cref{homologyFkn-1dim8}, these are products of homology spheres and tori).
	\begin{enumerate}
		\item We have an isomorphism $H_2(X)\to H_2(M_{n-2})$ for $n\geq 5$.\\
		
		\item Let $n\neq 6$. The following hold:\\
		\begin{enumerate}
		\item For even $4\leq j\leq n$, the kernel of $H_j(X)\to H_j(M_{n-2})$ is free abelian and equal to the image of
		\[
		H_j(X_1^{n-1})\to H_j(X).
		\]
		\item For odd $j\leq n$, $H_{j}(X)\to H_{j}(M_{n-2})$ is surjective.\\
		
		\item For even $4\leq j\leq n-2$, the cokernel of $H_j(X)\to H_j(M_{n-2})$ is free abelian.\\
	    \end{enumerate}
		If $d^2_{4,*}=d^3_{4,*}=d^4_{4,*}=0$ for the rational Serre spectral sequence associated to $T\to X\to X^*$, then this holds for $n=6$ as well.\\
		
		\item For $n\neq 6$, $H_k(X)$ is free abelian for all $k\neq n-1$. If $d^2_{4,*}=d^3_{4,*}=d^4_{4,*}=0$ for the rational Serre spectral sequence
		associated to $T\to X\to X^*$, then this holds for $n=6$ as well.\\
		
		\item For $n\geq 5$, $H_1((X\setminus \partial Z)^*)\neq 0$ if and only if every subgraph of covalence one is ineffective,
		and then $H_1((X\setminus \partial Z)^*)$ is generated by the $S^1$-fiber of $S^1\to Y_{n-2}^*\to M_{n-2}^*$.\\
		The same holds for $n=4$ if any three edges emerging at a vertex belong to an ineffective subgraph.\\
		
		\item $H_*((X\setminus \partial Z)^*)$ is torsion free.
	\end{enumerate}
\end{lemma}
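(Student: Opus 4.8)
The statement splits into a \emph{total-space} block (items (1)--(4), concerning $H_*(X)\to H_*(M_{n-2})$ and the shape of $H_*(X)$) and an \emph{orbit-space} block (items (5)--(6), concerning $(X\setminus\partial Z)^*$). I would attack the total-space block by comparing $X$ and $M_{n-2}$ through the compact manifold $W:=M_{n-2}'$, which has $\partial W=X$ and deformation retracts onto $M_{n-2}$ via $r_{n-2}$ (\cref{defretr}), so that Lefschetz duality on $(W,X)$ applies. I would attack the orbit-space block along the equivariant surgery filtration of \cref{chain}, using \cref{section}, \cref{lem:homologymap}, \cref{homologyFkn-1} (and \cref{homologyFkn-1dim8} when $n=4$). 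The one place the two blocks interact is that $H^*(X;\Q)$ is read off from $H^*(X^*;\Q)$ by the rational Serre spectral sequence of the locally free action $T\to X\to X^*$; this spectral sequence degenerates in the expected way except, possibly, in dimension $n=6$, which is exactly why $n=6$ is singled out and the hypothesis on $d^2_{4,*},d^3_{4,*},d^4_{4,*}$ is imposed there.

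\textbf{The total-space block, items (1)--(4).} First one checks $W$ is oriented: for $n\ge5$ it is a codimension-zero submanifold of the oriented $M$, and for $n=4$ the hypothesis that $M$ is $\Z$-equivariantly formal forces $w_1(M)=0$, hence $w_1(W)=0$, as in \cref{homologyFkn-1dim8}. Then $H_*(W)=H_*(M_{n-2})$ and Lefschetz duality gives $H_k(W,X)\cong H^{2n-k}(M_{n-2})$, which vanishes for $k\le3$ since $\dim M_{n-2}=2n-4$; plugging this into the long exact sequence of $(W,X)$ gives item (1) ($H_2(X)\xrightarrow{\sim}H_2(M_{n-2})$, $n\ge5$) and the surjectivity of $H_3(X)\to H_3(M_{n-2})$. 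For the higher degrees one needs the structure of $H^*(M_{n-2};\Z)$: since every face submanifold of $M_{n-2}$ is an equivariantly formal torus manifold, a Mayer--Vietoris induction over the face poset — using the surjectivity of restriction maps onto face submanifolds for such torus manifolds, cf.\ \cref{torusManifold}, \cref{orbitSpace} — shows that $H^{\mathrm{even}}(M_{n-2})$ is free abelian, that $H^{\mathrm{odd}}(M_{n-2})$ is free abelian, and, Lefschetz-dually, that the image of $\partial\colon H_{j+1}(W,X)\to H_j(X)$ is exactly $\operatorname{im}(H_j(X_1^{n-1})\to H_j(X))$; this is item (2a). Dualizing the same computation yields the injectivity $H^{\mathrm{odd}}(M_{n-2})\hookrightarrow H^{\mathrm{odd}}(X)$ needed for the rest of (2b), and the cokernels in (3) then lie inside $H^{\mathrm{even}}(M_{n-2})$, hence are free. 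Item (4) uses instead Poincaré duality on the closed oriented $(2n-1)$-manifold $X$: $\operatorname{Tor}(H_k(X))\cong\operatorname{Tor}(H_{2n-2-k}(X))$, and one checks that running through the surgery steps $X_k\leadsto X_{k+1}$ — whose Mayer--Vietoris sequences only involve the free homology of $\mathcal M_k$ (a homology sphere times a torus), with attaching maps $f_k$ acting as identities on homology by \cref{lem:homologymap} — confines all torsion to degree $n-1$; the exception is $(n,k)=(6,3)$, where $f_3$ may act nontrivially (\cref{rem:homologyhtilde}), and this is precisely what the hypothesis on the Serre differentials rules out.

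\textbf{The orbit-space block, items (5)--(6).} In $X=X_{n-2}$ all $T$-orbits are full-dimensional, so $X\setminus\partial Z$ is the disjoint union of $Y_{n-2}$ (where the extended $T^n$-action is defined) and the effective covalence-one strata. By \cref{section} applied to $k=n-2$ together with \cref{equlinebundles3} one gets $Y_{n-2}=B_{n-2}\times T^n$ with $B_{n-2}\simeq_{H_*}M_{n-2}^*$; hence $Y_{n-2}^*\cong B_{n-2}\times S^1$ and $H_*(Y_{n-2}^*;\Z)\cong H_*(M_{n-2}^*)\otimes H_*(S^1)$ is free abelian (the homology of $M_{n-2}^*$ being free in the setting of \cref{local}), with $H_1$ generated by the $S^1$-fiber. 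If every covalence-one subgraph is ineffective, then $X\setminus\partial Z=Y_{n-2}$ and (5), (6) follow at once. Otherwise $(X\setminus\partial Z)^*$ is obtained from $Y_{n-2}^*$ by gluing in, along link circles, the disjoint orbit spaces of the effective covalence-one strata; a local slice computation near such a stratum, the normal circle having weight one, shows that the fiber circle bounds there, which gives (5), while the glued-in pieces have free orbit-space homology (by \cref{torusManifold} and \cref{homologyFkn-1}) with attaching maps that are identities on homology (\cref{lem:homologymap}), so the Mayer--Vietoris sequences split into short exact sequences of free abelian groups, giving (6). The $n=4$ statements are proven the same way, with ``any three edges at a vertex belong to an ineffective subgraph'' replacing ``covalence one'' and using \cref{homologyFkn-1dim8}.

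\textbf{Main obstacle.} The genuinely delicate step is the structural claim about $H^{\mathrm{odd}}(M_{n-2})$ underlying (2a) and (2b): that it is free abelian and, under Lefschetz duality, detected by $X$ exactly along $X_1^{n-1}$. This requires a careful analysis of the face-poset Mayer--Vietoris spectral sequence of $M_{n-2}$, leaning on the faces being equivariantly formal torus manifolds with surjective restriction maps, and then matching the outcome with the geometry of the $X_k$. The secondary, recurring difficulty is the dimension $n=6$: the Hopf-type map $\mathcal M_3^*\to S^3$ of \cref{rem:homologyhtilde} can both create torsion and obstruct the degeneration of the rational Serre spectral sequence of $T\to X\to X^*$, and the hypothesis on $d^2_{4,*},d^3_{4,*},d^4_{4,*}$ is exactly what restores the clean picture in that case.
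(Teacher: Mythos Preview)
Your Lefschetz-duality argument for item (1) is correct and actually cleaner than the paper's iterative computation through the $H_2(X_k^{n-1})$. For items (2) and (3), however, your route differs substantially from the paper's and has a real gap. The paper's key device is \cref{conttorus}: one constructs an equivariantly formal torus manifold $\tilde M$ with $\tilde M_{n-3}'=M_{n-3}'$, proves the statements first for $\tilde X=\partial\tilde M_{n-2}'$ via the Mayer--Vietoris of the \emph{closed} manifold $\tilde M=\tilde M_{n-2}'\cup N$ together with $H^{\mathrm{odd}}(\tilde M)=0$, and then transfers back to $X$ by comparing both $X$ and $\tilde X$ to $X_{n-3}'$ through the surgery diagram (\cref{lem:homologymap} controls $f_{n-3}$; the $n=6$ subtlety is isolated in \cref{rem:cokernel}). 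The identification in (2a) comes out of the torus-manifold step: there one exhibits an explicit splitting $H_*(\tilde X)=I\oplus K\oplus K'$ with $K'$ realised inside $H_*(X_1^{n-1})$ and equal to $\ker(H_*(\tilde X)\to H_*(N))$, and this is what pins the kernel of $H_j(X)\to H_j(M_{n-2})$ to the image of $X_1^{n-1}$.

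Your proposed route stops at $\ker(H_j(X)\to H_j(M_{n-2}))=\operatorname{im}\partial$ with $\partial\colon H_{j+1}(W,X)\to H_j(X)$, but gives no mechanism for identifying $\operatorname{im}\partial$ with $\operatorname{im}(H_j(X_1^{n-1})\to H_j(X))$; that identification is the content of (2a), not a formal consequence of Lefschetz duality plus knowing $H^*(M_{n-2})$. Moreover, even granting that $H^{\mathrm{odd}}(M_{n-2})$ is free abelian, the image of a map out of a free group need not be free, so (2a) really needs item (3) as input; and your sketch for (3) (``running through the surgery steps confines torsion to degree $n-1$'') is not an argument --- torsion can be created in the cokernels at each surgery step, and the paper again handles this by reducing to $\tilde X$, where the Mayer--Vietoris with the torsion-free pieces $\tilde{\mathcal F}_{n-2}^{n-1}$ and $\tilde Y_{n-2}$ settles it. For items (4) and (5) your outline is close in spirit, but you invoke $Y_{n-2}$ and $B_{n-2}$, which require the $T^n$-action on $X_{n-2}$ and hence that every covalence-one subgraph be ineffective; the paper works instead with $Y_{n-3}$ (where the $T^n$-action always exists by \cref{fol}) and then deduces (5) from the already-established structure of $H_*(X^*)$ in \cref{homologyX*} via the long exact sequence of the pair $((X\setminus\partial Z)^*\cup\partial Z^*, \partial Z^*\times S^1)$.
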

To prove this, we need the following general principle.
\begin{lemma}\label{conttorus}
	For $n\geq 5$ and $X$ as above, there is an equivariantly formal torus manifold $\tilde{M}$ (which is not necessarily smooth, but smooth in a neighborhood of $\tilde{M}_{n-2}$)
	such that $M_{n-3}'= \tilde{M}'_{n-3}$. In particular, the GKM graph of $M$ is that of $\tilde{M}$, and so $H_T^*(M)\cong H_T^*(\tilde{M})$.
\end{lemma}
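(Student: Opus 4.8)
The plan is to realize $M'_{n-3}$ as the thickened equivariant $(n-3)$-skeleton of a closed, locally standard \emph{torus} manifold $\tilde M$ (with $T^n$ acting) whose orbit space $\tilde M^*$ and all of whose faces are acyclic over $\Z$; equivariant formality of $\tilde M$ then follows at once from \cref{torusManifold}. I would build $\tilde M$ by \emph{capping off} the boundary $X_{n-3}=\partial M'_{n-3}$ with a (possibly non-smooth, away from $\tilde M_{n-2}$) $T^n$-manifold $W$ that introduces no $T^n$-orbit of dimension $\le n-3$ in its interior, so that $\tilde M'_{n-3}=M'_{n-3}$ holds by construction and the equivariant one-skeleton of $\tilde M$ lies inside $M'_{n-3}$.

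First I would set up the $T^n$-structure. Since $n\ge 5$, \cref{fol} applies, so the $T=T^{n-1}$-action on $M'_{n-3}$ extends to an effective $T^n$-action with every $T^n$-orbit of dimension $\le n-2$ a $T$-orbit; in particular the $T^n$- and $T$-equivariant skeleta of $M'_{n-3}$ agree up to dimension $n-3$. The hypotheses of \cref{ext} are in force: general position gives $(n-1)$-independence, hence $\ge 4$-independence, and the standing assumption $b_1(M^*_2)=0$ makes $\pi_1(\Gamma)$ generated by two-valent GKM subgraphs by \cref{rem:graphbetti}. Thus the labeling of $\Gamma$ extends to an effective $\Z^n$-labeling, and the $T^n$-GKM graph of $M'_{n-3}$ is a torus graph $\hat\Gamma$ that restricts to $\Gamma$. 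From now on $M'_{n-3}$ is kept fixed, together with all face submanifolds of $M$ of dimension $\le 2(n-3)$, which are already equivariantly formal torus manifolds by the hypotheses of \cref{local}.

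To build the cap I would continue the inductive attaching scheme of \cref{defretr}, now carried out abstractly and $T^n$-equivariantly through the stages $k=n-2,n-1,n$. At stage $k$ one glues onto $X_{k-1}$, along the equivariantly trivial normal bundle of $\mathcal{M}_{k-1}$ (\cref{equlinebundles1}, \cref{equlinebundles2}) and using gluing maps in the torus form of \cref{lem:homologymap}, a collection of $2k$-dimensional equivariantly formal torus face submanifolds with prescribed $(k-1)$-skeleton, chosen so that their orbit spaces are acyclic manifolds with acyclic faces; the components coming from ineffective covalence-one subgraphs are taken to be products of homology discs with the appropriate tori, matching \cref{homologyFkn-1} and \cref{homologyFkn-1dim8}. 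The existence of such equivariantly-formal completions is itself established by downward induction on codimension, the base case being that the $2(n-2)$-dimensional faces are the face submanifolds of $M$ themselves, and the inductive step using that the orbit-space homology to be killed is controlled by \cref{section}, \cref{homologyX*} and Poincar\'e/Lefschetz duality for the (topological-manifold) orbit spaces. After stage $k=n$ the construction closes up to a compact topological $T^n$-manifold $\tilde M$, smooth in a neighborhood of $\tilde M_{n-2}$, locally standard (each attached chart is standard and $M$ itself was), with finite nonempty fixed set $M^T\subset M'_{n-3}$; since every orbit created by the cap has dimension $\ge n-2$, we get $\tilde M'_{n-3}=M'_{n-3}$.

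By construction $\tilde M^*$ and all its faces are acyclic over $\Z$, so \cref{torusManifold} gives that $\tilde M$ is equivariantly formal. Its GKM graph is $\hat\Gamma$, which restricts to the GKM graph $\Gamma$ of $M$ (their one-skeleta agree inside $M'_{n-3}=\tilde M'_{n-3}$); as every finite stabilizer lies in a proper subtorus (\cref{lem:isotropycomplone}, \cref{rem:ABFP}), the integral Chang--Skjelbred lemma applies, and the GKM presentation of \cref{orbitSpace} together with the remark following it yields $H_T^*(M)\cong H_T^*(\Gamma)\cong H_T^*(\tilde M)$ (the first isomorphism under the additional hypothesis, present in all applications, that $M$ is itself $\Z$-equivariantly formal). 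The main obstacle is the construction of the cap: one must arrange $\tilde M^*$ and every face of it to be acyclic, which is where the hypotheses of \cref{local} --- acyclicity of the low faces, $(n-3)$-acyclicity of $M^*_{n-2}$, general position --- must be fed through the Mayer--Vietoris bookkeeping of \cref{chain}, \cref{homologyX*}, \cref{section} and duality, with the behaviour near isotropy submanifolds (ineffective covalence-one subgraphs) and the exceptional low dimensions $n=5,6$ handled separately.
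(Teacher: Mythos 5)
Your overall strategy matches the paper's: cap off $X_{n-3} = \partial M'_{n-3}$ with a $T^n$-equivariant piece that introduces no orbits of dimension $\le n-3$, producing a closed locally standard torus manifold whose orbit space and faces are acyclic, and then invoke \cref{torusManifold}. The $T^n$-structure setup via \cref{fol} and \cref{ext}, the observation that the deformation retract forces $\tilde M'_{n-3}=M'_{n-3}$, and the conclusion via the GKM description are all correct and line up with the paper.

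The gap is in the one step you flag yourself as ``the main obstacle'': you never actually \emph{construct} the cap, you only state desiderata for it. Saying ``at stage $k$ one glues a collection of $2k$-dimensional equivariantly formal torus face submanifolds with prescribed $(k-1)$-skeleton, chosen so that their orbit spaces are acyclic manifolds with acyclic faces'' is circular --- the existence of such completions is precisely what has to be proved, and ``established by downward induction, the orbit-space homology to be killed is controlled by \cref{section}, \cref{homologyX*} and duality'' tells you what must be killed, not how to kill it. The ingredient that makes the paper's construction actually go through, and which is entirely absent from your proposal, is Kervaire's theorem (\cite{Ker69}) that any homology sphere of dimension $\ge 4$ bounds a contractible \emph{topological} manifold. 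The paper applies this twice: first, after gluing $F_{n-2}\times D^4$ onto $M'_{n-3}$ (not arbitrary ``face submanifolds'': literally $F_{n-2}\times D^4$, with the attaching map pinned down on $\partial F_{n-2}\times\{0\}$ to recover $M_{n-2}$), the components of $\tilde{\mathcal{M}}_{n-2}$ are homology-sphere $\times$ torus by \cref{homologyFkn-1}, and these are capped equivariantly by (contractible manifold) $\times$ torus; second, the resulting boundary is a principal $T^n$-bundle over a homology $(n-1)$-sphere, which is again capped via Kervaire. Without naming this result your inductive scheme has no way to produce the pieces you want, since ``acyclic manifold with given homology-sphere boundary'' is a nontrivial existence statement. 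Relatedly, the special treatment at $n=5$ is not a generic ``low-dimensional exception'' but arises precisely because $4$-dimensional homology spheres are not known to be smoothable: the paper therefore replaces $F_3\times D^4$ by $D^3\times T^3\times D^4$ to force $\tilde{\mathcal{M}}_3^*$ to be a union of \emph{smooth} $3$-spheres. (The case $n=6$, which you also single out, is not treated separately in this lemma; that distinction belongs to \cref{propX}.) Finally, the isomorphism $H_T^*(M)\cong H_T^*(\tilde M)$ does, as you note, require $M$ to be GKM over $\Z$; that caveat is fine, but it should be stated that this is the only intended use of the ``in particular'' clause, rather than folded in as a silent hypothesis.
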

\begin{proof}
	For $n\geq 6$, we take $M_{n-3}'$ and glue in $F_{n-2}\times D^4$ into there in such a way that the
     map restricted to $\partial F_{n-2}\times \{0\}$ is the same as for $M_{n-2}$, and that the $T^n$-action is preserved. This gives us $\tilde{M}_{n-2}'$ with boundary
     $\tilde{X}_{n-2}$. Now $\tilde{\mathcal{M}}_{n-2}$ is equivariantly diffeomorphic to products of homology spheres and $T^{n-1}$ by \cref{homologyFkn-1}.
     Since any smooth homology sphere of dimension at least $4$ bounds a contractible manifold (though not necessarily smoothly) by \cite{Ker69}, we can perform equivariant surgery on $\tilde{\mathcal{M}}_{n-2}$
     using those contractible manifolds, giving us a manifold $\tilde{M}_{n-1}'$ with boundary $\tilde{X}_{n-1}$, which is the total space
     of a principal $T^n$-bundle over a closed manifold of dimension $n-1$ whose homology is concentrated in degrees $0$, $n-2$ and $n-1$, hence a homology sphere of dimension $\geq 5$
     (actually this is a homotopy sphere, since the fundamental group of $\tilde{X}_{n-2}^*$ is generated by $\tilde{\mathcal{M}}_{n-2}$, but this is not important).
     Since any homology sphere of dimension $\geq 5$ is smooth (they have a PL structure by Kirby-Siebenmann, and then it was observed in \cite{Ker69} that they have a smooth structure),
     it bounds a contractible manifold topologically by \cite{Ker69}, so we can close $\tilde{M}'_{n-1}$ equivariantly to obtain a
     $T^n$-manifold $\tilde{M}$ whose orbit space is face-acyclic.\\
     For $n=5$ we do this in the same way, but instead of gluing $F_3\times D^4$ into $M_2'$, we take $D^3\times T^3\times D^4$, ensuring that $\tilde{\mathcal{M}}_{n-2}^*$
     is a union of $3$-spheres, thus ensuring that the boundary of $\tilde{M}'_{4}$ is actually a smooth $4$-sphere (it is unknown to the author whether every homology $4$-sphere is smoothable).
\end{proof}
\begin{remark}
	This fits into the context of the statement in \cite{AM23} that the equivariant cohomology of manifolds in general position for $n\geq 5$
	with $\Q$-coefficients has the structure of the equivariant cohomology of an equivariantly formal torus manifold with the $T^n$-action restricted to
	a certain $T^{n-1}$ in $T^n$.\\
	Note that it does not really matter that $\tilde{M}$ constructed above is not smooth, since the smoothness used in \cite{MP03} was only used to ensure the existence
	of local models around $T^n$-orbits, the existence of normal bundles of face submanifolds and then the existence of a canonical local model of the torus manifold,
	all of which we still have in our case. A bit of care might be necessary in section 9 of the article \cite{MP03}, since blow-ups are considered there. However, there is no blow-up performed
	on a characteristic submanifold (this wouldn't even change the space), so that every blow-up occuring in these arguments is performed on facial submanifolds of codimension at least
	$4$, all of which are contained in the 'smooth part' $\tilde{M}_{n-2}'$ of $\tilde{M}$.
\end{remark}
\begin{proof}[Proof of \cref{propX}]
	\mbox{}
	\begin{enumerate}
		\item We have an isomorphism $H_2(X_1)\to H_2(M_1)$ by \cref{X_1^*}. Since the submanifolds we iteratively remove from $X_1$ to obtain $X_1^{n-1}$
		are of codimension at least $4$, we do not change the second homology when doing so. That is, $H_2(X_1^{n-1})\to H_2(M_1)$ is also an isomorphism.
		Now consider the diagram in \cref{chain} for $j=n-1$ and iterate to see that $H_2(X_k^{n-1})\to H_2(M_k)$ is an isomorphism for all $k\leq n-2$, in particular $k=n-2$.\\
		
		\item Assume that $n\geq 5$. We show the assertion for an equivariantly formal torus manifold $M$ first and set $N:=M\setminus (M_{n-2}'\setminus X)$.
		Looking at the long exact sequence 
		\[
		\hdots \to H_*(X) \to H_*(M_{n-2})\oplus H_*(N)\to H_*(M)\to H_{*-1}(X)\to H_{*-1}(M_{n-2})\oplus H_{*-1}(N)\to \hdots
		\]
		for odd degrees, we see that $H_{odd}(X)\to H_{odd}(M_{n-2})\oplus H_{odd}(N)$ is surjective (which already shows statement (b), then),
		$H_{even}(X)\to H_{even}(M_{n-2})\oplus H_{even}(N)$ is injective and that its cokernel is free abelian (if there is torsion in even degree, then also in odd
		degree by Poincaré duality and the universal coefficient theorem).\\
		The orbit space of both the free and non-free $T^n$-orbits $N_{n-1}$ and $N_n$ of $N$ is acyclic. By the diagram (where $k<m$, of course)
		\begin{equation*}\label{diagram}
				\begin{tikzcd}
				\hdots \to H_j(\mathcal{F}_{k}^{n-1})\oplus H_j(Y_k) \arrow["h_1",r] \arrow{d} & H_j(X_k^{n-1}) \arrow{r} \arrow{d} & H_{j-1}((\mathcal{F}_{k}^{n-1})^*\times T^n) \arrow{d} \overset{h_2}{\to} \hdots\\
				\hdots \to H_j(\mathcal{F}_{m}^{n-1})\oplus H_j(Y_{m}) \arrow["g_1",r] \arrow{d} & H_j(X_m^{n-1}) \arrow{r} \arrow{d} & H_{j-1}((\mathcal{F}_{m}^{n-1})^*\times T^n) \arrow{d} \overset{g_2}{\to} \hdots\\
				\hdots \to H_j(N_{n-1})\oplus H_j(N_{n}) \arrow{r} & H_j(N) \arrow{r} & H_0(N_{j-1})\otimes H_{j-1}(T^n) \to \hdots
			\end{tikzcd}
		\end{equation*}
		the kernel of $H_*(X=X_{n-2}^{n-1})\to H_*(N)$ can be identified to be the image $I$ of $g_1$ plus the kernel $K$ of $g_2$ restricted to $H_{n-2}((\mathcal{F}_{n-2}^{n-1})^*)\otimes H_{*}(T^n)$.
		Thus we have the splitting $H_*(X)=I\oplus K\oplus K'$, where $K'$ is isomorphic to the kernel of $g_2$ restricted to $H_0((\mathcal{F}_{n-2}^{n-1})^*)\otimes H_*(T^n)$,
		which is isomorphic to the kernel of $h_2$ restricted to $H_0((\mathcal{F}_{k}^{n-1})^*)\otimes H_*(T^n)$ via the vertical map. Now for $k=1$, we choose a unique preimage
		(also called $K'$) in $H_*(X_1^{n-1})$ for this, and denote also by $K'\subset H_*(X_k^{n-1})$ the image of $K'\subset H_*(X_1^{n-1})$ under the inclusion $X_1^{n-1}\to X_k^{n-1}$.\\

			It follows that the image of $H_*(X_1^{n-1})\to H_*(X_k^{n-1})$ is precisely $K'$ for $k\geq 3$. Indeed, by the diagram, the image of $H_*(X_1^{n-1})\to H_*(X_2^{n-1})$
			is contained in $K'\oplus \text{im}(H_*(Y_2)\to H_*(X_2^{n-1}))$, and $H_*(Y_2)\to H_*(X_k^{n-1})$ is the $0$-map.\\
			For $j\geq 3$, we have that $H_j(X_1^{n-1})\to H_j(M_1)=0$ is the $0$-map. So in the splitting $H_*(X)=I\oplus K\oplus K'$, we may assume that
			the kernel of $H_j(X)\to H_j(M_{n-2})$ contains whole $K'$ and is thus equal to $K'$, because it has to intersect $I\oplus K$
			trivially (remember that $H_j(X)\to H_j(M_{n-2})\oplus H_j(N)$ is injective). This shows assertion (a) for torus manifolds.\\
			
		The only thing left to show for a torus manifold is that the cokernel of $H_j(X)\to H_j(M_{n-2})$ is free abelian for even $4\leq j\leq n-2$. We know this for $H_j(X)\to H_j(M_{n-2})\oplus H_j(N)$,
		and since $H_j(X)\to H_j(N)$ is surjective and the preimages may be chosen to be in $K'$, we can conclude that the cokernels of both maps agree.\\
		
		Now for general $M$ and $n\geq 5$, we use \cref{conttorus} and consider $\tilde{X}=\partial \tilde{M}_{n-2}$. Let $Q$ be either $\tilde{X}$ or $X$. Then we have the commutative diagram
		\begin{equation*}
			\begin{tikzcd}
				\hdots \overset{g_{j}}{\to} H_j(X_{n-3}')\oplus H_j(F_{n-2}\times S^3) \arrow{r} \arrow{d} & H_j(Q) \arrow{r} \arrow{d} & H_{j-1}(\mathcal{M}_{n-3}\times S^3) \arrow{d} \overset{g_{j-1}}{\to} \hdots\\
				\hdots \overset{h_{j}}{\to} H_j(M_{n-3})\oplus H_j(F_{n-2}) \arrow{r} & H_j(M_{n-2}) \arrow{r} & H_{j-1}(\mathcal{M}_{n-3}) \overset{h_{j-1}}{\to} \hdots
			\end{tikzcd}
		\end{equation*}
		and the maps $h_j$ are independent from $Q$. We know that $\ker(g_{j})\to \ker(h_j)$ is surjective for $\tilde{X}$ and even $j\leq n-1$ since $H_{j+1}(\tilde{X})\to H_{j+1}(M_{n-2})$ is surjective.
		So we know that the same holds for $X$ as long as $n\neq 6$, since then the maps $g_j$ are also independent from $Q$ by \cref{lem:homologymap} and the discussion before. As we will show in \cref{rem:cokernel}
		very soon, the cokernel of $\ker(g_j)\to \ker(h_j)$ for $n=6$ also does not depend on $Q,$ for $j\leq 5$ and under our assumptions.\\
        Let us now show the statements for $X$.\\
		\begin{enumerate}
		\item 
		By the already shown statement for $\tilde{X}$, the kernel of $H_j(\tilde{X})\to H_j(M_{n-2})$, where $4\leq j\leq n$ is even, is precisely $K'$.
		Using this and the above diagram, we see that $H_j(\mathcal{M}_{n-3}\times S^3) \oplus K'$ surjects onto
		\[
		\ker(H_j(X_{n-3}')\to H_j(M_{n-3})/\text{im}(H_j(\mathcal{M}_{n-3})\to H_j(M_{n-3}))).
		\]
		Using the diagram again, we see that the latter group surjects onto
		\[
		\ker(H_j(X)\to H_j(M_{n-2}))
		\]
		under the inclusion, so
		\[
		H_j(\mathcal{M}_{n-3}\times S^3) \oplus K'\to \ker(H_j(X)\to H_j(M_{n-2}))
		\]
		is surjective. The point is now that (also for $n=6$) by \cref{lem:homologymap}, the image of $H_j(\mathcal{M}_{n-3}\times S^3)\to H_j(X)$ (which is actually $0$ for $n\neq 6$)
		is contained in the image of $K'\to H_j(X)$, because the image of $H_j(\mathcal{M}_{n-3}\times \{pt.\})\to H_j(X)$ is contained in the image of $H_j(F_{n-2}\times S^3)\to H_j(X)$.
		So we have shown that $K'\to \ker(H_j(X)\to H_j(M_{n-2}))$ is indeed surjective. This shows (a).\\

        \item 
		For odd $j\leq n$, the surjectivity of $H_j(\tilde{X})\to H_j(M_{n-2})$ implies that
		\[
		H_j(X_{n-3}')\oplus H_j(\mathcal{M}_{n-3})\to H_j(M_{n-3})
		\]
		is surjective (because for these degrees, the kernel of $h_{j-1}$ injects into $H_{j-1}(\mathcal{M}_{n-3})$, also for $n=6$) and that
		the kernel of $g_{j-1}$ surjects onto the kernel of $h_{j-1}$ (also for $n=6$, as we will show in \cref{rem:cokernel}). This, in turn, implies the surjectivity of $H_j({X})\to H_j(M_{n-2})$
		by the same diagram chase one does to prove the four lemma, and thus (b).\\
		
		\item
		At last for even $4\leq j\leq n-2$, we note that $H_j(Y_{n-3})\oplus K'\to H_j(X_{n-3}')$ is surjective by the top row for $k=n-3$ in \ref{diagram}, so that $H_j(X_{n-3}')\to H_j(X)\to H_j(M_{n-2})$ is the $0$-map.
		For even $4\leq j\leq n-4$, the cokernel of $H_j(X)\to H_j(M_{n-2})$ can thus be identified with $[H_j(M_{n-3})/\text{im}(h_j')]$ because
		$\text{coker}(\ker(g_{j-1})\to \ker(h_{j-1}))=0$, and so can the cokernel of $H_j(\tilde{X})\to H_j(M_{n-2})$. This shows the statement
		for $4\leq j\leq n-4$.\\
		
		For $j=n-2$, we note that $\ker(h_{n-3})$ injects into
		$$K:=\ker(h^*_{n-3}\colon H_{n-3}(\mathcal{M}_{n-3}^*)\to H_{n-3}(M_{n-3}^*)).$$
		Since $H_{n-2}(X_{n-2}^*)\to H_{n-2}(M_{n-2}^*)$ is an isomorphism by \cref{homologyX*}, $K$ is surjected on by
		$$\ker(H_{n-3}(\mathcal{M}^*_{n-3}\times \{pt.\}) \to H_{n-3}(\mathcal{M}^*_{n-3}\times S^2)\to H_{n-3}((X_{n-3}')^*)).$$
        But this can be identified with the kernel of (remember that $Y_{n-3}=Y_{n-3}^*\times T\cong M_{n-3}^*\times S^1 \times T$ and
        $\mathcal{M}_{n-3}=\mathcal{M}_{n-3}^*\times T^{n-2}$)
        \[
        H_{n-3}(\mathcal{M}_{n-3}^*\times \{pt.\})\hookrightarrow H_{n-3}(\mathcal{M}_{n-3})\to H_{n-3}(Y_{n-3})\to H_{n-3}(X_{n-3}'),
        \]
        which implies that $\text{coker}(\ker(g_{n-3})\to \ker(h_{n-3}))=0$.\\
        \end{enumerate}
		For $n=4$, $H_4(X_1)=0$ and $H_2(X_1)\to H_2(M_1)$ is an isomorphism, so $H_2(X_1')\to H_2(M_1)$ is an isomorphism as well,
		and hence $H_2(X)\to H_2(M_2)$ and $H_3(X)\to H_3(M_2)$ are surjective. By the above diagram, the kernel of $H_4(X)\to H_4(M_2)$ can be identified with
		the image of $H_1(F_2)\otimes H_3(S^3)\to H_4(X_1^3)\to H_4(X)$. But this is torsion free for general $n\geq 4$.
		Indeed, since $H_*(Y_{1})$ is generated by $\mathcal{F}_{1}^*\times T^n$
		(which goes to $0$ in $(\mathcal{F}_{1}\times S^{2n-5})\cap X_1^{n-1}$), $H_{\geq 4}(Y_1)\to H_{\geq 4}(X_1^{n-1})$
		and thus $H_{\geq 4}(\mathcal{F}_1^{n-1}) \to H_{\geq 4}(X_1^{n-1})$ is the $0$-map, so $H_{\geq 4}(X_1^{n-1})$ has no torsion by
		\[
		\hdots \to H_*(\mathcal{F}_1^{n-1}) \oplus H_*(Y_1)\to H_*(X_1^{n-1})\to H_{*-1}(\mathcal{F}_1^{n-1}\times S^1)\to \hdots
		\]
		and thus the image of $H_{\geq 1}(F_2)\otimes H_3(S^3)\to H_{\geq 4}(X_1^{n-1})$ has no torsion. This image injects into
		$H_{\geq 4}(X_{n-3}')$ by the diagram in \cref{chain}, so we have shown everything.\\
		
		\item $H_k(X)$ is free abelian for $k\neq n-1$ if and only if it is free abelian for $k\leq n-2$ by Poincaré duality. This holds for $n=4$,
		because the kernel of $H_2(X_1')\to H_2(X)$ is the image of $H_2(Y_1)\to H_2(X_1')$, and the cokernel of both homomorphisms is free abelian
		as can be directly seen from the two diagrams above.\\
		
		Now assume $n\geq 5$. We have that $H_k(X)$ is torsion free if and only if $[H_k(X_{n-3}')\oplus H_k(F_{n-2}\times S^3)]/\text{im}(g_k)$
		is. Now this image $\text{im}(g_k)$ does not depend on $Q$ by \cref{rem:homologyhtilde}, so
		it suffices to show this for $\tilde{X}$ again by the same reasons as in the point before, for which we see this by
		\[
		\hdots \to H_k(\tilde{\mathcal{F}}_{n-2}^{n-1}) \oplus H_k(\tilde{Y}_{n-2})\to H_k(\tilde{X})\to H_{k-1}(\tilde{\mathcal{F}}_{n-2}^{n-1}\times S^1)\to \hdots
		\]
		
		\item Let $n\geq 5$. Since we have the (trivial) bundle $S^1\to Y_{n-3}^*\to M_{n-3}^*$, the corresponding statement holds for $X_{n-3}'$, and
		\[
		H_1((X_{n-3}'\setminus (\partial Z\cap X_{n-3}'))^*)\to H_1((X\setminus \partial Z)^*)
		\]
		is an isomorphism.\\
		If $n=4$, we still have the bundle $S^1\to Y_{1}^*\to M_{1}^*$, although it might not be orientable anymore. However, the fiber becomes nullhomotopic
		in $(X_1\setminus Z)^*$ the moment this set contains a connected component of $(\mathcal{F}_1^3)^*$, which it does not do if and only if any three edges
		emerging from a vertex belong to an ineffective subgraph. If the latter is true, then the bundle $S^1\to Y_{1}^*\to M_{1}^*$ becomes orientable, since
		it is the restriction of the orientable, not necessarily trivial, bundle $S^1\to Y_{2}^*\to M_{2}^*$, which then exists by \cref{fol}.\\
		
		\item Every connected component of $\partial Z$ is equivariantly diffeomorphic
		to a product of $T^{n-1}$ and a homology sphere with some ineffectice action of $T^{n-1}$ on itself (see \cref{homologyFkn-1}).
		Now use the statement from before, the long exact sequence
		\[
		\hdots \to H_{*+1}(X^*)\to H_{*}(\partial Z^* \times S^1)\to H_{*}(\partial Z^*)\oplus H_{*}((X\setminus \partial Z)^*)\to H_{*}(X^*)\to \hdots
		\]
		and that the interesting homology of $X^*$ is concentrated in degrees $2$ and $n-2$ (it also follows that the interesting homology of $(X\setminus \partial Z)^*$
		is concentrated in degree $2$, $n-2$ and $n-1$, as long as not every GKM subgraph of covalence one is ineffective).
	\end{enumerate}
\end{proof}
\begin{remark}\label{rem:cokernel}
		We show that, under our additional assumption in \cref{propX}, it holds for $n=6$ that the cokernel of $\ker(g_{j})\to \ker(h_j)$ in
		\begin{equation*}
			\begin{tikzcd}
				\hdots \overset{g_{j}}{\to} H_j(X_{n-3}')\oplus H_j(F_{n-2}\times S^3) \arrow{r} \arrow{d} & H_j(Q) \arrow{r} \arrow{d} & H_{j-1}(\mathcal{M}_{n-3}\times S^3) \arrow{d} \overset{g_{j-1}}{\to} \hdots\\
				\hdots \overset{h_{j}}{\to} H_j(M_{n-3})\oplus H_j(F_{n-2}) \arrow{r} & H_j(M_{n-2}) \arrow{r} & H_{j-1}(\mathcal{M}_{n-3}) \overset{h_{j-1}}{\to} \hdots
			\end{tikzcd}
		\end{equation*}
does not depend on $Q$.
Indeed, $X$ and $\tilde{X}$ have the same rational homology in degrees $\leq 5$,
because their rational Serre spectral sequences are the same ($H_2((X_3')^*)\to H_2(X^*)$ and $H_2((X_3')^*)\to H_2(\tilde{X}^*)$ are isomorphisms,
so the differentials $d^2_{2,*}$ of their sequences are the same by naturality).\\
Now we have a splitting of $H_j(\mathcal{M}_{3}\times S^3)$ (remember that $j\leq 5$) into $K_1\oplus K_2\oplus K_3$, where
\[
K_1=H_3(\mathcal{M}_{3}^*)\otimes H_{j-3}(T^4), \; K_2=H_0(\mathcal{M}_3)\otimes H_{j-3}(T^4)\otimes H_3(S^3), \; K_3=H_0(\mathcal{M}_3)\otimes H_j(T^4).
\]
Since $K_1\to H_j(X_3')$ factors through $H_j(Y_{3})\to H_j(X_{3}')$ and $K_2\to H_j(X_3')$
factors through $H_j(X_1')\to H_j(X_{3}')$, the images of these two maps are disjoint,
and so the kernel of $g^1_j$ in $g_j=g^1_j\oplus g^2_j$ is given by
\[
\ker(K_1\to H_j(X_3'))\oplus \ker(K_2\to H_j(X_3'))\oplus K_3 =:K_1'\oplus K_2'\oplus K_3.
\]
However, $K_3$ injects into $H_j(F_4\times S^3)$ and the image in there is disjoint with the image of
$K_1\oplus K_2$, so that the kernel of $g_j$ is the kernel of $g^2_j$ restricted to $K_1'\oplus K_2'$.\\

Now, the image of $K_1'\oplus K_2'\to H_*(F_4\times S^3)$ in the case of a torus manifold (that is, the map $\tilde{h}_j$ in
\cref{rem:homologyhtilde} is $0$) is precisely $K_2'$, since $K_1'$ is sent to $0$. In particular, the rational dimension of the image in every degree is $\dim_{\Q} K_2'$
(in that degree) and the rational dimension of the kernel in every degree is $\dim_{\Q} K_1'$ (in that degree).\\

For general $\tilde{h}_j$, the rational dimension of the image $K_1'\oplus K_2'\to H_j(F_4\times S^3)$ is at least $\dim_{\Q} K_2'$, because $K_2'$
injects into there, and so the rational dimension of the kernel is at most $\dim_{\Q} K_1'$ (again, in every degree). If $X$
and $\tilde{X}$ have the same rational homology, the image of $K_1'\oplus K_2'\to H_j(F_4\times S^3)$ (for the $\tilde{h}$ belonging to $X$) has the same dimension as for $\tilde{h}=0$,
which implies that the image of
$$\tilde{h}\otimes \text{id}\colon K_1'\to [H_0(\mathcal{M}_3) \otimes H_3(S^3)]\otimes H_{*-3}(T^4)$$
is contained in $K_2'$, with rational coefficients.\\
As we saw in the discussion for $n=4$ in the last part of point (2) of the proof of \cref{propX}, the image of
\[
H_0(\mathcal{M}_{n-3})\otimes H_3(S^3)\otimes H_{j-3}(T^4)\to H_j(X_3')
\]
with integer coefficients is torsion free in degrees
$\geq 3$, so the image of $\tilde{h}\otimes \text{id}$ is contained in $K_2'$ when taking integer coefficients, too.\\
All in all, this shows that the cokernel of $\ker(g_{j})\to \ker(h_j)$ does not depend on $Q$ for $j\leq 5$. 
\end{remark}

\newpage

\section{A sufficient criterion for equivariant formality}\label{suffEF}
Here, we want to formulate and prove a sufficient criterion for equivariant formality of spaces in general position. Let us, at first, explicitly state and prove the theorem for
$n=3$, which turns the assumption on general position into the ordinary GKM condition. The arguments for general $n$ follow the same line, but become way more technical
due to the fact that there is less control over the isotropy submanifolds.
\begin{theorem}\label{theosuffEF3}
	Consider a 2-independent action of $T^2$ on the compact manifold $M$ of dimension $6$, such that
	\begin{itemize}
		\item the orbit space is a homology sphere over $\Z$.
		\item for every finite group $H$, every connected component of $M^H$ contains a fixed point.
		\item the orbit space of an arbitrary isotropy submanifold is a disk.
	\end{itemize}
	Then $M$ is equivariantly formal over $\Z$.
\end{theorem}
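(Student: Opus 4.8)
The plan is to show $H^{\mathrm{odd}}(M;\Z)=0$, which by the remark following the definition of a GKM$_j$-type action is equivalent to equivariant formality over $\Z$ (for isolated fixed points): once $H^{\mathrm{odd}}(M;\Z)=0$, Poincaré duality --- $M$ being oriented --- gives $H_{\mathrm{odd}}(M;\Z)=0$, so $H^*(M;\Z)$ is torsion-free by the universal coefficient theorem, the Serre spectral sequence of $M\to M\times_TET\to BT$ collapses at $E_2$ for parity reasons, and $H_T^*(M;\Z)\cong H^*(BT;\Z)\otimes H^*(M;\Z)$ is $H^*(BT;\Z)$-free.

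First I would extract the structural consequences of the hypotheses. A $2$-independent $T^2$-action on $M^6$ is GKM in general position, and one checks from the hypotheses that it is locally standard; then by \cref{lem:isotropycomplone} every stabilizer is a product of a subtorus with at most one finite cyclic group, the nontrivial finite factors appearing precisely along the $4$-dimensional isotropy submanifolds $Z=M^{\Z_p}$. The assumption that every component of a finite $M^H$ contains a fixed point makes the action appropriate, so $M^*$ is a topological $4$-manifold by \cref{lem:approp} and, being a $\Z$-homology sphere, a $\Z$-homology $4$-sphere; in particular $b_1(M^*)=0$, so $\pi_1(\Gamma)$ is generated by connection paths (\cref{rem:graphbetti}). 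Each $Z$ inherits local standardness from $M$ and carries the effective torus-manifold action of $T^2/\Z_p\cong T^2$; since $Z^*$ is a disk it is acyclic with all of its faces acyclic, so \cref{torusManifold} shows $Z$ is equivariantly formal over $\Z$ --- the $n=3$ form of ``admissibility'', trivially satisfied here because $\widetilde H_*(Z^*)=0$ --- and hence $H^{\mathrm{odd}}(Z;\Z)=0$ and $Z^*$ is face-acyclic over every $\Z_p$ by \cref{torusManifoldgen}.

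Next I would compute $H_*(M;\Z)$ from the decomposition $M=M_1'\cup_{X_1}N$, where $M_1'\supset M_1$ is the equivariant tubular neighbourhood of the equivariant $1$-skeleton with boundary $X_1=\partial M_1'$ (\cref{X_1^*}, \cref{defretr}) and $N=\overline{M\setminus M_1'}$. Here $M_1'$ deformation retracts onto $M_1=\bigcup_{e\in E(\Gamma)}S_e$, a union of invariant $2$-spheres glued along fixed points, and an edge-by-edge Mayer--Vietoris computation gives $H_*(M_1';\Z)=(\Z,\Z^{r},\Z^{\#E(\Gamma)},0,0,0)$ with $r=b_1(\Gamma)$. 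On orbit spaces $(M_1')^*\simeq\Gamma$ and $X_1^*\cong\#_r(S^1\times S^2)$ by \cref{X_1*}, and the Mayer--Vietoris sequence of $M^*=(M_1')^*\cup_{X_1^*}N^*$ --- using that $M^*$ is a $\Z$-homology $4$-sphere, that $N^*$ is a compact oriented $4$-manifold with nonempty connected boundary, and that the separating hypersurface $X_1^*$ receives the fundamental class of $M^*$ --- forces
\[
H_*(N^*;\Z)=(\Z,\,0,\,\Z^{r},\,0,\,0)\qquad\text{and}\qquad H_1(X_1^*;\Z)\xrightarrow{\ \cong\ }H_1((M_1')^*;\Z).
\]
The $T^2$-action on $N$ has only finite stabilizers, trivial off the loci $Z\cap N$, which lie in the face-acyclic orbit spaces $Z^*$; from $H_*(N^*)$, $H_*(Z^*)$ and the orbit maps one recovers $H_*(N;\Z)$ --- over $\Q$ by the Gysin sequences of the almost-free quotient, over $\Z$ using that the only possible torsion sits over the $\Z_p$-loci and is killed by the face-acyclicity of $Z^*$, equivalently by \cref{thm:ABFP}, whose hypothesis $M_{p,i-1}\subset M_i$ holds for $M$ and the $Z$'s by local standardness and \cref{rem:ABFP} --- as well as $H_*(X_1;\Z)$ (with $H_2(X_1)\cong H_2(M_1)$ as in \cref{X_1^*}). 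Feeding $H_*(M_1')$, $H_*(X_1)$ and $H_*(N)$ into the Mayer--Vietoris sequence of $M=M_1'\cup_{X_1}N$, combined with Poincaré--Lefschetz duality on the $6$-manifold-with-boundary $N$ to constrain its homology in the top half of the range, should show $H_*(M;\Z)$ is concentrated in even degrees and torsion-free, completing the proof.

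The main obstacle is the last step: lifting the orbit-space homology of $N$ (and of the $Z$'s) to the integral homology of $N$ and $X_1$ in the presence of the disconnected $\Z_p$-stabilizers. This is exactly where the hypothesis that \emph{every isotropy submanifold has disk orbit space} is used --- in the Ayzenberg--Masuda converse one instead assumes all stabilizers connected, which removes the isotropy submanifolds altogether --- feeding through \cref{torusManifold} and the Atiyah--Bredon exactness of \cref{thm:ABFP}. A secondary point is the verification that the stated hypotheses force local standardness (hence appropriateness); everything else is Mayer--Vietoris and Poincaré duality bookkeeping.
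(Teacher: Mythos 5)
Your overall strategy matches the paper's: reduce to $H_{3}(M)=0$ by Poincar\'e duality and $\pi_1(M)\cong\pi_1(M^*)$ (\cref{lem:fundamentalgroup}), decompose $M=M_1'\cup_{X_1}N$, and exploit the fact that $M^*$ is a homology $4$-sphere to get $H_2(X_1^*)\to H_2(N^*)$ an isomorphism and $\widetilde H_*(N^*)$ concentrated in degree $2$. That much is right and is exactly how the paper sets things up.

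The gap is in the last step. Knowing $H_*(M_1')$, $H_*(X_1)$, and $H_*(N)$ as abstract groups and ``feeding them into Mayer--Vietoris'' cannot by itself yield $H_3(M)=0$: from the MV sequence (and the injectivity of $H_2(X_1)\to H_2(M_1)\oplus H_2(N)$) one gets
\[
H_3(M)\;\cong\;\operatorname{coker}\bigl(H_3(X_1)\to H_3(N)\bigr),
\]
so one must show that this specific map is surjective, not merely compute the two groups. The paper does this by a \emph{comparison} argument: the degree-$2$ isomorphism on orbit spaces is pushed through the naturality of the Serre spectral sequence for $T^2\to X_1\to X_1^*$ and $T^2\to N\to N^*$, and the finite-isotropy locus $Z$ is handled by separately comparing the MV sequences of $(X_1,\,X_1\setminus\partial Z,\,\partial Z)$ and $(N,\,N\setminus Z,\,Z)$, then matching the kernels $K_1\to K_2$ of the two boundary maps page by page in the associated group extensions. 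Your sketch correctly names the right ingredients (disk orbit spaces of the $Z$'s, almost-free Gysin/Serre sequences, Atiyah--Bredon for the formal $Z$'s), but compresses precisely this technical core into ``is killed by the face-acyclicity of $Z^*$'' and ``Mayer--Vietoris bookkeeping.'' Note also that \cref{thm:ABFP} cannot be applied to $M$ itself here (equivariant formality of $M$ is the conclusion, not a hypothesis); it is legitimately applied only to the $Z$'s, which for $n=3$ is overkill since $Z^*\cong D^2$ already forces $Z\cong D^2\times T^2$. In short: same decomposition and reduction as the paper, but the decisive surjectivity of $H_3(X_1)\to H_3(N)$ in the presence of $\Z_p$-stabilizers is asserted rather than proved, and proving it requires the paper's spectral-sequence comparison, not just computing the two homology groups independently.
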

In the following (also for general $n$), we write $Z$ for the isotropy submanifolds of $N:=M\setminus (M_{n-2}'\setminus X)$. Every component hits $X$ non-trivially
by the assumption that such a component contains a fixed point, so we can set $\partial Z=X\cap Z$, which is a union of $S^1\times T^2$'s for $n=3$.\\
Let $C$ be a connected component of $Z$, and $H\subset T$ be its stabilizer. The boundary $E$ of a small closed neighborhood of $C$ is equivariantly diffeomorphic to $C\times_H S^1$,
since $C^*=D^2$ is contractible and the normal fiber over a torus orbit is of that form. This directly implies that $H$ is cyclic, because it necessarily acts freely on $S^1$.
Note also that the $T$-action on $E$ is free and that the orbit space is homotopy equivalent to $S^1$, so we may also write $E=C^*\times S^1\times T$,
$T$ acting only on the right factor. In this description, the natural map $E\to C$ is a bundle map (viewing both spaces as nonprincipal $T^2$-bundles), but restricted to $T$ it is only a covering,
not a diffeomorphism! In particular, the map in homology between the fibers (see \cref{lem:naturalityspectralsequence}) is not an isomorphism, but only an injection.\\
The corresponding map on orbit spaces $E^*=C^*\times S^1\to C^*$ in this description, however, is the usual projection. We will also write $E=Z^*\times S^1\times T$
for the boundary of a small closed neighborhood of $Z$ in $N$.\\
Before proving \cref{theosuffEF3}, we need a small lemma, first. This is formulated and proven for general $n$, under the assumption that every connected component of an isotropy submanifold contains a fixed point; the normal bundle of $Z^*$ in $N^*$ is then still a disk bundle over $Z^*$
(though not necessarily trivial).
\begin{lemma}\label{lem:fundamentalgroup}
	$\pi_1(M)\to \pi_1(M^*)$ is an isomorphism.
\end{lemma}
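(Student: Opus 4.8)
The plan is to pass to the free part of the action, where the orbit map is a principal $T$-bundle, and to control the discarded strata by codimension counts read off from the locally standard models. Write $M^{\mathrm{free}}$ for the set of points with trivial stabiliser, $M^{*\circ}=M^*\setminus M^*_{n-2}$ for the complement of the positive-dimensional stabiliser locus downstairs, and $M^{*,\mathrm{free}}=M^{\mathrm{free}}/T$. The two geometric facts I intend to use are: a free $T$-orbit is nullhomotopic in $M$ because it lies in some chart $\cong\mathbb C^n$ around a fixed point (here one uses that at a fixed point of an effective torus action the $n$ isotropy weights generate the full weight lattice, so a point with all coordinates nonzero is free), and an orbit on an isotropy submanifold $Z$ is nullhomotopic in $M$ because, by hypothesis, $Z$ contains a fixed point, and sliding the base point to it along $Z$ contracts the orbit inside $Z\subset M$.

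First I would dispatch surjectivity of $\pi_*\colon\pi_1(M)\to\pi_1(M^*)$ by the usual path-lifting argument for quotients by a compact group: cover a loop in $M^*$ by finitely many basic conical opens, lift over each, patch the lifts by elements of the connected group $T$, and close up the resulting endpoint discrepancy by an arc inside an orbit (which maps to a point). For injectivity I would first do the codimension bookkeeping from the $\mathbb C^n$-models: the stratum of points whose orbit has dimension exactly $k$ has codimension $2(n-k)\ge 4$ in $M$ for $k\le n-2$, while its image in the topological manifold $M^*$ (a manifold by \cref{lem:approp}) has codimension $(n+1)-k\ge 3$; each isotropy submanifold $Z$ has codimension $\ge 2$ in $M$ and $Z^*$ codimension $\ge 2$ in $M^*$. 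Since these strata are conical, general position gives that $\pi_1(M^{\mathrm{free}})\to\pi_1(M)$ is onto and $\pi_1(M^{*\circ})\to\pi_1(M^*)$ is an isomorphism.

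Now take $[\ell]\in\ker\pi_*$ and homotope $\ell$ into $M^{\mathrm{free}}$. Then $\pi\ell$ is nullhomotopic in $M^*$, hence in $M^{*\circ}$, so it bounds a disc $F\colon D^2\to M^{*\circ}$, which I make transverse to $\bigcup Z^*$; transversality leaves finitely many interior points $q_1,\dots,q_m$, and the standard meridian argument yields
\[
[\pi\ell]=\prod_i g_i\,\mu_i^{\varepsilon_i}\,g_i^{-1}\in\pi_1\bigl(M^{*,\mathrm{free}}\bigr),
\]
with $\mu_i$ a meridian of the relevant $Z_i^*$. Using the exact sequence $\pi_1(T)\xrightarrow{\partial}\pi_1(M^{\mathrm{free}})\xrightarrow{\pi_*}\pi_1(M^{*,\mathrm{free}})\to 1$ of the principal bundle $M^{\mathrm{free}}\to M^{*,\mathrm{free}}$, I lift this to $[\ell]=\bigl(\prod_i\widehat g_i\,\widehat\mu_i^{\varepsilon_i}\,\widehat g_i^{-1}\bigr)\cdot\partial(c)$ in $\pi_1(M^{\mathrm{free}})$, where $\widehat\mu_i$ is a loop lifting $\mu_i$ inside $\pi^{-1}$ of a small slice at $q_i$ and $c\in\pi_1(T)$. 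It then remains to see that every factor dies in $\pi_1(M)$: $\partial(c)$ is represented by a loop in a free orbit, hence (all free orbits being freely homotopic in the connected $M^{\mathrm{free}}$, and one of them bounding in a $\mathbb C^n$-chart) maps to $0$; and $\widehat\mu_i$ lies in a tubular neighbourhood of an orbit $T\cdot x_i$, $x_i\in Z_i$, which is $T$-equivariantly homotopy equivalent to that orbit, so $\widehat\mu_i$ maps into the image of $\pi_1(T\cdot x_i)\to\pi_1(M)$, which is trivial. Hence $[\ell]=0$, and with surjectivity this proves the lemma.

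The main obstacle is precisely the isotropy-submanifold part: because such submanifolds can have codimension $2$, the finite-stabiliser locus cannot simply be deleted, which forces the meridian analysis, and the assumption that every component of an isotropy submanifold contains a fixed point is exactly what makes the meridian classes vanish in $\pi_1(M)$. A secondary point requiring care is the validity of the transversality and codimension arguments in the topological manifold $M^*$; this I would justify by noting that local standardness makes the orbit-space stratification conical, hence the relevant strata locally flat, so that "removing a locally finite union of submanifolds of codimension $\ge c$ preserves $\pi_{\le c-2}$" applies.
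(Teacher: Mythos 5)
Your argument is correct and reaches the same conclusion as the paper's, but by a noticeably more self-contained route. The paper first passes to the complement $N=M\setminus(M_{n-2}'\setminus X)$ and reduces the claim to $\pi_1(N)\to\pi_1(N^*)$; it then pushes a kernel loop through the two-stage chain $\pi_1(N_{\mathrm{free}})\to\pi_1(N_{\mathrm{free}}^*)\to\pi_1(N^*)$ and finally to $\pi_1(X_{\mathrm{free}}^*)\to\pi_1(X^*)$, landing it in a fiber of the $T^n$-bundle $Y_{n-3}\to B_{n-3}$, which is then contracted to a fixed point. This leans on the auxiliary spaces $X$, $Y$, $B$ constructed in \cref{local} and on the $T^n$-extension of \cref{fol}. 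You instead stay inside $M$ and $M^*$ throughout: you delete the positive-dimensional stabiliser locus (codimension $\ge 3$ in $M^*$, so $\pi_1$-preserving), make a nullhomotopy in $M^{*\circ}$ transverse to the codimension-two $Z^*$, decompose the projected loop into meridians via the bundle exact sequence, and kill each piece directly — the meridian lifts by retracting the local tube $T\times_{H}\mathbb{C}$ onto the orbit $T\cdot x_i$ and sliding that orbit along $Z_i$ to a fixed point, and the $\partial(c)$ factor by contracting a free orbit in a $\mathbb{C}^n$-chart. Both proofs ultimately rest on the same two geometric inputs (local standardness forcing the conical/locally flat stratification, and the hypothesis that every component of an isotropy submanifold meets $M^T$), so the content is the same; what your version buys is independence from the $M_k'/X_k/Y_k$ machinery, at the cost of having to invoke transversality and local flatness for the stratification of the topological manifold $M^*$, which you correctly flag as the point needing care. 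One small imprecision worth tightening: you assert the $n$ isotropy weights at a fixed point "generate the full weight lattice"; what is actually needed (and what effectiveness gives) is only that their common kernel in $T^{n-1}$ is trivial, which already makes the generic point of the chart free.
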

\begin{proof}
	We consider $M_{n-2}'$ and $X:=X_{n-2}$ for the manifold $M$, see \cref{equneigh}.
	We set $N:=M\setminus (M_{n-2}'\setminus X)$. It suffices to show that $\pi_1(N)\to \pi_1(N^*)$ is an isomorphism by dimensional reasons.
	It is standard theory that this map is surjective, and we may assume by transversality that
	a loop $\gamma$ in its kernel only hits free orbits $N_{free}$. If $\gamma$ is in the kernel of $\pi_1(N_{free})\to \pi_1(N_{free}^*)$, it is in a torus orbit and we are done
	(because these can be homotoped to a point in $M$). If not, then the image $\gamma'$ of $\gamma$ is in the kernel of $\pi_1(N_{free}^*)\to \pi_1(N^*)$. This is generated by the $S^1$-fibers
	of the boundary of the normal bundle of $Z^*$ in $N$, so $\gamma'$ can thus be homotoped into $X_{free}^*$ (through $N_{free}^*$),
	to be in the kernel of $\pi_1(X_{free}^*)\to \pi_1(X^*)$. The kernel of $\pi_1(X_{free}^*)\to \pi_1(X^*)$ is
	generated by the fiber of $S^1\to Y_{n-2}^*\to B_{n-2}$ if $n=3$ (see \cref{singfol}) or can be homotoped along $Z^*$ into the fiber of
	$S^1\to Y_{n-3}^*\to B_{n-3}$ if $n\geq 4$,
	and so $\gamma$ is a loop in the fiber of $T^n\to Y_{n-2}\to B_{n-2}$ (or $T^n\to Y_{n-3}^*\to B_{n-3}$).
	This fiber can be homotoped (through $M$) to a fixed point, so we are done. 
\end{proof}
Now we can come to the proof of \cref{theosuffEF3}.
\begin{proof}
	By Poincaré duality, $H^{odd}(M)=H_{odd}(M)$, and we show that the latter is $0$.
	We have already shown that $\pi_1(M)=\pi_1(M^*)$, so we only need to show that $H_3(M)=0$. Set $N:=M\setminus (M_1'\setminus X)$,
	where $M_1'$ and $X:=X_1$ are as in \cref{equneigh}.
	By $H_*(M^*)=H_*(S^{4})$ and the Mayer Vietoris sequence belonging to $M^*=M_{1}'^*\cup N^*$,
	we see that the interesting homology of $N^*$ is concentrated in degree $2$ ($H_3(N)=H^1(N^*/X^*)=0$ because of Lefschetz duality) and that $H_2(X^*)\to H_2(N^*)$ is an isomorphism.\\
	In view of $H_3(M_1)=0$ and $H_2(X)\to H_2(M_1)$ being an isomorphism (see \cref{X_1^*}), need to show that $H_3(X)\to H_3(N)$ is surjective because of
	\[
	\hdots \to H_3(X)\to H_3(M_1)\oplus H_3(N)\to H_3(M)\to H_2(X)\to H_2(M_1)\oplus H_2(N)\to \hdots
	\]
	If the action on $N$ was free, this would be an immediate consequence of
	the isomorphism (we only need it to be a surjection) $H_2(X^*)\to H_2(N^*)$ and the Serre spectral sequence. Indeed, $E^2_{p,*}(X)\to E^2_{p,*}(N)$ is a surjection
	for $p=2$ and an injection (even an isomorphism) for $p=0$, so $\ker(d^2_{2,*}(X))\to \ker(d^2_{2,*}(N))$ is a surjection
	(remember that, for homology, $d^r_{p,q}$ goes to $E^r_{p-r,q+r-1}$) and thus $E^3_{2,*}(X)\to E^3_{2,*}(N)$ is. Also,
	$E^3_{0,*}(X)\to E^3_{0,*}(N)$ is an injection, then. Due to degree reasons, $E^3_{2,*}(X)=E^{\infty}_{2,*}(X)$
	and the same for $N$. Now the claim follows by $H_3(N)=E_{2,1}^{\infty}(N)$.\\
	
	In case of the action not being free, we denote by $Z$ the isotropy submanifolds
	in $N$ (whose orbit spaces are disks) and consider the diagram for the orbit spaces
	\[
	\begin{tikzcd}
		H_2((X\setminus \partial Z)^*) \oplus H_2(\partial Z^*)\arrow{r} \arrow{d} & H_2(X^*) \arrow{r}{\partial_2} \arrow{d} &
		H_{1}(\partial Z^*\times S^1) \arrow["i_1",r] \arrow{d} & H_{1}((X\setminus \partial Z)^*) \oplus H_{1}(\partial Z^*) \arrow{d}\\
		H_2((N\setminus Z)^*) \oplus 0\arrow[hookrightarrow]{r} & H_2(N^*) \arrow{r}{\partial_2} & H_{1}(Z^*\times S^1) \arrow["i_2",r] & H_{1}((N\setminus Z)^*)\oplus 0
	\end{tikzcd}
	\]
	and the diagram for the total spaces
	\[
	\begin{tikzcd}
		H_3(X\setminus \partial Z) \oplus H_3(\partial Z)\arrow{r} \arrow{d} & H_3(X) \arrow{r}{\partial_3} \arrow{d} &
		H_{2}(\partial E) \arrow{r} \arrow{d} & H_{2}(X\setminus \partial Z) \oplus H_{2}(\partial Z) \arrow{d}\\
		H_3(N\setminus Z)\oplus 0 \arrow[hookrightarrow]{r} & H_3(N) \arrow{r}{\partial_3} & H_{2}(E) \arrow{r} & H_{2}(N\setminus Z)\oplus H_{2}(Z)
	\end{tikzcd}
	\]
	We wrote $0$ for the terms of $H_*(Z^*)$ respectively $H_*(Z)$ that are $0$, because $Z^*$ is a union of discs and $Z$ is topologically a union of $D^2\times T^2$'s.\\	
	In the above diagram, we see by a simple diagram chase and usage of $H_2(X^*)\to H_2(N^*)$ being an isomorphism
	\begin{itemize}
		\item the kernel of $i_2$ is isomorphic to the kernel of $i_1$ via the vertical map.
		\item $H_2((X\setminus \partial Z)^*)\to H_2((N\setminus Z)^*)$ is surjective, because of the last statement, and because $H_2((N\setminus Z)^*)\to H_2(N^*)$ is injective.
	\end{itemize}
	Thus, $H_3(X\setminus \partial Z)\to H_3(N\setminus Z)$ is surjective by the same type of argument as when the action is free on $N$
	(we only needed $H_2((X\setminus \partial Z)^*)\to H_2((N\setminus Z)^*)$ to be surjective). In order to conclude the theorem, we have to show that $K_1\to K_2$, where
	\[
	K_1:=\ker(H_2(\partial E)\to H_2(X\setminus \partial Z)\oplus H_2(\partial Z)),\; K_2:= \ker(H_2(E)\to H_2(N\setminus Z)\oplus H_2(Z)),
	\]
	is surjective (this suffices by the same proof of the four-lemma about surjectivity, although the assumptions are slightly different).
	Since the occuring maps on spaces are bundle maps, we may use the Serre spectral sequence to understand this. Moreover,
	under $H_2(\partial E)\to H_2(E)$, we can and we will view $K_2$ as contained in
	$$H_0(\partial Z)\otimes H_2(S^1\times T) \subset H_2((\partial E)_1)$$
	(see \cref{rem:groupext} for the notation) from now on.\\
	We want to understand the kernel $K_1'$ of
	\[
	E^{\infty}_{1,1}(\partial E)\to E^{\infty}_{1,1}(X\setminus \partial Z)\oplus E^{\infty}_{1,1}(\partial Z),
	\]
	first. By degree reasons, $E^{\infty}_{1,1}=E^2_{1,1}$ for all occuring terms. It follows that $K_1'$
	is contained in
	$$H_0(\partial Z)\otimes H_1(S^1) \otimes H_1(T),$$
	because $\ker(H_1(\partial E^*)\to H_1(\partial Z^*))$ is contained in $H_0(\partial Z)\otimes H_1(S^1)$.\\
	Also, $K_1'$ is mapped isomorphically (because $\ker(i_1)\to\ker(i_2)$ is an isomorphism) to the kernel $K_2'$ of
	$$E_{1,1}^{\infty}(E)\to E_{1,1}^{\infty}(N\setminus Z)\oplus E_{1,1}^{\infty}(Z),$$
	which is contained in
	$$H_0(Z)\otimes H_1(S^1) \otimes H_1(T),$$
	so that we may view both $K_1'$ and $K_2'$ to be the same subgroup in
	$$H_0(\partial Z) \otimes H_1(S^1) \otimes H_1(T)\subset E^{\infty}_{1,1}(\partial E).$$
	Of course, these groups are not necessarily equal to $K_1$ and $K_2$, but, using \cref{rem:groupext}, they are related to them via the following diagram
	\[
	\begin{tikzcd}
		0 \arrow{r} & E^{\infty}_{0,2}(\partial E) \arrow{r} \arrow{d} & H_2((\partial E)_{(1)}) \arrow["j_1", r] \arrow{d} & E^{\infty}_{1,1}(\partial E) \arrow{r} \arrow["j_2",d] & 0\\
		0 \arrow{r} & E^{\infty}_{0,2}(\partial Z)\oplus E^{\infty}_{0,2}(X\setminus \partial Z) \arrow{r} \arrow{d} & G_1 \arrow{r} \arrow{d} &
		E^{\infty}_{1,1}(\partial Z)\oplus E^{\infty}_{1,1}(X\setminus \partial Z) \arrow{r} \arrow{d} & 0\\
		0 \arrow{r} & E^{\infty}_{0,2}(Z)\oplus E^{\infty}_{0,2}(N\setminus Z) \arrow{r} & G_2 \arrow{r} & E^{\infty}_{1,1}(Z)\oplus E^{\infty}_{1,1}(N\setminus Z) \arrow{r} & 0
	\end{tikzcd}
	\]
	We used the notation
	$$G_1=H_2((\partial Z)_1)\oplus H_2((X\setminus \partial Z)_1), \quad G_2=H_2(Z_1)\oplus H_2((N\setminus Z)_1).$$
	By definition of these, we have injections
	$$G_1\hookrightarrow H_2(\partial Z)\oplus H_2(X\setminus \partial Z), \quad G_2\hookrightarrow H_2(Z)\oplus H_2(N\setminus Z).$$
	Therefore, it suffices to show that
	any $x$ in $K_2\subset H_2((\partial E)_{(1)})$ (which is $0$ in $G_2$, then) is $0$ in $G_1$.\\
	We know that $j_1(x)\in K_2=K_1$, so $j_2(j_1(y))=0$. It follows that the image of $x$ in $G_1$ comes from
	$E^{\infty}_{0,2}(\partial Z)\oplus E^{\infty}_{0,2}(X\setminus \partial Z)$,
	but this injects into $E^{\infty}_{0,2}(Z)\oplus E^{\infty}_{0,2}(N\setminus Z)$ (as argued in the case $Z=\emptyset$). So $y$ is $0$ in $G_1$, as well, and we have shown the claim
	and thus the whole assertion.
\end{proof}
Let us now turn to the general case. The main difference is that we do not assume that $Z^*$ is acyclic over $\Z$, so there are some technical difficulties
to overcome. For example, we do not know if the normal bundle of $Z^*$ in $N^*$ is trivial.
\begin{theorem}\label{theosuffEF}
	Consider an action in general position of $T=T^{n-1}$ on the compact manifold $M$ of dimension $2n\geq 8$ such that
	\begin{itemize}
		\item for all closed subgroups $H\subset T$, every connected component of $M^H$ intersects $M^T$ non-trivially.
		\item the orbit space $M/T$ has the homology of a sphere (and is thus a homology sphere).
		\item $M_{n-2}^*$ is $n-3$-acyclic over $\Z$.
		\item every face-submanifold of codimension $4$ is equivariantly formal.
		\item for any isotropy submanifold $Q$ fixed by $\Z_p\subset T$, corresponding to an ineffective subgraph of covalence $1$,
		the map $H_*(Q^*)\to H_*(Q^*)$ given by multiplication with $p$ is an isomorphism in positive degrees.
	\end{itemize}
    Then the action is equivariantly formal over $\Z$.
\end{theorem}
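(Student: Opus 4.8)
The plan is to carry the argument of \cref{theosuffEF3} over to dimension $2n\ge 8$, replacing the explicit low-dimensional computations there by the results of \cref{local} and \cref{EFgenPos}, the one genuinely new point being the control of the isotropy submanifolds lying in the complement of an equivariant neighbourhood of $M_{n-2}$, which is exactly what the fifth hypothesis is designed for. First I would reduce to a vanishing statement and check the setting. Since $M^*$ is a homology sphere, $H_1(M^*)=0$, so \cref{lem:fundamentalgroup} gives $H_1(M;\Z)=0$; hence $M$ is orientable, and by Poincaré duality with the universal coefficient theorem it suffices to prove $H_{odd}(M;\Z)=0$. Before that I would note that the five hypotheses place us in the situation of \cref{local}, \cref{homologyX*} and \cref{propX}: the first hypothesis is the standing assumption on closed subgroups, the third is the $(n-3)$-acyclicity of $M_{n-2}^*$, the fourth makes the $2(n-2)$-dimensional face submanifolds equivariantly formal torus manifolds (hence locally standard by \cref{torusManifold}, so that the action on $M$ is locally standard and the machinery of \cref{local} applies), and these together with the homology-sphere assumption force $H_*(M_k^*)$ to be concentrated in degrees $0$ and $k$ for $k\le n-2$ (combining \cref{orbitSpace} and \cref{lem:homologyfkj} with Lefschetz duality inside the homology sphere $M^*$), which is precisely the hypothesis of \cref{homologyX*} and of \cref{propX}. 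In the borderline case $n=6$ one must also verify $d^2_{4,*}=d^3_{4,*}=d^4_{4,*}=0$ for the rational Serre spectral sequence of $T\to X\to X^*$; this follows from \cref{conttorus}, since that rational spectral sequence then coincides with the one of the boundary of an honest equivariantly formal torus manifold, where these differentials vanish (cf.\ \cref{rem:cokernel}).

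Next I would set up the decomposition $M=M_{n-2}'\cup_X N$ with $X=X_{n-2}$, $M_{n-2}'$ the equivariant neighbourhood of \cref{defretr}, and $N:=\overline{M\setminus M_{n-2}'}$ a compact manifold with $\partial N=X$; recall that $M_{n-2}'$ retracts equivariantly onto $M_{n-2}$. From the Mayer--Vietoris sequence of $M^*=(M_{n-2}')^*\cup_{X^*}N^*$, using $H_*(M^*)=H_*(S^{n+1})$, the concentration of $H_*(M_{n-2}^*)$ in degrees $0$ and $n-2$, and Lefschetz duality for the $(n+1)$-manifold $N^*$ with boundary $X^*$, one reads off exactly as for $n=3$ that the reduced homology of $N^*$ sits in degree $2$ and that $H_2(X^*)\to H_2(N^*)$ is an isomorphism. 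Feeding this into the Mayer--Vietoris sequence
\[
\cdots\to H_j(X)\xrightarrow{\phi_j} H_j(M_{n-2})\oplus H_j(N)\to H_j(M)\to H_{j-1}(X)\xrightarrow{\phi_{j-1}}\cdots,
\]
and using that $H_j(X)\to H_j(M_{n-2})$ is onto for odd $j$, that $H_2(X)\to H_2(M_{n-2})$ is even an isomorphism, and that $\ker\!\big(H_{j-1}(X)\to H_{j-1}(M_{n-2})\big)$ is the free abelian image of $H_{j-1}(X_1^{n-1})$ for $j-1$ even (all from \cref{propX}), the statement $H_{odd}(M)=0$ reduces, by the usual four-lemma type diagram chase together with Poincaré duality on $M$, to two things: that $H_j(X)\to H_j(N)$ is onto in the relevant odd degrees, and the compatibility that the above image maps injectively to $H_{j-1}(N)$; the latter I would transfer from the torus manifold $\tilde M$ of \cref{conttorus} through the identity $M_{n-3}'=\tilde M_{n-3}'$, for which $H_{odd}(\tilde M)=0$ already holds.

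Finally the analysis of $N$ is the crux. If the $T$-action on $N$ were free, surjectivity of $H_j(X)\to H_j(N)$ in odd degrees would follow at once by comparing the Serre spectral sequences of $T\to N\to N^*$ and $T\to X\to X^*$ (surjectivity of $H_2(X^*)\to H_2(N^*)$ propagates through the differentials to $E^\infty_{2,*}$), exactly as in \cref{theosuffEF3}. In general the non-free orbits of $N$ lie over the isotropy submanifolds $Z$ obtained by intersecting $N$ with the covalence-one ineffective face submanifolds $Q$ of $M$; the first hypothesis ensures each component of $Q$ meets $M^T$, so $\partial Z=Z\cap X$ is a union of products of tori with homology spheres (\cref{homologyFkn-1}), and the boundary $E$ of an equivariant tubular neighbourhood of $Z$ in $N$ is a free $T$-space whose projection $E\to Z$ restricts on orbits to the $p$-fold covering $T\to T/\Z_p$. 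I would then run the two Mayer--Vietoris diagrams of \cref{theosuffEF3} (one for orbit spaces, one for total spaces) and reduce the required surjectivity to that of a map $K_1\to K_2$ between the kernels of the boundary maps into $H_*(\partial E)$ and $H_*(E)$; the new difficulty is that $Z^*\simeq Q^*$ need not be acyclic, so the normal bundle of $Z^*$ in $N^*$ need not be trivial and the Serre spectral sequence of $E$ has a non-trivial fibre, whose homology map $H_*(T)\to H_*(T/\Z_p)$ is multiplication by $p$ in the directions meeting the ineffective circle and the identity otherwise. The fifth hypothesis --- multiplication by $p$ is an isomorphism on $H_{>0}(Q^*)$ --- is exactly what makes the relevant maps out of $H_*(E^*)\cong H_*(Z^*\times S^1)$ onto their targets, so that the $E^\infty$-page comparison and the group-extension diagram of \cref{theosuffEF3} go through verbatim. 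The case $n=4$, where \cref{propX} is not available, is treated by the same argument carried out directly as in \cref{theosuffEF3}, all $4$-dimensional face submanifolds being equivariantly formal torus manifolds. The main obstacle throughout is precisely this last step: reconciling the $p$-fold covering of fibres with the fifth hypothesis in the presence of a possibly non-trivial normal bundle of $Z^*$.
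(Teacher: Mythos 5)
Your proposal follows the paper's proof essentially point for point: the same Mayer--Vietoris decomposition $M = M_{n-2}'\cup_X N$, the same appeal to \cref{propX}, \cref{homologyX*} and \cref{basprop} to identify the homology of $X^*$ and $N^*$, the same reduction to showing $H_j(X_1^{n-1})\to H_j(N)$ is surjective for $2\le j\le n$ together with injectivity of $K'$ into $H_*(N)$ in even degrees, and the same recognition that the admissibility hypothesis is precisely what controls the fibre map $T^{n-1}\to T^{n-1}/\Z_p$ in the Serre spectral sequence comparison $E\to Z$. Two places are more delicate than your sketch lets on. First, the closing claim that the group-extension diagram of \cref{theosuffEF3} goes through ``verbatim'' underestimates the general case: since $Z^*$ is no longer a disk, the paper must introduce the auxiliary group $I$ (the homology of the $2$-skeleton of $(X_1^{n-1}\setminus\partial Z)^*$), tensor it to $\tilde I=I\otimes H_*(T)$, and prove \cref{kerdelinfty} and \cref{kerdel} to pin down the kernel $\tilde K=K\otimes H_*(T)$ on the $E^\infty$-page before the four-lemma chase can be run. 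Second, your proposed transfer of the injectivity of $K'\to H_*(N)$ from the torus manifold $\tilde M$ of \cref{conttorus} is not immediate, since $N$ and $\tilde N$ genuinely differ away from $M_{n-3}'$; the paper instead argues directly via the rational Serre spectral sequence of $T\to N\to N^*$ (made available by \cref{serreQ} for the almost-free action), using that $H_2((X_1^{n-1})^*)\to H_2(N^*)$ is an isomorphism and that $K'$ is free abelian, so rational injectivity already suffices --- a cleaner route, and the one your transfer would implicitly reduce to anyway.
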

\begin{remark}
	If every $Q$ fulfills the conditions of the theorem (which is equivalent to Tor$(H_{\geq 1}(Q^*);\Z_p)=0$, of course),
	then we call $Q$ \textit{admissible}. This particularly implies that $Q^*$ is rationally acyclic.
	We will argue later why the other conditions on $Q$ are absolutely necessary.\\
	If every $Q^*$ is acyclic over $\Z$, the arguments become less technical and shorten dramatically. For example, normal bundles as well
	as the principal $T^{n-1}$-bundles over the $Q^*$'s are trivial, then, and one could argue very similarly to the case $n=3$.
\end{remark}
The proof of this will be done in this chapter. By the conditions we imposed and the use of \cref{torusManifold},
we may use the results in \cref{local} throughout. For example, by \cref{homologyX*}, the reduced homology of $X^*$ is concentrated in degrees $2$ and $n-2$ (and $n$).
Further, since $(M_{n-2}^*) \cup (N^*)$ is a sphere with $(M_{n-2}^*) \cap (N^*)=X^*$, Mayer Vietoris implies that
\[
H_j(X^*)\to H_j(M_{n-2}^*)\oplus H_j(N^*)
\]
is an isomorphism for $j=2$ and $j=n-2$ and that therefore the reduced homology of $N^*$ is concentrated in degree $2$
(that $H_{n-1}(N^*)=0$ follows from Lefschetz duality and $H^1(N,X)=0$).\\

We want to start with some basic properties, where we only use the fact that $M^*$ is a homology sphere.
\begin{lemma}\label{basprop}
	Let $Z$ be the union of all isotropy submanifolds in $N$, and let $E$ be the boundary of a closed neighborhood of them.
\begin{enumerate}
	\item $H_2((X_1^{n-1})^*)\to H_2(N^*)$ is an isomorphism for $n\geq 4$ and $H_2((X_1^{n-1}\setminus \partial Z)^*)\to H_2((X\setminus \partial Z)^*)$ is surjective for $n\geq 5$.\\
	
	\item For $j\geq 2$, the map $H_j(E^*)\to H_j(Z^*)\oplus H_j((N\setminus Z)^*)$ is an isomorphism in torsion parts, and we have $H_j(E^*)\cong H_j(Z^*\times S^1)$.\\
	
	\item Set $I$ to be the homology of the 2-skeleton of a CW decomposition of $(X_1^{n-1}\setminus \partial Z)^*$. The map
	\[
	H_{*}(E^*)\oplus I\to H_{*}(Z^*)\oplus H_{*}((N\setminus Z)^*)
	\]
	induced by $E^*\to Z^*$, $E^*\to (N\setminus Z)^*$ and $(X_1^{n-1}\setminus \partial Z)^*\to (N\setminus Z)^*$ is surjective.
	Its kernel $K$ is free abelian and concentrated in degree $1$ and $2$. In fact, its intersection with $H_*(E^*)$ is isomorphic to
	\[
	\ker[H_1(\partial Z^*\times S^1)\to H_1(\partial Z^*) \oplus H_1((X_1^{n-1}\setminus \partial Z)^*)]
	\]
	under the inclusion in $E^*$.
\end{enumerate}	
\end{lemma}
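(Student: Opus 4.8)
The plan is to reduce all three assertions to a bookkeeping exercise with Mayer--Vietoris and Gysin sequences on orbit spaces, fed by the structural results of \cref{local}: the ``equivariant surgery'' picture of \cref{chain}, the identifications of $F_{k+1}$ with (a homology disk)$\,\times T^{k+1}$ and of $\mathcal{M}_k$ with (a homology $k$-sphere)$\,\times T^{k+1}$ (with $T$ acting only on the torus factor, so that $F_{k+1}^*$ is $\Z$-acyclic and $\mathcal{M}_k^*$ is a homology $k$-sphere), together with \cref{section}, \cref{lem:homologyfkj}, \cref{propX} and \cref{homologyX*}. Throughout I would use, as recorded just before the lemma, that $M^*$ being a homology sphere forces the reduced homology of $N^*$ into degree $2$ and makes $H_2(X^*)\to H_2(N^*)$ an isomorphism for $n\ge 5$ (since $H_2(M_{n-2}^*)=0$ then), and that all orbit spaces occurring are topological manifolds (\cref{rem:orbitspacemanifold}), so that Poincaré--Lefschetz duality is available.

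For (1) I would run the chain $X_1^{n-1}\subset X_2^{n-1}\subset\dots$ of surgery diagrams of \cref{chain} with $j=n-1$. Passing from $(X_k^{n-1})^*$ to $(X_{k+1}^{n-1})^*$ glues $F_{k+1}^*\times S$ along $\mathcal{M}_k^*\times S$; since $F_{k+1}^*$ is acyclic one has $H_*(F_{k+1}^*\times S)=H_*(S)$, and since $\mathcal{M}_k^*$ is a homology $k$-sphere, for $k\ge 3$ the groups $H_1(\mathcal{M}_k^*\times S)$ and $H_2(\mathcal{M}_k^*\times S)$ reduce to $H_1(S)$ and $H_2(S)$, so the Mayer--Vietoris sequence gives $H_2((X_k^{n-1})^*)\cong H_2((X_{k+1}^{n-1})^*)$. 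The steps $k=1,2$ I would treat separately: there the attaching loci carry extra $1$- and $2$-dimensional homology, but the classes these can contribute to $H_2$ are pinned down by \cref{section} and the triviality of $T^n\to Y_k\to B_k$ (\cref{equlinebundles3}), so $H_2$ is still preserved; finally $X_{n-2}^{n-1}$ differs from $X$ only by closed subsets of orbit-space codimension $\ge 3$, whence $H_2((X_1^{n-1})^*)\cong H_2(X^*)\cong H_2(N^*)$. For the second statement of (1) I would run the same chain with $\partial Z$ deleted throughout; since only surjectivity is needed and surjectivity is robust under deleting the (small-codimension) loci $\partial Z^*$, this gives that $H_2((X_1^{n-1}\setminus\partial Z)^*)\to H_2((X\setminus\partial Z)^*)$ is onto.

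For (2): by \cref{lem:isotropycomplone} together with the covalence-$1$ ineffectivity, the principal stabilizer of a component of $Z$ is a finite cyclic group $\Z_p$, so the unit normal sphere bundle $E$ of $Z$ in $N$ is an $S^1$-bundle over $Z$ with free $T$-action, and $E^*=E/T\to Z^*$ is again an $S^1$-bundle. Since each component of $Z^*$ is admissible, hence rationally acyclic with multiplication by $p$ invertible on $H_{\ge 1}$, I expect the Gysin sequence of $E^*\to Z^*$ to degenerate — its Euler class is divisible by $p$ and lies in a group on which $p$ acts invertibly, so it acts trivially in the sequence — giving $H_j(E^*)\cong H_j(Z^*)\oplus H_{j-1}(Z^*)=H_j(Z^*\times S^1)$. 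Feeding this into the Mayer--Vietoris sequence of $N^*=(\text{a collar of }Z^*)\cup(N\setminus Z)^*$ with intersection $E^*$, and using that $H_*(N^*)$ is torsion-free, exactness immediately yields that $H_j(E^*)\to H_j(Z^*)\oplus H_j((N\setminus Z)^*)$ is an isomorphism on torsion subgroups for $j\ge 2$.

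For (3): the $H_*(Z^*)$-summand of the target is hit because $H_*(E^*)\to H_*(Z^*)$ is (split) onto; for the $H_*((N\setminus Z)^*)$-summand I would use the Mayer--Vietoris sequence of $N^*$ again — in degrees $\ge 3$ the vanishing of $H_*(N^*)$ makes $H_*((N\setminus Z)^*)$ a quotient of $H_*(E^*)$, and in degree $2$ the cokernel of $H_2(E^*)\to H_2(Z^*)\oplus H_2((N\setminus Z)^*)$ injects into $H_2(N^*)\cong H_2(X^*)$ and is therefore hit, via statement (1), by the image of the map $H_2((X_1^{n-1}\setminus\partial Z)^*)\to H_2((N\setminus Z)^*)$, i.e.\ by $I$ (as $H_2$ of a space is a quotient of $H_2$ of its $2$-skeleton). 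Since $I$ contributes nothing in degrees $\ge 3$ and the map is already onto there, $K$ is concentrated in degrees $1$ and $2$; its intersection with $H_*(E^*)$ I would identify with $\ker[H_1(\partial Z^*\times S^1)\to H_1(\partial Z^*)\oplus H_1((X_1^{n-1}\setminus\partial Z)^*)]$ by tracking the $S^1$-fibres over $\partial Z^*$ that bound in $(N\setminus Z)^*$, and this group is free abelian because $\partial Z^*$ is a homology $(n-2)$-sphere (so $H_1(\partial Z^*\times S^1)$ is free), while the remaining degree-$2$ part of $K$ comes from the $2$-cells in $I$ and is free as well. The main obstacle, as I see it, is twofold: making the $k=1,2$ steps of the surgery chain in (1) genuinely work (the attaching spheres need not be null-homologous, so one must really use \cref{section} and \cref{equlinebundles3} to control their classes in $H_2$, and for $n=5$ verify that deleting the codimension-$2$ locus $\partial Z^*$ does not disturb the $H_2$-statements), and establishing the integral — not merely rational — degeneration of the Gysin sequence in (2), which is exactly where the admissibility hypothesis has to be used in an essential way. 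Everything else is exact-sequence chasing combined with the duality available on the orbit-space manifolds.
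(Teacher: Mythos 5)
Your proposal diverges from the paper's own proof in two significant ways, one of which contains a genuine gap.

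For item (1) the paper does not iterate the surgery steps of \cref{chain}; it writes down a single map of Mayer--Vietoris sequences comparing the decomposition $X_1^{n-1}=(\mathcal{F}_1^{n-1}\text{-nbhd})\cup Y_1$ directly to the one for $X_{n-3}^{n-1}$, and reads off injectivity and surjectivity of $H_2((X_1^{n-1})^*)\to H_2(X^*)$ from the identification of the image of $\partial_2$ with $H_0((\mathcal{F}_1^{n-1})^*)\otimes H_1(S^1)$ and its image in the $Y_k^*$-terms. Your iterated approach is a legitimate alternative in spirit, but you correctly flag that the $k=1,2$ steps carry extra homology in the attaching loci, and you do not actually resolve this; the paper's one-shot comparison is precisely designed to sidestep that issue.

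For item (2) there is a real problem. You propose to degenerate the Gysin sequence of $S^1\to E^*\to Z^*$ by arguing that the Euler class $e$ is divisible by $p$ and that $p$ acts invertibly on the relevant group. But admissibility forces $H_{\ge 1}(Z^*;\Z)$ to be finite with no $p$-torsion, hence $H^2(Z^*;\Z)$ is a finite group on which multiplication by $p$ is an automorphism. In such a group \emph{every} element is divisible by $p$, so divisibility gives no information whatsoever, and $e=p\,e_1$ certainly does not force $e\cap(-)=p\,(e_1\cap(-))$ to vanish on $H_{\ge 3}(Z^*)$ (where the target is again a $p$-invertible torsion group). The paper establishes the collapse by a different mechanism: it compares the Serre spectral sequence of $S^1\to E^*\to Z^*$ to that of the ``trivial bundle'' $Z^*\to Z^*$ via the bundle map $E^*\to Z^*$, identifies $K_p:=\ker d^2_{p,0}$ as a subgroup of $H_p(Z^*)$, and deduces $K_p=H_p(Z^*)$ from the surjectivity of $H_p(E^*)\to H_p(Z^*)$ --- which in turn is a consequence of the first assertion of (2) (the Mayer--Vietoris map being an isomorphism on torsion parts) combined with $H_{\ge 1}(Z^*)$ being entirely torsion, so that $H_p(Z^*)$ sits inside the torsion of $H_p(Z^*)\oplus H_p((N\setminus Z)^*)$. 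This is where admissibility is actually used; your Euler-class route needs to be replaced by something like this.

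Item (3) is broadly aligned with the paper's diagram-chasing argument, but it rests on the conclusions of (2), so it inherits the gap until (2) is repaired.
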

\begin{proof}
	For the first point we consider
	\[
	\begin{tikzcd}
		\hdots \arrow{r} & H_*((\mathcal{F}_1^{n-1})^*) \oplus H_*(Y_1^*)\arrow{r} \arrow{d} & H_*((X_1^{n-1})^*) \arrow{r}{\partial_*} \arrow{d} &
		H_{*-1}((\mathcal{F}_1^{n-1})^*\times S^1) \arrow{r}{g_{*-1}} \arrow{d} & \hdots\\
		\hdots \arrow{r} & H_*((\mathcal{F}_{n-3}^{n-1})^*) \oplus H_*(Y_{n-3}^*)\arrow{r} & H_*((X_{n-3}^{n-1})^*) \arrow{r}{\partial_*} &
		H_{*-1}((\mathcal{F}_{n-3}^{n-1})^*\times S^1) \arrow{r}{h_{*-1}} & \hdots\\
	\end{tikzcd}
	\]
    For the last two points, we look at (writing $\partial Z$ for $\partial Z\cap X_1^{n-1}$)
	\[
	\begin{tikzcd}
		\hdots \arrow{r} & H_j((X_1^{n-1}\setminus \partial Z)^*) \oplus H_j(\partial Z^*)\arrow{r} \arrow{d} & H_j((X_1^{n-1})^*) \arrow{r}{\partial_j} \arrow{d} &
		H_{j-1}(\partial Z^*\times S^1) \arrow{r} \arrow{d} & \hdots\\
		\hdots \arrow{r} & H_j((N\setminus Z)^*)\oplus H_j(Z^*) \arrow{r} & H_j(N^*) \arrow{r}{\partial_j} & H_{j-1}(E^*) \arrow{r} & \hdots
	\end{tikzcd}
	\]
\begin{enumerate}
	\item 
	Let $n\geq 5$.
	The upper $\partial_*$ is an injection in degree $2$, and its image is in $H_0((\mathcal{F}_1^{n-1})^*)\otimes H_1(S^1)$, which injects
	into $H_1((\mathcal{F}_{n-3}^{n-1})^*\times S^1)$. It follows that the image of the middle vertical map is disjoint with the image of the bottom left map,
	which is at the same time the kernel of $H_2((X_{n-3}')^*)\to H_2(X^*)$. This shows the injectivity of
	$f\colon H_2((X_1^{n-1})^*)\to H_2(X^*)\overset{\cong}{\to} H_2(N^*)$.\\
	
	To show the surjectivity of $f$, we note that the image of
	$$H_0((\mathcal{F}_1^{n-1})^*)\otimes H_1(S^1)\to H_*((\mathcal{F}_1^{n-1})^*) \oplus H_*(Y_1^*)$$
	is generated by the $S^1$-fiber of $Y_1^* \cong B_1\times S^1$, and that this image gets mapped isomorphically to the homology generated by the $S^1$-fiber
	of $Y_{n-3}^*\cong B_{n-3}\times S^1$. It follows that $\ker(g_1)\to \ker(h_1)$ is surjective. Thus, the cokernel of the middle vertical map
	is generated by the image of the bottom left map, which is trivial for $n\neq 5$. For $n=5$, the image of the bottom left map vanishes in $H_2(X)$.
	In both cases, it follows that $f$ is surjective.\\

	For $n=4$, we note that $H_2((X_1^{3})^*)\to H_2(X^*)$ is a surjection onto the kernel of $H_2(X^*)\to H_2(M_2^*)$ by the diagram in \cref{chain}.
	Since $H_2(X_2^*)\to H_2(N^*)\oplus H_2(M_2^*)$ is an isomorphism,
	the kernel of $H_2(X_2^*)\to H_2(M_2^*)$ gets mapped isomorphically to $H_2(N^*)$. This shows the claim here.\\
	
	\item The first statement comes from the bottom row of the diagram. For the second statement, we look
	at the spectral sequence associated to $S^1\to E^*\to Z^*$ and claim that this collapses at the second page.
	It certainly does so at the third page by degree reasons, so we only need to show that $K_p:=\ker(d^2_{p,0})=H_p(Z^*)$.
	We have a bundle map $E^*\to Z^*$ which induces homomorphisms between the second and third page. In particular, $E^2_{p,0}\to H_p(Z^*)$ is an isomorphism,
	$E^{\infty}_{p,0}=E^3_{p,0}=K_p\to H_p(Z^*)$ is the inclusion and the homomorphism restricted to $E^2_{p,1}$ and $E^3_{p,1}$ is $0$. Now $H_p(E^*)\to H_p(Z^*)$ fits into
	\[
\begin{tikzcd}
	0 \arrow{r} & E^3_{p-1,1} \arrow{r} \arrow{d} & H_p(E^*) \arrow{r} \arrow{d} & K_p \arrow{r} \arrow{d} & 0\\
	0 \arrow{r} & 0 \arrow{r} & H_p(Z^*) \arrow{r} & H_p(Z^*) \arrow{r} & 0
\end{tikzcd}
\]
and so $K_p\to H_p(Z^*)$ needs to be surjective since $H_p(E^*)\to H_p(Z^*)$ is. That the above sequence splits is now
a consequence of the isomorphism
$$H_j(E^*)\to H_j(Z^*)\oplus H_j((N\setminus Z)^*)$$ in torsion parts.\\

	\item The first statement is evident by the diagram and the surjectivity of $H_2((X_1^{n-1})^*)\to H_2(N^*)$ and $I\to H_{\leq 2}((X_1^{n-1}\setminus \partial Z)^*)$.\\
	For the second one, we note that the kernel of $H_1(E^*)\to H_1(Z^*)$ is generated by the fibers (one for every connected component of $Z^*$) of $E^*\to Z^*$. Now the statement follows from
	$H_2((X_1^{n-1})^*)\to H_2(N^*)$ being surjective.
\end{enumerate}
\end{proof}
We want to show that the odd integer homology of $M$ vanishes by considering the Mayer Vietoris sequence of the triple
$(X, M_{n-2}, N)$, where $N=M\setminus (M_{n-2}\setminus X)$. Here, the odd homology of $M$ vanishes if and only if
\begin{itemize}
	\item $H_{even}(X)\to H_{even}(M_{n-2})\oplus H_{even}(N)$ is injective.
	\item $H_{odd}(X)\to H_{odd}(M_{n-2})\oplus H_{odd}(N)$ is surjective.
\end{itemize}
For $n=4$, we only need to show the surjectivity for degree $3$, since already $H_2(X)\to H_2(M_2)$ is injective by \cref{propX}.\\
For $n\geq 5$, we will not show these statements in general. Instead, we will show them for degrees $\leq n$ and then show
that $H_{\leq n-2}(M)$ is torsion free in even degrees, so that the odd homology of degree $\geq n+1$ is torsion free (and thus $0$) as well.\\

The injectivity is clear immediately for all even degrees $j\leq n$, since we saw in \cref{propX} that
the kernel of $H_j(X)\to H_j(M_{n-2})$ is free abelian and the image of $H_*(X_1^{n-1})\to H_*(X)$, which
injects into $H_*(N)$ when taking $\Q$-coefficients. Indeed, $H_2((X_1^{n-1})^*)\to H_2(N^*)$ is an isomorphism by \cref{basprop}, and now we can use the rational Serre spectral sequence, see \cref{serreQ}, as well as the fact that the sequence for $N$ collapses
at the third page due to degree reasons.\\

That $H_{j}(M)$ is torsion free for even $j\leq n-2$ follows from the fact that
$i\colon H_{j}(X)\to H_{j}(M_{n-2})\oplus H_{j}(N)$ is injective by the previous discussion,
and that $H_*(X_1^{n-1})\to H_*(N)$ is surjective for all degrees $\leq n$ (which we show soon). If the latter holds,
then the cokernel of $i$ is the cokernel of $H_{j}(X)\to H_{j}(M_{n-2})$ (which is torsion free by \cref{propX}) for $j\geq 4$,
because $H_{\geq 3}(X_1^{n-1})\to H_{\geq 3}(M_1)=0$ is the $0$-map.\\
For degree $2$, we then have that both $H_2(X)\to H_2(N)$ and $H_2(X)\to H_2(M_{n-2})$ (see \cref{propX}) are isomorphisms, so $H_2(M)=H_2(X)$, which is torsion free by \cref{propX}.\\

Now let us come to the main task, which is showing the surjectivity. We already know that $H_j(X)\to H_j(M_{n-2})$ is a surjection for odd $j$ by \cref{propX}, so now we want to show
this for $H_j(X)\to H_j(M_{n-2})\oplus H_j(N)$, for which it suffices to show that $H_j(X_1^{n-1})\to H_j(N)$ is surjective for all $2\leq j\leq n$
(we have $\pi_1(M)=\pi_1(M^*)$ by \cref{lem:fundamentalgroup}, so we already now $H_1(M)=0$). We saw in \cref{basprop} that
\[
H_{\geq 1}(E^*)\oplus I_{\geq 1}\to H_{\geq 1}(Z^*)\oplus H_{\geq 1}((N\setminus Z)^*)
\]
is surjective and that its kernel $K$ is free abelian and concentrated in degree $1$ and $2$. We set $\tilde{I}=I\otimes H_*(T)$ and want to show that
\[
h^2\colon E^2_{p,*}(E)\oplus \tilde{I}_{p,*}\ \to E^2_{p,*}(Z)\oplus E^2_{p,*}(N\setminus Z)
\]
(note that the maps on these spaces are bundle maps) is also surjective for $p\geq 1$, its kernel $\tilde{K}$ is $K\otimes H_*(T)$ and that it is injective for $p=0$.\\
Similar to the discussion at the beginning of \cref{suffEF} for $n=3$,
the map in homology between the $T^{n-1}$-fibers of $E$ and $Z$ is only an injection, not an isomorphism,
because the fibers of $E$ are not mapped diffeomorphically onto the fibers of $Z$. Indeed,
for a connected component $E'$ of $E$ and the corresponding component $Z'$ of $Z$ fixed by $\Z_p\subset T$, the fiber map
$T^{n-1}\to T^{n-1}$ belonging to $E'\to Z'$
is given by $T^{n-1}\to T^{n-1}/ \Z_p=T^{n-1}$. We may assume that $\Z_p\subset S^1\times \{e\}\subset S^1\times T^{n-2}$,
so the corresponding map in homology is given by
\[
H_*(S^1)\otimes H_*(T^{n-2})\to H_*(S^1)\otimes H_*(T^{n-2}), \quad x\otimes y \mapsto p x \otimes y=p(x\otimes y).
\]
This implies that $E^2_{\geq 1,*}(E')\to E^2_{\geq 1,*}(Z')$ is of the form
$$(H_*(E'^*)\to H_*(Z'^*)\overset{\cdot p}{\to} H_*(Z'^*))\otimes \text{id}_{H_*(T^{n-1})}.$$
Since multiplication with $p$ in $H_{\geq 1}(Z')$ is an isomorphism by assumption, we deduce that $h^2$ is 
a surjection and its kernel $\tilde{K}$ is $K\otimes H_*(T)$. Moreover, $d^2$ vanishes on $\tilde{K}$ by the compatibility of the differentials of
both sides and the injectivitiy for the $E^2_{0,*}$-groups,
and the image of $d^2_{\geq 2,*}$ of the left side intersects $\tilde{K}$ trivially because $\tilde{K}$ is free abelian. It follows that
\[
(E^2(E)\oplus \tilde{I})/\tilde{K} \to E^2(Z)\oplus E^2(N\setminus Z)
\]
is an injective chain homomorphism, and its cokernel is generated by $H_0(\partial Z)\otimes H_*(T)\to E^2_{0,*}(Z)$
(so that it is even an isomorphism in the $E^2_{\geq 1,*}$-entries).\\
Thus, passing to their homologies, we have that
\[
h^3\colon (E^3(E)\oplus \tilde{I}^3)/\tilde{K} \to E^3(Z)\oplus E^3(N\setminus Z),
\]
where $\tilde{I}^3:=\ker(d^2)_{\tilde{I}}$, is again an injective chain homomorphism, and its cokernel is generated by $H_0(\partial Z)\otimes H_*(T)\to E^3_{0,*}(Z)$.\\
We can continue arguing like this and then see that ($\tilde{I}^3=\tilde{I}^{\infty}$ by degree reasons)
\[
h^{\infty}\colon (E^{\infty}(E)\oplus \tilde{I}^{\infty})/\tilde{K} \to E^{\infty}(Z)\oplus E^{\infty}(N\setminus Z)
\]
fulfills all the same statements.\\
In particular, we have that
\[
H_*(E)\oplus H_*(X_1^{n-1}\setminus \partial Z) \oplus H_*(\partial Z)\to H_*(Z)\oplus H_*(N\setminus Z)
\]
is a surjection by basic properties of group extensions. We also get the following lemma.
\begin{lemma}\label{kerdelinfty}
	It holds $\tilde{K}\cap E^{\infty}(E)=(K\cap H_*(E^*))\otimes H_*(T) \subset E_{1,*}^{\infty}(E)$.\\
	In particular,
	\[
	\ker[E_{1,*}^{\infty}(\partial Z\times S^1)\to E_{1,*}^{\infty}(\partial Z)\oplus E_{1,*}^{\infty}(X_1^{n-1}\setminus \partial Z)]\to \tilde{K}\cap E^{\infty}(E)
	\]
	is an isomorphism.
\end{lemma}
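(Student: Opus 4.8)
The plan is to derive the identity $\tilde{K}\cap E^{\infty}(E)=(K\cap H_*(E^*))\otimes H_*(T)$ from the corresponding statement on the $E^2$-pages together with a degree (column-concentration) argument, and then to obtain the ``in particular'' by feeding the description of $K\cap H_*(E^*)$ in \cref{basprop}(3) into that identity. On the second pages, since $H_*(T)$ is free abelian there are no Tor-terms, so $E^2(E)=H_*(E^*)\otimes H_*(T)$ and $\tilde{I}=I\otimes H_*(T)$, and inside $(H_*(E^*)\oplus I)\otimes H_*(T)$ one has $\tilde{K}=K\otimes H_*(T)$ with $K\subset H_*(E^*)\oplus I$ concentrated in base degrees $1$ and $2$ by \cref{basprop}(3). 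Flatness of $H_*(T)$ gives
\[
\tilde{K}\cap E^2(E)=(K\cap H_*(E^*))\otimes H_*(T),
\]
and by \cref{basprop}(3) the group $K\cap H_*(E^*)$ lies in $H_1(E^*)$, so this intersection is contained in the first column $E^2_{1,*}(E)$.

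To pass to $E^{\infty}$, recall from the discussion preceding the statement that $\tilde{K}$ survives to $E^{\infty}$ and injects there, so we may regard $\tilde{K}\subset E^{\infty}(E)\oplus\tilde{I}^{\infty}$. Let $q$ be the page-level projection onto the $\tilde{I}$-summand; it is a morphism of spectral sequences (coming from the splitting of the union into $E$ and $X_1^{n-1}\setminus\partial Z$), and on $E^{\infty}$ it is the projection $E^{\infty}(E)\oplus\tilde{I}^{\infty}\to\tilde{I}^{\infty}$ with kernel $E^{\infty}(E)$. If $x\in\tilde{K}$ is a cycle then $q(x)$ is again a cycle whose class in $\tilde{I}^{\infty}$ is the $\tilde{I}^{\infty}$-component of the class of $x$; moreover $q(x)$ lies in $q(\tilde{K})=q(K)\otimes H_*(T)$, which sits in base degrees $1$ and $2$ only. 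On the other hand, since $I$ vanishes above degree $2$, the spectral sequence $\tilde{I}$ is concentrated in columns $\le 2$, so its only possibly nonzero differential is $d^2\colon E^2_{2,*}\to E^2_{0,*+1}$ and every eventual boundary of $\tilde{I}$ lies in column $0$. Hence the class of $q(x)$ vanishes only if $q(x)=0$, i.e.\ the class of $x$ lies in $E^{\infty}(E)$ exactly when $x\in\ker(q|_{\tilde{K}})=\tilde{K}\cap E^2(E)$. Together with the previous paragraph this yields $\tilde{K}\cap E^{\infty}(E)=(K\cap H_*(E^*))\otimes H_*(T)\subset E^{\infty}_{1,*}(E)$.

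For the ``in particular'', the displayed map is induced by the inclusion $\partial Z\times S^1\hookrightarrow E$, which is an equivariant sub-$T$-bundle over the inclusion $\iota\colon\partial Z^*\times S^1\hookrightarrow E^*=Z^*\times S^1$; hence its fibre map is the identity and on first columns it is $\iota_*\otimes\mathrm{id}_{H_*(T)}$. The components $\partial Z$ of $X$ are products of homology spheres with tori (\cref{homologyFkn-1}, \cref{homologyFkn-1dim8}), so the $T$-bundles over $\partial Z^*$ and over $\partial Z^*\times S^1$ are trivial and their first-column $E^{\infty}$-terms are $H_1(\,\cdot\,)\otimes H_*(T)$; tracking the two component maps (the multiplication-by-$p$ fibre discrepancy already being absorbed into \cref{basprop}(3)) identifies the displayed kernel with $\ker[\,H_1(\partial Z^*\times S^1)\to H_1(\partial Z^*)\oplus H_1((X_1^{n-1}\setminus\partial Z)^*)\,]\otimes H_*(T)$. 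By \cref{basprop}(3) the map $\iota_*$ carries the factor $\ker[\,H_1(\partial Z^*\times S^1)\to H_1(\partial Z^*)\oplus H_1((X_1^{n-1}\setminus\partial Z)^*)\,]$ isomorphically onto $K\cap H_*(E^*)$, so $\iota_*\otimes\mathrm{id}$ carries the whole kernel isomorphically onto $(K\cap H_*(E^*))\otimes H_*(T)=\tilde{K}\cap E^{\infty}(E)$, which is the assertion.

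The step I expect to be the crux is the passage from $E^2$ to $E^{\infty}$ in the second paragraph: a priori, moving to the subquotient $E^{\infty}$ could enlarge $\tilde{K}\cap E^{\infty}(E)$ beyond $\tilde{K}\cap E^2(E)$, and what rules this out is precisely that the differentials on $\tilde{I}$ land in column $0$ while $q(\tilde{K})$ sits in columns $1$ and $2$ — that is, the $2$-dimensionality of the skeleton defining $I$. The remaining, more routine, difficulty is the bookkeeping in the third paragraph: checking that the bundle inclusion $\partial Z\times S^1\hookrightarrow E$ and the two projections to $\partial Z$ and to $X_1^{n-1}\setminus\partial Z$ induce on the first columns of the $E^{\infty}$-pages exactly the orbit-space maps of \cref{basprop}(3) tensored with $H_*(T)$, up to the fibre maps already accounted for there.
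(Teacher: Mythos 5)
Your proof is correct and, at its core, is the same approach the paper takes; the paper's own proof is just a one-line appeal to ``the discussion above'' for the first identity and to \cref{basprop}(3) for the ``in particular,'' and what you have written is the fleshed-out version of that appeal. Your flatness argument for $\tilde{K}\cap E^2(E)=(K\cap H_*(E^*))\otimes H_*(T)$, the observation that the projection $q$ onto the $\tilde{I}$-summand is a morphism of spectral sequences, and the column-concentration argument (differentials on $\tilde{I}$ land only in column $0$ while $q(\tilde{K})$ sits in columns $1$ and $2$, so no nonzero element of $q(\tilde{K})$ dies by $E^\infty$) are exactly the details the paper leaves implicit, and they are correct. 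One small inaccuracy in your third paragraph: the multiplication-by-$p$ fibre discrepancy in $\partial Z\times S^1\to\partial Z$ is not ``absorbed into \cref{basprop}(3)'' (that lemma is purely about orbit spaces); rather, it simply does not enlarge the kernel because multiplication by $p$ is injective on the free abelian group $H_*(T)$, so the kernel of the displayed map is still $\ker[H_1(\partial Z^*\times S^1)\to H_1(\partial Z^*)\oplus H_1((X_1^{n-1}\setminus\partial Z)^*)]\otimes H_*(T)$, and the rest of your argument then goes through as written.
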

\begin{proof}
	The first statement is just a result of the discussion above. The second follows from the first and the fact that
	\[
	\ker[H_1(\partial Z^*\times S^1)\to H_1(\partial Z^*) \oplus H_1((X_1^{n-1}\setminus \partial Z)^*)]\to K\cap H_*(E^*)
	\]
	is an isomorphism by \cref{basprop}.
\end{proof}
To understand
\[
K':=\ker(H_*(E) \to H_*(Z)\oplus H_*(N\setminus Z))
\]
we prove the following lemma.
\begin{lemma}\label{kerdel}
	Writing $\partial Z$ for $\partial Z\cap X_1^{n-1}$, $K'$ is in the image (under the inclusion) of
	\[
	\ker[H_*(\partial Z\times S^1)\to H_*(\partial Z)\oplus H_*(X_1^{n-1}\setminus \partial Z)].
	\]
\end{lemma}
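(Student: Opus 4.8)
The plan is to deduce this from the already-established $E^\infty$-level statement in \cref{kerdelinfty} by a filtration comparison, and then to transport the resulting class on the associated graded back to an honest homology class in $L:=\ker[H_*(\partial Z\times S^1)\to H_*(\partial Z)\oplus H_*(X_1^{n-1}\setminus\partial Z)]$. Throughout I write $j\colon \partial Z\times S^1\hookrightarrow E$ for the inclusion of the ``corner'' of the closed neighbourhood of $Z$, i.e.\ the part of $E$ lying over $\partial Z=\partial Z\cap X_1^{n-1}$. First I would record the easy inclusion $j_*(L)\subseteq K'$: the composite $\partial Z\times S^1\xrightarrow{j}E\to Z$ factors through $\partial Z\hookrightarrow Z$, and $\partial Z\times S^1\xrightarrow{j}E\to N\setminus Z$ is homotopic to $\partial Z\times S^1\hookrightarrow X_1^{n-1}\setminus\partial Z\to N\setminus Z$, so a class dying in $H_*(\partial Z)\oplus H_*(X_1^{n-1}\setminus\partial Z)$ dies in $H_*(Z)\oplus H_*(N\setminus Z)$.

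Next comes the filtration step. All six spaces $E,Z,N\setminus Z,\partial Z,X_1^{n-1}\setminus\partial Z,\partial Z\times S^1$ carry torus-fibred maps to their orbit spaces, and all maps among them are fibrewise over the bases, hence preserve the Serre filtrations and induce maps on the $E^\infty$-pages. From the discussion preceding \cref{kerdelinfty}, the comparison map $h^\infty\colon (E^\infty(E)\oplus\tilde I^\infty)/\tilde K\to E^\infty(Z)\oplus E^\infty(N\setminus Z)$ is injective with cokernel supported in filtration degree $0$, so
$$\ker\big(E^\infty_{p,*}(E)\to E^\infty_{p,*}(Z)\oplus E^\infty_{p,*}(N\setminus Z)\big)=\big(\tilde K\cap E^\infty(E)\big)_{p,*},$$
which by \cref{kerdelinfty} is $0$ for $p\neq 1$ and equals $(K\cap H_*(E^*))\otimes H_*(T)\subset E^\infty_{1,*}(E)$ for $p=1$. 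Now if $0\neq x\in K'$ has Serre filtration degree $p$, its class in $E^\infty_{p,*}(E)$ lies in this kernel, forcing $p=1$; the case $p=0$ gives moreover $K'\cap F_0H_*(E)=0$. Hence the associated-graded projection $q$ is injective on $K'$ and maps it into $\tilde K\cap E^\infty(E)$, which by \cref{kerdelinfty} is the isomorphic image under $j$ of $\overline L:=\ker[E^\infty_{1,*}(\partial Z\times S^1)\to E^\infty_{1,*}(\partial Z)\oplus E^\infty_{1,*}(X_1^{n-1}\setminus\partial Z)]$.

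It then remains to lift, i.e.\ to show that $q$ maps $L$ onto $\overline L$. Here I would use that, in the range relevant to degree-one base classes, the Serre spectral sequences of $E$, $\partial Z\times S^1$, $X_1^{n-1}\setminus\partial Z$ and of the torus-fibred maps of $Z$, $\partial Z$ degenerate at $E^2$, and the orbit-space homologies are torsion free (by \cref{basprop}, \cref{homologyFkn-1} and \cref{propX}); so $H_*(\partial Z\times S^1)\cong H_*(\partial Z^*\times S^1)\otimes H_*(T)$ and the maps out of it become tensor products of the orbit-space maps with the fibre maps $H_*(T)\to H_*(T)$, the latter being multiplication by $p$ (for the map to $\partial Z$) or the identity. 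Torsion-freeness makes these fibre maps injective, so $L$ is computed coordinatewise and, comparing with the identification $\overline L\cong(K\cap H_*(E^*))\otimes H_*(T)\cong\ell\otimes H_*(T)$ coming from \cref{basprop}, where $\ell:=\ker[H_1(\partial Z^*\times S^1)\to H_1(\partial Z^*)\oplus H_1((X_1^{n-1}\setminus\partial Z)^*)]$, one obtains $q(L\cap F_1H_*(\partial Z\times S^1))=\overline L$. Granting this, the proof finishes cleanly: given $x\in K'$, write $q(x)=j^\infty(\overline w)$ with $\overline w\in\overline L$, lift $\overline w$ to $w\in L$; then $q(j_*(w))=j^\infty(q(w))=j^\infty(\overline w)=q(x)$, so $x-j_*(w)\in F_0H_*(E)$, while $x-j_*(w)\in K'$ by the first step, whence $x=j_*(w)\in j_*(L)$ because $K'\cap F_0H_*(E)=0$.

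The main obstacle is this last lifting step: it is exactly where the admissibility hypothesis is needed — the multiplication-by-$p$ on the torus fibre of $E\to Z$ must not obstruct the lift, and this is precisely what $\mathrm{Tor}(H_{\ge 1}(Z^*);\Z_p)=0$ guarantees — and where one must verify that the relevant spectral sequences genuinely collapse and that the ``$T$-fibre'' classes inject (so that both $K'\cap F_0H_*(E)=0$ and $L\cap F_0H_*(\partial Z\times S^1)=0$). Everything else is routine bookkeeping with group extensions, parallel to the $n=3$ argument in \cref{theosuffEF3}.
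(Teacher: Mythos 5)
Your proposal follows the same high-level strategy as the paper's proof: use \cref{kerdelinfty} to reduce to the $E^\infty$-level, and then lift back up the Serre filtration. Your first step ($j_*(L)\subseteq K'$) and your observation that $K'\cap F_0H_*(E)=0$ (so that $K'$ is detected on the associated graded in filtration degree $1$) are both correct and are implicit in the paper's argument.

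Where you deviate is in the key lifting step, and this is where your write-up leaves a genuine gap. You propose to prove that the projection $q\colon L\cap F_1H_*(\partial Z\times S^1)\to \overline{L}$ is surjective by decomposing everything as $H_*(\text{orbit space})\otimes H_*(T)$ and computing the kernel ``coordinatewise.'' This requires, in particular, that the Serre spectral sequence of $T\to X_1^{n-1}\setminus\partial Z\to(X_1^{n-1}\setminus\partial Z)^*$ degenerates at $E^2$ and that the extensions split — neither of which is established in the paper (and the first is not obviously true: $X_1^{n-1}\setminus\partial Z$ is not a product or a trivial bundle over its orbit space in general). You acknowledge this as ``the main obstacle,'' but the proposal as written does not resolve it. The paper sidesteps this entirely: rather than proving surjectivity of $q$ on $L$, it shows directly that the \emph{unique} preimage $\tilde x\in H_1$ of $x\in K'$ (which exists by \cref{kerdelinfty} together with the isomorphism $E^\infty_{0,*}(\partial Z\times S^1)\cong E^\infty_{0,*}(E)$) already lands in $L$. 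The argument is a short diagram chase in the second three-row diagram: $i_1(f(\tilde x))=0$ by construction, so the image of $\tilde x$ in $H_2$ would have to come from the $E^\infty_{0,*}$-part, and injectivity of $j_1$ (which is exactly the $E^\infty_{0,*}$-component of the injectivity of $h^\infty$, i.e.\ the admissibility input) then forces $j_2(\tilde x)\ne 0$, contradicting $x\in K'$. This uses only \cref{kerdelinfty}, the filtration short exact sequences, and injectivity of $h^\infty$ — no spectral sequence collapse or splitting for $X_1^{n-1}\setminus\partial Z$ is needed. You should replace your tensor-product lifting argument with this diagram chase (or an equivalent one); the rest of your proposal — the inclusion $j_*(L)\subseteq K'$, the reduction of $K'$ to filtration degree $1$, and the final bookkeeping $x-j_*(w)\in K'\cap F_0H_*(E)=0$ — then goes through as you wrote it.
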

\begin{proof}
First, we look at
\[
\begin{tikzcd}
	0 \arrow{r} & E^{\infty}_{0,*}(\partial Z\times S^1) \arrow{r} \arrow["\cong", d] & H_1 \arrow{r} \arrow{d} & E^{\infty}_{1,*-1}(\partial Z\times S^1) \arrow{r} \arrow{d} & 0\\
	0 \arrow{r} & E^{\infty}_{0,*}(E)\arrow{r} \arrow{d} & G_1 \arrow["f",r] \arrow{d} & E^{\infty}_{1,*-1}(E)\arrow{r} \arrow["g",d] & 0\\
	0 \arrow{r} & E^{\infty}_{0,*}(Z)\oplus E^{\infty}_{0,*}(N\setminus Z) \arrow{r} & G_2 \arrow{r} & E^{\infty}_{1,*-1}(Z)\oplus E^{\infty}_{1,*-1}(N\setminus Z) \arrow{r} & 0
\end{tikzcd}
\]
Here, $H_1$ is the homology of $(\partial Z\times S^1)_1$ in $\partial Z\times S^1$, $G_1$ is the homology of $E_{(1)}$ in $H_*(E)$,
 and $G_2$ is the homology of $Z_{(1)}$ plus the homology of $(N\setminus Z)_{(1)}$ in $H_*(Z)\oplus H_*(N\setminus Z)$.
The kernel of $G_1\to G_2$ then is $K'$ (since $\tilde{K}\cap E^{\infty}(E)\subset E_{1,*}^{\infty}(E)$), and can be identified with $f(K')\subset \ker(g)$. Now this has a unique preimage
in $E^{\infty}_{1,*-1}(\partial Z\times S^1)$ by \cref{kerdelinfty}.
Thus (and also because the top left vertical map is an isomorphism), to $x$ in $K'$ there is a unique preimage $\tilde{x}$ in $H_1$.\\
Now look at
\[
\begin{tikzcd}
	0 \arrow{r} & E^{\infty}_{0,*}(\partial Z\times S^1) \arrow{r} \arrow{d} & H_1 \arrow{r}{f} \arrow{d} & E^{\infty}_{1,*-1}(\partial Z\times S^1) \arrow{r} \arrow{d}{i_1} & 0\\
	0 \arrow{r} & E^{\infty}_{0,*}(\partial Z)\oplus E^{\infty}_{0,*}(X_1^{n-1}\setminus \partial Z) \arrow{r} \arrow{d}{j_1} & H_2 \arrow{r} \arrow{d}{j_2} &
	E^{\infty}_{1,*-1}(\partial Z)\oplus E^{\infty}_{1,*-1}(X_1^{n-1}\setminus \partial Z) \arrow{r} \arrow{d}{i_2} & 0\\
	0 \arrow{r} & E^{\infty}_{0,*}(Z)\oplus E^{\infty}_{0,*}(N\setminus Z) \arrow{r} & G_2 \arrow{r} & E^{\infty}_{1,*-1}(Z)\oplus E^{\infty}_{1,*-1}(N\setminus Z) \arrow{r} & 0
\end{tikzcd}
\]
We want to show that $\tilde{x}$ is in the kernel of $H_1\to H_2$.
We have that $i_1(f(\tilde{x}))=0$ by the choice of $\tilde{x}$. If $\tilde{x}$ was not $0$ in $H_2$, it would be in the non-trivial image of $j_2$. But this would mean
that $\tilde{x}$ in $G_2$ would not be $0$ ($j_1$ is injective because $h^{\infty}$ is), a contradiction.
\end{proof}
With this preliminary work, we can look at
\[
\begin{tikzcd}
	\hdots \arrow{r} & H_j(X_1^{n-1}\setminus \partial Z) \oplus H_j(\partial Z)\arrow{r} \arrow{d} & H_j(X_1^{n-1}) \arrow{r}{\partial_j} \arrow{d} &
	H_{j-1}(\partial Z\times S^1) \arrow["g_1",r] \arrow{d} & \hdots\\
	\hdots \arrow{r} & H_j(N\setminus Z)\oplus H_j(Z) \arrow{r} & H_j(N) \arrow{r}{\partial_j} & H_{j-1}(E) \arrow["g_2",r] & \hdots
\end{tikzcd}
\]
where we write $\partial Z$ for $\partial Z\cap X_1^{n-1}$. The image of the bottom left map equals
\[
(H_j(N\setminus Z)\oplus H_j(Z))/\text{im}[H_j(E)\to H_j(N\setminus Z)\oplus H_j(Z)].
\]
By the previous discussion, this is surjected on by the image of the left vertical map.\\
By \cref{kerdel}, the kernel of $g_2$ is in the image of the kernel of $g_1$ under the right vertical map.
All in all, this shows that $H_j(X_1^{n-1})\to H_j(N)$ is surjective by the same diagram chase one does to prove the four lemma about surjectivity. We are done.\\
\begin{remark}\label{surjrem}
	We can construct manifolds where a connected component of $Z^*$ is indeed not an integer homology disk. There is an embedding $\SO(3)\to S^5$
	with trivial normal bundle (this factors through $\SO(3)\to S^2\times S^2\to S^5$, where the first embedding comes from
	sending the first two columns to the respective points in the $S^2$'s), so there is an embedding
	$D_{\Q}^3\to D^5$, where $D_{\Q}^3:=\SO(3)\setminus \mathring{D}^3$, such that the boundary $S^1$ is mapped to the boundary of the standard $D^3\subset D^5$.\\
	Take some (quasi)toric 8-manifold and restrict the
	$T^4$-action to a $T^3$-action in general position. The orbit space of a closed neighborhood $U$ of a non-free $T^3$-orbit is then
	$D^5$ with a prescribed embedding $S^2\to \partial D^5$, and we may extend this to the embedding $D_{\Q}^3\to D^5$.
	It is clear that we can extend the action from $\partial U$ to $U$ such that the isotropy submanifold is precisely $D_{\Q}^3\times T^3$.
	The resulting manifold is equivariantly formal over $\Z$ if and only if the order of the isotropy group is odd
	(we will see the other direction in \cref{necEF}).
\end{remark}

\newpage

\section{A necessary condition for equivariant formality}\label{necEF}
Here we want to formulate and prove the following theorem, where the coefficients are taken to be $\Z$.
We use the notation in \cref{suffEF}.
\begin{theorem}\label{theonecEF}
	 Consider an action in general position of $T=T^{n-1}$ on the compact manifold $M$ of dimension $6$ or $2n\geq 10$ such that
	 \begin{itemize}
	 	\item The odd (co)homology of $M$ vanishes.
	 	\item Whenever $x$ is a point whose stabilizer is finite, there is a neighborhood of $x$ in which this is the only non-trivial stabilizer subgroup.
	 \end{itemize}
	 The following statements hold.
	\begin{enumerate}
		\item For all closed subgroups $H\subset T$ (not necessarily connected), every connected component of $M^H$ intersects $M^T$ non-trivially.
		\item Any connected component $Q^*$ of $Z^*$ is admissible.
		\item $M^*_k$ is $k-1$-acyclic for $k\leq n-2$.
		\item $M^*$ is a homology sphere.
	\end{enumerate}
\end{theorem}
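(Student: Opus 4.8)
The plan is to prove \cref{theonecEF} by essentially reversing the machinery built up in Sections \ref{graph}--\ref{EFgenPos}. The two hypotheses (vanishing of odd cohomology, i.e.\ $M$ is GKM$_{n-1}$ under the general-position assumption, and the local isolation of finite stabilizers) are exactly what is needed to run the results of \cref{local}: the second hypothesis together with \cref{lem:isotropycomplone} forces the action to be locally standard, so \cref{torusManifold} applies to all face submanifolds, and \cref{lem:approp} (via the observation after the theorem statement that the action is appropriate) gives that $M^*$ is a topological manifold. I would carry out the four claims roughly in the order (1), (3), (2), (4), since (3) is needed to invoke the technical machinery and (4) is most naturally deduced last from the structure of $X^*$.

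For claim (1), the subtorus case is \cite[Lemma 2.2]{MP03} (quoted as the lemma after \cref{thm:ABFP} in the equivariantly formal case); the new content is closed subgroups $H$ with a finite part. Here I would argue that a connected component $C$ of $M^H$ is a connected component of $M^{H^0}$ intersected with the fixed set of the finite group $H/H^0$ acting on it, use that $M^{H^0}$ already contains a fixed point and is itself equivariantly formal (again \cite[Lemma 2.2]{MP03}), and then run an Atiyah--Bredon / Chang--Skjelbred argument for $\Z$-coefficients over the relevant prime (using \cite[Theorem 2.1]{FP07} and \cref{rem:ABFP}, the local-standardness guaranteeing the index condition $M_{p,i-1}\subset M_i$). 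This reduces to showing a finite cyclic group acting on an equivariantly formal space with the isolation property has fixed points meeting $M^T$, which follows by induction on $|H/H^0|$ using Smith theory on the $\Z_p$-cohomology. For claim (3), once local standardness is in hand, apply \cref{torusManifoldgen} to each face submanifold over $\Q$ and $\Z_p$ for all $p$, and then bootstrap: \cref{orbitSpace} (the $R=\Q$ and connected-stabilizer $\Z$ cases) gives the acyclicity of the skeleta $M^*_k$ over $\Q$ and away from the bad primes; the primes dividing isotropy orders are handled by the admissibility that claim (2) supplies, patched together with a Bockstein/universal-coefficient argument. For claim (2), given $Q$ an isotropy submanifold fixed by $\Z_p$, \cref{lem:isotropycomplone} says $Q$ has a product-of-torus-times-$\Z_p$ stabilizer structure, and I would use that the inclusion $Q\hookrightarrow M$ together with equivariant formality of $M$ over $\Z_p$ forces $H^{\mathrm{odd}}(Q;\Z_p)=0$ (the standard dimension estimate $\dim H^{\mathrm{odd}}(M^G;\Z_p)\le\dim H^{\mathrm{odd}}(M;\Z_p)$ used in the proof of \cref{torusManifoldgen}); combined with the fact (from the analysis preceding \cref{theosuffEF}, and the fiber-map computation $x\otimes y\mapsto px\otimes y$ there) that multiplication by $p$ on $H_{\geq 1}(Q^*)$ is governed by this, one gets that $H_{\geq 1}(Q^*)$ has no $p$-torsion, which is the definition of admissible.

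For claim (4), I would use \cref{conttorus}: since $M$ is $\Z$-equivariantly formal in general position (that is what the hypotheses plus claims (1)--(3) amount to, via \cref{torusManifold} applied to all faces), there is an equivariantly formal torus manifold $\tilde M$ with $M'_{n-3}=\tilde M'_{n-3}$, hence with the same GKM graph and $H^*_T(M)\cong H^*_T(\tilde M)$. By \cref{torusManifold} applied to $\tilde M$, the orbit space $\tilde M^*$ and all its faces are acyclic. Now \cref{homologyX*} (with the input that $M^*_k$ has homology only in degrees $0,k$, which is claim (3)) identifies $H_*(X^*)$ up to the known kernel, and combining this with the Mayer--Vietoris sequence for $M^*=M_{n-2}^*\cup N^*$ along $X^*$, together with the facts that $N^*$ has the homology of a point modulo the free-orbit part and that $H_j(X^*)\to H_j(M^*_{n-2})$ is controlled by \cref{propX}, pins down $H_*(M^*)$. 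The upshot should be that $H_*(M^*)$ agrees with $H_*(\tilde M^*)$ in the relevant range, and since $\tilde M^*$ is acyclic and $M^*$ is a closed topological manifold of dimension $n+1$ (by \cref{lem:approp}), Poincar\'e duality over $\Z$ forces $M^*$ to be a homology $(n+1)$-sphere.

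The main obstacle I expect is the interplay between the bad primes (those dividing isotropy orders) and the $\Z$-coefficient statements in claims (3) and (4): over $\Q$ and over $\Z_p$ for good $p$ everything follows from \cref{orbitSpace} and \cref{torusManifoldgen} more or less directly, but pushing through to $\Z$-coefficients requires knowing there is no $p$-torsion for the bad $p$, which is precisely claim (2), so the logical dependencies must be threaded carefully --- claim (2) must be proven using only $\Z_p$-equivariant formality (available from the hypothesis) rather than the full conclusion. A secondary technical point is that the isolation hypothesis must be leveraged to rule out the pathological isotropy configurations, so that \cref{lem:isotropycomplone} and hence local standardness genuinely apply; and for $n=6$ one must check that the exceptional clauses in \cref{propX} and \cref{homologyX*} (the conditions on the rational Serre spectral sequence differentials $d^2_{4,*}=d^3_{4,*}=d^4_{4,*}=0$) are automatically satisfied here --- this should follow because $M$ is genuinely $\Z$-equivariantly formal, so its rational spectral sequence matches that of the torus manifold $\tilde M$ where these differentials vanish by degree and acyclicity reasons.
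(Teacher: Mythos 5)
The overall framework you set up is reasonable, but your sketch diverges from the paper's proof in several places, and for claim (4) there is a genuine gap. For claim (1), the paper does not run a Chang--Skjelbred/Atiyah--Bredon induction at all; it observes that if a component $Q$ of $M^H$ (with $H$ finite, say $\Z_p\subset H$) missed $M^T$, then $Q$ would lie in the interior of $N$ where $T$ acts without fixed points, forcing $\chi(Q)=0$, while Smith theory gives $\dim H^{\mathrm{odd}}(Q;\Z_p)\le\dim H^{\mathrm{odd}}(M;\Z_p)=0$, hence $H^{\mathrm{odd}}(Q;\Q)=0$ and $\chi(Q)>0$ --- a contradiction. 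Your proposed induction on $|H/H^0|$ via Smith theory never sets up such a contradiction and it is unclear how it would close. For claim (2), citing the fiber-map computation $x\otimes y\mapsto px\otimes y$ from \cref{suffEF} is circular: that computation is used there to draw conclusions \emph{under the assumption} of admissibility, not to establish it. The paper instead applies the $\Z_p$-version of \cref{thm:ABFP} directly to the induced action on $Q$ and uses Lefschetz duality twice to kill the reduced $\Z_p$-cohomology of $Q^*$. For claim (3), the paper applies the $\Z$-ABFP sequence to $M$ directly and reads off the answer from the resulting cochain complex in degrees $*\le 0$; your proposed patching of $\Q$- and $\Z_p$-statements via Bockstein arguments is more roundabout and would require you to have already established claim (2) separately, which you acknowledge.

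The serious gap is in claim (4). You propose to deduce $H_*(M^*)$ from $H_*(\tilde M^*)$ via \cref{conttorus}, but the dimensions do not match: $\tilde M^*=\tilde M/T^n$ is $n$-dimensional, while $M^*=M/T^{n-1}$ is $(n+1)$-dimensional, so the claim that ``$H_*(M^*)$ agrees with $H_*(\tilde M^*)$'' is not even well-posed. Moreover $M$ and $\tilde M$ agree only on the $(n-3)$-skeleton, so there is no identification of $N$ with any subset of $\tilde M$. You also gloss over the central technical difficulty: the $T$-action on $N$ is not free (the isotropy submanifolds $Z$ live there), so $H_*(N^*)$ cannot be read off from a Serre spectral sequence or from an MVS that treats $N^*$ as known. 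The paper addresses precisely this by working in equivariant cohomology: the ABFP sequence shows $H_T^k(N)\to H_T^k(X)$ is an isomorphism for $k\le n-3$ and injective for $k=n-2$, and $H_T^*(Z)$ is identified as the cohomology of a lens-space bundle over $Z^*$, whose potential $\Z_p$-torsion is killed by the admissibility of claim (2); this is what lets one conclude $H^*(N^*)=0$ in the intermediate degrees, after which Lefschetz duality, the fundamental group computation, and a separate degree-$2$ argument finish the job. None of that analysis has an analogue in your sketch.
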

\begin{remark}
	We excluded the case $n=4$ due to technical difficulties in the proof for the last point (the first three items can be proven just as described
	in the proof of \cref{theonecEF}).\\
	The author would be thankful for hints or remarks, even entire solutions, to how this can be solved.
\end{remark}
\begin{proof}
	We may assume that $Z\neq \emptyset$, because this is already treated in \cite{AM23}.
\begin{enumerate}
\item
This is standard if $H$ has positive dimension. If it doesn't, then any connected component $Q$ of $M^H$ is contained in the interior of $N$,
and for any prime $p$ dividing the order of the stabilizer group of $Q$ we have the estimate
\[
\dim(H^{odd}(Q;\Z_p))\leq \dim(H^{odd}(M;\Z_p))=0,
\]
so in particular $H^{odd}(Q;\Q)=0$, which is impossible due to $\chi(Q)=0$.\\

\item In this item, we take (co)homology with $\Z_p$-coefficients unless stated otherwise.\\
It suffices to show the statement for any prime $p$ that divides the order of the kernel of the $T$-action on $Q$. Then
\[
\dim H^{odd}(Q)\leq \dim H^{odd}(M)=0,
\]
so $Q$ is equivariantly formal over $\Z_p$. After dividing out the kernel, the $T^{n-1}$-action on $Q$ is locally standard.
Using \cref{thm:ABFP} for $\Z_p$-coefficients on $Q$, we obtain that
\[
H_T^{*+n-1}(Q,Q_{n-2})=H_T^{*+n-1}((N\cap Q)^*,(X\cap Q)^*)=H^{*+n-1}((N\cap Q)^*,(X\cap Q)^*)=0
\]
for degree $*<0$, so the reduced $\Z_p$-cohomology of $Q^*$ is at most non-trivial in degree $n-2$, and $H^{n-2}(Q^*,\partial Q^*)=0$. By Lefschetz duality,
$H_1(Q^*)=0$ and thus $H_1(Q^*,\partial Q^*)=0$. We finally conclude $H^{n-2}(Q^*)=0$ by using Lefschetz duality again.
It follows that Tor$(H_{\geq 1}(Q^*;\Z),\Z_p)=0$, so multiplication with $p$ in $H_*(Q^*;\Z)$ is an isomorphism in positive degrees.\\

\item
This is very similar to the arguments in \cite{AM23}. We only need to show that $M_{n-2}^*$ is $n-3$-acyclic,
since $H_{\leq k-1}(M_k^*)\to H_{\leq k-1}(M_{k+1}^*)$ is an isomorphism in positive degrees for $k\leq n-3$.\\
By \cref{thm:ABFP} and \cref{rem:ABFP}, we have a long exact sequence
\[
0\to H_T^*(M)\to H_T^*(M_0)\to H_T^{*+1}(M_1,M_0)\to \hdots \to H_T^{*+n-1}(M,M_{n-2})\to 0.
\]
Note that, although the action does not have connected stabilizers,
\[
H^{\leq i}_T(M_i,M_{i-1})=\bigoplus\limits_{Q:\dim(Q)=i} H^{\leq i}_T(Q_i,\partial Q_i)=\bigoplus\limits_{Q:\dim(Q)=i}  H^{\leq i}(Q_i^*,\partial Q_i^*),
\]
and every summand is non-trivial only in degree $i$. Hence, the above sequence for $*=0$ becomes
\[
0\to \Z\to \bigoplus\limits_{Q:\dim Q=0} \Z \overset{\delta_0}{\to} \hdots \overset{\delta_{n-3}}{\to} \bigoplus\limits_{Q:\dim Q=n-2} \Z \to H_T^{n-1}(M,M_{n-2})\to 0,
\]
(for $*<0$ we obtain $H_T^{\leq n-2}(M,M_{n-2})=0$) and this is still the cohomological cochain complex up to degree $n-3$ for the cell complex defined by $Q_{n-2}$. The acyclicity of this sequence implies that indeed
$M_{n-2}^*$ is $n-3$-acyclic.\\

\item Now we know that $X$ respectively $X^*$ has the properties as in \cref{homologyX*} and the remaining section.
Let us now turn to the statement about $M^*$. Since $M^*$ is certainly a rational homology sphere, and certainly $H_1(M^*)=H_1(M)=0$,
we can already deduce that $M^*$ is an integer homology sphere for $2n=6$. So let $n\geq 5$.\\

When we choose the degree to be less than $0$ in the ABFP-sequence, we obtain $0=H_T^{k}(M,M_{n-2})=H_T^{k}(N,X)$ (excision)
for $k\leq n-2$, which means that $H_T^k(N)\to H_T^k(X)$ is an isomorphism for $k\leq n-3$ and injective for $k=n-2$.
We want to argue that the same statement holds for ordinary cohomology (which would immediately follow from the action being free on $N$).\\

In fact, we will show at first that $H^*(N^*)=0$ for all degrees except $0$ and $2$, and then deal with degree $2$. Since we have $\pi_1(N^*)\cong \pi_1(M)$, we already know $H_1(N^*)=H^{1}(N^*)=0$,
and so $H^{n-1}(N^*)=0$ by Lefschetz duality. This means that we only need to worry about degrees $3\leq * \leq n-2$ in that regard.\\

This is void for $n=4$, so assume $n\geq 5$ and consider the diagram
\[
\begin{tikzcd}
	\hdots \arrow{r} & H^{*-1}(E^*) \arrow{r} \arrow{d} & H_T^*(N) \arrow{r} \arrow["g_1",d] & H^{*}((N\setminus Z)^*)\oplus H_T^{*}(Z) \arrow{d} \arrow{r} & \hdots\\
	\hdots \arrow{r} & H^{*-1}(\partial Z^*\times S^1) \arrow{r} & H_T^*(X) \arrow["g_2",r] & H^{*}((X\setminus \partial Z)^*)\oplus H_T^{*}(\partial Z) \arrow{r} & \hdots
\end{tikzcd}
\]
We need to understand $H_T^*(Z)\to H_T^*(\partial Z)$. We may restrict to a connected component of $Z$ (also called $Z$), and argue there.\\
The $T$-action on $Z$ has a finite stabilizer $\Z_p\subset T$, which is contained in a subcircle $S\subset T$. We may assume that $T=(S^1)^{n-1}$
and $S=S^1\times \{e\}\times \hdots \times \{e\}$, so that $T'=\{e\}\times T^{n-2}$ acts freely on $Z$. We have
\[
H_T^*(Z)=H^*(Z\times_T (S^{\infty})^{n-1})=H^*(Z/T' \times_{T/T'} S^{\infty})=H^*(Z/T' \times_{S^1/Z_p} L_p^{\infty}),
\]
so $H_T^*(Z)$ is the cohomology of the total space of an $L_p^{\infty}$-bundle over $Z^*$ (and similar for $\partial Z^*$).
Remember that $H^*(L_p^{\infty})$ is $\Z_p$ in even positive degrees and $0$ for odd degrees. It follows that
$E_2^{p,q}(Z)=H^p(Z^*; H^q(L_p^{\infty}))=0$ except for $q=0$ or $p=0$, because multiplication with $p$ is an isomorphism in $H_{\geq 1}(Z^*)$.
Hence, we have $H_T^*(Z)=H^*(Z)\oplus H^*(L_p^{\infty})$, and the kernel of $H_T^*(Z)\to H_T^*(\partial Z)$ is precisely $H^*(Z^*)$ for $*\geq 1$.\\

Now, we have $H^*(X^*)=H^*((X\setminus \partial Z)^*)=H^*(\partial Z^*)=0$ for $3\leq *\leq n-3$, so by the bottom sequence of the diagram we get that
$H^*_T(X)\to H_T^*(\partial Z)$, and thus $H^*_T(N)\to H_T^*(\partial Z)$, is an isomorphism, which implies that
\[
H_T^*(N) \to H^{*}((N\setminus Z)^*)\oplus H_T^{*}(Z)
\]
is injective and hits $H^{*}((N\setminus Z)^*)\oplus H^{*}(Z)$ trivially. The last statement also holds for $*=n-2$, since then $H^*((N\setminus Z)^*)\to H^*((X\setminus \partial Z)^*)$
is the $0$-map as well (the first group is torsion), and both $g_1$ and $g_2$ are injective. Hence
\[
H^{*}((N\setminus Z)^*)\oplus H^*(Z)\to H^{*}(E^*)
\]
is injective, and $H^{*-1}((N\setminus Z)^*)\oplus H_T^{*-1}(Z)\to H^{*-1}(E)$ and thus
$$H^{*-1}((N\setminus Z)^*)\oplus H^{*-1}(Z)\to H^{*-1}(E)$$ are surjective. This shows $H^*(N^*)=0$ for $3\leq *\leq n-2$.\\

Let us now consider $H^2(N^*)\to H^2(X^*)$, still assuming $n\geq 5$. We know that both groups are torsion free, and
we know that $H^2(N^*;\Q)\to H^2(X^*;\Q)$ is an isomorphism (since $H^2_T(N^*;\Q)=H^2(N^*;\Q)$ and similar for $X$).
So it is immediate that both $H^2(N^*)\to H^2(X^*)$ and $H_2(X^*)\to H_2(N^*)$ are injective. By the MVs belonging to $M^*=M_{n-2}\cup N^*$,
we see that $H_*(M^*)=H_*(S^{n+1})$ except in degree $2$, where there could be torsion. But this can be ruled out using Poincaré duality.
\end{enumerate}
\end{proof}

\newpage

\section{Appendix}
\subsection{Serre spectral sequence}
Here we fix some notation regarding the Serre spectral sequence for fiber bundles (over $\Z$ or $\Q$). We should note that we always talk about singular homology with coefficients
either in $\Q$ or $\Z$ whenever we talk about 'homology'. As long as not specified, coefficients are allowed to be both $\Q$ or $\Z$. See e.g. \cite{Hat04} for details.\\
Let $F\to M\to B$ be a fiber bundle over a connected CW-complex $B$, and assume that the homology of $B$ or $F$ over $\Z$ respectively $\Q$ is finitely generated. The point of the Serre spectral sequence is to compute the 
homology of the space $M$ from the homology of $B$ and $F$ via a first quadrant spectral sequence, only subject to the small condition that
the monodromy representation $\pi_1(B)\to \Aut(H_*(F))$ is trivial. This is true, for example, when the bundle restricted to the one-skeleton
of $B$ is trivial, so in particular for free group actions of a connected Lie group. Usually, one starts from the second page, which has the form
\[
E^2_{p,q}=H_p(M^*,H_q(F))=H_p(M^*)\otimes H_q(F)
\]
(where the last equation always holds over $\Q$, and also over $\Z$ if the homology of $F$, for example, is torsion free, which we assume from now on)
and the differential $d^2_{p,q}\colon E^2_{p,q}\to E^2_{p-2,q+1}$ (it will not matter for us how this differential looks like). Now, the third page is the homology of
the second page, that is,
\[
E^3_{p,q}:=\frac{\ker(d^2_{p,q}\colon E^2_{p,q}\to E^2_{p-2,q+1})}{\text{im}(d^2_{p+2,q-1}\colon E^2_{p+2,q-1}\to E^2_{p,q})}.
\]
The new differential now is of the form $d^3_{p,q}\colon E^2_{p,q}\to E^2_{p-3,q+2}$. Again, we do not have to know how this looks like exactly.\\
In general, we define $E^{r+1}_{p,q}$ from $E^r_{p,q}$ by
\[
E^{r+1}_{p,q}:=\frac{\ker(d^r_{p,q}\colon E^r_{p,q}\to E^r_{p-r,q+r-1})}{\text{im}(d^r_{p+r,q-r+1}\colon E^r_{p+r,q-r+1}\to E^r_{p,q})}
\]
with differential $d^{r+1}_{p,q}\colon E^{r+1}_{p,q}\to E^{r+1}_{p-r,q+r-1}$. Note that, for $r$ big enough, $E^r=E^{r+1}=E^{r+2}=\hdots$,
because we assumed the homologies of $B$ or $F$ to be finitely generated.
We then define $E^{\infty}_{p,q}:=E^r_{p,q}$ and say that the spectral sequence $(E^r,d^r)$ converges against $E^{\infty}$.
The homology of $M$ is encoded in $E^{\infty}$.
Denote by $(B)_p$ the $p$-skeleton of $B$, and by $M_p$ its preimage under $\pi\colon M\to B$.
\begin{theorem}
	We have
	\[
	E^{\infty}_{p,q}=\text{im}(H_{p+q}(M_p)\to H_{p+q}(M))/\text{im}(H_{p+q}(M_{p-1})\to H_{p+q}(M))
	\]
	and therefore, over $\Q$, the isomorphism
	\[
	H_k(M)\cong \bigoplus\limits_{p+q=k} E^{\infty}_{p,q}.
	\]
\end{theorem}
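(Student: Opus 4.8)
The plan is to obtain this as the standard convergence statement for the spectral sequence of the filtered space $M$ with filtration $M_p := \pi^{-1}((B)_p)$; see \cite{Hat04} for the details we only sketch here. First I would form the exact couple assembled from the long exact homology sequences of the pairs $(M_p, M_{p-1})$: setting $D^1_{p,q} := H_{p+q}(M_p)$ and $E^1_{p,q} := H_{p+q}(M_p, M_{p-1})$, the connecting homomorphisms together with the maps induced by the inclusions $M_{p-1}\hookrightarrow M_p$ form an exact couple whose $r$-th derived couple has $E^r$-term precisely the one described by the iterated homology in the statement, with differential $d^r_{p,q}\colon E^r_{p,q}\to E^r_{p-r,q+r-1}$ of the stated bidegree. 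This step is purely formal homological algebra (the derived-couple construction) and needs no input beyond the long exact sequences.

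Next I would identify the low pages to match the description recalled earlier in the excerpt. Since $B$ is a CW complex and the bundle is trivial over each contractible cell, $M_p/M_{p-1}$ is a wedge of copies of $\Sigma^p(F_+)$ indexed by the $p$-cells of $B$, so $E^1_{p,\ast}$ is the cellular chain complex of $B$ with coefficients in the local system $H_\ast(F)$, and $d^1$ is the cellular differential. The hypothesis that the monodromy $\pi_1(B)\to\Aut(H_\ast(F))$ is trivial makes this coefficient system trivial, whence $E^2_{p,q}=H_p(B; H_q(F))$, which is $H_p(B)\otimes H_q(F)$ once one uses torsion-freeness of $H_\ast(F)$ as assumed.

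For the displayed formula I would set $F_p H_k(M) := \mathrm{im}(H_k(M_p)\to H_k(M))$. Because $B$ is a CW complex, $H_k(M_p)\to H_k(M)$ is already an isomorphism for $p>k$ (the usual skeletal argument), so in each total degree the filtration is finite, $0=F_{-1}H_k\subset F_0 H_k\subset\cdots\subset F_k H_k=H_k(M)$; in particular the spectral sequence is bounded and converges. The standard argument that the spectral sequence of a bounded filtration converges to the associated graded then yields $E^\infty_{p,q}\cong F_p H_{p+q}(M)/F_{p-1}H_{p+q}(M)$, which is exactly the claimed formula. The only point that deserves care — and the closest thing to a genuine obstacle — is precisely this last identification: one must check that the infinite cycles $Z^\infty_{p,q}$ and infinite boundaries $B^\infty_{p,q}$ produced by the derived-couple bookkeeping correspond under the isomorphisms above to $\mathrm{im}(H_{p+q}(M_p)\to H_{p+q}(M))$ and $\mathrm{im}(H_{p+q}(M_{p-1})\to H_{p+q}(M))$ respectively. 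This is a routine but slightly fiddly diagram chase in the exact couple, using boundedness to guarantee that the relevant images stabilize after finitely many steps.

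Finally, over $\Q$ each $F_p H_k(M)$ is a linear subspace, so the finite filtration splits (noncanonically), giving $H_k(M)\cong\bigoplus_{p+q=k} F_p H_k(M)/F_{p-1}H_k(M)=\bigoplus_{p+q=k} E^\infty_{p,q}$, which is the second assertion.
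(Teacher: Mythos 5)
The paper states this theorem without proof, simply pointing to \cite{Hat04} for the standard background; your argument is exactly that standard exact-couple/filtered-space proof (filtration $M_p=\pi^{-1}((B)_p)$, $E^1$-page as cellular chains with local coefficients, boundedness of the filtration, convergence to the associated graded, and the noncanonical splitting over $\Q$), and it is correct.
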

\begin{remark}\label{rem:groupext}
	Over $\Z$, there is no reason to assume that
	\[
	H_k(M)\cong \bigoplus\limits_{p+q=k} E^{\infty}_{p,q}
	\]
	since short exact sequences of abelian groups do not necessarily split. Instead, in order to calculate $H_k(M)$ for a given $k$, we get a series of extension problems to solve.
	That is, assuming we know the image $K$ of $H_k(M_{p-1})\to H_k(M)$ (which is $E^{\infty}_{0,k}$ for $p=1$), the image $H$ of $H_k(M_{p})\to H_k(M)$ sits in the short exact sequence
	\[
	0\to K\to H\to E^{\infty}_{p,k-p}\to 0.
	\]
\end{remark}
\begin{lemma}\label{lem:naturalityspectralsequence}
	Let $F_1\to X\to B_1$ and $F_2\to M\to B_2$ be bundles as above and consider a bundle map $f\colon X\to M$ covering $g\colon B_1\to B_2$.
	We consider the spectral sequences $E^r(X)$ and $E^r(M)$ belonging to $X$ and $M$, respectively.
	Then the following statements hold:
	\begin{enumerate}
		\item There is a map $f_*^r\colon E^r(X)\to E^r(M)$ which commutes with the differentials, where $f_*^{r+1}$ is induced by $f_*^r$ in
		the canonical way and $f_*^2$ is the canonical map $g_*\otimes i_*$, where $i_*\colon H_*(F_1)\to H_*(F_2)$ is the well-defined homomorphism
		in homology induced by $f$ and generic fiber inclusions $i_1$ and $i_2$. 
		\item The map $H_*(X)\to H_*(M)$ induced by $f$ is compatible with $f_*^{\infty}$ in the sense that it induces maps between the group extensions from above.
	\end{enumerate}
\end{lemma}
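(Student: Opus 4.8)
The plan is to present both Serre spectral sequences as the spectral sequences of the filtered spaces obtained by taking preimages of skeleta, to arrange that $f$ respects these filtrations, and then to observe that everything in the statement is the formal consequence of having a morphism of the underlying exact couples. So the only real work is the reduction step; the rest is the standard homological algebra of exact couples, for which I would cite \cite{Hat04}.

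First I would reduce to the case in which $g$ is cellular. By cellular approximation choose a cellular map $g'$ homotopic to $g$; by homotopy invariance of pullbacks of fibrations the bundles $g^*M$ and $(g')^*M$ are isomorphic over $B_1$, and composing the tautological map $X\to g^*M$ with such an isomorphism and with $(g')^*M\to M$ produces a bundle map $f'\colon X\to M$ covering $g'$ that is homotopic to $f$ (as an ordinary map) and carries the fibre over $b$ into the fibre over $g'(b)$. Since from the $E^2$-page on the Serre spectral sequences, the maps $f^r_*$, and the induced map on $H_*$ all depend on $(f,g)$ only through the maps it induces on the homology of $X$, of $M$, of the pairs of filtration pieces, and of a generic fibre, replacing $(f,g)$ by $(f',g')$ changes nothing. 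Hence we may assume $g((B_1)_p)\subset (B_2)_p$ for every $p$; writing $X_p:=\pi^{-1}((B_1)_p)$ and $M_p:=\pi^{-1}((B_2)_p)$ we then have $f(X_p)\subset M_p$. This reduction is the one genuinely non-formal point of the proof, and I expect it to be the main obstacle to state cleanly.

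With $f$ filtration-preserving, $f$ induces compatible maps $H_{p+q}(X_p)\to H_{p+q}(M_p)$ and $H_{p+q}(X_p,X_{p-1})\to H_{p+q}(M_p,M_{p-1})$ commuting with the connecting homomorphisms of the pairs, i.e. a morphism of the exact couples whose derived couples are the Serre spectral sequences of $X\to B_1$ and $M\to B_2$. The general theory of exact couples then yields maps $f^r_*\colon E^r(X)\to E^r(M)$ commuting with the differentials $d^r$, with $f^{r+1}_*$ the map induced by $f^r_*$ on the homology of $(E^r,d^r)$. This gives part (1) except for the identification of $f^2_*$. On the $E^2$-page, $E^2_{p,q}(X)=H_p(B_1;\mathcal H_q(F_1))$, and the triviality of the monodromy makes $\mathcal H_q(F_1)$ the constant coefficient system $H_q(F_1)$; the induced map is $g_*$ on $H_p$ of the base together with the coefficient homomorphism $H_q(F_1)\to H_q(F_2)$ obtained from $f$ restricted to a fibre, which is exactly the well-defined $i_*$ of the statement (well-defined because both monodromies are trivial and $f$ sends the fibre over $b$ into the fibre over $g(b)$). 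Hence $f^2_*=g_*\otimes i_*$.

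Finally, for part (2), recall from the theorem preceding \cref{rem:groupext} that the relevant filtration of $H_k(X)$ is $F_pH_k(X)=\operatorname{im}(H_k(X_p)\to H_k(X))$ and that $E^\infty_{p,q}(X)=F_pH_{p+q}(X)/F_{p-1}H_{p+q}(X)$, and similarly for $M$. Since $f(X_p)\subset M_p$, the map $f_*\colon H_k(X)\to H_k(M)$ carries $F_pH_k(X)$ into $F_pH_k(M)$, so it induces maps on the successive quotients; unwinding the construction of the exact-couple morphism shows these induced maps are precisely $f^\infty_{p,q}$. Thus $f_*$ on $H_*$ is a map of the short exact sequences $0\to F_{p-1}\to F_p\to E^\infty_{p,\ast}\to 0$ compatible with $f^\infty_*$, which is the asserted compatibility with the group extensions of \cref{rem:groupext}.
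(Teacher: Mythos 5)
The paper does not prove this lemma at all; it is stated as a standard fact in the appendix, with the remark ``See e.g.\ \cite{Hat04} for details'' deferring the construction of the spectral sequence and its naturality to Hatcher's notes. Your argument (skeletal filtration, cellular approximation of $g$ to make the bundle map filtration-preserving via pullbacks along homotopic maps, morphism of exact couples yielding maps on every page, identification of $f^2_*$ as $g_*\otimes i_*$, and compatibility with the $E^\infty$-filtration on $H_*$) is precisely the standard proof those references supply and is correct; since the paper gives no proof of its own, there is nothing to diverge from.
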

Everything we just explained for homology works in the same way for cohomology, the difference being that the differentials
go to the reverse direction. Of course, one might expect that the ring structure on the cohomology does survive
when going to the next page, and is in some sense compatible with the differentials. While this is true, it is not needed
in that paper and so we omit this.\\

Now let a connected Lie group $G$ act freely on a CW-complex $M$. We get a fiber bundle $G\to M\to M/G$ which is trivial over $(M/G)_1$.
So we can always use the Serre spectral sequence in these situations, even with $\Z$-coefficients.
\begin{remark}\label{serreQ}
	It is a crucial fact that, if $G=T^n$ is abelian and
	acts almost freely (that is, only with dicrete isotropies), then the Serre spectral sequence even works in this case (together
	with all naturality properties), under the restriction that one takes $\Q$-coefficients. We will outline the reason here
    (this is due to \cite{Z16}, and persosnal correspondence). We consider the homotopy fiber $F_p$ of $p$ in $M\to M_{T^n} \overset{p}{\to} BT^n$, and the following bundle
	\[
	\Omega BT^n \to F_p \to M_{T^n}.
	\]
	Together with the natural homotopy equivalence $M\to F_p$, there can be constructed a homotopy equivalence $h\colon T^n\to \Omega BT^n$
	such that the diagram
	\[
	\begin{tikzcd}
		T^n \arrow{r} \arrow{d} & M \arrow{r} \arrow{d} & M_{T^n} \\
		\Omega BT^n \arrow{r} & F_p \arrow{ru}
	\end{tikzcd}
	\]
	commutes. That is, we can say that $T^n\to M\to M_{T^n}$ is a 'fibration up to homotopy equivalence'. Now, $H^*(M/T;\Q)\to H_T^*(M;\Q)=H^*(M_{T^n})$
	is an isomorphism, and the monodromy of $\Omega BT^n \to F_p \to M_{T^n}$ is trivial, so its Serre spectral sequence computes the (co)homology of $M$ with rational coefficients.
	The naturality conditions now come from the naturality of the constructed fibration $\Omega BT^n \to F_p \to M_{T^n}$.
\end{remark}

\newpage

%\bibliographystyle{amsalpha}
%\bibliography{bibliography}
\bibliographystyle{amsalpha}

\end{document}